\newtheorem{theorem}{Theorem}
\newtheorem{lemma}[theorem]{Lemma}
\newtheorem{proposition}[theorem]{Proposition}
\newtheorem{remark}[theorem]{Remark}
\newenvironment{proof}[1][Proof]{\noindent\textbf{#1.} }{\ \rule{0.5em}{0.5em}}
\begin{document}

\title{Distributional Results for Thresholding Estimators in
High-Dimensional Gaussian Regression Models \thanks{%
We would like to thank Hannes Leeb, a referee, and an associate editor for
comments on a previous version of the paper.}}
\author{Benedikt M. P\"{o}tscher \and Ulrike Schneider}
\date{June 2011\\
Revised November 2011}
\maketitle

\begin{abstract}
We study the distribution of hard-, soft-, and adaptive soft-thresholding
estimators within a linear regression model where the number of parameters $%
k $ can depend on sample size $n$ and may diverge with $n$. In addition to
the case of known error-variance, we define and study versions of the
estimators when the error-variance is unknown. We derive the finite-sample
distribution of each estimator and study its behavior in the large-sample
limit, also investigating the effects of having to estimate the variance
when the degrees of freedom $n-k$ does not tend to infinity or tends to
infinity very slowly. Our analysis encompasses both the case where the
estimators are tuned to perform consistent variable selection and the case
where the estimators are tuned to perform conservative variable selection.
Furthermore, we discuss consistency, uniform consistency and derive the
uniform convergence rate under either type of tuning.

MSC subject classification: 62F11, 62F12, 62J05, 62J07, 62E15, 62E20

Keywords and phrases: Thresholding, Lasso, adaptive Lasso, penalized maximum
likelihood, variable selection, finite-sample distribution, asymptotic
distribution, variance estimation, uniform convergence rate,
high-dimensional model, oracle property
\end{abstract}

\section{Introduction}

We study the distribution of thresholding estimators such as
hard-thresholding, soft-thresholding, and adaptive soft-thresholding in a
linear regression model when the number of regressors can be large. These
estimators can be viewed as penalized least-squares estimators in the case
of an orthogonal design matrix, with soft-thresholding then coinciding with
the Lasso (introduced by Frank and Friedman (1993), Alliney and Ruzinsky
(1994), and Tibshirani (1996)) and with adaptive soft-thresholding
coinciding with the adaptive Lasso (introduced by Zou (2006)). Thresholding
estimators have of course been discussed earlier in the context of model
selection (see Bauer, P\"{o}tscher and Hackl (1988)) and in the context of
wavelets (see, e.g., Donoho, Johnstone, Kerkyacharian, Picard (1995)).
Contributions concerning distributional properties of thresholding and
penalized least-squares estimators are as follows: Knight and Fu (2000)
study the asymptotic distribution of the Lasso estimator when it is tuned to
act as a conservative variable selection procedure, whereas Zou (2006)
studies the asymptotic distribution of the Lasso and the adaptive Lasso
estimators when they are tuned to act as consistent variable selection
procedures. Fan and Li (2001) and Fan and Peng (2004) study the asymptotic
distribution of the so-called smoothly clipped absolute deviation (SCAD)
estimator when it is tuned to act as a consistent variable selection
procedure. In the wake of Fan and Li (2001) and Fan and Peng (2004) a large
number of papers have been published that derive the asymptotic distribution
of various penalized maximum likelihood estimators under consistent tuning;
see the introduction in P\"{o}tscher and Schneider (2009) for a partial
list. Except for Knight and Fu (2000), all these papers derive the
asymptotic distribution in a fixed-parameter framework. As pointed out in
Leeb and P\"{o}tscher (2005), such a fixed-parameter framework is often
highly misleading in the context of variable selection procedures and
penalized maximum likelihood estimators. For that reason, P\"{o}tscher and
Leeb (2009) and P\"{o}tscher and Schneider (2009) have conducted a detailed
study of the finite-sample as well as large-sample distribution of various
penalized least-squares estimators, adopting a moving-parameter framework
for the asymptotic results. [Related results for so-called
post-model-selection estimators can be found in Leeb and P\"{o}tscher (2003,
2005) and for model averaging estimators in P\"{o}tscher (2006); see also
Sen (1979) and P\"{o}tscher (1991).] The papers by P\"{o}tscher and Leeb
(2009) and P\"{o}tscher and Schneider (2009) are set in the framework of an
orthogonal linear regression model with a fixed number of parameters and
with the error-variance being known.

In the present paper we build on the just mentioned papers P\"{o}tscher and
Leeb (2009) and P\"{o}tscher and Schneider (2009). In contrast to these
papers, we do not assume the number of regressors $k$ to be fixed, but let
it depend on sample size -- thus allowing for high-dimensional models. We
also consider the case where the error-variance is unknown, which in case of
a high-dimensional model creates non-trivial complications as then
estimators for the error-variance will typically not be consistent.
Considering thresholding estimators from the outset in the present paper
allows us also to cover non-orthogonal design. While the asymptotic
distributional results in the known-variance case do not differ in substance
from the results in P\"{o}tscher and Leeb (2009) and P\"{o}tscher and
Schneider (2009), not unexpectedly we observe different asymptotic behavior
in the unknown-variance case if the number of degrees of freedom $n-k$ is
constant, the difference resulting from the non-vanishing variability of the
error-variance estimator in the limit. Less expected is the result that --
under consistent tuning -- for the variable selection probabilities (implied
by all the estimators considered) as well as for the distribution of the
hard-thresholding estimator, estimation of the error-variance still has an
effect asymptotically even if $n-k$ diverges, but does so only slowly.

To give some idea of the theoretical results obtained in the paper we next
present a rough summary of some of these results. For simplicity of
exposition assume for the moment that the $n\times k$ design matrix $X$ is
such that the diagonal elements of $(X^{\prime }X/n)^{-1}$ are equal to $1$,
and that the error-variance $\sigma ^{2}$ is equal to $1$. Let $\tilde{\theta%
}_{H,i}$ denote the hard-thresholding estimator for the $i$-th component $%
\theta _{i}$ of the regression parameter, the threshold being given by $\hat{%
\sigma}\eta _{i,n}$, with $\hat{\sigma}^{2}$ denoting the usual
error-variance estimator and with $\eta _{i,n}$ denoting a tuning parameter.
An infeasible version of the estimator, denoted by $\hat{\theta}_{H,i}$,
which uses $\sigma $ instead of $\hat{\sigma}$, is also considered
(known-variance case). We then show that the uniform rate of convergence of
the hard-thresholding estimator is $n^{-1/2}$ if the threshold satisfies $%
\eta _{i,n}\rightarrow 0$ and $n^{1/2}\eta _{i,n}\rightarrow e_{i}<\infty $
("conservative tuning"), but that the uniform rate is only $\eta _{i,n}$ if
the threshold satisfies $\eta _{i,n}\rightarrow 0$ and $n^{1/2}\eta
_{i,n}\rightarrow \infty $ ("consistent tuning"). The same result also holds
for the soft-thresholding estimator $\tilde{\theta}_{S,i}$ and the adaptive
soft-thresholding estimator $\tilde{\theta}_{AS,i}$, as well as for
infeasible variants of the estimators that use knowledge of $\sigma $
(known-variance case). Furthermore, all possible limits of the centered and
scaled distribution of the hard-thresholding estimator $\tilde{\theta}_{H,i}$
(as well as of the soft- and the adaptive soft-thresholding estimators $%
\tilde{\theta}_{S,i}$ and $\tilde{\theta}_{AS,i}$) under a moving parameter
framework are obtained. Consider first the case of conservative tuning: then
all possible limiting forms of the distribution of $n^{1/2}\left( \tilde{%
\theta}_{H,i}-\theta _{i,n}\right) $ as well as of $n^{1/2}\left( \hat{\theta%
}_{H,i}-\theta _{i,n}\right) $ for arbitrary parameter sequences $\theta
_{i,n}$ are determined. It turns out that -- in the known-variance case --
these limits are of the same functional form as the finite-sample
distribution, i.e., they are a convex combination of a pointmass and an
absolutely continuous distribution that is an excised version of a normal
distribution. In the unknown-variance case, when the number of degrees of
freedom $n-k$ goes to infinity, exactly the same limits arise. However, if $%
n-k$ is constant, the limits are "averaged" versions of the limits in the
known-variance case, the averaging being with respect to the distribution of
the variance estimator $\hat{\sigma}^{2}$. Again these limits have the same
functional form as the corresponding finite-sample distributions. Consider
next the case of consistent tuning: Here the possible limits of $\eta
_{i,n}^{-1}\left( \tilde{\theta}_{H,i}-\theta _{i,n}\right) $ as well as of $%
\eta _{i,n}^{-1}\left( \hat{\theta}_{H,i}-\theta _{i,n}\right) $ have to be
considered, as $\eta _{i,n}$ is the uniform convergence rate. In the
known-variance case the limits are convex combinations of (at most) two
pointmasses, the location of the pointmasses as well as the weights
depending on $\theta _{i,n}$ and $\eta _{i,n}$. In the unknown-variance case
exactly the same limits arise if $n-k$ diverges to infinity sufficiently
fast; however, if $n-k$ is constant or diverges to infinity sufficiently
slowly, the limits are again convex combinations of the same pointmasses,
but with weights that are typically different. The picture for
soft-thresholding and adaptive soft-thresholding is somewhat different: in
the known-variance case, as well as in the unknown-variance case when $n-k$
diverges to infinity, the limits are (single) pointmasses. However, in the
unknown-variance case and if $n-k$ is constant, the limit distribution can
have an absolutely continuous component. It is furthermore useful to point
out that in case of consistent tuning the sequence of distributions of $%
n^{1/2}\left( \tilde{\theta}_{H,i}-\theta _{i,n}\right) $ is not
stochastically bounded in general (since $\eta _{i,n}$ is the uniform
convergence rate), and the same is true for soft-thresholding $\tilde{\theta}%
_{S,i}$ and adaptive soft-thresholding $\tilde{\theta}_{AS,i}$. This throws
a light on the fragility of the oracle-property, see Section \ref{oracle}
for more discussion.

While our theoretical results for the thresholding estimators immediately
apply to Lasso and adaptive Lasso in case of orthogonal design, this is not
so in the non-orthogonal case. In order to get some insight into the
finite-sample distribution of the latter estimators also in the
non-orthogonal case, we numerically compare the distribution of Lasso and
adaptive Lasso with their thresholding counterparts in a simulation study.

\bigskip

The main take-away messages of the paper can be summarized as follows:

\begin{itemize}
\item The finite-sample distributions of the various thresholding estimators
considered are highly non-normal, the distributions being in each case a
convex combination of pointmass and an absolutely continuous (non-normal)\
component.

\item The non-normality persists asymptotically in a moving parameter
framework.

\item Results in the unknown-variance case are obtained from the
corresponding results in the known-variance case by smoothing with respect
to the distribution of $\hat{\sigma}$. In line with this, one would expect
the limiting behavior in the unknown-variance case to coincide with the
limiting behavior in the known-variance whenever the degrees of freedom $n-k$
diverge to infinity. This indeed turns out to be so for some of the results,
but not for others where we see that the speed of divergence of $n-k$
matters.

\item In case of conservative tuning the estimators have the expected
uniform convergence rate, which is $n^{-1/2}$ under the simplified
assumptions of the above discussion, whereas under consistent tuning the
uniform rate is slower, namely $\eta _{i,n}$ under the simplified
assumptions of the above discussion. This is intimately connected with the
fact that the so-called `oracle property' paints a misleading picture of the
performance of the estimators.

\item The numerical study suggests that the results for the thresholding
estimators $\tilde{\theta}_{S,i}$ and $\tilde{\theta}_{AS,i}$ qualitatively
apply also to the (components of) the Lasso and the adaptive Lasso as long
as the design matrix is not too ill-conditioned.
\end{itemize}

The paper is organized as follows. We introduce the model and define the
estimators in Section \ref{model}. Section \ref{variable} treats the
variable selection probabilities implied by the estimators. Consistency,
uniform consistency, and uniform convergence rates are discussed in Section %
\ref{minimax}. We derive the finite-sample distribution of each estimator in
Section \ref{FS} and study the large-sample behavior of these in Section \ref%
{LS}. A numerical study of the finite-sample distribution of Lasso and
adaptive Lasso can be found in Section \ref{numstudy}. All proofs are
relegated to Section \ref{prfs}.

\section{The Model and the Estimators\label{model}}

Consider the linear regression model

\begin{equation*}
Y=X\theta +u
\end{equation*}%
with $Y$ an $n\times 1$ vector, $X$ a nonstochastic $n\times k$ matrix of
rank $k\geq 1$, and $u\sim N(0,\sigma ^{2}I_{n})$, $0<\sigma <\infty $. We
allow $k$, the number of columns of $X$, as well as the entries of $Y$, $X$,
and $u$ to depend on sample size $n$ (in fact, also the probability spaces
supporting $Y$ and $u$ may depend on $n$), although we shall almost always
suppress this dependence on $n$ in the notation. Note that this framework
allows for high-dimensional regression models, where the number of
regressors $k$ is large compared to sample size $n$, as well as for the more
classical situation where $k$ is much smaller than $n$. Furthermore, let $%
\xi _{i,n}$ denote the nonnegative square root of $((X^{\prime
}X/n)^{-1})_{ii}$, the $i$-th diagonal element of $(X^{\prime }X/n)^{-1}$.
Now let%
\begin{equation*}
\hat{\theta}_{LS}=\left( X^{\prime }X\right) ^{-1}X^{\prime }Y
\end{equation*}%
\begin{equation*}
\hat{\sigma}^{2}=(n-k)^{-1}(Y-X\hat{\theta}_{LS})^{\prime }(Y-X\hat{\theta}%
_{LS})
\end{equation*}%
denote the least-squares estimator for $\theta $ and the associated
estimator for $\sigma ^{2}$, the latter being defined only if $n>k$. The
hard-thresholding estimator $\tilde{\theta}_{H}$ is defined via its
components as follows 
\begin{equation*}
\tilde{\theta}_{H,i}=\tilde{\theta}_{H,i}(\eta _{i,n})=\hat{\theta}_{LS,i}%
\boldsymbol{1}\left( \left\vert \hat{\theta}_{LS,i}\right\vert >\hat{\sigma}%
\xi _{i,n}\eta _{i,n}\right) ,
\end{equation*}%
where the tuning parameters $\eta _{i,n}$ are positive real numbers and $%
\hat{\theta}_{LS,i}$ denotes the $i$-th component of the least-squares
estimator. We shall also need to consider its infeasible counterpart $\hat{%
\theta}_{H}$ given by 
\begin{equation*}
\hat{\theta}_{H,i}=\hat{\theta}_{H,i}(\eta _{i,n})=\hat{\theta}_{LS,i}%
\boldsymbol{1}\left( \left\vert \hat{\theta}_{LS,i}\right\vert >\sigma \xi
_{i,n}\eta _{i,n}\right) .
\end{equation*}%
The soft-thresholding estimator $\tilde{\theta}_{S}$ and its infeasible
counterpart $\hat{\theta}_{S}$\ are given by%
\begin{equation*}
\tilde{\theta}_{S,i}=\tilde{\theta}_{S,i}(\eta _{i,n})=\func{sign}(\hat{%
\theta}_{LS,i})\left( \left\vert \hat{\theta}_{LS,i}\right\vert -\hat{\sigma}%
\xi _{i,n}\eta _{i,n}\right) _{+}
\end{equation*}%
and 
\begin{equation*}
\hat{\theta}_{S,i}=\hat{\theta}_{S,i}(\eta _{i,n})=\func{sign}(\hat{\theta}%
_{LS,i})\left( \left\vert \hat{\theta}_{LS,i}\right\vert -\sigma \xi
_{i,n}\eta _{i,n}\right) _{+},
\end{equation*}%
where $\left( \cdot \right) _{+}=\max (\cdot ,0)$. Finally, the adaptive
soft-thresholding estimator $\tilde{\theta}_{AS}$ and its infeasible
counterpart $\hat{\theta}_{AS}$ are defined via 
\begin{eqnarray*}
\tilde{\theta}_{AS,i} &=&\tilde{\theta}_{AS,i}(\eta _{i,n})=\hat{\theta}%
_{LS,i}\left( 1-\hat{\sigma}^{2}\xi _{i,n}^{2}\eta _{i,n}^{2}/\hat{\theta}%
_{LS,i}^{2}\right) _{+} \\
&=&\left\{ 
\begin{array}{cc}
0 & \text{if }\left\vert \hat{\theta}_{LS,i}\right\vert \leq \hat{\sigma}\xi
_{i,n}\eta _{i,n} \\ 
\hat{\theta}_{LS,i}-\hat{\sigma}^{2}\xi _{i,n}^{2}\eta _{i,n}^{2}/\hat{\theta%
}_{LS,i} & \text{if }\left\vert \hat{\theta}_{LS,i}\right\vert >\hat{\sigma}%
\xi _{i,n}\eta _{i,n}%
\end{array}%
\right.
\end{eqnarray*}%
and 
\begin{eqnarray*}
\hat{\theta}_{AS,i} &=&\hat{\theta}_{AS,i}(\eta _{i,n})=\hat{\theta}%
_{LS,i}\left( 1-\sigma ^{2}\xi _{i,n}^{2}\eta _{i,n}^{2}/\hat{\theta}%
_{LS,i}^{2}\right) _{+} \\
&=&\left\{ 
\begin{array}{cc}
0 & \text{if }\left\vert \hat{\theta}_{LS,i}\right\vert \leq \sigma \xi
_{i,n}\eta _{i,n} \\ 
\hat{\theta}_{LS,i}-\sigma ^{2}\xi _{i,n}^{2}\eta _{i,n}^{2}/\hat{\theta}%
_{LS,i} & \text{if }\left\vert \hat{\theta}_{LS,i}\right\vert >\sigma \xi
_{i,n}\eta _{i,n}%
\end{array}%
\right. .
\end{eqnarray*}

Note that $\tilde{\theta}_{H}$, $\tilde{\theta}_{S}$, and $\tilde{\theta}%
_{AS}$ as well as their infeasible counterparts are equivariant under
scaling of the columns of $(Y:X)$ by non-zero column-specific scale factors.
We have chosen to let the thresholds $\hat{\sigma}\xi _{i,n}\eta _{i,n}$ ($%
\sigma \xi _{i,n}\eta _{i,n}$, respectively) depend explicitly on $\hat{%
\sigma}$ ($\sigma $, respectively) and $\xi _{i,n}$ in order to give $\eta
_{i,n}$ an interpretation independent of the values of $\sigma $ and $X$.
Furthermore, often $\eta _{i,n}$ will be chosen independently of $i$, i.e., $%
\eta _{i,n}=\eta _{n}$ where $\eta _{n}$ is a positive real number. Clearly,
for the feasible versions we always need to assume $n>k$, whereas for the
infeasible versions $n\geq k$ suffices.

We note the simple fact that%
\begin{equation}
0\leq \tilde{\theta}_{S,i}\leq \tilde{\theta}_{AS,i}\leq \tilde{\theta}%
_{H,i}\leq \hat{\theta}_{LS,i}  \label{ineq_1}
\end{equation}%
holds on the event that $\hat{\theta}_{LS,i}\geq 0$, and that%
\begin{equation}
\hat{\theta}_{LS,i}\leq \tilde{\theta}_{H,i}\leq \tilde{\theta}_{AS,i}\leq 
\tilde{\theta}_{S,i}\leq 0  \label{ineq_2}
\end{equation}%
holds on the event that $\hat{\theta}_{LS,i}\leq 0$. Analogous inequalities
hold for the infeasible versions of the estimators.

\begin{remark}
\label{LASSO}\normalfont\emph{(Lasso) }(i) Consider the objective function 
\begin{equation*}
(Y-X\theta )^{\prime }(Y-X\theta )+2n\hat{\sigma}\sum_{i=1}^{k}\eta
_{i,n}^{\prime }\left\vert \theta _{i}\right\vert ,
\end{equation*}%
where $\eta _{i,n}^{\prime }$ are positive real numbers. It is well-known
that a unique minimizer $\tilde{\theta}_{L}$ of this objective function
exists, the Lasso-estimator. It is easy to see that in case $X^{\prime }X$
is diagonal we have 
\begin{equation*}
\tilde{\theta}_{L,i}=\func{sign}(\hat{\theta}_{LS,i})\left( \left\vert \hat{%
\theta}_{LS,i}\right\vert -\hat{\sigma}\eta _{i,n}^{\prime }\xi
_{i,n}^{2}\right) _{+}.
\end{equation*}%
Hence, in the case of diagonal $X^{\prime }X$, the components $\tilde{\theta}%
_{L,i}$ of the Lasso reduce to soft-thresholding estimators with appropriate
thresholds; in particular, $\tilde{\theta}_{L,i}$ coincides with $\tilde{%
\theta}_{S,i}$ for the choice $\eta _{i,n}^{\prime }=\eta _{i,n}\xi
_{i,n}^{-1}$. Therefore all results derived below for soft-thresholding
immediately give corresponding results for the Lasso as well as for the
Dantzig-selector in the diagonal case. We shall abstain from spelling out
further details.

(ii) Sometimes $\eta _{i,n}^{\prime }$ in the definition of the Lasso is
chosen independently of $i$; more reasonable choices seem to be (a) $\eta
_{i,n}^{\prime }=\eta _{i,n}\psi _{i,n}$ (where $\psi _{i,n}$ denotes the
nonnegative square root of the $i$-th diagonal element of $(X^{\prime }X/n)$%
), and (b) $\eta _{i,n}^{\prime }=\eta _{i,n}\xi _{i,n}^{-1}$ where $\eta
_{i,n}$ are positive real numbers (not depending on the design matrix and
often not on $i$) as then $\eta _{i,n}$ again has an interpretation
independent of the values of $\sigma $ and $X$. Note that in case (a) or (b)
the solution of the optimization problem is equivariant under scaling of the
columns of $(Y:X)$ by non-zero column-specific scale factors.

(iii) Similar results obviously hold for the infeasible versions of the
estimators.
\end{remark}

\begin{remark}
\label{ALASSO}\normalfont\emph{(Adaptive Lasso)} Consider the objective
function 
\begin{equation*}
(Y-X\theta )^{\prime }(Y-X\theta )+2n\hat{\sigma}^{2}\sum_{i=1}^{k}(\eta
_{i,n}^{\prime })^{2}\left\vert \theta _{i}\right\vert /\left\vert \hat{%
\theta}_{LS,i}\right\vert ,
\end{equation*}%
where $\eta _{i,n}^{\prime }$ are positive real numbers. This is the
objective function of the adaptive Lasso (where often $\eta _{i,n}^{\prime
}=\eta _{n}^{\prime }$ is chosen independent of $i$). Again the minimizer $%
\tilde{\theta}_{AL}$ exists and is unique (at least on the event where $\hat{%
\theta}_{LS,i}\neq 0$ for all $i$). Clearly, $\tilde{\theta}_{AL}$ is
equivariant under scaling of the columns of $(Y:X)$ by non-zero
column-specific scale factors provided $\eta _{i,n}^{\prime }$ does not
depend on the design matrix. It is easy to see that in case $X^{\prime }X$
is diagonal we have 
\begin{equation*}
\tilde{\theta}_{AL,i}=\hat{\theta}_{LS,i}\left( 1-\hat{\sigma}^{2}\xi
_{i,n}^{2}\left( \eta _{i,n}^{\prime }\right) ^{2}/\hat{\theta}%
_{LS,i}^{2}\right) _{+}.
\end{equation*}%
Hence, in the case of diagonal $X^{\prime }X$, the components $\tilde{\theta}%
_{AL,i}$ of the adaptive Lasso reduce to the adaptive soft-thresholding
estimators $\tilde{\theta}_{AS,i}$ (for $\eta _{i,n}^{\prime }=\eta _{i,n}$%
). Therefore all results derived below for adaptive soft-thresholding
immediately give corresponding results for the adaptive Lasso in the
diagonal case. We shall again abstain from spelling out further details.
Similar results obviously hold for the infeasible versions of the estimators.
\end{remark}

\begin{remark}
\normalfont\emph{(Other estimators) }(i) The adaptive Lasso as defined in
Zou (2006) has an additional tuning parameter $\gamma $. We consider
adaptive soft-thresholding only for the case $\gamma =1$, since otherwise
the estimator is not equivariant in the sense described above. Nonetheless
an analysis for the case $\gamma \neq 1$, similar to the analysis in this
paper, is possible in principle.

(ii) An analysis of a SCAD-based thresholding estimator is given in P\"{o}%
tscher and Leeb (2009) in the known-variance case. [These results are given
in the orthogonal design case, but easily generalize to the non-orthogonal
case.] The results obtained there for SCAD-based thresholding are similar in
spirit to the results for the other thresholding estimators considered here.
The unknown-variance case could also be analyzed in principle, but we
refrain from doing so for the sake of brevity.

(iii) Zhang (2010) introduced the so-called minimax concave penalty (MCP)\
to be used for penalized least-squares estimation. Apart from the usual
tuning parameter, MCP also depends on a shape parameter $\gamma $. It turns
out that the thresholding estimator based on MCP coincides with
hard-thresholding in case $\gamma \leq 1$, and thus is covered by the
analysis of the present paper. In case $\gamma >1$, the MCP-based
thresholding estimator could similarly be analyzed, especially since the
functional form of the MCP-based thresholding estimator is relatively simple
(namely, a piecewise linear function of the least-squares estimator). We do
not provide such an analysis for brevity.
\end{remark}

\emph{For all asymptotic considerations in this paper we shall always assume
without further mentioning that }$\xi _{i,n}^{2}/n=((X^{\prime
}X)^{-1})_{ii} $\emph{\ satisfies}%
\begin{equation}
\sup_{n}\xi _{i,n}^{2}/n<\infty  \label{xi}
\end{equation}%
\emph{for every fixed }$i\geq 1$\emph{\ satisfying }$i\leq k(n)$\emph{\ for
large enough }$n$\emph{.} The case excluded by assumption (\ref{xi}) seems
to be rather uninteresting as unboundedness of $\xi _{i,n}^{2}/n$\ means
that the information contained in the regressors gets weaker with increasing
sample size (at least along a subsequence); in particular, this implies
(coordinate-wise) inconsistency of the least-squares estimator. [In fact, if 
$k$ as well as the elements of $X$ do not depend on $n$, this case is
actually impossible as $\xi _{i,n}^{2}/n$ is then necessarily monotonically
nonincreasing.]

The following notation will be used in the paper: Let $\mathbb{\bar{R}}$
denote the extended real line $\mathbb{R\cup }\left\{ -\infty ,\infty
\right\} $ endowed with the usual topology. On $\mathbb{N\cup }\left\{
\infty \right\} $ we shall consider the topology it inherits from $\mathbb{%
\bar{R}}$. Furthermore, $\Phi $ and $\phi $ denote the cumulative
distribution function (cdf) and the probability density function (pdf) of a
standard normal distribution, respectively. By $T_{m,c}$ we denote the cdf
of a non-central $T$-distribution with $m\in \mathbb{N}$ degrees of freedom
and non-centrality parameter $c\in \mathbb{R}$. In the central case, i.e., $%
c=0$, we simply write $T_{m}$. We use the convention $\Phi (\infty )=1$, $%
\Phi (-\infty )=0$ with a similar convention for $T_{m,c}$.

\section{Variable Selection Probabilities\label{variable}}

The estimators $\tilde{\theta}_{H}$, $\tilde{\theta}_{S}$, and $\tilde{\theta%
}_{AS}$ can be viewed as performing variable selection in the sense that
these estimators set components of $\theta $ exactly equal to zero with
positive probability. In this section we study the variable selection
probability $P_{n,\theta ,\sigma }\left( \tilde{\theta}_{i}\neq 0\right) $,
where $\tilde{\theta}_{i}$ stands for any of the estimators $\tilde{\theta}%
_{H,i}$, $\tilde{\theta}_{S,i}$, and $\tilde{\theta}_{AS,i}$. Since these
probabilities are the same for any of the three estimators considered we
shall drop the subscripts $H$, $S$, and $AS$ in this section. We use the
same convention also for the variable selection probabilities of the
infeasible versions.

\subsection{Known-Variance Case}

Since $P_{n,\theta ,\sigma }\left( \hat{\theta}_{i}\neq 0\right)
=1-P_{n,\theta ,\sigma }\left( \hat{\theta}_{i}=0\right) $ it suffices to
study the variable deletion probability%
\begin{equation}
P_{n,\theta ,\sigma }\left( \hat{\theta}_{i}=0\right) =\Phi \left(
n^{1/2}\left( -\theta _{i}/(\sigma \xi _{i,n})+\eta _{i,n}\right) \right)
-\Phi \left( n^{1/2}\left( -\theta _{i}/(\sigma \xi _{i,n})-\eta
_{i,n}\right) \right) .  \label{select_prob}
\end{equation}

As can be seen from the above formula, $P_{n,\theta ,\sigma }\left( \hat{%
\theta}_{i}=0\right) $ depends on $\theta $ only via $\theta _{i}$. We first
study the variable selection/deletion probabilities under a
"fixed-parameter" asymptotic framework.

\begin{proposition}
\label{select_prob_pointwise}Let $0<\sigma <\infty $ be given. For every $%
i\geq 1$ satisfying $i\leq k=k(n)$ for large enough $n$ we have:

(a) A necessary and sufficient condition for $P_{n,\theta ,\sigma }\left( 
\hat{\theta}_{i}=0\right) \rightarrow 0$ as $n\rightarrow \infty $ for all $%
\theta $ satisfying $\theta _{i}\neq 0$ ($\theta _{i}$ not depending on $n$)
is $\xi _{i,n}\eta _{i,n}\rightarrow 0$.

(b) A necessary and sufficient condition for $P_{n,\theta ,\sigma }\left( 
\hat{\theta}_{i}=0\right) \rightarrow 1$ as $n\rightarrow \infty $ for all $%
\theta $ satisfying $\theta _{i}=0$ is $n^{1/2}\eta _{i,n}\rightarrow \infty 
$.

(c) A necessary and sufficient condition for $P_{n,\theta ,\sigma }\left( 
\hat{\theta}_{i}=0\right) \rightarrow c_{i}<1$ as $n\rightarrow \infty $ for
all $\theta $ satisfying $\theta _{i}=0$ is $n^{1/2}\eta _{i,n}\rightarrow
e_{i}$, $0\leq e_{i}<\infty $. The constant $c_{i}$ is then given by $%
c_{i}=\Phi \left( e_{i}\right) -\Phi \left( -e_{i}\right) $.
\end{proposition}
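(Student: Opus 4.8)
The plan is to work directly from the closed-form expression \eqref{select_prob} for $P_{n,\theta,\sigma}(\hat{\theta}_i=0)$, which by hypothesis is the only tool needed. Write $a_{i,n}=n^{1/2}\theta_i/(\sigma\xi_{i,n})$ and $b_{i,n}=n^{1/2}\eta_{i,n}$, so that the deletion probability equals $\Phi(b_{i,n}-a_{i,n})-\Phi(-b_{i,n}-a_{i,n})$. Note that under the standing assumption \eqref{xi} we have $\sup_n \xi_{i,n}^2/n<\infty$, equivalently $n^{1/2}/\xi_{i,n}$ is bounded away from $0$; this will be used to translate conditions on $\xi_{i,n}\eta_{i,n}$ into conditions on $b_{i,n}=n^{1/2}\eta_{i,n}$ and vice versa in part (a). All three parts then reduce to elementary facts about the monotone bounded function $\Phi$ and about when the difference of two shifted copies of it tends to a prescribed limit.

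For part (b) and (c) we have $\theta_i=0$, hence $a_{i,n}=0$ and the deletion probability is exactly $\Phi(b_{i,n})-\Phi(-b_{i,n})=2\Phi(b_{i,n})-1$, a continuous strictly increasing function of $b_{i,n}\in[0,\infty)$ that maps $0\mapsto 0$ and $\infty\mapsto 1$. Thus $2\Phi(b_{i,n})-1\to 1$ iff $b_{i,n}\to\infty$, giving (b); and $2\Phi(b_{i,n})-1\to c_i<1$ iff $b_{i,n}$ converges to some finite $e_i\ge 0$ (using that $2\Phi-1$ is a homeomorphism of $[0,\infty)$ onto $[0,1)$, so convergence of the image along the whole sequence forces convergence of $b_{i,n}$ itself), with $c_i=\Phi(e_i)-\Phi(-e_i)$. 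This proves (c). The only mild subtlety here is the "only if" direction of (c): one must argue that if $2\Phi(b_{i,n})-1$ converges to a limit strictly below $1$ then $b_{i,n}$ cannot have $+\infty$ as a subsequential limit (else that subsequence would push the image to $1$), hence $b_{i,n}$ is bounded, and then any two subsequential limits of $b_{i,n}$ yield the same value of the injective map $2\Phi-1$, forcing a genuine limit $e_i$.

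For part (a) we have $\theta_i\neq 0$ fixed, so $a_{i,n}=n^{1/2}\theta_i/(\sigma\xi_{i,n})$ has the same sign as $\theta_i$ and $|a_{i,n}|=(n^{1/2}/\xi_{i,n})\cdot|\theta_i|/\sigma$. The deletion probability $\Phi(b_{i,n}-a_{i,n})-\Phi(-b_{i,n}-a_{i,n})$ is the $\Phi$-measure of an interval of length $2b_{i,n}$ centered at $-a_{i,n}$, hence is bounded above by $\min(2\Phi(b_{i,n})-1,\ \phi(0)\cdot 2b_{i,n})$ wait—more simply, it is at most $2\Phi(b_{i,n})-1$ and also at most $2b_{i,n}\phi(0)$ when $|a_{i,n}|$ stays away from the center; but the cleanest route: if $\xi_{i,n}\eta_{i,n}\to 0$, then since $n^{1/2}/\xi_{i,n}$ is bounded below by a positive constant we get $b_{i,n}=n^{1/2}\eta_{i,n}=(n^{1/2}/\xi_{i,n})\,\xi_{i,n}\eta_{i,n}$, and also $a_{i,n}\to\pm\infty$ because $|a_{i,n}| = |\theta_i|/(\sigma)\cdot n^{1/2}/\xi_{i,n} \ge$ const $\cdot n^{1/2}/\xi_{i,n}$, and $\xi_{i,n}\eta_{i,n}\to 0$ with $\eta_{i,n}>0$ forces... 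Actually the decisive observation is: $b_{i,n}-a_{i,n}$ and $-b_{i,n}-a_{i,n}$ straddle $-a_{i,n}$, and one checks $-b_{i,n}-a_{i,n}\to-\infty$ and $b_{i,n}-a_{i,n}\to-\infty$ (for $\theta_i>0$; symmetrically $+\infty$ for $\theta_i<0$) precisely when $\xi_{i,n}\eta_{i,n}\to 0$, using $b_{i,n}-a_{i,n} = (n^{1/2}/\xi_{i,n})(\xi_{i,n}\eta_{i,n}-\theta_i/\sigma)$. So the "if" direction follows from $\Phi(\mp\infty)-\Phi(\mp\infty)=0$ combined with $n^{1/2}/\xi_{i,n}$ being bounded away from $0$. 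For the "only if" direction, suppose $\xi_{i,n}\eta_{i,n}\not\to 0$; pass to a subsequence along which $\xi_{i,n}\eta_{i,n}\ge\delta>0$; along it, for a suitable choice of $\theta$ (with $\theta_i$ of the appropriate sign and small enough magnitude, or using that we may choose $\theta_i$ freely subject only to $\theta_i\neq0$) the interval $[-b_{i,n}-a_{i,n},\,b_{i,n}-a_{i,n}]$ retains non-vanishing $\Phi$-mass, so $P_{n,\theta,\sigma}(\hat\theta_i=0)\not\to 0$; this requires also invoking \eqref{xi} in the form $\xi_{i,n}^2/n\le C$, i.e. $n^{1/2}/\xi_{i,n}\ge C^{-1/2}>0$, to control $b_{i,n}$ from below. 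I expect this "only if" direction of (a) — choosing the offending parameter value and checking the mass does not vanish, while correctly threading the standing assumption \eqref{xi} — to be the main obstacle, though it is still routine; the rest is bookkeeping with $\Phi$.
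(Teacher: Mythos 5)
Your handling of parts (b) and (c) is correct, and your ``only if'' argument for part (a) (pass to a subsequence along which $\xi_{i,n}\eta_{i,n}\geq\delta>0$, choose $\theta_i\neq 0$ of the right sign with $|\theta_i|/\sigma<\delta$, and use (\ref{xi}) to keep $n^{1/2}\xi_{i,n}^{-1}$ bounded away from zero so that the two arguments of $\Phi$ have opposite signs and stay bounded away from zero) is essentially the paper's own proof. The problem is the ``if'' direction of (a) as you state it: the claimed mechanism --- that $b_{i,n}-a_{i,n}$ and $-b_{i,n}-a_{i,n}$ both diverge to $-\infty$ (resp.\ $+\infty$) ``precisely when'' $\xi_{i,n}\eta_{i,n}\to 0$ --- is false. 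That divergence requires $n^{1/2}/\xi_{i,n}\to\infty$, whereas the standing assumption (\ref{xi}) only bounds $n^{1/2}/\xi_{i,n}$ \emph{away from zero}. For instance $\xi_{i,n}=n^{1/2}$, $\eta_{i,n}=n^{-1}$ is admissible; then $\xi_{i,n}\eta_{i,n}=n^{-1/2}\to 0$, yet for $\theta_i>0$ both $b_{i,n}-a_{i,n}$ and $-b_{i,n}-a_{i,n}$ converge to the finite number $-\theta_i/\sigma$, not to $-\infty$. The conclusion still holds in that regime, but for a different reason: whenever $n^{1/2}/\xi_{i,n}$ stays bounded (along a subsequence), $b_{i,n}=(n^{1/2}/\xi_{i,n})\,\xi_{i,n}\eta_{i,n}\to 0$ there, so the deletion probability, being at most $2\Phi(b_{i,n})-1$, vanishes --- equivalently, both $\Phi$-arguments converge to the same finite point. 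So your step, as written, fails and needs a case distinction.

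The clean repair is exactly the paper's device: by a subsequence argument assume $n^{1/2}\xi_{i,n}^{-1}\to\kappa$, where $\kappa>0$ (possibly $=\infty$) by (\ref{xi}); since $-\theta_i/\sigma\pm\xi_{i,n}\eta_{i,n}\to-\theta_i/\sigma\neq 0$, both arguments $n^{1/2}\xi_{i,n}^{-1}\left(-\theta_i/\sigma\pm\xi_{i,n}\eta_{i,n}\right)$ converge to the single point $-\kappa\theta_i/\sigma\in\bar{\mathbb{R}}$, and continuity of $\Phi$ on $\bar{\mathbb{R}}$ gives the vanishing of the difference uniformly over the cases $\kappa<\infty$ and $\kappa=\infty$. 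You in fact wrote down the bound $2\Phi(b_{i,n})-1$ before discarding it, and combining it with your divergence argument in the complementary case would also work; but the single-mechanism claim you settled on is incorrect, so the ``if'' direction of (a) is a genuine (if easily fixable) gap.
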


Part (a) of the above proposition gives a necessary and sufficient condition
for the procedure to correctly detect nonzero coefficients with probability
converging to $1$. Part (b) gives a necessary and sufficient condition for
correctly detecting zero coefficients with probability converging to $1$.

\begin{remark}
\label{uninteresting}\normalfont If $\xi _{i,n}/n^{1/2}$ does not converge
to zero, the conditions on $\eta _{i,n}$ in Parts (a) and (b) are
incompatible; also the conditions in Parts (a) and (c) are then incompatible
(except when $e_{i}=0$). However, the case where $\xi _{i,n}/n^{1/2}$ does
not converge to zero is of little interest as the least-squares estimator $%
\hat{\theta}_{LS,i}$ is then not consistent.
\end{remark}

\begin{remark}
\normalfont\emph{(Speed of convergence in Proposition \ref%
{select_prob_pointwise}) }(i) The speed of convergence in (a) is $\xi
_{i,n}\eta _{i,n}$ in case $n^{1/2}\xi _{i,n}^{-1}$ is bounded (an
uninteresting case as noted above); if$\ n^{1/2}\xi _{i,n}^{-1}\rightarrow
\infty $, the speed of convergence in (a) is not slower than $\exp \left(
-cn\xi _{i,n}^{-2}\right) /\left( n^{1/2}\xi _{i,n}^{-1}\right) $ for some
suitable $c>0$ depending on $\theta _{i}/\sigma $.

(ii) The speed of convergence in (b) is $\exp \left( -0.5n\eta
_{i,n}^{2}\right) /\left( n^{1/2}\eta _{i,n}\right) $. In (c) the speed of
convergence is given by the rate at which $n^{1/2}\eta _{i,n}$ approaches $%
e_{i}$.

[For the above results we have made use of Lemma VII.1.2 in Feller (1957).]
\end{remark}

\begin{remark}
\normalfont For $\theta \in \mathbb{R}^{k(n)}$ let $A_{n}(\theta )=\left\{
i:1\leq i\leq k(n),\theta _{i}\neq 0\right\} $. Then (i) for every $i\in
A_{n}(\theta )$ 
\begin{equation*}
P_{n,\theta ,\sigma }\left( \hat{\theta}_{i}=0\right) \leq P_{n,\theta
,\sigma }\left( \bigcup_{j\in A_{n}(\theta )}\left\{ \hat{\theta}%
_{j}=0\right\} \right) \leq \sum_{j\in A_{n}(\theta )}P_{n,\theta ,\sigma
}\left( \hat{\theta}_{j}=0\right) .
\end{equation*}%
Suppose now that the entries of $\theta $ do not change with $n$ (although
the dimension of $\theta $ may depend on $n$).\footnote{%
More precisely, this means that $\theta $ is made up of the initial $k(n)$
elements of a fixed element of $\mathbb{R}^{\infty }$.} Then, given that $%
\limfunc{card}(A_{n}(\theta ))$ is bounded (this being in particular the
case if $k(n)$ is bounded), the probability of incorrect non-detection of at
least one nonzero coefficient converges to $0$ if and only if $\xi
_{i,n}\eta _{i,n}\rightarrow 0$ as $n\rightarrow \infty $ for every $i\in
A_{n}(\theta )$. [If $\limfunc{card}(A_{n}(\theta ))$ is unbounded then this
probability converges to $0$, e.g., if $\xi _{i,n}\eta _{i,n}\rightarrow 0$
and $n^{1/2}\xi _{i,n}^{-1}\rightarrow \infty $ as $n\rightarrow \infty $
for every $i\in A_{n}(\theta )$ and $\inf_{i\in A_{n}(\theta )}\left\vert
\theta _{i}\right\vert >0$ and $\sum_{i\in A_{n}(\theta )}\exp \left( -cn\xi
_{i,n}^{-2}\right) /\left( n^{1/2}\xi _{i,n}^{-1}\right) \rightarrow 0$ as $%
n\rightarrow \infty $ for a suitable $c$ that is determined by $\inf_{i\in
A_{n}(\theta )}\left\vert \theta _{i}\right\vert /\sigma $.]

(ii) For every $i\notin A_{n}(\theta )$ we have%
\begin{eqnarray*}
P_{n,\theta ,\sigma }\left( \hat{\theta}_{i}=0\right) &\geq &P_{n,\theta
,\sigma }\left( \bigcap_{j\notin A_{n}(\theta )}\left\{ \hat{\theta}%
_{j}=0\right\} \right) =1-P_{n,\theta ,\sigma }\left( \bigcup_{j\notin
A_{n}(\theta )}\left\{ \hat{\theta}_{j}\neq 0\right\} \right) \\
&\geq &1-\sum_{j\notin A_{n}(\theta )}\left[ 1-P_{n,\theta ,\sigma }\left( 
\hat{\theta}_{j}=0\right) \right] .
\end{eqnarray*}%
Suppose again that the entries of $\theta $ do not change with $n$. Then,
given that $\limfunc{card}(A_{n}^{c}(\theta ))$ is bounded (this being in
particular the case if $k(n)$ is bounded), the probability of incorrectly
classifying at least one zero parameter as a non-zero one converges to $0$
as $n\rightarrow \infty $ if and only if $n^{1/2}\eta _{i,n}\rightarrow
\infty $ for every $i\in A_{n}(\theta )$. [If $\limfunc{card}%
(A_{n}^{c}(\theta ))$ is unbounded then this probability converges to $0$,
e.g., if $\sum_{i\notin A_{n}(\theta )}\exp \left( -0.5n\eta
_{i,n}^{2}\right) /\left( n^{1/2}\eta _{i,n}\right) \rightarrow 0$ as $%
n\rightarrow \infty $.]

(iii) In case $X^{\prime }X$ is diagonal, the relevant probabilities $%
P_{n,\theta ,\sigma }\left( \bigcup_{i\in A_{n}(\theta )}\left\{ \hat{\theta}%
_{i}=0\right\} \right) $ as well as $P_{n,\theta ,\sigma }\left(
\bigcap_{i\notin A_{n}(\theta )}\left\{ \hat{\theta}_{i}=0\right\} \right) $
can be directly expressed in terms of products of $P_{n,\theta ,\sigma
}\left( \hat{\theta}_{i}=0\right) $ or $1-P_{n,\theta ,\sigma }\left( \hat{%
\theta}_{i}=0\right) $, and Proposition \ref{select_prob_pointwise} can then
be applied.
\end{remark}

Since the fixed-parameter asymptotic framework often gives a misleading
impression of the actual behavior of a variable selection procedure
(cf.~Leeb and P\"{o}tscher (2005), P\"{o}tscher and Leeb (2009)) we turn to
a "moving-parameter" framework next, i.e., we allow the elements of $\theta $
as well as $\sigma $ to depend on sample size $n$. In the proposition to
follow (and all subsequent large-sample results) we shall concentrate only
on the case where $\xi _{i,n}\eta _{i,n}\rightarrow 0$ as $n\rightarrow
\infty $, since otherwise the estimators $\hat{\theta}_{i}$ are not even
consistent for $\theta _{i}$ as a consequence of Proposition\emph{\ }\ref%
{select_prob_pointwise}, cf.~also Theorem \ref{thresh_consistency} below.
Given the condition $\xi _{i,n}\eta _{i,n}\rightarrow 0$, we shall then
distinguish between the case $n^{1/2}\eta _{i,n}\rightarrow e_{i}$, $0\leq
e_{i}<\infty $, and the case $n^{1/2}\eta _{i,n}\rightarrow \infty $, which
in light of Proposition \ref{select_prob_pointwise} we shall call the case
of "conservative tuning" and the case of "consistent tuning", respectively.%
\footnote{%
There is no loss of generality here in assuming convergence of $n^{1/2}\eta
_{i,n}$ to a (finite or infinite) limit, in the sense that this convergence
can, for any given sequence $n^{1/2}\eta _{i,n}$, be achieved along suitable
subsequences in light of compactness of the extended real line.}

\begin{proposition}
\label{select_prob_moving_par}Suppose that for given $i\geq 1$ satisfying $%
i\leq k=k(n)$ for large enough $n$ we have $\xi _{i,n}\eta _{i,n}\rightarrow
0$ and $n^{1/2}\eta _{i,n}\rightarrow e_{i}$ where $0\leq e_{i}\leq \infty $.

(a) Assume $e_{i}<\infty $. Suppose that the true parameters $\theta
^{(n)}=(\theta _{1,n},\dots ,\theta _{k_{n},n})\in \mathbb{R}^{k_{n}}$ and $%
\sigma _{n}\in (0,\infty )$ satisfy $n^{1/2}\theta _{i,n}/(\sigma _{n}\xi
_{i,n})\rightarrow \nu _{i}\in \mathbb{\bar{R}}$. Then 
\begin{equation*}
\lim_{n\rightarrow \infty }P_{n,\theta ^{(n)},\sigma _{n}}\left( \hat{\theta}%
_{i}=0\right) =\Phi \left( -\nu _{i}+e_{i}\right) -\Phi \left( -\nu
_{i}-e_{i}\right) .
\end{equation*}

(b) Assume $e_{i}=\infty $. Suppose that the true parameters $\theta
^{(n)}=(\theta _{1,n},\ldots ,\theta _{k_{n},n})\in \mathbb{R}^{k_{n}}$ and $%
\sigma _{n}\in (0,\infty )$ satisfy $\theta _{i,n}/(\sigma _{n}\xi
_{i,n}\eta _{i,n})\rightarrow \zeta _{i}\in \mathbb{\bar{R}}$. Then

\qquad 1. $\left\vert \zeta _{i}\right\vert <1$ implies $\lim_{n\rightarrow
\infty }P_{n,\theta ^{(n)},\sigma _{n}}\left( \hat{\theta}_{i}=0\right) =1$.

\qquad 2. $\left\vert \zeta _{i}\right\vert >1$ implies $\lim_{n\rightarrow
\infty }P_{n,\theta ^{(n)},\sigma _{n}}\left( \hat{\theta}_{i}=0\right) =0$.

\qquad 3. $\left\vert \zeta _{i}\right\vert =1$ and $r_{i,n}:=n^{1/2}\left(
\eta _{i,n}-\zeta _{i}\theta _{i,n}/(\sigma _{n}\xi _{i,n})\right)
\rightarrow r_{i}$, for some $r_{i}\in \mathbb{\bar{R}}$, imply 
\begin{equation*}
\lim_{n\rightarrow \infty }P_{n,\theta ^{(n)},\sigma _{n}}\left( \hat{\theta}%
_{i}=0\right) =\Phi (r_{i}).
\end{equation*}
\end{proposition}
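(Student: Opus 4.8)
The plan is to work directly from the closed-form expression~(\ref{select_prob}) for $P_{n,\theta,\sigma}(\hat\theta_i = 0)$, which I rewrite as
\begin{equation*}
P_{n,\theta^{(n)},\sigma_n}\left(\hat\theta_i = 0\right) = \Phi\!\left(-a_n + b_n\right) - \Phi\!\left(-a_n - b_n\right),
\end{equation*}
where $a_n = n^{1/2}\theta_{i,n}/(\sigma_n \xi_{i,n})$ and $b_n = n^{1/2}\eta_{i,n}$. The entire proposition is then a matter of computing the limit of this difference of $\Phi$-values under the various regimes, using continuity of $\Phi$ on $\mathbb{\bar R}$ together with the conventions $\Phi(\infty)=1$, $\Phi(-\infty)=0$ fixed in the notation section.

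For part~(a) we have $b_n \to e_i$ with $0 \le e_i < \infty$ and $a_n \to \nu_i \in \mathbb{\bar R}$ by hypothesis, so $-a_n + b_n \to -\nu_i + e_i$ and $-a_n - b_n \to -\nu_i - e_i$ in $\mathbb{\bar R}$ (there is no $\infty - \infty$ ambiguity because $e_i$ is finite). Continuity of $\Phi$ on the extended real line immediately yields the stated limit $\Phi(-\nu_i + e_i) - \Phi(-\nu_i - e_i)$; when $\nu_i = \pm\infty$ both terms collapse appropriately and the difference is $0$, consistent with the formula. This part is essentially immediate.

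For part~(b), $b_n \to \infty$, and the idea is to factor $b_n$ out: write $-a_n \pm b_n = b_n\left(1 \mp a_n/b_n\right)$ — wait, more precisely $-a_n + b_n = b_n(1 - a_n/b_n)$ and $-a_n - b_n = -b_n(1 + a_n/b_n)$, and note that $a_n/b_n = \theta_{i,n}/(\sigma_n \xi_{i,n}\eta_{i,n}) \to \zeta_i$. In case~1, $|\zeta_i| < 1$ forces $1 - a_n/b_n \to 1 - \zeta_i > 0$ and $1 + a_n/b_n \to 1 + \zeta_i > 0$, so $-a_n + b_n \to +\infty$ and $-a_n - b_n \to -\infty$, giving $\Phi(\infty) - \Phi(-\infty) = 1 - 0 = 1$. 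In case~2, $|\zeta_i| > 1$: if $\zeta_i > 1$ then both $-a_n+b_n$ and $-a_n-b_n$ tend to $-\infty$ (since $1 - a_n/b_n \to 1-\zeta_i<0$), so the difference tends to $0$; if $\zeta_i < -1$ then both tend to $+\infty$ and again the difference tends to $0$. (A small care: if $\zeta_i = \pm\infty$ this argument still runs, one term being $\Phi$ at a $\mp\infty$ limit.) In case~3, $|\zeta_i| = 1$; suppose first $\zeta_i = 1$. Then $-a_n - b_n = -(a_n + b_n)$; since $a_n/b_n \to 1$ and $b_n\to\infty$ this tends to $-\infty$, so $\Phi(-a_n-b_n)\to 0$. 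For the other term, $-a_n + b_n = n^{1/2}\left(\eta_{i,n} - \theta_{i,n}/(\sigma_n\xi_{i,n})\right) = n^{1/2}\left(\eta_{i,n} - \zeta_i \theta_{i,n}/(\sigma_n\xi_{i,n})\right) = r_{i,n} \to r_i$, so $\Phi(-a_n+b_n) \to \Phi(r_i)$, and the limit is $\Phi(r_i)$. The case $\zeta_i = -1$ is symmetric: now $-a_n+b_n \to +\infty$ so its $\Phi$ goes to $1$, while $-(-a_n - b_n) = a_n + b_n = -\left(-\zeta_i\theta_{i,n}/(\sigma_n\xi_{i,n}) - \eta_{i,n}\right)n^{1/2}\cdot(-1)$; one rearranges to see $-a_n - b_n = -r_{i,n}\to -r_i$, giving $\Phi(-a_n+b_n) - \Phi(-a_n-b_n) \to 1 - \Phi(-r_i) = \Phi(r_i)$ by symmetry of $\Phi$.

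The only genuine subtlety — and the step I would be most careful with — is the bookkeeping of signs and of the $\pm\infty$ limits in part~(b), in particular making sure that in case~3 the quantity $r_{i,n}$ as defined in the statement (with the factor $\zeta_i$ inside) matches $-a_n + b_n$ when $\zeta_i = 1$ and $-(-a_n - b_n)$ when $\zeta_i = -1$, so that the uniform answer $\Phi(r_i)$ comes out correctly in both subcases using $\Phi(-x) = 1 - \Phi(x)$. Everything else is direct substitution into~(\ref{select_prob}) and passage to the limit via continuity of $\Phi$; no tail estimates or quantitative bounds are needed for the proposition itself (those enter only in the Remark on speed of convergence, via Feller's lemma, which is not part of what must be proved here).
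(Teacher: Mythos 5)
Your argument is correct and is essentially the paper's own proof: Part (a) is read off from (\ref{select_prob}) and continuity of $\Phi$ on $\mathbb{\bar R}$, and for Part (b) the paper performs exactly your factorization, writing the probability as $\Phi\left(n^{1/2}\eta_{i,n}(1-\theta_{i,n}/(\sigma_{n}\xi_{i,n}\eta_{i,n}))\right)-\Phi\left(n^{1/2}\eta_{i,n}(-1-\theta_{i,n}/(\sigma_{n}\xi_{i,n}\eta_{i,n}))\right)$ and identifying the surviving term with $r_{i,n}$ when $\left\vert \zeta _{i}\right\vert =1$. Your sign bookkeeping in the $\zeta_{i}=-1$ subcase (using $\Phi(-x)=1-\Phi(x)$) is the same "analogous" step the paper leaves to the reader, so nothing is missing.
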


In a fixed-parameter asymptotic analysis, which in Proposition \ref%
{select_prob_moving_par} corresponds to the case $\theta _{i,n}\equiv \theta
_{i}$ and $\sigma _{n}\equiv \sigma $, the limit of the probabilities $%
P_{n,\theta ,\sigma }\left( \hat{\theta}_{i}=0\right) $ is always $0$ in
case $\theta _{i}\neq 0$, and is $1$ in case $\theta _{i}=0$ and consistent
tuning (it is $\Phi \left( e_{i}\right) -\Phi \left( -e_{i}\right) $ in case 
$\theta _{i}=0$ and conservative tuning); this does clearly not properly
capture the finite-sample behavior of these probabilities. The
moving-parameter asymptotic analysis underlying Proposition \ref%
{select_prob_moving_par} better captures the finite-sample behavior and,
e.g., allows for limits other than $0$ and $1$ even in the case of
consistent tuning. In particular, Proposition \ref{select_prob_moving_par}
shows that the convergence of the variable selection/deletion probabilities
to their limits in a fixed-parameter asymptotic framework is not uniform in $%
\theta _{i}$, and this non-uniformity is local in the sense that it occurs
in an arbitrarily small neighborhood of $\theta _{i}=0$ (holding the value
of $\sigma >0$ fixed).\footnote{%
More generally, the non-uniformity arises for $\theta _{i}/\sigma $ in a
neighborhood of zero.} Furthermore, the above proposition entails that under
consistent tuning deviations from $\theta _{i}=0$ of larger order than under
conservative tuning go unnoticed asymptotically with probability 1 by the
variable selection procedure corresponding to $\hat{\theta}_{i}$. For more
discussion in a special case (which in its essence also applies here) see P%
\"{o}tscher and Leeb (2009).

\begin{remark}
\normalfont\emph{(Speed of convergence in Proposition \ref%
{select_prob_moving_par}) }(i) The speed of convergence in (a) is given by
the slower of the rate at which $n^{1/2}\eta _{i,n}$ approaches $e_{i}$ and $%
n^{1/2}\theta _{i,n}/(\sigma _{n}\xi _{i,n})$ approaches $\nu _{i}$ provided
that $\left\vert \nu _{i}\right\vert <\infty $; if $\left\vert \nu
_{i}\right\vert =\infty $, the speed of convergence is not slower than 
\begin{equation*}
\exp \left( -cn\theta _{i,n}^{2}/(\sigma _{n}^{2}\xi _{i,n}^{2})\right)
/\left\vert n^{1/2}\theta _{i,n}/(\sigma _{n}\xi _{i,n})\right\vert
\end{equation*}%
for any $c<1/2$.

(ii) The speed of convergence in (b1) is not slower than $\exp \left(
-cn\eta _{i,n}^{2}\right) /\left( n^{1/2}\eta _{i,n}\right) $ where $c$
depends on $\zeta _{i}$. The same is true in case (b2) provided $\left\vert
\zeta _{i}\right\vert <\infty $; if $\left\vert \zeta _{i}\right\vert
=\infty $, the speed of convergence is not slower than $\exp \left(
-cn\theta _{i,n}^{2}/(\sigma _{n}^{2}\xi _{i,n}^{2})\right) /\left\vert
n^{1/2}\theta _{i,n}/(\sigma _{n}\xi _{i,n})\right\vert $ for every $c<1/2$.
In case (b3) the speed of convergence is not slower than the speed of
convergence of%
\begin{equation*}
\max \left( \exp \left( -cn\eta _{i,n}^{2}\right) /\left( n^{1/2}\eta
_{i,n}\right) ,\left\vert r_{i,n}-r_{i}\right\vert \right)
\end{equation*}%
for any $c<2$ in case $\left\vert r_{i}\right\vert <\infty $; in case $%
\left\vert r_{i}\right\vert =\infty $ it is not slower than%
\begin{equation*}
\max \left( \exp \left( -cn\eta _{i,n}^{2}\right) /\left( n^{1/2}\eta
_{i,n}\right) ,\exp \left( -0.5r_{i,n}^{2}\right) /\left\vert
r_{i,n}\right\vert \right)
\end{equation*}%
for any $c<2$.
\end{remark}

The preceding remark corrects and clarifies the remarks at the end of
Section 3 in P\"{o}tscher and Leeb (2009) and Section 3.1 in P\"{o}tscher
and Schneider (2009).

\subsection{Unknown-Variance Case}

In the unknown-variance case the finite-sample variable selection/deletion
probabilities can be obtained as follows:%
\begin{align}
P_{n,\theta ,\sigma }\left( \tilde{\theta}_{i}=0\right) & =P_{n,\theta
,\sigma }\left( \left\vert \hat{\theta}_{LS,i}\right\vert \leq \hat{\sigma}%
\xi _{i,n}\eta _{i,n}\right)  \notag \\
& =\int_{0}^{\infty }P_{n,\theta ,\sigma }\left( \left\vert \hat{\theta}%
_{LS,i}\right\vert \leq \hat{\sigma}\xi _{i,n}\eta _{i,n}\mid \hat{\sigma}%
=s\sigma \right) \rho _{n-k}(s)ds  \notag \\
& =\int_{0}^{\infty }P_{n,\theta ,\sigma }\left( \hat{\theta}_{i}(s\eta
_{i,n})=0\right) \rho _{n-k}(s)ds  \notag \\
& =\int_{0}^{\infty }\left[ \Phi \left( n^{1/2}\left( -\theta _{i}/(\sigma
\xi _{i,n})+s\eta _{i,n}\right) \right) \right.  \notag \\
& \qquad \left. -\Phi \left( n^{1/2}\left( -\theta _{i}/(\sigma \xi
_{i,n})-s\eta _{i,n}\right) \right) \right] \rho _{n-k}(s)ds  \notag \\
& =T_{n-k,n^{1/2}\theta _{i}/(\sigma \xi _{i,n})}\left( n^{1/2}\eta
_{i,n}\right) -T_{n-k,n^{1/2}\theta _{i}/(\sigma \xi _{i,n})}\left(
-n^{1/2}\eta _{i,n}\right) .  \label{select_prob_unknown}
\end{align}%
Here we have used (\ref{select_prob}), and independence of $\hat{\sigma}$
and $\hat{\theta}_{LS,i}$ allowed us to replace $\hat{\sigma}$ by $s\sigma $
in the relevant formulae, cf.~Leeb and P\"{o}tscher (2003, p.~110). In the
above $\rho _{n-k}$ denotes the density of $(n-k)^{-1/2}$ times the square
root of a chi-square distributed random variable with $n-k$ degrees of
freedom. It will turn out to be convenient to set $\rho _{n-k}(s)=0$ for $%
s<0 $, making $\rho _{n-k}$ a bounded continuous function on $\mathbb{R}$.

We now have the following fixed-parameter asymptotic result for the variable
selection/deletion probabilities in the unknown-variance case that perfectly
parallels the corresponding result in the known-variance case, i.e.,
Proposition \ref{select_prob_pointwise}:

\begin{proposition}
\label{select_prob_pointwise_unknown}Let $0<\sigma <\infty $ be given. For
every $i\geq 1$ satisfying $i\leq k=k(n)$ for large enough $n$ we have:

(a) A necessary and sufficient condition for $P_{n,\theta ,\sigma }\left( 
\tilde{\theta}_{i}=0\right) \rightarrow 0$ as $n\rightarrow \infty $ for all 
$\theta $ satisfying $\theta _{i}\neq 0$ ($\theta _{i}$ not depending on $n$%
) is $\xi _{i,n}\eta _{i,n}\rightarrow 0$.

(b) A necessary and sufficient condition for $P_{n,\theta ,\sigma }\left( 
\tilde{\theta}_{i}=0\right) \rightarrow 1$ as $n\rightarrow \infty $ for all 
$\theta $ satisfying $\theta _{i}=0$ is $n^{1/2}\eta _{i,n}\rightarrow
\infty $.

(c) A necessary and sufficient condition for $P_{n,\theta ,\sigma }\left( 
\tilde{\theta}_{i}=0\right) -c_{i,n}\rightarrow 0$ as $n\rightarrow \infty $
for all $\theta $ satisfying $\theta _{i}=0$ and with $c_{i,n}=T_{n-k}\left(
e_{i}\right) -T_{n-k}\left( -e_{i}\right) $ satisfying $\limsup_{n%
\rightarrow \infty }c_{i,n}<1$ is $n^{1/2}\eta _{i,n}\rightarrow e_{i}$, $%
0\leq e_{i}<\infty $.
\end{proposition}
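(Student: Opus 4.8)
The plan is to work from the finite-sample identity (\ref{select_prob_unknown}), or equivalently from its mixture form
$$
P_{n,\theta,\sigma}(\tilde\theta_i=0)=\int_0^\infty P_{n,\theta,\sigma}\!\left(\hat\theta_i(s\eta_{i,n})=0\right)\rho_{n-k}(s)\,ds ,
$$
whose integrand is the known-variance deletion probability (\ref{select_prob}) with the threshold rescaled by $s$. Write $m_n=n-k$, $a_n=n^{1/2}\theta_i/(\sigma\xi_{i,n})$, $b_n=n^{1/2}\eta_{i,n}$, so that the left-hand side equals $T_{m_n,a_n}(b_n)-T_{m_n,a_n}(-b_n)=P(|Z+a_n|\le b_nS_{m_n})$, with $Z$ standard normal independent of $S_m:=(\chi^2_m/m)^{1/2}\sim\rho_m$. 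I shall use the following elementary facts. (i) This probability depends on $a_n$ only through $|a_n|$, is jointly continuous in $(a_n,b_n)$, vanishes at $b_n=0$, and for finite $m_n$ increases strictly to $1$ as $b_n\uparrow\infty$. (ii) If $m_n\to m^*\in\mathbb N\cup\{\infty\}$, $a_n\to a^*$, $b_n\to b^*$, then $T_{m_n,a_n}(b_n)\to T_{m^*,a^*}(b^*)$, where $T_{\infty,a}(b):=\Phi(b-a)$; this follows from $S_m\to1$ in probability and a Slutsky argument. (iii) The uniform bounds $\sup_{m\ge1}P(S_m>N)\to0$ as $N\to\infty$ and $\sup_{m\ge1}P(S_m<\varepsilon)\to0$ as $\varepsilon\downarrow0$ hold (Chernoff bounds; resp.\ a direct estimate, the $m=1$ term being dominant near the origin). (iv) $\sup_{m\ge1}\sup_x t_m(x)=\phi(0)$, $T_m(x)\le\Phi(x)$ for $x\ge0$, and $T_m$ is strictly increasing (its density being everywhere positive), where $t_m$ is the density of the central $t$-distribution with $m$ degrees of freedom. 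Finally, assumption (\ref{xi}) gives $b_n\ge(\sup_n\xi_{i,n}^2/n)^{-1/2}\xi_{i,n}\eta_{i,n}$, so $b_n$ stays bounded away from $0$ along any subsequence on which $\xi_{i,n}\eta_{i,n}$ does; this is the only place (\ref{xi}) is used.

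For the sufficiency directions: in (b) and (c), $\theta_i=0$ gives $a_n=0$ and $P_{n,\theta,\sigma}(\tilde\theta_i=0)=2T_{m_n}(b_n)-1$. For (b), $1-T_m(b_n)=P(Z>b_nS_m)\le P(S_m<\varepsilon)+P(Z>b_n\varepsilon)$; letting $n\to\infty$ (using $b_n\to\infty$) and then $\varepsilon\downarrow0$ yields $T_{m_n}(b_n)\to1$. For (c), (iv) gives $|T_{m_n}(b_n)-T_{m_n}(e_i)|\le\phi(0)\,|b_n-e_i|\to0$, hence $P_{n,\theta,\sigma}(\tilde\theta_i=0)-c_{i,n}\to0$, while $c_{i,n}=2T_{m_n}(e_i)-1\le2\Phi(e_i)-1<1$. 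For (a), fix $\theta_i\ne0$; then $|a_n|$ is bounded away from $0$ (by (\ref{xi})) and $R_n:=|a_n|/b_n=|\theta_i|/(\sigma\xi_{i,n}\eta_{i,n})\to\infty$. Split the mixture integral at $s=R_n/2$: the contribution of $\{s>R_n/2\}$ is $\le\sup_{m\ge1}P(S_m>R_n/2)\to0$; on $\{s\le R_n/2\}$ the normal mass of $[-sb_n,sb_n]$ (an interval of half-width $sb_n\le|a_n|/2$ not containing $|a_n|$) is $\le2sb_n\phi(|a_n|-sb_n)\le2sb_n\phi(|a_n|/2)$, so this contribution is $\le2b_n\phi(|a_n|/2)$, which $\to0$ by a short case split on whether $b_n\to0$ or $|a_n|=R_nb_n\to\infty$. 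Hence $P_{n,\theta,\sigma}(\tilde\theta_i=0)\to0$.

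For the necessity directions I argue via subsequences. (a): Suppose $\xi_{i,n}\eta_{i,n}\not\to0$; set $\delta_0:=\limsup_n\xi_{i,n}\eta_{i,n}\in(0,\infty]$, fix a real $\theta_i$ with $0<|\theta_i|<\sigma\delta_0$, and pass to a subsequence along which $\xi_{i,n}\eta_{i,n}\to\delta_0$, $m_n\to m^*$, $b_n\to b^*$; then $R_n\to R_\infty<1$ and $b^*>0$ by (\ref{xi}). If $b^*<\infty$, (i)--(ii) give $P_{n,\theta,\sigma}(\tilde\theta_i=0)\to T_{m^*,R_\infty b^*}(b^*)-T_{m^*,R_\infty b^*}(-b^*)>0$; if $b^*=\infty$, a direct computation from $P(|Z+R_nb_n|\le b_nS_{m_n})$ (using (iii) for the near-origin mass of $S_{m_n}$) again yields a strictly positive limit -- so the limit is not $0$. (b): Suppose $b_n\not\to\infty$; pass to a subsequence with $b_n\to b^*<\infty$ and $m_n\to m^*$; then $P_{n,\theta,\sigma}(\tilde\theta_i=0)=2T_{m_n}(b_n)-1\to2T_{m^*}(b^*)-1<1$, so the limit is not $1$. (c): Suppose $b_n\not\to e_i$. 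A subsequence along which $b_n\to\infty$ would give $P_{n,\theta,\sigma}(\tilde\theta_i=0)\to1$ (as in the sufficiency proof of (b)), hence $c_{i,n}\to1$ along it and $\limsup_n c_{i,n}=1$, contradicting $\limsup_n c_{i,n}<1$; thus $b_n$ is bounded and, along a subsequence, $b_n\to b^*\ne e_i$ and $m_n\to m^*$. But then $P_{n,\theta,\sigma}(\tilde\theta_i=0)-c_{i,n}\to0$ forces $T_{m^*}(b^*)=T_{m^*}(e_i)$, contradicting strict monotonicity of $T_{m^*}$; hence $b_n\to e_i$.

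The main obstacle is that the mixing density $\rho_{n-k}$ varies with $n$ and $n-k$ need not tend to infinity, so one cannot simply invoke Proposition \ref{select_prob_pointwise} or ordinary dominated convergence. The technical heart is therefore the uniform-in-$m$ control of both tails of $(\chi^2_m/m)^{1/2}$ in (iii), together with the joint continuity (ii), which lets the subsequence arguments handle the regime $n-k\to\infty$ (limit normal, reproducing the known-variance conclusions) and the regime $n-k$ bounded (limit a genuine non-central $t$, whose relevant probability is still positive since $b^*>0$) on a common footing.
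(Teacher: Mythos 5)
Your proof is correct, but it follows a noticeably different route from the paper's, so it is worth comparing the two. The paper proves (a) by exploiting monotonicity of $s\mapsto P_{n,\theta ,\sigma }\bigl( \hat{\theta}_{i}(s\eta _{i,n})=0\bigr)$ together with the uniform tightness of $\{\rho _{m}:m\in \mathbb{N}\}$ to sandwich $P_{n,\theta ,\sigma }\bigl( \tilde{\theta}_{i}=0\bigr)$ between $(1-\delta )P_{n,\theta ,\sigma }\bigl( \hat{\theta}_{i}(c_{\ast }\eta _{i,n})=0\bigr)$ and $P_{n,\theta ,\sigma }\bigl( \hat{\theta}_{i}(c^{\ast }\eta _{i,n})=0\bigr)+\delta$, and then simply invokes the known-variance Proposition \ref{select_prob_pointwise}(a) for both directions; for (b) and (c) it reduces to $T_{n-k}(n^{1/2}\eta _{i,n})-T_{n-k}(-n^{1/2}\eta _{i,n})$ and splits into the regime where $n-k$ is eventually constant (unbounded support of the $t$-distribution) and the regime $n-k\rightarrow \infty$ (Polya's theorem, $\sup_{x}\left\vert T_{n-k}(x)-\Phi (x)\right\vert \rightarrow 0$). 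You instead work throughout with the representation $P(\left\vert Z+a_{n}\right\vert \leq b_{n}S_{m_{n}})$ and substitute uniform-in-$m$ estimates for these devices: the same two-sided tail control of $S_{m}$ (your (iii) is exactly the paper's tightness statement), the uniform Lipschitz bound $\sup_{m}\sup_{x}t_{m}(x)\leq \phi (0)$ and $T_{m}(x)\leq \Phi (x)$ for (b)/(c) sufficiency (thereby avoiding the regime split there), a direct Gaussian bound after splitting the mixture at $s=R_{n}/2$ for the sufficiency of (a), and subsequential limits of the noncentral-$t$ probability for all the necessity parts instead of recycling Proposition \ref{select_prob_pointwise}. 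The paper's argument is shorter because it reuses already-proved results; yours is more self-contained and treats bounded and divergent $n-k$ on a single footing. Two trivial repairs: in the sufficiency of (a) the interval in question is centered at $-a_{n}$ (not $[-sb_{n},sb_{n}]$), and passing from the pointwise bound $2sb_{n}\phi (\left\vert a_{n}\right\vert /2)$ to the stated bound $2b_{n}\phi (\left\vert a_{n}\right\vert /2)$ after integrating against $\rho _{m_{n}}$ uses $E(S_{m})\leq 1$ (Jensen), which you should say explicitly; neither affects the conclusion.
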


Proposition \ref{select_prob_pointwise_unknown} shows that the dichotomy
regarding conservative tuning and consistent tuning is expressed by the same
conditions in the unknown-variance case as in the known-variance case.
Furthermore, note that $c_{i,n}$ appearing in Part (c) of the above
proposition converges to $c_{i}=\Phi (e_{i})-\Phi (-e_{i})$ in the case
where $n-k\rightarrow \infty $, the limit thus being the same as in the
known-variance case. This is different in case $n-k$ is constant equal to $m$%
, say, eventually, the sequence $c_{i,n}$ then being constant equal to $%
T_{m}\left( e_{i}\right) -T_{m}\left( -e_{i}\right) $ eventually. We finally
note that Remark \ref{uninteresting}\ also applies to Proposition \ref%
{select_prob_pointwise_unknown} above.

For the same reasons as in the known-variance case we next investigate the
asymptotic behavior of the variable selection/deletion probabilities under a
moving-parameter asymptotic framework. We consider the case where $n-k$ is
(eventually) constant and the case where $n-k\rightarrow \infty $. There is
no essential loss in generality in considering these two cases only, since
by compactness of $\mathbb{N\cup \{\infty \}}$ we can always assume
(possibly after passing to subsequences) that $n-k$ converges in $\mathbb{%
N\cup \{\infty \}}$.

\begin{theorem}
\label{select_prob_moving_par_unknown}Suppose that for given $i\geq 1$
satisfying $i\leq k=k(n)$ for large enough $n$ we have $\xi _{i,n}\eta
_{i,n}\rightarrow 0$ and $n^{1/2}\eta _{i,n}\rightarrow e_{i}$ where $0\leq
e_{i}\leq \infty $.

(a) Assume $e_{i}<\infty $. Suppose that the true parameters $\theta
^{(n)}=(\theta _{1,n},\dots ,\theta _{k_{n},n})\in \mathbb{R}^{k_{n}}$ and $%
\sigma _{n}\in (0,\infty )$ satisfy $n^{1/2}\theta _{i,n}/(\sigma _{n}\xi
_{i,n})\rightarrow \nu _{i}\in \mathbb{\bar{R}}$.

\qquad (a1) If $n-k$ is eventually constant equal to $m$, say, then%
\begin{equation*}
\lim_{n\rightarrow \infty }P_{n,\theta ^{(n)},\sigma _{n}}\left( \tilde{%
\theta}_{i}=0\right) =\int_{0}^{\infty }\left( \Phi \left( -\nu
_{i}+se_{i}\right) -\Phi \left( -\nu _{i}-se_{i}\right) \right) \rho
_{m}(s)ds.
\end{equation*}%
\qquad \qquad \qquad \qquad \qquad

\qquad (a2) If $n-k\rightarrow \infty $ holds, then%
\begin{equation*}
\lim_{n\rightarrow \infty }P_{n,\theta ^{(n)},\sigma _{n}}\left( \tilde{%
\theta}_{i}=0\right) =\Phi \left( -\nu _{i}+e_{i}\right) -\Phi \left( -\nu
_{i}-e_{i}\right) .
\end{equation*}

(b) Assume $e_{i}=\infty $. Suppose that the true parameters $\theta
^{(n)}=(\theta _{1,n},\ldots ,\theta _{k_{n},n})\in \mathbb{R}^{k_{n}}$ and $%
\sigma _{n}\in (0,\infty )$ satisfy $\theta _{i,n}/(\sigma _{n}\xi
_{i,n}\eta _{i,n})\rightarrow \zeta _{i}\in \mathbb{\bar{R}}$.

\qquad (b1) If $n-k$ is eventually constant equal to $m$, say, then 
\begin{equation*}
\lim_{n\rightarrow \infty }P_{n,\theta ^{(n)},\sigma _{n}}\left( \tilde{%
\theta}_{i}=0\right) =\int_{\left\vert \zeta _{i}\right\vert }^{\infty }\rho
_{m}(s)ds=\Pr (\chi _{m}^{2}>m\zeta _{i}^{2}).
\end{equation*}%
\qquad

\qquad (b2) If $n-k\rightarrow \infty $ holds, then

\qquad \qquad 1. $\left\vert \zeta _{i}\right\vert <1$ implies $%
\lim_{n\rightarrow \infty }P_{n,\theta ^{(n)},\sigma _{n}}\left( \tilde{%
\theta}_{i}=0\right) =1$.

\qquad \qquad 2. $\left\vert \zeta _{i}\right\vert >1$ implies $%
\lim_{n\rightarrow \infty }P_{n,\theta ^{(n)},\sigma _{n}}\left( \tilde{%
\theta}_{i}=0\right) =0$.

\qquad \qquad 3. $\left\vert \zeta _{i}\right\vert =1$ and $n^{1/2}\eta
_{i,n}/\left( n-k\right) ^{1/2}\rightarrow 0$ imply 
\begin{equation*}
\lim_{n\rightarrow \infty }P_{n,\theta ^{(n)},\sigma _{n}}\left( \tilde{%
\theta}_{i}=0\right) =\Phi (r_{i})
\end{equation*}%
provided $r_{i,n}:=n^{1/2}\left( \eta _{i,n}-\zeta _{i}\theta _{i,n}/(\sigma
_{n}\xi _{i,n})\right) \rightarrow r_{i}$ for some $r_{i}\in \mathbb{\bar{R}}
$.

\qquad \qquad 4. $\left\vert \zeta _{i}\right\vert =1$ and $n^{1/2}\eta
_{i,n}/\left( n-k\right) ^{1/2}\rightarrow 2^{1/2}d_{i}$ with $%
0<d_{i}<\infty $ imply 
\begin{equation*}
\lim_{n\rightarrow \infty }P_{n,\theta ^{(n)},\sigma _{n}}\left( \tilde{%
\theta}_{i}=0\right) =\int_{-\infty }^{\infty }\Phi (d_{i}t+r_{i})\phi (t)dt
\end{equation*}%
provided $r_{i,n}\rightarrow r_{i}$ for some $r_{i}\in \mathbb{\bar{R}}$.
[Note that the integral in the above display reduces to $1$ if $r_{i}=\infty 
$, and to $0$ if $r_{i}=-\infty $.]

\qquad \qquad 5. $\left\vert \zeta _{i}\right\vert =1$ and $n^{1/2}\eta
_{i,n}/\left( n-k\right) ^{1/2}\rightarrow \infty $ imply 
\begin{equation*}
\lim_{n\rightarrow \infty }P_{n,\theta ^{(n)},\sigma _{n}}\left( \tilde{%
\theta}_{i}=0\right) =\Phi (r_{i}^{\prime })
\end{equation*}%
provided $\left( n^{1/2}\eta _{i,n}/\left( n-k\right) ^{1/2}\right)
^{-1}r_{i,n}\rightarrow 2^{-1/2}r_{i}^{\prime }$ for some $r_{i}^{\prime
}\in \mathbb{\bar{R}}$.
\end{theorem}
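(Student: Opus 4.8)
The plan is to reduce Theorem~\ref{select_prob_moving_par_unknown} to the finite-sample identity~\eqref{select_prob_unknown} and then pass to the limit under an expectation. Write $\nu_{i,n}=n^{1/2}\theta_{i,n}/(\sigma_n\xi_{i,n})$ and $e_{i,n}=n^{1/2}\eta_{i,n}$, let $Z:=n^{1/2}(\hat\theta_{LS,i}-\theta_{i,n})/(\sigma_n\xi_{i,n})\sim N(0,1)$, and let $S_{n-k}:=\hat\sigma/\sigma_n$, which has density $\rho_{n-k}$ and is independent of $Z$ (this independence is what produced~\eqref{select_prob_unknown}). Since $\{\tilde\theta_i=0\}=\{|Z+\nu_{i,n}|\le S_{n-k}e_{i,n}\}$, we have
\[
P_{n,\theta^{(n)},\sigma_n}(\tilde\theta_i=0)=E\!\left[\Phi(S_{n-k}e_{i,n}-\nu_{i,n})-\Phi(-S_{n-k}e_{i,n}-\nu_{i,n})\right],
\]
and the whole proof consists of taking the limit in this expression using two facts about $S_{n-k}$: if $n-k$ is eventually equal to $m$ then $S_{n-k}$ has the fixed density $\rho_m$; and if $n-k\to\infty$ then $S_{n-k}\to1$ in probability with $ES_{n-k}^2=1$, and more precisely $V_{n-k}:=(2(n-k))^{1/2}(S_{n-k}-1)$ converges in distribution to $N(0,1)$ (the chi-square central limit theorem together with the delta method).

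\emph{Case (a).} Here $e_{i,n}\to e_i<\infty$ and $\nu_{i,n}\to\nu_i\in\bar{\mathbb{R}}$. The functions $g_n(s):=\Phi(se_{i,n}-\nu_{i,n})-\Phi(-se_{i,n}-\nu_{i,n})$ are bounded by $1$ and uniformly Lipschitz (constant at most $2\|\phi\|_\infty\sup_n e_{i,n}<\infty$). For~(a1), $\rho_{n-k}=\rho_m$ eventually, $g_n(s)\to\Phi(se_i-\nu_i)-\Phi(-se_i-\nu_i)$ for each $s$, and dominated convergence against $\rho_m$ gives the stated integral. For~(a2), uniform Lipschitzness and $E|S_{n-k}-1|\to0$ (which follows from convergence in probability plus boundedness in $L^2$) give $E[g_n(S_{n-k})]-g_n(1)\to0$, while $g_n(1)=\Phi(e_{i,n}-\nu_{i,n})-\Phi(-e_{i,n}-\nu_{i,n})\to\Phi(e_i-\nu_i)-\Phi(-e_i-\nu_i)$; the cases $\nu_i=\pm\infty$ are handled by the conventions on $\Phi$ at $\pm\infty$ (both $\Phi$-terms then have the same limit, so the right-hand side is $0$).

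\emph{Case (b).} Now $e_{i,n}\to\infty$ and $\zeta_{i,n}:=\nu_{i,n}/e_{i,n}=\theta_{i,n}/(\sigma_n\xi_{i,n}\eta_{i,n})\to\zeta_i\in\bar{\mathbb{R}}$, so the integrand equals $\Phi(e_{i,n}(S_{n-k}-\zeta_{i,n}))-\Phi(-e_{i,n}(S_{n-k}+\zeta_{i,n}))$. For~(b1), $\rho_{n-k}=\rho_m$ is fixed while $e_{i,n}\to\infty$, so the integrand tends $\rho_m$-almost everywhere to $\mathbf{1}(s>\zeta_i)-\mathbf{1}(s<-\zeta_i)$, which on the support $(0,\infty)$ of $\rho_m$ equals $\mathbf{1}(s>|\zeta_i|)$ for every $\zeta_i\in\bar{\mathbb{R}}$; dominated convergence then yields $\int_{|\zeta_i|}^\infty\rho_m(s)\,ds=\Pr(\chi_m^2>m\zeta_i^2)$. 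For~(b2), with $n-k\to\infty$: if $|\zeta_i|<1$ (respectively $|\zeta_i|>1$), then on the event $\{|S_{n-k}-1|<\delta\}$ with $\delta$ small both arguments of the first $\Phi$ are at least (respectively at most) a positive (respectively negative) constant times $e_{i,n}$, so that term $\to1$ (respectively $\to0$) while the second $\Phi$-term $\to0$; since this event has probability $\to1$ and the integrand lies in $[0,1]$, the limit is $1$ (respectively $0$). The real work is the boundary case $|\zeta_i|=1$. Taking $\zeta_i=1$ (the case $\zeta_i=-1$ is analogous, with the roles of the two $\Phi$-terms interchanged), write
\[
e_{i,n}(S_{n-k}-\zeta_{i,n})=c_nV_{n-k}+r_{i,n},\qquad c_n:=\frac{n^{1/2}\eta_{i,n}}{(2(n-k))^{1/2}},
\]
where $r_{i,n}=e_{i,n}(1-\zeta_{i,n})=n^{1/2}(\eta_{i,n}-\zeta_i\theta_{i,n}/(\sigma_n\xi_{i,n}))$ is precisely the quantity in the statement, while $-e_{i,n}(S_{n-k}+\zeta_{i,n})\to-\infty$ in probability (since $S_{n-k}+\zeta_{i,n}\to2$ and $e_{i,n}\to\infty$), so its $\Phi$ vanishes; using $Z\perp S_{n-k}$,
\[
P_{n,\theta^{(n)},\sigma_n}(\tilde\theta_i=0)=P\!\left(Z-c_nV_{n-k}-r_{i,n}\le0\right)+o(1).
\]
Because $n^{1/2}\eta_{i,n}/(n-k)^{1/2}$ controls $c_n$ up to the constant $2^{1/2}$, the three sub-cases fall out: if it $\to0$ then $c_n\to0$, $c_nV_{n-k}\to0$ in probability, and the probability $\to\Phi(r_i)$ (sub-case~3); if it $\to2^{1/2}d_i$ then $c_n\to d_i$, $c_nV_{n-k}$ converges in distribution to $d_iT$ with $T\sim N(0,1)$ independent of $Z$, and the probability $\to P(Z-d_iT\le r_i)=\int\Phi(d_it+r_i)\phi(t)\,dt$ (sub-case~4); if it $\to\infty$ then $c_n\to\infty$ and $r_{i,n}/c_n\to r_i'$, so after dividing by $c_n$ one gets $P(Z/c_n\le V_{n-k}+r_{i,n}/c_n)\to P(T+r_i'\ge0)=\Phi(r_i')$ (sub-case~5). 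In each sub-case the limit comes from joint convergence in distribution of the relevant pair, $0$ being a continuity point of the atomless limit law, and the extended-real cases $r_i,r_i'=\pm\infty$ are immediate because the pertinent quantity then diverges to $\pm\infty$ in probability.

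\emph{Main obstacle.} Every part of the argument outside the $|\zeta_i|=1$ sub-cases of (b2) is a routine dominated-convergence or Slutsky step once the representation above is in place. The crux is the $|\zeta_i|=1$ analysis: one must split $S_{n-k}e_{i,n}-\nu_{i,n}$ correctly into the chi-square-CLT fluctuation $c_nV_{n-k}$ and the drift $r_{i,n}$, check that $c_n$ equals $n^{1/2}\eta_{i,n}/(n-k)^{1/2}$ up to the factor $2^{1/2}$ (so that the trichotomy $0$ / $2^{1/2}d_i$ / $\infty$ imposed in the hypotheses is exactly what the normalization dictates), verify that the companion term $\Phi(-e_{i,n}(S_{n-k}+\zeta_{i,n}))$ is asymptotically negligible, and carry the $\bar{\mathbb{R}}$-valued sub-limits through uniformly. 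Keeping the $\zeta_i=-1$ symmetry bookkeeping clean is a minor additional point.
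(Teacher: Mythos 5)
Your proof is correct, and for the crucial boundary cases (b2)3--5 it takes a genuinely different route from the paper's. The paper stays with the representation (\ref{select_prob_unknown}) as an integral against $\rho_{n-k}$, substitutes $s-1=(2(n-k))^{-1/2}t$, and replaces the rescaled density by $\phi$ using the Appendix Lemma ($L_{1}$-convergence of $(2m)^{-1/2}\rho_{m}((2m)^{-1/2}t+1)$ to $\phi$, proved via Ibragimov's unimodality theorem and Scheff\'{e}'s Lemma), after which dominated convergence finishes the argument; you instead write the deletion probability as $P\left( Z\leq c_{n}V_{n-k}+r_{i,n}\right) +o(1)$ with $Z$ independent of $V_{n-k}$, and then only need $V_{n-k}\Rightarrow N(0,1)$ (chi-square CLT plus delta method) together with Slutsky/continuous-mapping steps, the limit laws being atomless. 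This dispenses with the $L_{1}$-density lemma entirely and is somewhat more elementary, while the paper's stronger lemma is what allows it to argue directly at the level of densities. Your treatments of (a1) and (b1) (dominated convergence), of (a2) (uniform Lipschitz bound plus $E\left\vert S_{n-k}-1\right\vert \rightarrow 0$, where the paper uses Polya's theorem and weak convergence of the law of $\hat{\sigma}/\sigma $ to pointmass at $1$), and of (b2)1--2 (event splitting on $\{\left\vert S_{n-k}-1\right\vert <\delta \}$, where the paper uses monotonicity of $s\mapsto P_{n}(s)$ and tail bounds on $\rho _{n-k}$) are minor variants of the paper's steps and are sound. One small correction: in (b2) with $\zeta _{i}<-1$ your parenthetical claim that the first $\Phi $-term tends to $0$ while the second tends to $0$ is not right --- there both terms tend to $1$ --- but their difference still tends to $0$, so the conclusion stands; this is the same sign bookkeeping issue you already flag for $\zeta _{i}=-1$.
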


Theorem \ref{select_prob_moving_par_unknown} shows, in particular, that also
in the unknown-variance case the convergence of the variable
selection/deletion probabilities to their limits in a fixed-parameter
asymptotic framework is not locally uniform in $\theta _{i}$. In the case of
conservative tuning the theorem furthermore shows that the limit of the
variable selection/deletion probabilities in the unknown-variance case is
the same as in the known-variance case if the degrees of freedom $n-k$ go to
infinity (entailing that the distribution of $\hat{\sigma}/\sigma $
concentrates more and more around $1$); if $n-k$ is eventually constant, the
limit turns out to be a mixture of the known-variance case limits (with $%
\sigma $ replaced by $s\sigma $), the mixture being with respect to the
distribution of $\hat{\sigma}/\sigma $. [We note that in the somewhat
uninteresting case $e_{i}=0$ this mixture also reduces to the same limit as
in the known-variance case.] While this result is as one would expect, the
situation is different and more subtle in the case of consistent tuning: If $%
n-k\rightarrow \infty $ the limits are the same as in the known-variance
case if $\left\vert \zeta _{i}\right\vert <1$ or $\left\vert \zeta
_{i}\right\vert >1$ holds, namely $1$ and $0$, respectively. However, in the
"boundary" case $\left\vert \zeta _{i}\right\vert =1$ the rate at which $n-k$
diverges to infinity becomes relevant. If the divergence is fast enough in
the sense that $n^{1/2}\eta _{i,n}/\left( n-k\right) ^{1/2}\rightarrow 0$,
again the same limit as in the known-variance case, namely $\Phi (r_{i})$,
is obtained; but if $n-k$ diverges to infinity more slowly, a different
limit arises (which, e.g., in case 4 of Part (b2) is obtained by averaging $%
\Phi (r_{i}+\cdot )$ with respect to a suitable distribution). The case
where the degrees of freedom $n-k$ is eventually constant looks very much
different from the known-variance case and again some averaging with respect
to the distribution of $\hat{\sigma}/\sigma $ takes place. Note that in this
case the limiting variable deletion probabilities are $1$ and $0$,
respectively, only if $\zeta _{i}=0$ and $\left\vert \zeta _{i}\right\vert
=\infty $, respectively, which is in contrast to the known-variance case
(and the unknown-variance case with $n-k\rightarrow \infty $).

\begin{remark}
\label{costfree}\normalfont(i) For later use we note that Proposition \ref%
{select_prob_moving_par} and Theorem \ref{select_prob_moving_par_unknown}
also hold when applied to subsequences, as is easily seen.

(ii) The convergence conditions in Proposition \ref{select_prob_moving_par}
on the various quantities involving $\theta _{i,n}$ and $\sigma _{n}$ are
essentially cost-free in the sense that given any sequence $(\theta
_{i,n},\sigma _{n})$ we can, due to compactness of $\mathbb{\bar{R}}$,
select from any subsequence $n_{j}$ a further subsubsequence $n_{j(l)}$ such
that along this subsubsequence all relevant quantities such as $%
n^{1/2}\theta _{i,n}/(\sigma _{n}\xi _{i,n})$ (or $\theta _{i,n}/(\sigma
_{n}\xi _{i,n}\eta _{i,n})$ and $r_{i,n}$) converge in $\mathbb{\bar{R}}$.
Since Proposition \ref{select_prob_moving_par} also holds when applied to
subsequences as just noted, an application of this proposition to the
subsubsequence $n_{j(l)}$ then results in a characterization of all possible
accumulation points of the variable selection/deletion probabilities in the
known-variance case.

(iii) In a similar manner, the convergence conditions in Theorem \ref%
{select_prob_moving_par_unknown} (including the ones on $n-k$) are
essentially cost-free, and thus this theorem provides a full
characterization of all possible accumulation points of the variable
selection/deletion probabilities in the unknown-variance case.
\end{remark}

As just discussed, in the case of conservative tuning we get the same
limiting behavior under moving-parameter asymptotics in the known-variance
and in the unknown-variance case along any sequence of parameters if $%
n-k\rightarrow \infty $ or $e_{i}=0$ (which in the conservatively tuned case
can equivalently be stated as $n^{1/2}\eta _{i,n}/\left( n-k\right)
^{1/2}\rightarrow 0$). In the case of consistent tuning the same coincidence
of limits occurs if $n-k\rightarrow \infty $ fast enough such that $%
n^{1/2}\eta _{i,n}/\left( n-k\right) ^{1/2}\rightarrow 0$. This is not
accidental but a consequence of the following fact:

\begin{proposition}
\label{closeness_prob}Suppose that for given $i\geq 1$ satisfying $i\leq
k=k(n)$ for large enough $n$ we have $n^{1/2}\eta
_{i,n}(n-k)^{-1/2}\rightarrow 0$ as $n\rightarrow \infty $. Then 
\begin{equation*}
\sup_{\theta \in \mathbb{R}^{k},0<\sigma <\infty }\left\vert P_{n,\theta
,\sigma }\left( \hat{\theta}_{i}=0\right) -P_{n,\theta ,\sigma }\left( 
\tilde{\theta}_{i}=0\right) \right\vert \rightarrow 0\qquad \text{for }%
n\rightarrow \infty .
\end{equation*}
\end{proposition}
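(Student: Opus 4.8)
The plan is to reduce the assertion to a single deterministic inequality that is uniform in $(\theta,\sigma)$ and then to verify that the right-hand side of that inequality tends to zero under the stated rate condition. Fix $n$ large enough that $i\le k(n)$ and $n>k(n)$, and write $b_n:=n^{1/2}\eta_{i,n}$; for fixed $\theta$ and $\sigma$ put $c:=n^{1/2}\theta_i/(\sigma\xi_{i,n})$ and define $h(x):=\Phi(-c+x)-\Phi(-c-x)$ for $x\ge 0$. By (\ref{select_prob}), applied with $\eta_{i,n}$ replaced by $s\eta_{i,n}$, we have $P_{n,\theta,\sigma}(\hat{\theta}_i(s\eta_{i,n})=0)=h(sb_n)$ for every $s\ge 0$; in particular $P_{n,\theta,\sigma}(\hat{\theta}_i=0)=h(b_n)$. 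Combining this with the third line of (\ref{select_prob_unknown}) and the fact that $\rho_{n-k}$ is a probability density, the difference of the two deletion probabilities becomes the average
\begin{equation*}
P_{n,\theta,\sigma}\bigl(\tilde{\theta}_i=0\bigr)-P_{n,\theta,\sigma}\bigl(\hat{\theta}_i=0\bigr)=\int_0^\infty\bigl(h(sb_n)-h(b_n)\bigr)\rho_{n-k}(s)\,ds .
\end{equation*}

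Next I would use that $h$ is Lipschitz with a universal constant: since $h'(x)=\phi(x-c)+\phi(x+c)\le 2\phi(0)$ for all $x\ge 0$, we get $|h(sb_n)-h(b_n)|\le 2\phi(0)\,b_n\,|s-1|$. Inserting this into the display above and noting that neither $b_n=n^{1/2}\eta_{i,n}$ nor the integral $\int_0^\infty|s-1|\rho_{n-k}(s)\,ds$ depends on $(\theta,\sigma)$, we obtain the uniform bound
\begin{equation*}
\sup_{\theta\in\mathbb{R}^{k},\,0<\sigma<\infty}\bigl|P_{n,\theta,\sigma}(\hat{\theta}_i=0)-P_{n,\theta,\sigma}(\tilde{\theta}_i=0)\bigr|\le 2\phi(0)\,n^{1/2}\eta_{i,n}\int_0^\infty|s-1|\rho_{n-k}(s)\,ds .
\end{equation*}

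Let $S$ be a random variable with density $\rho_{n-k}$, so that the last integral equals $E|S-1|$ and $S^2=(n-k)^{-1}\chi^2_{n-k}$, whence $E(S^2)=1$. The remaining task is to bound $E|S-1|$ by a constant multiple of $(n-k)^{-1/2}$, uniformly over all admissible $n-k\ge 1$, so that no hypothesis $n-k\to\infty$ is needed. For this I would invoke the elementary inequality $|s-1|\le|s^2-1|$, which holds for every $s\ge 0$, to get $E|S-1|\le E|S^2-1|$; then Cauchy--Schwarz together with $\operatorname{Var}(\chi^2_{n-k})=2(n-k)$ gives
\begin{equation*}
E|S^2-1|=(n-k)^{-1}\,E\bigl|\chi^2_{n-k}-(n-k)\bigr|\le(n-k)^{-1}\bigl(2(n-k)\bigr)^{1/2}=2^{1/2}(n-k)^{-1/2} .
\end{equation*}
Combining the last two displays yields
\begin{equation*}
\sup_{\theta,\sigma}\bigl|P_{n,\theta,\sigma}(\hat{\theta}_i=0)-P_{n,\theta,\sigma}(\tilde{\theta}_i=0)\bigr|\le 2^{3/2}\phi(0)\,\frac{n^{1/2}\eta_{i,n}}{(n-k)^{1/2}} ,
\end{equation*}
and the right-hand side converges to zero by assumption, proving the proposition.

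There is no genuine obstacle here; the argument is essentially mechanical once the difference of probabilities is written as an average of $h(sb_n)-h(b_n)$ against the law of $S$. The only point requiring a modicum of care is extracting the rate $(n-k)^{-1/2}$ for $E|S-1|$ without presupposing that the degrees of freedom tend to infinity, and this is precisely what the inequality $|s-1|\le|s^2-1|$ for $s\ge 0$ delivers, reducing matters to a one-line first-moment bound for a chi-square variable.
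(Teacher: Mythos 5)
Your proof is correct, and while its skeleton coincides with the paper's (write both deletion probabilities via (\ref{select_prob}) and (\ref{select_prob_unknown}) as the same function of the threshold evaluated at $s\eta_{i,n}$ versus $\eta_{i,n}$, integrate against $\rho_{n-k}$, and use the global Lipschitz constant $\phi(0)=(2\pi)^{-1/2}$ of $\Phi$ to extract a factor $n^{1/2}\eta_{i,n}\left\vert s-1\right\vert$), you handle the remaining quantity $\int_{0}^{\infty}\left\vert s-1\right\vert \rho_{n-k}(s)\,ds$ by a genuinely different and more self-contained device. The paper truncates at $\left\vert s-1\right\vert \leq (n-k)^{-1/2}c(\varepsilon)$, invoking (a trivial modification of) Lemma 13 of P\"{o}tscher and Schneider (2010) to make the tail mass smaller than $\varepsilon$ uniformly in $n-k$, and then lets $\varepsilon\rightarrow 0$; this concentration lemma is a shared tool that is reused verbatim in the proof of Theorem \ref{closeness}. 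You instead bound the first absolute moment directly: $\left\vert s-1\right\vert \leq \left\vert s^{2}-1\right\vert$ for $s\geq 0$, so $E\left\vert S-1\right\vert \leq (n-k)^{-1}E\left\vert \chi_{n-k}^{2}-(n-k)\right\vert \leq 2^{1/2}(n-k)^{-1/2}$ by Cauchy--Schwarz, which yields the explicit non-asymptotic bound $2^{3/2}\phi(0)\,n^{1/2}\eta_{i,n}(n-k)^{-1/2}$, valid for every $n-k\geq 1$ and manifestly uniform in $(\theta,\sigma)$ since nothing depends on $\theta_{i}/\sigma$. What your route buys is an elementary, quantitative bound with an explicit constant and no appeal to an external lemma or an $\varepsilon$-argument; what the paper's route buys is a reusable concentration statement about $\rho_{n-k}$ that also powers the total-variation comparisons of Theorem \ref{closeness}, where a pure first-moment bound would not suffice as stated for the tail terms there.
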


\begin{remark}
\normalfont\label{weekend}Suppose that $\xi _{i,n}\eta _{i,n}\rightarrow 0$
holds as $n\rightarrow \infty $, the other case being of little interest as
noted earlier. If $n^{1/2}\eta _{i,n}(n-k)^{-1/2}$ does not converge to zero
as $n\rightarrow \infty $, it can be shown from Proposition \ref%
{select_prob_moving_par} and Theorem \ref{select_prob_moving_par_unknown}
that the limits of the variable deletion probabilities (along appropriate
(sub)sequences $(\theta ^{(n_{j})},\sigma _{n_{j}})$) for the known-variance
and the unknown-variance case do not coincide. This shows that the condition 
$n^{1/2}\eta _{i,n}(n-k)^{-1/2}\rightarrow 0$ in the above proposition
cannot be weakened (at least in case $\xi _{i,n}\eta _{i,n}\rightarrow 0$
holds).
\end{remark}

\section{Consistency, Uniform Consistency, and Uniform Convergence Rate\label%
{minimax}}

For purposes of comparison we start with the following obvious proposition,
which immediately follows from the observation that $\hat{\theta}_{LS,i}$ is 
$N(\theta _{i},\sigma ^{2}\xi _{i,n}^{2}/n)$-distributed.

\begin{proposition}
\label{ls_consistency}For every $i\geq 1$ satisfying $i\leq k=k(n)$ for
large enough $n$ we have the following:

(a) $\xi _{i,n}/n^{1/2}\rightarrow 0$ is a necessary and sufficient
condition for $\hat{\theta}_{LS,i}$ to be consistent for $\theta _{i}$, the
convergence rate being $\xi _{i,n}/n^{1/2}$.

(b) Suppose $\xi _{i,n}/n^{1/2}\rightarrow 0$. Then $\hat{\theta}_{LS,i}$ is
uniformly consistent for $\theta _{i}$ in the sense that for every $%
\varepsilon >0$%
\begin{equation*}
\lim_{n\rightarrow \infty }\sup_{\theta \in \mathbb{R}^{k}}\sup_{0<\sigma
<\infty }P_{n,\theta ,\sigma }\left( \left\vert \hat{\theta}_{LS,i}-\theta
_{i}\right\vert >\sigma \varepsilon \right) =0.
\end{equation*}

In fact, $\hat{\theta}_{LS,i}$ is uniformly $n^{1/2}/\xi _{i,n}$-consistent
for $\theta _{i}$ in the sense that for every $\varepsilon >0$ there exists
a real number $M>0$ such that 
\begin{equation*}
\sup_{n\in \mathbb{N}}\sup_{\theta \in \mathbb{R}^{k}}\sup_{0<\sigma <\infty
}P_{n,\theta ,\sigma }\left( \left( n^{1/2}/\xi _{i,n}\right) \left\vert 
\hat{\theta}_{LS,i}-\theta _{i}\right\vert >\sigma M\right) <\varepsilon .
\end{equation*}%
[Note that the probabilities in the displays above in fact neither depend on 
$\theta $ nor $\sigma $. In particular, the l.h.s.~of the above displays
equal $2\Phi (-\varepsilon n^{1/2}/\xi _{i,n})$ and $2\Phi (-M)$,
respectively.]
\end{proposition}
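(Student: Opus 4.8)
The plan is to exploit the explicit distributional fact that $\hat{\theta}_{LS,i}$ is $N(\theta_i,\sigma^2\xi_{i,n}^2/n)$-distributed, which reduces every probability appearing in the statement to an expression involving only the standard normal cdf $\Phi$ and, crucially, one that does not depend on $(\theta,\sigma)$ once we center and scale appropriately. Concretely, for part (a), I would write $\hat{\theta}_{LS,i}-\theta_i = \sigma\xi_{i,n}n^{-1/2}Z$ with $Z\sim N(0,1)$ under $P_{n,\theta,\sigma}$, so that
\begin{equation*}
P_{n,\theta,\sigma}\left(\left|\hat{\theta}_{LS,i}-\theta_i\right|>\sigma\varepsilon\right) = P\left(|Z|>\varepsilon n^{1/2}/\xi_{i,n}\right) = 2\Phi\left(-\varepsilon n^{1/2}/\xi_{i,n}\right),
\end{equation*}
an identity valid for all $\theta$ and all $\sigma>0$. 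This last quantity tends to $0$ as $n\to\infty$ precisely when $n^{1/2}/\xi_{i,n}\to\infty$, i.e.\ when $\xi_{i,n}/n^{1/2}\to 0$; conversely, if $\xi_{i,n}/n^{1/2}\not\to 0$ then along a subsequence it stays bounded away from $0$, $2\Phi(-\varepsilon n^{1/2}/\xi_{i,n})$ stays bounded away from $0$ along that subsequence, and consistency fails. The convergence-rate claim $\xi_{i,n}/n^{1/2}$ follows by the same computation: scaling the deviation by $n^{1/2}/\xi_{i,n}$ leaves a fixed $N(0,1)$ variable, so no faster rate is achievable and this one works.

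For part (b), assuming $\xi_{i,n}/n^{1/2}\to 0$, the uniform-consistency display is literally the $n\to\infty$ limit of the already-derived identity $\sup_{\theta,\sigma}P_{n,\theta,\sigma}(|\hat{\theta}_{LS,i}-\theta_i|>\sigma\varepsilon) = 2\Phi(-\varepsilon n^{1/2}/\xi_{i,n})$, and this goes to $0$ under the stated hypothesis. For the uniform $n^{1/2}/\xi_{i,n}$-consistency, the same reasoning gives
\begin{equation*}
\sup_{n}\sup_{\theta\in\mathbb{R}^k}\sup_{0<\sigma<\infty}P_{n,\theta,\sigma}\left(\left(n^{1/2}/\xi_{i,n}\right)\left|\hat{\theta}_{LS,i}-\theta_i\right|>\sigma M\right) = 2\Phi(-M),
\end{equation*}
which is $<\varepsilon$ as soon as $M$ is chosen large enough that $\Phi(-M)<\varepsilon/2$; note the supremum over $n$ is trivial here since the expression does not depend on $n$ at all. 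I would emphasize in the write-up the point already flagged in the bracketed remark of the statement, namely that because the event $\{(n^{1/2}/\xi_{i,n})|\hat{\theta}_{LS,i}-\theta_i|>\sigma M\}$ has probability $P(|Z|>M)$ identically in $(\theta,\sigma,n)$, the suprema are not really suprema at all.

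There is essentially no obstacle here — the proposition is called "obvious" for good reason — so the only thing to be careful about is the "necessary" direction in (a): one must phrase it as a subsequence argument, since $\xi_{i,n}/n^{1/2}$ need not be monotone in general (the excerpt notes it is monotone only in the fixed-design, fixed-$k$ case). I would state it as: if $\hat{\theta}_{LS,i}$ is consistent then for every $\varepsilon>0$ we need $2\Phi(-\varepsilon n^{1/2}/\xi_{i,n})\to 0$, which forces $n^{1/2}/\xi_{i,n}\to\infty$ along the full sequence (otherwise a bounded subsequence contradicts the limit), i.e.\ $\xi_{i,n}/n^{1/2}\to 0$. The whole proof is three or four lines once the Gaussian identity is written down.
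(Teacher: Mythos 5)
Your proposal is correct and follows exactly the route the paper intends: the paper states the proposition is immediate from the fact that $\hat{\theta}_{LS,i}$ is $N(\theta_i,\sigma^2\xi_{i,n}^2/n)$-distributed, and your explicit identity $P_{n,\theta,\sigma}\left(\left\vert\hat{\theta}_{LS,i}-\theta_i\right\vert>\sigma\varepsilon\right)=2\Phi\left(-\varepsilon n^{1/2}/\xi_{i,n}\right)$, independent of $(\theta,\sigma)$, is precisely the computation behind the bracketed remark in the statement. The subsequence argument for the necessity direction in (a) and the choice of $M$ with $2\Phi(-M)<\varepsilon$ in (b) fill in the routine details correctly.
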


The corresponding result for the estimators $\tilde{\theta}_{H,i}$, $\tilde{%
\theta}_{S,i}$, or $\tilde{\theta}_{AS,i}$ and their infeasible counterparts 
$\hat{\theta}_{H,i}$, $\hat{\theta}_{S,i}$, or $\hat{\theta}_{AS,i}$ is now
as follows.

\begin{theorem}
\label{thresh_consistency}Let $\tilde{\theta}_{i}$ stand for any of the
estimators $\tilde{\theta}_{H,i}$, $\tilde{\theta}_{S,i}$, or $\tilde{\theta}%
_{AS,i}$. Then for every $i\geq 1$ satisfying $i\leq k=k(n)$ for large
enough $n$ we have the following:

(a) $\tilde{\theta}_{i}$ is consistent for $\theta _{i}$ if and only if $\xi
_{i,n}\eta _{i,n}\rightarrow 0$ and $\xi _{i,n}/n^{1/2}\rightarrow 0$.

(b) Suppose $\xi _{i,n}\eta _{i,n}\rightarrow 0$ and $\xi
_{i,n}/n^{1/2}\rightarrow 0$. Then $\tilde{\theta}_{i}$ is uniformly
consistent in the sense that for every $\varepsilon >0$%
\begin{equation*}
\lim_{n\rightarrow \infty }\sup_{\theta \in \mathbb{R}^{k}}\sup_{0<\sigma
<\infty }P_{n,\theta ,\sigma }\left( \left\vert \tilde{\theta}_{i}-\theta
_{i}\right\vert >\sigma \varepsilon \right) =0.
\end{equation*}%
Furthermore, $\tilde{\theta}_{i}$ is uniformly $a_{i,n}$-consistent with $%
a_{i,n}=\min \left( n^{1/2}/\xi _{i,n},(\xi _{i,n}\eta _{i,n})^{-1}\right) $
in the sense that for every $\varepsilon >0$ there exists a real number $M>0$
such that 
\begin{equation*}
\sup_{n\in \mathbb{N}}\sup_{\theta \in \mathbb{R}^{k}}\sup_{0<\sigma <\infty
}P_{n,\theta ,\sigma }\left( a_{i,n}\left\vert \tilde{\theta}_{i}-\theta
_{i}\right\vert >\sigma M\right) <\varepsilon .
\end{equation*}

(c) Suppose $\xi _{i,n}\eta _{i,n}\rightarrow 0$ and $\xi
_{i,n}/n^{1/2}\rightarrow 0$ and $b_{i,n}\geq 0$. If for every $\varepsilon
>0$ there exists a real number $M>0$ such that 
\begin{equation}
\limsup_{n\rightarrow \infty }\sup_{\theta \in \mathbb{R}^{k}}\sup_{0<\sigma
<\infty }P_{n,\theta ,\sigma }\left( b_{i,n}\left\vert \tilde{\theta}%
_{i}-\theta _{i}\right\vert >\sigma M\right) <\varepsilon  \label{nec}
\end{equation}%
holds, then $b_{i,n}=O(a_{i,n})$ necessarily holds.

(d) Let $\hat{\theta}_{i}$ stand for any of the estimators $\hat{\theta}%
_{H,i}$, $\hat{\theta}_{S,i}$, or $\hat{\theta}_{AS,i}$. Then the results in
(a)-(c) also hold for $\hat{\theta}_{i}$.
\end{theorem}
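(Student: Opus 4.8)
The plan is to reduce everything to the hard-thresholding estimator $\tilde{\theta}_{H,i}$ by exploiting the sandwiching inequalities (\ref{ineq_1})--(\ref{ineq_2}), and then to analyze $\tilde{\theta}_{H,i}$ directly via the distributional decomposition $\hat{\theta}_{LS,i}=\theta_i+\sigma\xi_{i,n}n^{-1/2}Z$ with $Z\sim N(0,1)$ and $\hat{\sigma}=\sigma S$ with $S=(n-k)^{-1/2}\chi_{n-k}$ independent of $Z$ (or $S\equiv 1$ in the infeasible case). On the event $\{\tilde{\theta}_{H,i}\neq 0\}$ we have $\tilde{\theta}_{H,i}=\hat{\theta}_{LS,i}$, so $|\tilde{\theta}_{H,i}-\theta_i|=|\hat{\theta}_{LS,i}-\theta_i|$ there; on the event $\{\tilde{\theta}_{H,i}=0\}$ we have $|\tilde{\theta}_{H,i}-\theta_i|=|\theta_i|\le\hat{\sigma}\xi_{i,n}\eta_{i,n}$. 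Combining,
\begin{equation}
|\tilde{\theta}_{H,i}-\theta_i|\le\max\bigl(|\hat{\theta}_{LS,i}-\theta_i|,\ \hat{\sigma}\xi_{i,n}\eta_{i,n}\bigr),\qquad\text{and also}\qquad |\tilde{\theta}_{H,i}-\theta_i|\le|\hat{\theta}_{LS,i}-\theta_i|+\hat{\sigma}\xi_{i,n}\eta_{i,n}, \label{eq:hard_sandwich}
\end{equation}
and by (\ref{ineq_1})--(\ref{ineq_2}) the same bounds hold for $\tilde{\theta}_{S,i}$ and $\tilde{\theta}_{AS,i}$ (since each lies between $0$ and $\tilde{\theta}_{H,i}$ on $\{\hat\theta_{LS,i}\ge0\}$, with the mirror-image statement on $\{\hat\theta_{LS,i}\le0\}$, so its distance to $\theta_i$ never exceeds $\max(|\theta_i|,|\tilde\theta_{H,i}-\theta_i|)\le$ the right side above). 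For the lower direction needed in (a) and (c) I would use that on $\{\tilde\theta_{H,i}=0\}$ the error equals $|\theta_i|$ exactly, and a matching lower bound comparing $\tilde\theta_{S,i}$ to $\tilde\theta_{H,i}$: note $|\tilde\theta_{S,i}-\tilde\theta_{H,i}|\le\hat\sigma\xi_{i,n}\eta_{i,n}$ on $\{\hat\theta_{LS,i}\neq0\}$, so $|\tilde\theta_{S,i}-\theta_i|\ge|\tilde\theta_{H,i}-\theta_i|-\hat\sigma\xi_{i,n}\eta_{i,n}\ge|\hat\theta_{LS,i}-\theta_i|-\hat\sigma\xi_{i,n}\eta_{i,n}$ on $\{\tilde\theta_{H,i}\neq0\}$, and similarly for $\tilde\theta_{AS,i}$; this shows all three estimators are equivalent up to an additive $O_P(\hat\sigma\xi_{i,n}\eta_{i,n})$ term, which under $\xi_{i,n}\eta_{i,n}\to0$ is negligible.

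For part (b), assuming $\xi_{i,n}\eta_{i,n}\to0$ and $\xi_{i,n}/n^{1/2}\to0$: the first bound in (\ref{eq:hard_sandwich}) gives $P_{n,\theta,\sigma}(|\tilde\theta_i-\theta_i|>\sigma\varepsilon)\le P(|\hat\theta_{LS,i}-\theta_i|>\sigma\varepsilon/2)+P(\hat\sigma\xi_{i,n}\eta_{i,n}>\sigma\varepsilon/2)$; the first term is $2\Phi(-\varepsilon n^{1/2}/(2\xi_{i,n}))\to0$ by Proposition \ref{ls_consistency}, the second is $P(S>\varepsilon/(2\xi_{i,n}\eta_{i,n}))\to0$ since $\xi_{i,n}\eta_{i,n}\to0$ and $S$ has a distribution not depending on $\theta,\sigma$ (and for the infeasible case $S\equiv1$ so this term is eventually $0$). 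Both bounds are uniform in $\theta,\sigma$ because neither $\hat\sigma/\sigma$ nor $(\hat\theta_{LS,i}-\theta_i)/(\sigma\xi_{i,n})$ depends on $(\theta,\sigma)$ in distribution. For the uniform $a_{i,n}$-consistency with $a_{i,n}=\min(n^{1/2}/\xi_{i,n},(\xi_{i,n}\eta_{i,n})^{-1})$, multiply (\ref{eq:hard_sandwich}) by $a_{i,n}$: $a_{i,n}|\hat\theta_{LS,i}-\theta_i|\le(n^{1/2}/\xi_{i,n})|\hat\theta_{LS,i}-\theta_i|=\sigma|Z|$, and $a_{i,n}\hat\sigma\xi_{i,n}\eta_{i,n}\le\hat\sigma=\sigma S$, so $a_{i,n}|\tilde\theta_i-\theta_i|\le\sigma(|Z|+S)$, whence $P(a_{i,n}|\tilde\theta_i-\theta_i|>\sigma M)\le P(|Z|+S>M)$, which can be made $<\varepsilon$ by choosing $M$ large, uniformly in $n,\theta,\sigma$.

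Parts (a) and (c) are where the real content lies. For (a), sufficiency of $\xi_{i,n}\eta_{i,n}\to0$ and $\xi_{i,n}/n^{1/2}\to0$ follows from (b). For necessity: if $\xi_{i,n}/n^{1/2}\not\to0$, then $\hat\theta_{LS,i}$ is inconsistent by Proposition \ref{ls_consistency}, and one shows consistency of $\tilde\theta_i$ fails too — e.g. along a subsequence where $\xi_{i,n}/n^{1/2}$ is bounded away from $0$, at $\theta_i$ chosen so that deletion has probability bounded away from both $0$ and $1$ (using Proposition \ref{select_prob_pointwise}/\ref{select_prob_pointwise_unknown}) the estimator puts mass $\ge$ const on $\{0\}$ and on a region near $\hat\theta_{LS,i}$, preventing concentration at $\theta_i$. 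If instead $\xi_{i,n}\eta_{i,n}\not\to0$, then along a subsequence with $\xi_{i,n}\eta_{i,n}$ bounded away from $0$ take $\theta_i\neq0$ fixed: by Proposition \ref{select_prob_pointwise} (resp.\ \ref{select_prob_pointwise_unknown}) the deletion probability $P(\tilde\theta_i=0)$ does not go to $0$, and on $\{\tilde\theta_i=0\}$ the error is $|\theta_i|>0$, so consistency fails. For (c), the necessity of $b_{i,n}=O(a_{i,n})$: suppose not, so along a subsequence $b_{i,n}/a_{i,n}\to\infty$. This splits into two cases according to which term achieves the minimum in $a_{i,n}$ infinitely often. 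If $a_{i,n}=n^{1/2}/\xi_{i,n}$ infinitely often, then $b_{i,n}\xi_{i,n}/n^{1/2}\to\infty$ along that subsequence; at $\theta\equiv0$ (or any fixed $\theta$), $b_{i,n}(\tilde\theta_i-\theta_i)$ restricted to $\{\tilde\theta_i\neq0\}$ behaves like $b_{i,n}\sigma\xi_{i,n}n^{-1/2}Z$ which blows up — but one must check $P(\tilde\theta_i\neq0)$ is bounded away from $0$, which holds at a suitably chosen $\theta_i$ (growing slowly so that the standardized mean stays bounded) again by the variable-selection propositions. If $a_{i,n}=(\xi_{i,n}\eta_{i,n})^{-1}$ infinitely often, then $b_{i,n}\xi_{i,n}\eta_{i,n}\to\infty$; choose $\theta_i=\theta_{i,n}$ with $|\theta_{i,n}|$ of exact order $\xi_{i,n}\eta_{i,n}$ (so $|\zeta_i|$ strictly between suitable bounds) so that by Proposition \ref{select_prob_moving_par} (resp.\ Theorem \ref{select_prob_moving_par_unknown}) the deletion probability stays bounded away from $0$; on that event the error equals $|\theta_{i,n}|\asymp\xi_{i,n}\eta_{i,n}$, so $b_{i,n}|\tilde\theta_i-\theta_i|\asymp b_{i,n}\xi_{i,n}\eta_{i,n}\to\infty$ with non-vanishing probability, contradicting (\ref{nec}). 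Part (d) is immediate since all the above arguments used only the distributional decomposition and the sandwiching inequalities, both of which hold verbatim for the infeasible estimators with $\hat\sigma$ replaced by $\sigma$ (i.e.\ $S\equiv1$), which if anything simplifies matters.

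The main obstacle will be the necessity direction in (c): one has to produce, for each alleged violating subsequence, an explicit moving parameter sequence $(\theta_{i,n},\sigma_n)$ along which the scaled estimation error fails to be stochastically bounded, and this requires careful bookkeeping of which branch of $a_{i,n}$ is active and invoking the correct case of Proposition \ref{select_prob_moving_par} or Theorem \ref{select_prob_moving_par_unknown} to guarantee the deletion (or non-deletion) event carries non-vanishing probability. The subtlety is that in the unknown-variance consistent-tuning regime the deletion probabilities can have several different limiting forms depending on $n^{1/2}\eta_{i,n}/(n-k)^{1/2}$, so one should first pass to a subsequence along which this ratio converges in $[0,\infty]$ (using compactness, as the paper does elsewhere) before selecting the parameter path; but in every case a value of $|\zeta_i|$ or $|\nu_i|$ can be chosen making the limiting deletion probability lie strictly in $(0,1)$, which is all that is needed.
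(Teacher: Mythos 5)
Your overall route is the paper's: reduce each estimator to $\hat{\theta}_{LS,i}$ via a closeness bound, get (b) from Proposition \ref{ls_consistency} together with tightness of the distributions of $\hat{\sigma}/\sigma$, and obtain (a) and (c) by exhibiting (moving) parameters at which the deletion event has non-vanishing probability, invoking Propositions \ref{select_prob_pointwise_unknown} and \ref{select_prob_moving_par} and Theorem \ref{select_prob_moving_par_unknown} after passing to subsequences. However, two of your intermediate inequalities are false as stated, and one of them, taken literally, destroys the uniformity over $\theta \in \mathbb{R}^{k}$ claimed in (b). First, the "max" bound is wrong: on the deletion event the error equals $\left\vert \theta _{i}\right\vert $, which is only bounded by $\left\vert \hat{\theta}_{LS,i}-\theta _{i}\right\vert +\hat{\sigma}\xi _{i,n}\eta _{i,n}$, not by $\max \left( \left\vert \hat{\theta}_{LS,i}-\theta _{i}\right\vert ,\hat{\sigma}\xi _{i,n}\eta _{i,n}\right) $ (take $\hat{\sigma}\xi _{i,n}\eta _{i,n}=1$, $\hat{\theta}_{LS,i}=0.9$, $\theta _{i}=1.8$). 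Second, and more seriously, the transfer to $\tilde{\theta}_{S,i}$ and $\tilde{\theta}_{AS,i}$ via $\left\vert \tilde{\theta}_{S,i}-\theta _{i}\right\vert \leq \max \left( \left\vert \theta _{i}\right\vert ,\left\vert \tilde{\theta}_{H,i}-\theta _{i}\right\vert \right) \leq \left\vert \hat{\theta}_{LS,i}-\theta _{i}\right\vert +\hat{\sigma}\xi _{i,n}\eta _{i,n}$ fails: when no deletion occurs and $\left\vert \theta _{i}\right\vert $ is large with $\hat{\theta}_{LS,i}$ close to $\theta _{i}$, the middle term is at least $\left\vert \theta _{i}\right\vert $ while the right-hand side is small. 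The repair is to drop the detour through hard thresholding: each of the three estimators satisfies $\left\vert \tilde{\theta}_{i}-\hat{\theta}_{LS,i}\right\vert \leq \hat{\sigma}\xi _{i,n}\eta _{i,n}$ directly (for soft thresholding by definition; for adaptive soft thresholding because $\hat{\sigma}^{2}\xi _{i,n}^{2}\eta _{i,n}^{2}/\left\vert \hat{\theta}_{LS,i}\right\vert \leq \hat{\sigma}\xi _{i,n}\eta _{i,n}$ on the non-deletion event), which is exactly the inequality (\ref{closeness_H_S_AS_LS_unknown}) on which the paper's proof rests; with it your argument for (b), using tightness of the family corresponding to $\rho _{n-k}$, goes through and is in fact shorter than the paper's computation for the hard-thresholding case.

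Two further points. In (a), for the necessity of $\xi _{i,n}/n^{1/2}\rightarrow 0$, your suggested mechanism ("deletion probability bounded away from both $0$ and $1$") is not the right one: once $\xi _{i,n}\eta _{i,n}\rightarrow 0$ is established, the deletion probability at a fixed $\theta _{i}\neq 0$ typically tends to zero; the clean argument (the paper's) is that consistency of $\tilde{\theta}_{i}$ plus $\xi _{i,n}\eta _{i,n}\rightarrow 0$ and tightness of $\hat{\sigma}/\sigma $ transfers consistency to $\hat{\theta}_{LS,i}$, whence Proposition \ref{ls_consistency}(a) gives $\xi _{i,n}/n^{1/2}\rightarrow 0$ -- your reverse inequality $\left\vert \tilde{\theta}_{i}-\theta _{i}\right\vert \geq \left\vert \hat{\theta}_{LS,i}-\theta _{i}\right\vert -\hat{\sigma}\xi _{i,n}\eta _{i,n}$ already encodes this. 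In (c), in the branch where $a_{i,n}=n^{1/2}/\xi _{i,n}$, for soft and adaptive soft thresholding the error on the non-deletion event differs from $\hat{\theta}_{LS,i}-\theta _{i}$ by up to $\hat{\sigma}\xi _{i,n}\eta _{i,n}$, and $b_{i,n}\hat{\sigma}\xi _{i,n}\eta _{i,n}$ also diverges there, so a cancellation must be ruled out; this is easy because in that branch $n^{1/2}\eta _{i,n}\leq 1$, so at $\theta _{i}=0$ one has $\left\vert \tilde{\theta}_{i}\right\vert \geq (\xi _{i,n}/n^{1/2})\left( \sigma \left\vert Z\right\vert -\hat{\sigma}\right) $ and the event $\left\{ \left\vert Z\right\vert \geq \hat{\sigma}/\sigma +1\right\} $ has probability bounded away from zero uniformly (independence plus tightness) and entails non-deletion -- but it does need saying. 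With these repairs your case split by the active branch of the minimum is equivalent, after the subsequence bookkeeping you describe, to the paper's split by the limits of $n^{1/2}\eta _{i,n}$ and $n-k$ (choosing $\zeta _{i}\in (0,1)$ or a finite nonzero $\nu _{i}$ so that Theorem \ref{select_prob_moving_par_unknown} makes the deletion probability non-vanishing, with the $e_{i}=0$ case handled through asymptotic normality at fixed parameters), and part (d) is indeed the same argument with $\hat{\sigma}$ replaced by $\sigma $.
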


The preceding theorem shows that the thresholding estimators $\tilde{\theta}%
_{H,i}$, $\tilde{\theta}_{S,i}$, and $\tilde{\theta}_{AS,i}$ (as well as
their infeasible versions) are uniformly $a_{i,n}$-consistent and that this
rate is sharp and cannot be improved. In particular, if the tuning is
conservative these estimators are uniformly $n^{1/2}/\xi _{i,n}$-consistent,
which is the usual rate one expects to find in a linear regression model as
considered here. However, if consistent tuning is employed, the preceding
theorem shows that these thresholding estimators are then only uniformly $%
(\xi _{i,n}\eta _{i,n})^{-1}$-consistent, i.e., have a slower uniform
convergence rate than the least-squares (maximum likelihood) estimator (or
the conservatively tuned thresholding estimators for that matter). For a
discussion of the pointwise convergence rate see Section \ref{oracle}.

\begin{remark}
\normalfont\label{asy-equiv}If $n^{1/2}\eta _{i,n}\rightarrow e_{i}=0$, then 
$\tilde{\theta}_{i}$ is asymptotically equivalent to $\hat{\theta}_{LS,i}$
in the sense that for every $\varepsilon >0$%
\begin{equation*}
\lim_{n\rightarrow \infty }\sup_{\theta \in \mathbb{R}^{k}}\sup_{0<\sigma
<\infty }P_{n,\theta ,\sigma }\left( \left( n^{1/2}/\xi _{i,n}\right) |%
\tilde{\theta}_{i}-\hat{\theta}_{LS,i}|>\sigma \varepsilon \right) =0.
\end{equation*}%
A similar statement holds for $\hat{\theta}_{i}$. For $\tilde{\theta}_{i}$
this follows immediately from (\ref{closeness_H_S_AS_LS_unknown}) in Section %
\ref{prfs} and the fact that the family of distributions corresponding to $%
\rho _{n-k}$ is tight; for $\hat{\theta}_{i}$ this follows from the relation 
$\left\vert \hat{\theta}_{i}-\hat{\theta}_{LS,i}\right\vert \leq \sigma \xi
_{i,n}\eta _{i,n}$.
\end{remark}

\begin{remark}
\normalfont(i) A variation of the proof of Theorem \ref{thresh_consistency}
shows that in case of consistent tuning for the infeasible estimators
additionally also%
\begin{equation*}
\lim_{n\rightarrow \infty }\sup_{\theta \in \mathbb{R}^{k}}\sup_{0<\sigma
<\infty }P_{n,\theta ,\sigma }\left( a_{i,n}\left\vert \hat{\theta}%
_{i}-\theta _{i}\right\vert >\sigma M\right) =0
\end{equation*}%
holds for every $M>1$, and that for the feasible estimators%
\begin{equation*}
\lim_{n\rightarrow \infty }\sup_{\theta \in \mathbb{R}^{k}}\sup_{0<\sigma
<\infty }P_{n,\theta ,\sigma }\left( a_{i,n}\left\vert \tilde{\theta}%
_{i}-\theta _{i}\right\vert >\sigma M\right) =0
\end{equation*}%
holds for every $M>1$ provided that $n-k\rightarrow \infty $.

(ii) Inspection of the proof shows that the conclusion of Theorem \ref%
{thresh_consistency}(c) continues to hold if the supremum over $\mathbb{R}%
^{k}$ is replaced by the supremum over an arbitrarily small neighborhood of $%
0$ and $\sigma $ is held fixed at an arbitrary positive value.

(iii) If $\sigma \varepsilon $ and $\sigma M$ are replaced by $\varepsilon $
and $M$, respectively, in the displays in Proposition \ref{ls_consistency}
and Theorem \ref{thresh_consistency} as well as in Remark \ref{asy-equiv},
the resulting statements remain true provided the suprema over $0<\sigma
<\infty $ are replaced by suprema over $0<\sigma \leq c$, where $c>0$ is an
arbitrary real number.
\end{remark}

\section{Finite-Sample Distributions\label{FS}}

\subsection{ Known-Variance Case}

We next present the finite-sample distributions of the infeasible
thresholding estimators. It will turn out to be convenient to give the
results for scaled versions, where the scaling factor $\alpha _{i,n}$ is a
positive real number, but is otherwise arbitrary. \emph{Note that below we
suppress the dependence of the distribution functions of the thresholding
estimators on the scaling sequence }$\alpha _{i,n}$\emph{\ in the notation.}
Furthermore, observe that the finite-sample distributions depend on $\theta $
only through $\theta _{i}$.

\begin{proposition}
\label{1}The cdf $H_{H,n,\theta ,\sigma }^{i}:=H_{H,\eta _{i,n},n,\theta
,\sigma }^{i}$ of $\sigma ^{-1}\alpha _{i,n}(\hat{\theta}_{H,i}-\theta _{i})$
is given by 
\begin{eqnarray}
H_{H,n,\theta ,\sigma }^{i}(x) &=&\Phi \left( n^{1/2}x/(\alpha _{i,n}\xi
_{i,n})\right) \boldsymbol{1}\left( \left\vert \alpha _{i,n}^{-1}x+\theta
_{i}/\sigma \right\vert >\xi _{i,n}\eta _{i,n}\right)  \notag \\
&&+\Phi \left( n^{1/2}\left( -\theta _{i}/(\sigma \xi _{i,n})+\eta
_{i,n}\right) \right) \boldsymbol{1}\left( 0\leq \alpha _{i,n}^{-1}x+\theta
_{i}/\sigma \leq \xi _{i,n}\eta _{i,n}\right)  \notag \\
&&+\Phi \left( n^{1/2}\left( -\theta _{i}/(\sigma \xi _{i,n})-\eta
_{i,n}\right) \right) \boldsymbol{1}\left( -\xi _{i,n}\eta _{i,n}\leq \alpha
_{i,n}^{-1}x+\theta _{i}/\sigma <0\right) ,  \label{hard_finite_sample}
\end{eqnarray}%
or, equivalently, 
\begin{eqnarray}
dH_{H,n,\theta ,\sigma }^{i}(x) &=&\left\{ \Phi \left( n^{1/2}\left( -\theta
_{i}/(\sigma \xi _{i,n})+\eta _{i,n}\right) \right) -\Phi \left(
n^{1/2}\left( -\theta _{i}/(\sigma \xi _{i,n})-\eta _{i,n}\right) \right)
\right\} d\delta _{-\alpha _{i,n}\theta _{i}/\sigma }(x)  \notag \\
&&+\left( n^{1/2}/(\alpha _{i,n}\xi _{i,n})\right) \phi \left(
n^{1/2}x/(\alpha _{i,n}\xi _{i,n})\right) \boldsymbol{1}\left( \left\vert
\alpha _{i,n}^{-1}x+\theta _{i}/\sigma \right\vert >\xi _{i,n}\eta
_{i,n}\right) dx  \label{hard_finite_sample_density}
\end{eqnarray}%
where $\delta _{z}$ denotes pointmass at $z$.
\end{proposition}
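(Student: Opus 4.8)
The plan is to compute the cdf by splitting on the dichotomy produced by hard-thresholding, and then to obtain the density form by differentiation; this is just the finite-sample computation of Pötscher and Leeb (2009)/Pötscher and Schneider (2009) carried through with a general $\xi_{i,n}$. Since $\hat{\theta}_{LS,i}$ is the $i$-th coordinate of $(X'X)^{-1}X'Y$ and $u\sim N(0,\sigma^{2}I_{n})$, we have $\hat{\theta}_{LS,i}\sim N(\theta_{i},\sigma^{2}\xi_{i,n}^{2}/n)$; equivalently $Z_{i}:=n^{1/2}(\hat{\theta}_{LS,i}-\theta_{i})/(\sigma\xi_{i,n})$ is standard normal. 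Fix $\alpha_{i,n}>0$ and set $T:=\sigma^{-1}\alpha_{i,n}(\hat{\theta}_{H,i}-\theta_{i})$. On the deletion event $D:=\{|\hat{\theta}_{LS,i}|\le\sigma\xi_{i,n}\eta_{i,n}\}$ one has $\hat{\theta}_{H,i}=0$, so $T=-\sigma^{-1}\alpha_{i,n}\theta_{i}$ is constant there, while on $D^{c}$ one has $T=\sigma^{-1}\alpha_{i,n}(\hat{\theta}_{LS,i}-\theta_{i})$. Writing $u:=\sigma\alpha_{i,n}^{-1}x+\theta_{i}$, so that $\{T\le x\}\cap D^{c}=\{\hat{\theta}_{LS,i}\le u\}\cap D^{c}$ and $u/\sigma=\alpha_{i,n}^{-1}x+\theta_{i}/\sigma$, this gives
\[
H^{i}_{H,n,\theta,\sigma}(x)=P_{n,\theta,\sigma}(D)\,\boldsymbol{1}\!\left(\alpha_{i,n}^{-1}x+\theta_{i}/\sigma\ge 0\right)+P_{n,\theta,\sigma}\!\left(|\hat{\theta}_{LS,i}|>\sigma\xi_{i,n}\eta_{i,n},\ \hat{\theta}_{LS,i}\le u\right),
\]
where the indicator comes from $-\sigma^{-1}\alpha_{i,n}\theta_{i}\le x\iff u\ge 0$, and $P_{n,\theta,\sigma}(D)$ is exactly the deletion probability (\ref{select_prob}).

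The second probability I would evaluate by distinguishing the four mutually exclusive, exhaustive cases $u>\sigma\xi_{i,n}\eta_{i,n}$, $u<-\sigma\xi_{i,n}\eta_{i,n}$, $0\le u\le\sigma\xi_{i,n}\eta_{i,n}$, and $-\sigma\xi_{i,n}\eta_{i,n}\le u<0$ (the first two together forming the region $|\alpha_{i,n}^{-1}x+\theta_{i}/\sigma|>\xi_{i,n}\eta_{i,n}$ of (\ref{hard_finite_sample}), the last two its other two indicator regions). Using $\{|\hat{\theta}_{LS,i}|>\sigma\xi_{i,n}\eta_{i,n}\}\cap\{\hat{\theta}_{LS,i}\le u\}=\{\hat{\theta}_{LS,i}\le u\}\setminus\big(\{|\hat{\theta}_{LS,i}|\le\sigma\xi_{i,n}\eta_{i,n}\}\cap\{\hat{\theta}_{LS,i}\le u\}\big)$, the subtracted set equals $D$, $\emptyset$, $\{-\sigma\xi_{i,n}\eta_{i,n}\le\hat{\theta}_{LS,i}\le u\}$, $\{-\sigma\xi_{i,n}\eta_{i,n}\le\hat{\theta}_{LS,i}\le u\}$, respectively; standardising by $Z_{i}$ and using $n^{1/2}(u-\theta_{i})/(\sigma\xi_{i,n})=n^{1/2}x/(\alpha_{i,n}\xi_{i,n})$ and $n^{1/2}(\pm\sigma\xi_{i,n}\eta_{i,n}-\theta_{i})/(\sigma\xi_{i,n})=n^{1/2}(-\theta_{i}/(\sigma\xi_{i,n})\pm\eta_{i,n})$ turns every term into a $\Phi$-value. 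Adding back the point-mass term $P_{n,\theta,\sigma}(D)$ in exactly the cases with $u\ge 0$ (the first and third) and simplifying with (\ref{select_prob}), the first two cases both produce $\Phi(n^{1/2}x/(\alpha_{i,n}\xi_{i,n}))$, the third produces $\Phi(n^{1/2}(-\theta_{i}/(\sigma\xi_{i,n})+\eta_{i,n}))$, and the fourth produces $\Phi(n^{1/2}(-\theta_{i}/(\sigma\xi_{i,n})-\eta_{i,n}))$; assembling these is precisely (\ref{hard_finite_sample}).

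For the equivalent form (\ref{hard_finite_sample_density}) I would read off from (\ref{hard_finite_sample}) that $H^{i}_{H,n,\theta,\sigma}$ is constant on each of the two intervals $\{0\le\alpha_{i,n}^{-1}x+\theta_{i}/\sigma\le\xi_{i,n}\eta_{i,n}\}$ and $\{-\xi_{i,n}\eta_{i,n}\le\alpha_{i,n}^{-1}x+\theta_{i}/\sigma<0\}$, and equals $\Phi(n^{1/2}x/(\alpha_{i,n}\xi_{i,n}))$ on the complementary open set $\{|\alpha_{i,n}^{-1}x+\theta_{i}/\sigma|>\xi_{i,n}\eta_{i,n}\}$; moreover it is continuous at the two outer endpoints of these intervals (by the $\Phi$-identities above) and jumps only at their common boundary point $x=-\alpha_{i,n}\theta_{i}/\sigma$, the jump size being $\Phi(n^{1/2}(-\theta_{i}/(\sigma\xi_{i,n})+\eta_{i,n}))-\Phi(n^{1/2}(-\theta_{i}/(\sigma\xi_{i,n})-\eta_{i,n}))$. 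Hence the Lebesgue--Stieltjes measure $dH^{i}_{H,n,\theta,\sigma}$ is the sum of an atom of that mass at $-\alpha_{i,n}\theta_{i}/\sigma$ and of the absolutely continuous measure with density $(n^{1/2}/(\alpha_{i,n}\xi_{i,n}))\phi(n^{1/2}x/(\alpha_{i,n}\xi_{i,n}))$ restricted to $\{|\alpha_{i,n}^{-1}x+\theta_{i}/\sigma|>\xi_{i,n}\eta_{i,n}\}$; invoking (\ref{select_prob}) to identify the atom's mass, this is exactly (\ref{hard_finite_sample_density}). The only real work is the bookkeeping in the case distinction — handling the boundary values $u=0$ and $u=\pm\sigma\xi_{i,n}\eta_{i,n}$ and checking that the point-mass term falls into the right cases — but no individual case is difficult, and the whole argument is an exact finite-sample computation with no limiting steps.
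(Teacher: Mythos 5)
Your derivation is correct: the decomposition into the deletion event $D$ and its complement, the identification of $\{T\le x\}\cap D^{c}$ with $\{\hat{\theta}_{LS,i}\le u\}\cap D^{c}$, the four-case evaluation (with the subtracted sets $D$, $\emptyset$, $\{-\sigma\xi_{i,n}\eta_{i,n}\le\hat{\theta}_{LS,i}\le u\}$ twice), and the bookkeeping of when the atom at $-\alpha_{i,n}\theta_{i}/\sigma$ is included all check out, and the passage to (\ref{hard_finite_sample_density}) by identifying the single jump and the absolutely continuous piece off the flat interval is sound. The route differs from the paper's in emphasis rather than substance: the paper proves Propositions \ref{1}--\ref{3} jointly by observing that $\hat{\theta}_{H,i}/(\sigma\xi_{i,n})$ is the hard-thresholded version of an $N(\theta_{i}/(\sigma\xi_{i,n}),1/n)$ variable and then, exploiting this scale identification, simply cites eq.~(4) of P\"{o}tscher and Leeb (2009) to obtain the measure $dH_{H,n,\theta,\sigma}^{i}$ directly, recovering the cdf ``by elementary calculations''; you instead work self-containedly in the opposite order, computing the cdf (\ref{hard_finite_sample}) from first principles and then reading off the density form. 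What the paper's reduction buys is brevity and a uniform treatment of all three estimators at once; what your computation buys is a fully explicit, citation-free verification in which the boundary cases $u=0$ and $u=\pm\sigma\xi_{i,n}\eta_{i,n}$ and the placement of the pointmass are checked rather than delegated. Both yield exactly the stated formulas.
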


\begin{proposition}
\label{2}The cdf $H_{S,n,\theta ,\sigma }^{i}:=H_{S,\eta _{i,n},n,\theta
,\sigma }^{i}$ of $\sigma ^{-1}\alpha _{i,n}(\hat{\theta}_{S,i}-\theta _{i})$
is given by%
\begin{eqnarray}
H_{S,n,\theta ,\sigma }^{i}(x) &=&\Phi \left( n^{1/2}x/(\alpha _{i,n}\xi
_{i,n})+n^{1/2}\eta _{i,n}\right) \boldsymbol{1}\left( \alpha
_{i,n}^{-1}x+\theta _{i}/\sigma \geq 0\right)  \notag \\
&&+\Phi \left( n^{1/2}x/(\alpha _{i,n}\xi _{i,n})-n^{1/2}\eta _{i,n}\right) 
\boldsymbol{1}\left( \alpha _{i,n}^{-1}x+\theta _{i}/\sigma <0\right) ,
\label{soft_finite_sample}
\end{eqnarray}%
or, equivalently, 
\begin{eqnarray}
dH_{S,n,\theta ,\sigma }^{i}(x) &=&\left\{ \Phi \left( n^{1/2}\left( -\theta
_{i}/(\sigma \xi _{i,n})+\eta _{i,n}\right) \right) -\Phi \left(
n^{1/2}\left( -\theta _{i}/(\sigma \xi _{i,n})-\eta _{i,n}\right) \right)
\right\} d\delta _{-\alpha _{i,n}\theta _{i}/\sigma }(x)  \notag \\
&&+\left( n^{1/2}/(\alpha _{i,n}\xi _{i,n})\right) \left\{ \phi \left(
n^{1/2}x/(\alpha _{i,n}\xi _{i,n})+n^{1/2}\eta _{i,n}\right) \boldsymbol{1}%
\left( \alpha _{i,n}^{-1}x+\theta _{i}/\sigma >0\right) \right.
\label{soft_finite_sample_density} \\
&&\left. +\phi \left( n^{1/2}x/(\alpha _{i,n}\xi _{i,n})-n^{1/2}\eta
_{i,n}\right) \boldsymbol{1}\left( \alpha _{i,n}^{-1}x+\theta _{i}/\sigma
<0\right) \right\} dx.  \notag
\end{eqnarray}
\end{proposition}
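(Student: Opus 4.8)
The plan is to use that $\hat\theta_{S,i}$ is a fixed, monotone, piecewise-linear function of the single Gaussian variable $\hat\theta_{LS,i}$, so that computing the cdf reduces to inverting this function and evaluating a normal probability. Write $c_{i,n}=\sigma\xi_{i,n}\eta_{i,n}>0$ and let $s(y)=\operatorname{sign}(y)(|y|-c_{i,n})_{+}$, so that $\hat\theta_{S,i}=s(\hat\theta_{LS,i})$. The map $s$ is continuous and nondecreasing, with $s(y)=y-c_{i,n}$ on $\{y>c_{i,n}\}$, $s(y)=0$ on $\{|y|\le c_{i,n}\}$, and $s(y)=y+c_{i,n}$ on $\{y<-c_{i,n}\}$. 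Recall that $\hat\theta_{LS,i}$ is $N(\theta_i,\sigma^2\xi_{i,n}^2/n)$-distributed.

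First I would establish (\ref{soft_finite_sample}). By definition, $H_{S,n,\theta,\sigma}^i(x)=P_{n,\theta,\sigma}(\hat\theta_{S,i}\le\theta_i+\sigma x/\alpha_{i,n})$; put $t:=\theta_i+\sigma x/\alpha_{i,n}$, so that $t/\sigma=\alpha_{i,n}^{-1}x+\theta_i/\sigma$ and in particular the sign of $t$ governs the case split in (\ref{soft_finite_sample}). From the explicit piecewise form of $s$ and its monotonicity one checks that $\{s(\hat\theta_{LS,i})\le t\}=\{\hat\theta_{LS,i}\le t+c_{i,n}\}$ when $t\ge0$ (the flat segment of $s$ then contributes but is harmless there), whereas $\{s(\hat\theta_{LS,i})\le t\}=\{\hat\theta_{LS,i}\le t-c_{i,n}\}$ when $t<0$. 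Evaluating the $N(\theta_i,\sigma^2\xi_{i,n}^2/n)$-cdf at $t\pm c_{i,n}$, substituting $t=\theta_i+\sigma x/\alpha_{i,n}$ and $c_{i,n}=\sigma\xi_{i,n}\eta_{i,n}$, and simplifying yields the two branches $\Phi(n^{1/2}x/(\alpha_{i,n}\xi_{i,n})\pm n^{1/2}\eta_{i,n})$, which is precisely (\ref{soft_finite_sample}).

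Next I would pass to the density form (\ref{soft_finite_sample_density}). The two branches of (\ref{soft_finite_sample}) coincide except at the single point $x_0:=-\alpha_{i,n}\theta_i/\sigma$, which is the value taken by $\sigma^{-1}\alpha_{i,n}(\hat\theta_{S,i}-\theta_i)$ on the event $\{\hat\theta_{S,i}=0\}$. There the cdf is right-continuous and has a jump whose size, computed as the value of the branch for $t\ge0$ at $x_0$ minus the left limit taken along the branch for $t<0$, equals $\Phi(n^{1/2}(-\theta_i/(\sigma\xi_{i,n})+\eta_{i,n}))-\Phi(n^{1/2}(-\theta_i/(\sigma\xi_{i,n})-\eta_{i,n}))$; by (\ref{select_prob}) this equals $P_{n,\theta,\sigma}(\hat\theta_i=0)$, the mass assigned to $\delta_{-\alpha_{i,n}\theta_i/\sigma}$ in (\ref{soft_finite_sample_density}). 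On each of the two open half-lines $\{\alpha_{i,n}^{-1}x+\theta_i/\sigma>0\}$ and $\{\alpha_{i,n}^{-1}x+\theta_i/\sigma<0\}$ the cdf in (\ref{soft_finite_sample}) is continuously differentiable, and differentiating $\Phi(n^{1/2}x/(\alpha_{i,n}\xi_{i,n})\pm n^{1/2}\eta_{i,n})$ produces the absolutely continuous piece in (\ref{soft_finite_sample_density}); adding it to the atom at $x_0$ gives (\ref{soft_finite_sample_density}).

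I do not anticipate a substantive obstacle. The only points requiring care are (a) inverting $s$ correctly in the presence of its flat segment, which is why for $t\ge0$ the relevant inverse image is the one-sided set $\{\hat\theta_{LS,i}\le t+c_{i,n}\}$ rather than a two-sided set, and (b) attributing the jump of the cdf at $x_0$ entirely to the pointmass $\delta_{-\alpha_{i,n}\theta_i/\sigma}$, while noting that the absolutely continuous part carries no atom there (it merely has a jump discontinuity at $x_0$). Alternatively, one could deduce the result from Proposition~\ref{1} by means of the identity $\hat\theta_{S,i}=\hat\theta_{H,i}-\operatorname{sign}(\hat\theta_{LS,i})\,c_{i,n}\,\boldsymbol{1}(|\hat\theta_{LS,i}|>c_{i,n})$, but the direct computation above is shorter and more transparent.
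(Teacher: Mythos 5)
Your proposal is correct: the inversion of the monotone, piecewise-linear soft-thresholding map is handled properly (the one-sided preimages $\{\hat\theta_{LS,i}\le t+c_{i,n}\}$ for $t\ge 0$ and $\{\hat\theta_{LS,i}\le t-c_{i,n}\}$ for $t<0$ are exactly right, and the flat segment causes no trouble), the resulting cdf matches (\ref{soft_finite_sample}), and the decomposition into the atom at $-\alpha_{i,n}\theta_i/\sigma$ of mass $\Phi\left(n^{1/2}(-\theta_i/(\sigma\xi_{i,n})+\eta_{i,n})\right)-\Phi\left(n^{1/2}(-\theta_i/(\sigma\xi_{i,n})-\eta_{i,n})\right)$ plus the absolutely continuous part obtained by differentiating on each open half-line gives (\ref{soft_finite_sample_density}). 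The route differs from the paper's, however: the paper does not rederive the distribution but rescales, identifying $\hat\theta_{LS,i}/(\sigma\xi_{i,n})$ and $\theta_i/(\sigma\xi_{i,n})$ with the Gaussian observation and parameter in P\"otscher and Leeb (2009), and then simply invokes eq.~(5) of that reference to obtain the density $dH_{S,n,\theta,\sigma}^i$, recovering the cdf afterwards by elementary calculations; you instead work cdf-first and entirely from scratch. The paper's approach buys brevity and uniformity (the same two-line argument covers Propositions \ref{1}, \ref{2}, and \ref{3} by citing the corresponding formulas in the earlier papers), while yours buys a self-contained and transparent verification that does not require the reader to consult the external reference, at the cost of repeating a computation already available in the literature. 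Your closing remark that one could alternatively deduce the result from Proposition \ref{1} via the identity relating $\hat\theta_{S,i}$ to $\hat\theta_{H,i}$ is also sound but unnecessary, as you note.
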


\begin{proposition}
\label{3}The cdf $H_{AS,n,\theta ,\sigma }^{i}:=H_{AS,\eta _{i,n},n,\theta
,\sigma }^{i}$ of $\sigma ^{-1}\alpha _{i,n}(\hat{\theta}_{AS,i}-\theta
_{i}) $ is given by 
\begin{equation}
H_{AS,n,\theta ,\sigma }^{i}(x)=\Phi \left( z_{n,\theta ,\sigma
}^{(2)}(x,\eta _{i,n})\right) \boldsymbol{1}\left( \alpha
_{i,n}^{-1}x+\theta _{i}/\sigma \geq 0\right) +\Phi \left( z_{n,\theta
,\sigma }^{(1)}(x,\eta _{i,n})\right) \boldsymbol{1}\left( \alpha
_{i,n}^{-1}x+\theta _{i}/\sigma <0\right) ,  \label{adaptive_finite_sample}
\end{equation}%
where $z_{n,\theta ,\sigma }^{(1)}(x,y)\leq z_{n,\theta ,\sigma }^{(2)}(x,y)$
are defined by 
\begin{equation*}
0.5n^{1/2}\xi _{i,n}^{-1}(\alpha _{i,n}^{-1}x-\theta _{i}/\sigma )\pm n^{1/2}%
\sqrt{\left( 0.5\xi _{i,n}^{-1}(\alpha _{i,n}^{-1}x+\theta _{i}/\sigma
)\right) ^{2}+y^{2}}.
\end{equation*}%
Or, equivalently, 
\begin{eqnarray*}
dH_{AS,n,\theta ,\sigma }^{i}(x) &=&\left\{ \Phi \left( n^{1/2}\left(
-\theta _{i}/(\sigma \xi _{i,n})+\eta _{i,n}\right) \right) -\Phi \left(
n^{1/2}\left( -\theta _{i}/(\sigma \xi _{i,n})-\eta _{i,n}\right) \right)
\right\} d\delta _{-\alpha _{i,n}\theta _{i}/\sigma }(x) \\
&&+(0.5n^{1/2}/(\alpha _{i,n}\xi _{i,n}))\left\{ \phi \left( z_{n,\theta
,\sigma }^{(2)}(x,\eta _{i,n})\right) (1+t_{n,\theta ,\sigma }(x,\eta
_{i,n}))\boldsymbol{1}\left( \alpha _{i,n}^{-1}x+\theta _{i}/\sigma
>0\right) \right. \\
&&+\left. \phi \left( z_{n,\theta ,\sigma }^{(1)}(x,\eta _{i,n})\right)
(1-t_{n,\theta ,\sigma }(x,\eta _{i,n}))\boldsymbol{1}\left( \alpha
_{i,n}^{-1}x+\theta _{i}/\sigma <0\right) \right\} ,
\end{eqnarray*}%
where $t_{n,\theta ,\sigma }(x,y)=0.5\xi _{i,n}^{-1}\left( \alpha
_{i,n}^{-1}x+\theta _{i}/\sigma \right) /\left( (0.5\xi _{i,n}^{-1}\left(
\alpha _{i,n}^{-1}x+\theta _{i}/\sigma \right) )^{2}+y^{2}\right) ^{1/2}.$
\end{proposition}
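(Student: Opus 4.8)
The plan is to reduce the statement to a change-of-variables computation for a single Gaussian random variable pushed through the scalar adaptive-soft-thresholding map. Since $\sigma$ is known here, put $W=\sigma^{-1}\hat{\theta}_{LS,i}$, which is $N(\theta_{i}/\sigma,\xi_{i,n}^{2}/n)$-distributed, and abbreviate $c=\xi_{i,n}\eta_{i,n}>0$. Using $\sigma^{2}\xi_{i,n}^{2}\eta_{i,n}^{2}/\hat{\theta}_{LS,i}^{2}=c^{2}/W^{2}$, the definition of $\hat{\theta}_{AS,i}$ gives $\sigma^{-1}\hat{\theta}_{AS,i}=g(W)$, where $g(w)=w(1-c^{2}/w^{2})_{+}$, i.e.\ $g(w)=0$ for $|w|\le c$ and $g(w)=w-c^{2}/w$ for $|w|>c$. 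Hence $H_{AS,n,\theta,\sigma}^{i}(x)=P_{n,\theta,\sigma}(g(W)\le v(x))$, where $v(x)=\alpha_{i,n}^{-1}x+\theta_{i}/\sigma$ is precisely the quantity appearing in the indicators of (\ref{adaptive_finite_sample}).

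The second step is to read off the shape of $g$ from its piecewise formula: it is continuous, weakly increasing, identically $0$ on $[-c,c]$, and restricts to strictly increasing bijections $(c,\infty)\to(0,\infty)$ and $(-\infty,-c)\to(-\infty,0)$. Solving $w-c^{2}/w=v$, i.e.\ the quadratic $w^{2}-vw-c^{2}=0$, yields the two inverse branches $g_{+}^{-1}(v)=\tfrac12(v+\sqrt{v^{2}+4c^{2}})$ for $v\ge0$ and $g_{-}^{-1}(v)=\tfrac12(v-\sqrt{v^{2}+4c^{2}})$ for $v\le0$. Because of the flat stretch this gives $\{g(W)\le v\}=\{W\le g_{+}^{-1}(v)\}$ when $v\ge0$ and $\{g(W)\le v\}=\{W\le g_{-}^{-1}(v)\}$ when $v<0$. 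Feeding these half-lines into the $N(\theta_{i}/\sigma,\xi_{i,n}^{2}/n)$-cdf gives $H_{AS,n,\theta,\sigma}^{i}(x)=\Phi((n^{1/2}/\xi_{i,n})(g_{\pm}^{-1}(v(x))-\theta_{i}/\sigma))$ on the two regions, and it remains to recognise these arguments as $z_{n,\theta,\sigma}^{(2)}(x,\eta_{i,n})$ and $z_{n,\theta,\sigma}^{(1)}(x,\eta_{i,n})$; writing $v(x)/2-\theta_{i}/\sigma=\tfrac12(\alpha_{i,n}^{-1}x-\theta_{i}/\sigma)$ and pulling $\xi_{i,n}^{-1}$ inside the square root via $\xi_{i,n}^{-2}c^{2}=\eta_{i,n}^{2}$ does exactly that, while assigning the boundary value $v(x)=0$ to the $z^{(2)}$-branch keeps the cdf right-continuous. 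This proves (\ref{adaptive_finite_sample}).

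For the density representation I would first isolate the atom. The cdf is continuously differentiable in $x$ off the single point $x_{0}=-\alpha_{i,n}\theta_{i}/\sigma$ (where $v(x_{0})=0$), and there it jumps by $\Phi(z_{n,\theta,\sigma}^{(2)}(x_{0},\eta_{i,n}))-\Phi(z_{n,\theta,\sigma}^{(1)}(x_{0},\eta_{i,n}))$; at $x_{0}$ the square root collapses to $\eta_{i,n}$, so this jump equals $\Phi(n^{1/2}(-\theta_{i}/(\sigma\xi_{i,n})+\eta_{i,n}))-\Phi(n^{1/2}(-\theta_{i}/(\sigma\xi_{i,n})-\eta_{i,n}))$, which by (\ref{select_prob}) is $P_{n,\theta,\sigma}(|\hat{\theta}_{LS,i}|\le\sigma\xi_{i,n}\eta_{i,n})=P_{n,\theta,\sigma}(\hat{\theta}_{i}=0)$, the coefficient of $d\delta_{-\alpha_{i,n}\theta_{i}/\sigma}$. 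On $\{v(x)>0\}$ one differentiates $\Phi(z_{n,\theta,\sigma}^{(2)}(x,\eta_{i,n}))$ to get $\phi(z^{(2)})\,\partial_{x}z^{(2)}$, and the chain rule makes $\partial_{x}z_{n,\theta,\sigma}^{(2)}(x,y)$ the sum of $0.5n^{1/2}/(\alpha_{i,n}\xi_{i,n})$ from the linear term and the same prefactor times $t_{n,\theta,\sigma}(x,y)$ from the square-root term, i.e.\ $0.5n^{1/2}/(\alpha_{i,n}\xi_{i,n})(1+t_{n,\theta,\sigma}(x,y))$; on $\{v(x)<0\}$ the analogous computation with $z^{(1)}$ produces the factor $1-t_{n,\theta,\sigma}(x,y)$. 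Assembling the atom and these two absolutely continuous pieces gives the displayed $dH_{AS,n,\theta,\sigma}^{i}$.

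The computation is essentially bookkeeping. Two places warrant care: the algebraic reduction of $(n^{1/2}/\xi_{i,n})(g_{\pm}^{-1}(v(x))-\theta_{i}/\sigma)$ to the prescribed $z^{(j)}$ form, which hinges on the identity $\xi_{i,n}^{-2}c^{2}=\eta_{i,n}^{2}$ and on correctly tracking the numerous factors of $\tfrac12$; and the differentiation of the square-root part of $z^{(j)}$, where the ratio that emerges must be identified with $t_{n,\theta,\sigma}$. Neither involves any delicate analysis.
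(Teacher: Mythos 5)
Your proposal is correct: the reduction $\sigma^{-1}\hat{\theta}_{AS,i}=g(W)$ with $g(w)=w(1-c^{2}/w^{2})_{+}$, the inversion of the two strictly increasing branches via the quadratic $w^{2}-vw-c^{2}=0$, the identification of the resulting $\Phi$-arguments with $z^{(1)}$, $z^{(2)}$ (the identity $\xi_{i,n}^{-2}c^{2}=\eta_{i,n}^{2}$ and the boundary convention at $v(x)=0$ are handled properly, keeping the cdf right-continuous), the evaluation of the jump at $x_{0}=-\alpha_{i,n}\theta_{i}/\sigma$, and the differentiation producing the factors $1\pm t_{n,\theta,\sigma}$ all check out. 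Your route differs from the paper's in presentation rather than substance: the paper does not rederive anything, but rescales so that $\hat{\theta}_{LS,i}/(\sigma\xi_{i,n})$ is $N(\theta_{i}/(\sigma\xi_{i,n}),1/n)$ and then simply invokes the known finite-sample formulas (eqs.~(9)--(11)) for the adaptive soft-thresholding (adaptive LASSO) estimator in P\"otscher and Schneider (2009), followed by elementary algebra to restate them in the present notation. Your argument is self-contained and makes the change-of-variables mechanism behind those cited formulas explicit (monotone piecewise inversion of the thresholding map plus isolation of the atom at the thresholding point), at the cost of redoing a computation the paper deliberately outsources; the paper's proof buys brevity and makes transparent that the non-orthogonal design case reduces, componentwise, to the scalar known-variance case treated earlier.
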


The finite-sample distributions of $\hat{\theta}_{H,i}$, $\hat{\theta}_{S,i}$%
, and $\hat{\theta}_{AS,i}$ are seen to be non-normal. They are made up of
two components, one being a multiple of pointmass at $-\alpha _{i,n}\theta
_{i}/\sigma $ and the other one being absolutely continuous with a density
that is generally bimodal. For more discussion and some graphical
illustrations in a special case see P\"{o}tscher and Leeb (2009) and P\"{o}%
tscher and Schneider (2009).

\begin{remark}
\label{diag}\normalfont In the case where $X^{\prime }X$ is diagonal, the
estimators of the components $\theta _{i}$ and $\theta _{j}$ for $i\neq j$
are independent and hence the above results immediately allow one to
determine the finite-sample distributions of the entire vectors $\hat{\theta}%
_{H}$, $\hat{\theta}_{S}$, and $\hat{\theta}_{AS}$. In particular, this
provides the finite-sample distribution of the Lasso $\hat{\theta}_{L}$ and
the adaptive Lasso $\hat{\theta}_{AS}$ in the diagonal case (cf.~Remarks \ref%
{LASSO} and \ref{ALASSO}).
\end{remark}

\subsection{Unknown-Variance Case}

The finite-sample distributions of $\tilde{\theta}_{H,i}$, $\tilde{\theta}%
_{S,i}$, $\tilde{\theta}_{AS,i}$ are obtained next. The same remark on the
scaling as in the previous section applies here.

\begin{proposition}
\label{4}The cdf $H_{H,n,\theta ,\sigma }^{i\maltese }:=H_{H,\eta
_{i,n},n,\theta ,\sigma }^{i\maltese }$ of $\sigma ^{-1}\alpha _{i,n}(\tilde{%
\theta}_{H,i}-\theta _{i})$ is given by 
\begin{eqnarray}
H_{H,n,\theta ,\sigma }^{i\maltese }(x) &=&\Phi \left( n^{1/2}x/(\alpha
_{i,n}\xi _{i,n})\right) \int_{0}^{\infty }\boldsymbol{1}\left( \left\vert
\alpha _{i,n}^{-1}x+\theta _{i}/\sigma \right\vert >\xi _{i,n}s\eta
_{i,n}\right) \rho _{n-k}(s)ds  \label{hard_finite_sample_unknown} \\
&&+\int_{0}^{\infty }\Phi \left( n^{1/2}\left( -\theta _{i}/(\sigma \xi
_{i,n})+s\eta _{i,n}\right) \right) \boldsymbol{1}\left( 0\leq \alpha
_{i,n}^{-1}x+\theta _{i}/\sigma \leq \xi _{i,n}s\eta _{i,n}\right) \rho
_{n-k}(s)ds  \notag \\
&&+\int_{0}^{\infty }\Phi \left( n^{1/2}\left( -\theta _{i}/(\sigma \xi
_{i,n})-s\eta _{i,n}\right) \right) \boldsymbol{1}\left( -\xi _{i,n}s\eta
_{i,n}\leq \alpha _{i,n}^{-1}x+\theta _{i}/\sigma <0\right) \rho _{n-k}(s)ds.
\notag
\end{eqnarray}%
Or, equivalently, 
\begin{eqnarray}
dH_{H,n,\theta ,\sigma }^{i\maltese }(x) &=&\int_{0}^{\infty }\left\{ \Phi
\left( n^{1/2}\left( -\theta _{i}/(\sigma \xi _{i,n})+s\eta _{i,n}\right)
\right) \right.  \label{hard_finite_sample_unknown_density} \\
&&\left. -\Phi \left( n^{1/2}\left( -\theta _{i}/(\sigma \xi _{i,n})-s\eta
_{i,n}\right) \right) \right\} \rho _{n-k}(s)dsd\delta _{-\alpha
_{i,n}\theta _{i}/\sigma }(x)+n^{1/2}\alpha _{i,n}^{-1}\xi _{i,n}^{-1} 
\notag \\
&&\times \phi \left( n^{1/2}x/(\alpha _{i,n}\xi _{i,n})\right)
\int_{0}^{\infty }\boldsymbol{1}\left( \left\vert \alpha _{i,n}^{-1}x+\theta
_{i}/\sigma \right\vert >\xi _{i,n}s\eta _{i,n}\right) \rho _{n-k}(s)dsdx. 
\notag
\end{eqnarray}
\end{proposition}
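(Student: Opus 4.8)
The plan is to reduce everything to the already-established known-variance formula of Proposition \ref{1} by conditioning on the variance estimator. The structural observation driving this is that for every fixed $s>0$ the feasible hard-thresholding estimator $\tilde{\theta}_{H,i}(\eta_{i,n})$ agrees with the infeasible estimator $\hat{\theta}_{H,i}(s\eta_{i,n})$ on the event $\{\hat{\sigma}=s\sigma\}$: the sole difference between the two definitions is that the random threshold $\hat{\sigma}\xi_{i,n}\eta_{i,n}$ is replaced by the deterministic threshold $s\sigma\xi_{i,n}\eta_{i,n}=\sigma\xi_{i,n}(s\eta_{i,n})$, which is exactly the threshold appearing in $\hat{\theta}_{H,i}(s\eta_{i,n})$. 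This is the same device that produced (\ref{select_prob_unknown}).

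First I would make this precise without conditioning on the null event $\{\hat{\sigma}=s\sigma\}$. Write $\sigma^{-1}\alpha_{i,n}(\tilde{\theta}_{H,i}-\theta_i)$ as a fixed Borel function $g(\hat{\theta}_{LS,i},\hat{\sigma})$ of the pair $(\hat{\theta}_{LS,i},\hat{\sigma})$, and recall the standard fact that in the Gaussian model $Y=X\theta+u$, $u\sim N(0,\sigma^{2}I_{n})$, the statistics $\hat{\theta}_{LS}$ and $\hat{\sigma}$ are independent, with $\hat{\sigma}/\sigma$ having density $\rho_{n-k}$ (cf.~Leeb and P\"{o}tscher (2003, p.~110)). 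Since $g(a,s\sigma)$ is precisely the expression defining $\hat{\theta}_{H,i}(s\eta_{i,n})$ in terms of $\hat{\theta}_{LS,i}=a$, Fubini's theorem applied to the product law of $(\hat{\theta}_{LS,i},\hat{\sigma})$ gives, for every bounded Borel $f$,
\[
E_{n,\theta,\sigma}\,f\!\left(\sigma^{-1}\alpha_{i,n}(\tilde{\theta}_{H,i}-\theta_i)\right)=\int_{0}^{\infty}\left(\int f\,dH_{H,s\eta_{i,n},n,\theta,\sigma}^{i}\right)\rho_{n-k}(s)\,ds ,
\]
so that the cdf of $\sigma^{-1}\alpha_{i,n}(\tilde{\theta}_{H,i}-\theta_i)$ equals $\int_{0}^{\infty}H_{H,s\eta_{i,n},n,\theta,\sigma}^{i}(x)\,\rho_{n-k}(s)\,ds$.

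It then remains to insert the explicit formula (\ref{hard_finite_sample}) of Proposition \ref{1}, with $\eta_{i,n}$ replaced throughout by $s\eta_{i,n}$, and to integrate the three summands against $\rho_{n-k}(s)\,ds$. The first summand $\Phi\!\left(n^{1/2}x/(\alpha_{i,n}\xi_{i,n})\right)$ does not involve $s$ and hence factors out of the integral, leaving the factor $\int_{0}^{\infty}\boldsymbol{1}\!\left(\left|\alpha_{i,n}^{-1}x+\theta_i/\sigma\right|>\xi_{i,n}s\eta_{i,n}\right)\rho_{n-k}(s)\,ds$; the two remaining summands are integrated verbatim. This yields (\ref{hard_finite_sample_unknown}). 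For the equivalent density form (\ref{hard_finite_sample_unknown_density}) one integrates the decomposition (\ref{hard_finite_sample_density}) (again with $\eta_{i,n}$ replaced by $s\eta_{i,n}$) against $\rho_{n-k}(s)\,ds$, interchanging the $s$-integration with integration against test functions by Fubini: the pointmass sits at the $s$-independent location $-\alpha_{i,n}\theta_i/\sigma$, so its weight becomes the displayed $s$-integral of the Gaussian increment, while in the absolutely continuous part the $s$-independent factor $n^{1/2}\alpha_{i,n}^{-1}\xi_{i,n}^{-1}\phi\!\left(n^{1/2}x/(\alpha_{i,n}\xi_{i,n})\right)$ pulls out and only the indicator is integrated against $\rho_{n-k}$.

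I expect the only real work to be the measure-theoretic bookkeeping — setting up the independence/Fubini step cleanly so that the heuristic ``condition on $\hat{\sigma}=s\sigma$'' becomes rigorous, and checking the interchange of integration in the density computation. All of this is routine because $\rho_{n-k}$ is a bounded probability density (recall it is set to $0$ for $s<0$, making it bounded and continuous on $\mathbb{R}$) and every integrand in sight is bounded and jointly measurable; no estimates or limiting arguments are needed. Everything else reduces to substituting the rescaled tuning parameter $s\eta_{i,n}$ into Proposition \ref{1}.
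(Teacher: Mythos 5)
Your proposal is correct and follows essentially the same route as the paper: condition on $\hat{\sigma}=s\sigma$ (justified by independence of $\hat{\theta}_{LS,i}$ and $\hat{\sigma}$), write $H_{H,n,\theta,\sigma}^{i\maltese}(x)=\int_{0}^{\infty}H_{H,s\eta_{i,n},n,\theta,\sigma}^{i}(x)\rho_{n-k}(s)\,ds$, substitute Proposition \ref{1} with $\eta_{i,n}$ replaced by $s\eta_{i,n}$, and obtain the density form via Fubini. Your rendering of the conditioning step through the product law rather than conditioning on a null event is a harmless (slightly more explicit) variant of the paper's argument.
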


\begin{proposition}
\label{5}The cdf $H_{S,n,\theta ,\sigma }^{i\maltese }:=H_{S,\eta
_{i,n},n,\theta ,\sigma }^{i\maltese }$ of $\sigma ^{-1}\alpha _{i,n}(\tilde{%
\theta}_{S,i}-\theta _{i})$ is given by%
\begin{eqnarray}
H_{S,n,\theta ,\sigma }^{i\maltese }(x) &=&\int_{0}^{\infty }\Phi \left(
n^{1/2}x/(\alpha _{i,n}\xi _{i,n})+n^{1/2}s\eta _{i,n}\right) \rho
_{n-k}(s)ds\boldsymbol{1}\left( \alpha _{i,n}^{-1}x+\theta _{i}/\sigma \geq
0\right)  \notag \\
&&+\int_{0}^{\infty }\Phi \left( n^{1/2}x/(\alpha _{i,n}\xi
_{i,n})-n^{1/2}s\eta _{i,n}\right) \rho _{n-k}(s)ds\boldsymbol{1}\left(
\alpha _{i,n}^{-1}x+\theta _{i}/\sigma <0\right)  \notag \\
&=&T_{n-k,-n^{1/2}x/(\alpha _{i,n}\xi _{i,n})}\left( n^{1/2}\eta
_{i,n}\right) \boldsymbol{1}\left( \alpha _{i,n}^{-1}x+\theta _{i}/\sigma
\geq 0\right)  \notag \\
&&+T_{n-k,-n^{1/2}x/(\alpha _{i,n}\xi _{i,n})}\left( -n^{1/2}\eta
_{i,n}\right) \boldsymbol{1}\left( \alpha _{i,n}^{-1}x+\theta _{i}/\sigma
<0\right) .  \label{soft_finite_sample_unknown}
\end{eqnarray}%
Or, equivalently,%
\begin{eqnarray}
dH_{S,n,\theta ,\sigma }^{i\maltese }(x) &=&\int_{0}^{\infty }\left\{ \Phi
\left( n^{1/2}\left( -\theta _{i}/(\sigma \xi _{i,n})+s\eta _{i,n}\right)
\right) \right.  \label{soft_finite_sample_unknown_density} \\
&&\left. -\Phi \left( n^{1/2}\left( -\theta _{i}/(\sigma \xi _{i,n})-s\eta
_{i,n}\right) \right) \right\} \rho _{n-k}(s)dsd\delta _{-\alpha
_{i,n}\theta _{i}/\sigma }(x)+n^{1/2}\alpha _{i,n}^{-1}\xi _{i,n}^{-1} 
\notag \\
&&\times \left\{ \int_{0}^{\infty }\phi \left( n^{1/2}x/(\alpha _{i,n}\xi
_{i,n})+n^{1/2}s\eta _{i,n}\right) \rho _{n-k}(s)ds\boldsymbol{1}\left(
\alpha _{i,n}^{-1}x+\theta _{i}/\sigma >0\right) \right.  \notag \\
&&\left. +\int_{0}^{\infty }\phi \left( n^{1/2}x/(\alpha _{i,n}\xi
_{i,n})-n^{1/2}s\eta _{i,n}\right) \rho _{n-k}(s)ds\boldsymbol{1}\left(
\alpha _{i,n}^{-1}x+\theta _{i}/\sigma <0\right) \right\} dx.  \notag
\end{eqnarray}
\end{proposition}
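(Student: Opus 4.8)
The plan is to reduce the claim to the known-variance formula of Proposition \ref{2} by conditioning on $\hat{\sigma}$, exactly in the spirit of the computation leading to (\ref{select_prob_unknown}). The starting observation is elementary: since $\tilde{\theta}_{S,i}$ enters its defining formula only through the product $\hat{\sigma}\xi_{i,n}\eta_{i,n}$, on the event $\{\hat{\sigma}=s\sigma\}$ it coincides with the infeasible estimator obtained by replacing the tuning parameter $\eta_{i,n}$ by $s\eta_{i,n}$; in symbols, $\tilde{\theta}_{S,i}=\hat{\theta}_{S,i}(s\eta_{i,n})$ there. Let $\rho_{n-k}$ denote the density of $\hat{\sigma}/\sigma$. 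Because $\hat{\sigma}$ is independent of $\hat{\theta}_{LS,i}$, and hence of $\hat{\theta}_{S,i}(s\eta_{i,n})$ (a function of $\hat{\theta}_{LS,i}$ alone that does not involve $\hat{\sigma}$), conditioning gives
\begin{equation*}
H_{S,n,\theta,\sigma}^{i\maltese}(x)=\int_{0}^{\infty}H_{S,s\eta_{i,n},n,\theta,\sigma}^{i}(x)\,\rho_{n-k}(s)\,ds,
\end{equation*}
where $H_{S,s\eta_{i,n},n,\theta,\sigma}^{i}$ is the cdf of Proposition \ref{2} with $\eta_{i,n}$ replaced by $s\eta_{i,n}$.

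Next I would substitute the explicit expression (\ref{soft_finite_sample}). The point to notice is that the two indicators there, $\boldsymbol{1}(\alpha_{i,n}^{-1}x+\theta_i/\sigma\geq 0)$ and $\boldsymbol{1}(\alpha_{i,n}^{-1}x+\theta_i/\sigma<0)$, involve neither $s$ nor $\eta_{i,n}$, so they factor out of the integral, leaving $\int_{0}^{\infty}\Phi(n^{1/2}x/(\alpha_{i,n}\xi_{i,n})\pm n^{1/2}s\eta_{i,n})\rho_{n-k}(s)\,ds$ inside. This is precisely the first displayed form in the proposition. To obtain the second form I would use the identity $T_{n-k,c}(t)=\int_{0}^{\infty}\Phi(ts-c)\,\rho_{n-k}(s)\,ds$, itself an immediate consequence of the representation of a non-central $t$ variable as $(Z+c)/S$ with $Z\sim N(0,1)$ independent of $S=(\chi_{n-k}^{2}/(n-k))^{1/2}$; matching the argument $ts-c$ with $n^{1/2}x/(\alpha_{i,n}\xi_{i,n})\pm n^{1/2}s\eta_{i,n}$ forces $t=\pm n^{1/2}\eta_{i,n}$ and $c=-n^{1/2}x/(\alpha_{i,n}\xi_{i,n})$. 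Getting the sign of this non-centrality parameter right is the only delicate piece of bookkeeping.

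Finally, the density representation (\ref{soft_finite_sample_unknown_density}) comes from differentiating the cdf. Off the point $x=-\alpha_{i,n}\theta_i/\sigma$ the cdf is continuously differentiable, and differentiation under the integral sign is permissible because $\phi$ is bounded and $\rho_{n-k}$ is integrable; this yields the absolutely continuous part, the open indicators $\boldsymbol{1}(\alpha_{i,n}^{-1}x+\theta_i/\sigma>0)$ and $\boldsymbol{1}(\alpha_{i,n}^{-1}x+\theta_i/\sigma<0)$ arising since the single omitted point carries no Lebesgue mass. At $x=-\alpha_{i,n}\theta_i/\sigma$ the cdf has a jump equal to the difference of the two mixture integrals evaluated there; substituting $n^{1/2}x/(\alpha_{i,n}\xi_{i,n})=-n^{1/2}\theta_i/(\sigma\xi_{i,n})$ shows this difference is exactly the coefficient of $d\delta_{-\alpha_{i,n}\theta_i/\sigma}$ appearing in (\ref{soft_finite_sample_unknown_density}). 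Once the conditioning reduction and the non-central $t$ identity are in place there is no genuine obstacle; the remaining work is the bookkeeping of the indicator boundaries and signs just mentioned.
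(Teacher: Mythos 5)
Your proposal is correct and follows essentially the same route as the paper: conditioning on $\hat{\sigma}=s\sigma$ and using independence of $\hat{\sigma}$ and $\hat{\theta}_{LS,i}$ to write $H_{S,n,\theta,\sigma}^{i\maltese}$ as a $\rho_{n-k}$-mixture of the known-variance cdfs with tuning parameter $s\eta_{i,n}$, then invoking the identity $\int_{0}^{\infty}\Phi(a+bs)\rho_{\nu}(s)\,ds=T_{\nu,-a}(b)$ and elementary calculations for the density. Your explicit treatment of the jump at $x=-\alpha_{i,n}\theta_i/\sigma$ just spells out what the paper leaves as "elementary calculations."
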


\begin{proposition}
\label{6}The cdf $H_{AS,n,\theta ,\sigma }^{i\maltese }:=H_{AS,\eta
_{i,n},n,\theta ,\sigma }^{i\maltese }$ of $\sigma ^{-1}\alpha _{i,n}(\tilde{%
\theta}_{AS,i}-\theta _{i})$ is given by 
\begin{eqnarray}
H_{AS,n,\theta ,\sigma }^{i\maltese }(x) &=&\int_{0}^{\infty }\Phi \left(
z_{n,\theta ,\sigma }^{(2)}(x,s\eta _{i,n})\right) \rho _{n-k}(s)ds%
\boldsymbol{1}\left( \alpha _{i,n}^{-1}x+\theta _{i}/\sigma \geq 0\right) 
\notag \\
&&+\int_{0}^{\infty }\Phi \left( z_{n,\theta ,\sigma }^{(1)}(x,s\eta
_{i,n})\right) \rho _{n-k}(s)ds\boldsymbol{1}\left( \alpha
_{i,n}^{-1}x+\theta _{i}/\sigma <0\right) .
\label{adaptive_finite_sample_unknown}
\end{eqnarray}%
Or, equivalently, 
\begin{eqnarray}
dH_{AS,n,\theta ,\sigma }^{i\maltese }(x) &=&\int_{0}^{\infty }\left\{ \Phi
\left( n^{1/2}\left( -\theta _{i}/(\sigma \xi _{i,n})+s\eta _{i,n}\right)
\right) \right.  \label{adaptive_finite_sample_unknown_density} \\
&&\left. -\Phi \left( n^{1/2}\left( -\theta _{i}/(\sigma \xi _{i,n})-s\eta
_{i,n}\right) \right) \right\} \rho _{n-k}(s)dsd\delta _{-\alpha
_{i,n}\theta _{i}/\sigma }(x)+(0.5n^{1/2}/(\alpha _{i,n}\xi _{i,n}))  \notag
\\
&&\times \left\{ \int_{0}^{\infty }\phi \left( z_{n,\theta ,\sigma
}^{(2)}(x,s\eta _{i,n})\right) (1+t_{n,\theta ,\sigma }(x,s\eta _{i,n}))\rho
_{n-k}(s)ds\boldsymbol{1}\left( \alpha _{i,n}^{-1}x+\theta _{i}/\sigma
>0\right) \right.  \notag \\
&&\left. +\int_{0}^{\infty }\phi \left( z_{n,\theta ,\sigma }^{(1)}(x,s\eta
_{i,n})\right) (1-t_{n,\theta ,\sigma }(x,s\eta _{i,n}))\rho _{n-k}(s)ds%
\boldsymbol{1}\left( \alpha _{i,n}^{-1}x+\theta _{i}/\sigma <0\right)
\right\} dx.  \notag
\end{eqnarray}
\end{proposition}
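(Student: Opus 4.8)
The plan is to reduce everything to the known-variance result, Proposition \ref{3}, by conditioning on $\hat{\sigma}$, in the same way one would handle Propositions \ref{4} and \ref{5}. Two standard facts underpin this. First, in the Gaussian linear model $\hat{\sigma}$ is independent of $\hat{\theta}_{LS,i}$: the vectors $\hat{\theta}_{LS}=(X'X)^{-1}X'Y$ and $Y-X\hat{\theta}_{LS}$ are jointly normal and uncorrelated, hence independent, and $\hat{\sigma}^{2}$ is a function of the latter. Second, $\hat{\sigma}/\sigma$ has density $\rho_{n-k}$ on $(0,\infty)$, since $(n-k)\hat{\sigma}^{2}/\sigma^{2}$ is $\chi^{2}_{n-k}$-distributed.

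First I would record the elementary identity that, for any fixed $s>0$, substituting $\hat{\sigma}=s\sigma$ in the definition of $\tilde{\theta}_{AS,i}$ turns it into $\hat{\theta}_{AS,i}(s\eta_{i,n})$, i.e.\ the infeasible adaptive soft-thresholding estimator with tuning parameter rescaled by $s$; this holds because $\hat{\sigma}$ enters only through $\hat{\sigma}^{2}\xi_{i,n}^{2}\eta_{i,n}^{2}=\sigma^{2}\xi_{i,n}^{2}(s\eta_{i,n})^{2}$. By independence the joint law of $(\hat{\theta}_{LS,i},\hat{\sigma})$ factors into the product of its marginals, so Fubini's theorem together with the fact that $\hat{\sigma}/\sigma$ has density $\rho_{n-k}$ gives
\begin{equation*}
H_{AS,n,\theta ,\sigma }^{i\maltese }(x)=\int_{0}^{\infty }H_{AS,s\eta _{i,n},n,\theta ,\sigma }^{i}(x)\,\rho _{n-k}(s)\,ds,
\end{equation*}
where $H_{AS,s\eta _{i,n},n,\theta ,\sigma }^{i}$ is the cdf from Proposition \ref{3} taken with $\eta _{i,n}$ replaced by $s\eta _{i,n}$. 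Inserting the explicit expression (\ref{adaptive_finite_sample}) and pulling the indicators $\boldsymbol{1}(\alpha _{i,n}^{-1}x+\theta _{i}/\sigma \geq 0)$ and $\boldsymbol{1}(\alpha _{i,n}^{-1}x+\theta _{i}/\sigma <0)$ — which do not involve $s$ — out of the respective integrals yields (\ref{adaptive_finite_sample_unknown}).

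For the density representation (\ref{adaptive_finite_sample_unknown_density}) I would use that every $H_{AS,s\eta _{i,n},n,\theta ,\sigma }^{i}$ splits, as in the density form of Proposition \ref{3}, into a pointmass located at $-\alpha _{i,n}\theta _{i}/\sigma $ — the \emph{same} location for all $s$ — plus an absolutely continuous part. By Tonelli the $\rho _{n-k}$-mixture above then again splits into a pointmass at $-\alpha _{i,n}\theta _{i}/\sigma $, whose weight is the $\rho _{n-k}$-average of the known-variance pointmass weights with $\eta _{i,n}$ replaced by $s\eta _{i,n}$, plus an absolutely continuous component whose density is the $\rho _{n-k}$-average of the known-variance densities. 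Alternatively one differentiates the displayed cdf formula in $x$ under the integral sign, which is legitimate given the boundedness of $\phi$, of $\rho _{n-k}$, and of $t_{n,\theta ,\sigma }(\,\cdot\,,s\eta _{i,n})$ (the last being bounded by $1$ in absolute value). Either route delivers (\ref{adaptive_finite_sample_unknown_density}).

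I expect the main obstacle to be purely a matter of bookkeeping: justifying the interchange of integration and differentiation (or, in the mixture route, the Tonelli step for the absolutely continuous component), and checking that the jump of the cdf at $x=-\alpha _{i,n}\theta _{i}/\sigma $ — the point where the branch in (\ref{adaptive_finite_sample_unknown}) switches from $z_{n,\theta ,\sigma }^{(1)}$ to $z_{n,\theta ,\sigma }^{(2)}$ — equals exactly the integrated pointmass weight. Both reduce to verifications already carried out for Proposition \ref{3}, so no genuinely new difficulty arises; the rest is routine substitution, mirroring the proofs of Propositions \ref{4} and \ref{5}.
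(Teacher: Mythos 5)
Your proposal is correct and takes essentially the same route as the paper: conditioning on $\hat{\sigma}$, using its independence from $\hat{\theta}_{LS,i}$ to write $H_{AS,n,\theta ,\sigma }^{i\maltese }(x)=\int_{0}^{\infty }H_{AS,s\eta _{i,n},n,\theta ,\sigma }^{i}(x)\rho _{n-k}(s)ds$, and then substituting the known-variance formulas of Proposition \ref{3} with $\eta _{i,n}$ replaced by $s\eta _{i,n}$, the density form following by elementary calculations (Fubini/Tonelli). No gap; your bookkeeping remarks on the common pointmass location and the interchange of integrals correspond exactly to the "elementary calculations" the paper invokes.
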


As in the known-variance case the distributions are a convex combination of
pointmass and an absolutely continuous part. In case of hard-thresholding,
the averaging with respect to the density $\rho _{n-k}$ smoothes the
indicator functions leading to a continuous density function for the
absolutely continuous part (while in the known-variance case the density
function is only piece-wise continuous, cf.~Figure 1 in P\"{o}tscher and
Leeb (2009)). This is not so for soft-thresholding and adaptive
soft-thresholding, where the averaging with respect to the density $\rho
_{n-k}$ does not affect the indicator functions involved; here the shape of
the distribution is qualitatively the same as in the known-variance case
(Figure 2 in P\"{o}tscher and Leeb (2009) and Figure 1 in P\"{o}tscher and
Schneider (2009)).

\begin{remark}
\normalfont In the case where $X^{\prime }X$ is diagonal, the finite-sample
distributions of the entire vectors $\tilde{\theta}_{H}$, $\tilde{\theta}%
_{S} $, and $\tilde{\theta}_{AS}$ can be found from the distributions of $%
\hat{\theta}_{H}$, $\hat{\theta}_{S}$, and $\hat{\theta}_{AS}$ (see Remark %
\ref{diag}) by conditioning on $\hat{\sigma}=s\sigma $ and integrating with
respect to $\rho _{n-k}(s)$. In particular, this provides the finite-sample
distributions of the Lasso $\tilde{\theta}_{L}$ and the adaptive Lasso $%
\tilde{\theta}_{AS}$ in the diagonal case (cf.~Remarks \ref{LASSO} and \ref%
{ALASSO}).
\end{remark}

\section{Large-Sample Distributions\label{LS}}

We next derive the asymptotic distributions of the thresholding estimators
under a moving-parameter (and not only under a fixed-parameter) framework
since it is well-known that asymptotics based only on a fixed-parameter
framework often lead to misleading conclusions regarding the performance of
the estimators (cf.~also the discussion in Section \ref{oracle}).

\subsection{The Known-Variance Case\label{LSDKVC}}

We first consider the infeasible versions of the thresholding estimators.

\begin{proposition}
\label{LSDK_H}Suppose that for given $i\geq 1$ satisfying $i\leq k=k(n)$ for
large enough $n$ we have $\xi _{i,n}\eta _{i,n}\rightarrow 0$ and $%
n^{1/2}\eta _{i,n}\rightarrow e_{i}$ where $0\leq e_{i}\leq \infty $.

(a) Assume $e_{i}<\infty $. Set the scaling factor $\alpha
_{i,n}=n^{1/2}/\xi _{i,n}$. Suppose that the true parameters $\theta
^{(n)}=(\theta _{1,n},\dots ,\theta _{k_{n},n})\in \mathbb{R}^{k_{n}}$ and $%
\sigma _{n}\in (0,\infty )$ satisfy $n^{1/2}\theta _{i,n}/(\sigma _{n}\xi
_{i,n})\rightarrow \nu _{i}\in \mathbb{\bar{R}}$. Then $H_{H,n,\theta
^{(n)},\sigma _{n}}^{i}$ converges weakly to the distribution with cdf%
\begin{equation*}
\Phi \left( x\right) \boldsymbol{1}\left( \left\vert x+\nu _{i}\right\vert
>e_{i}\right) +\Phi \left( -\nu _{i}+e_{i}\right) \boldsymbol{1}\left( 0\leq
x+\nu _{i}\leq e_{i}\right) +\Phi \left( -\nu _{i}-e_{i}\right) \boldsymbol{1%
}\left( -e_{i}\leq x+\nu _{i}<0\right) ,
\end{equation*}%
the corresponding measure being%
\begin{equation}
\left\{ \Phi \left( -\nu _{i}+e_{i}\right) -\Phi \left( -\nu
_{i}-e_{i}\right) \right\} d\delta _{-\nu _{i}}(x)+\phi \left( x\right) 
\boldsymbol{1}\left( \left\vert x+\nu _{i}\right\vert >e_{i}\right) dx.
\label{hard_large_sample_unknown_density_B}
\end{equation}%
[This distribution reduces to a standard normal distribution in case $%
\left\vert \nu _{i}\right\vert =\infty $ or $e_{i}=0$.]

(b) Assume $e_{i}=\infty $. Set the scaling factor $\alpha _{i,n}=\left( \xi
_{i,n}\eta _{i,n}\right) ^{-1}$. Suppose that the true parameters $\theta
^{(n)}=(\theta _{1,n},\ldots ,\theta _{k_{n},n})\in \mathbb{R}^{k_{n}}$ and $%
\sigma _{n}\in (0,\infty )$ satisfy $\theta _{i,n}/(\sigma _{n}\xi
_{i,n}\eta _{i,n})\rightarrow \zeta _{i}\in \mathbb{\bar{R}}$.

\qquad 1. If $\left\vert \zeta _{i}\right\vert <1$, then $H_{H,n,\theta
^{(n)},\sigma _{n}}^{i}$ converges weakly to $\delta _{-\zeta _{i}}$.

\qquad 2. If $\left\vert \zeta _{i}\right\vert >1$, then $H_{H,n,\theta
^{(n)},\sigma _{n}}^{i}$ converges weakly to $\delta _{0}$.

\qquad 3. If $\left\vert \zeta _{i}\right\vert =1$ and $n^{1/2}\left( \eta
_{i,n}-\zeta _{i}\theta _{i,n}/(\sigma _{n}\xi _{i,n})\right) \rightarrow
r_{i}$, for some $r_{i}\in \mathbb{\bar{R}}$, then $H_{H,n,\theta
^{(n)},\sigma _{n}}^{i}$ converges weakly to%
\begin{equation*}
\Phi (r_{i})\delta _{-\zeta _{i}}+(1-\Phi (r_{i}))\delta _{0}.
\end{equation*}
\end{proposition}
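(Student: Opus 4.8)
The plan is to obtain each limit by passing directly to the limit in the finite-sample cdf $H_{H,n,\theta ,\sigma }^{i}$ of Proposition \ref{1}, after inserting the prescribed scaling $\alpha _{i,n}$, and then invoking the standard fact that convergence of cdfs at every continuity point of the limit cdf is equivalent to weak convergence. With $\alpha _{i,n}=n^{1/2}/\xi _{i,n}$ (case (a)) the leading factor $n^{1/2}x/(\alpha _{i,n}\xi _{i,n})$ collapses to $x$, and the three indicator arguments, after dividing by $\xi _{i,n}$ and multiplying by $n^{1/2}$, become $x+n^{1/2}\theta _{i,n}/(\sigma _{n}\xi _{i,n})$ compared against $n^{1/2}\eta _{i,n}$; with $\alpha _{i,n}=(\xi _{i,n}\eta _{i,n})^{-1}$ (case (b)) that factor becomes $n^{1/2}\eta _{i,n}x$, and the indicator arguments, after dividing by $\xi _{i,n}\eta _{i,n}$, become $x+\theta _{i,n}/(\sigma _{n}\xi _{i,n}\eta _{i,n})$ compared against $1$. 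In both cases I would rewrite the two $\Phi $-coefficients $\Phi \big(n^{1/2}(-\theta _{i,n}/(\sigma _{n}\xi _{i,n})\pm \eta _{i,n})\big)$ as $\Phi $ of $n^{1/2}\eta _{i,n}$ times a quantity converging to $\pm 1\mp \zeta _{i}$ (equivalently, using $n^{1/2}(-\theta _{i,n}/(\sigma _{n}\xi _{i,n})+\eta _{i,n})=r_{i,n}$ in the boundary case $\zeta _{i}=1$), which exposes their limits at once.

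For case (a): the hypotheses $n^{1/2}\theta _{i,n}/(\sigma _{n}\xi _{i,n})\to \nu _{i}$ and $n^{1/2}\eta _{i,n}\to e_{i}<\infty $ give, at every $x$ with $|x+\nu _{i}|\neq e_{i}$, the convergence $\mathbf 1(|x+n^{1/2}\theta _{i,n}/(\sigma _{n}\xi _{i,n})|>n^{1/2}\eta _{i,n})\to \mathbf 1(|x+\nu _{i}|>e_{i})$ and likewise for the two one-sided indicators, while the $\Phi $-coefficients tend to $\Phi (-\nu _{i}+e_{i})$ and $\Phi (-\nu _{i}-e_{i})$ and the leading term $\Phi (x)$ is exact. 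Adding the three terms gives the asserted cdf; a short direct check shows it is a bona fide cdf with a single jump of mass $\Phi (-\nu _{i}+e_{i})-\Phi (-\nu _{i}-e_{i})$ at $x=-\nu _{i}$, so its associated measure is \eqref{hard_large_sample_unknown_density_B}. The degenerate parameter configurations $|\nu _{i}|=\infty $ (for each fixed $x$ the first indicator is eventually $1$ and the others eventually $0$, leaving $\Phi (x)$) and $e_{i}=0$ (the pointmass weight vanishes and the absolutely continuous part restores $\Phi $) are treated separately.

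For case (b): since $n^{1/2}\eta _{i,n}\to \infty $ we have $\Phi (n^{1/2}\eta _{i,n}x)\to \mathbf 1(x>0)$ for $x\neq 0$, the indicator arguments converge (at $x$ with $|x+\zeta _{i}|\neq 1$) to $x+\zeta _{i}$ compared against $\pm 1$, and the two $\Phi $-coefficients tend to $\mathbf 1(\zeta _{i}<1)$ and $\mathbf 1(\zeta _{i}<-1)$ as long as $|\zeta _{i}|\neq 1$. If $|\zeta _{i}|<1$, collecting terms yields the cdf $\mathbf 1(x\ge -\zeta _{i})$, i.e.\ $\delta _{-\zeta _{i}}$; if $|\zeta _{i}|>1$, the terms combine to the cdf $\mathbf 1(x\ge 0)$, i.e.\ $\delta _{0}$; if $|\zeta _{i}|=1$, say $\zeta _{i}=1$, the coefficient $\Phi \big(n^{1/2}(-\theta _{i,n}/(\sigma _{n}\xi _{i,n})+\eta _{i,n})\big)=\Phi (r_{i,n})$ converges to $\Phi (r_{i})$ while the other coefficient still tends to $0$, and one obtains the cdf that is $0$ on $(-\infty ,-1)$, $\Phi (r_{i})$ on $(-1,0)$, and $1$ on $(0,\infty )$, i.e.\ $\Phi (r_{i})\delta _{-1}+(1-\Phi (r_{i}))\delta _{0}$; the case $\zeta _{i}=-1$ is symmetric, and $|\zeta _{i}|=\infty $, handled directly, is consistent with $r_{i}=\pm \infty $.

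The only genuinely delicate point — and the main obstacle — is the behavior at the ``boundary'' abscissae, namely the $x$ at which a limiting indicator argument equals $0$, $\pm e_{i}$ or $\pm 1$: there an individual summand need not converge, since the corresponding indicator can oscillate between $0$ and $1$ along the sequence. But all such $x$ are continuity points of the limit cdf except $x=-\nu _{i}$ (resp.\ $-\zeta _{i}$, resp.\ the two jump points in case (b)), so it is enough to show that the \emph{sum} of the three terms converges there; one uses that at most one indicator is nonzero, together with the observation that the $\Phi $-factor multiplying the ambiguous indicator has, at that particular $x$, the same limit as the $\Phi (x)$ (case (a)) or $\Phi (n^{1/2}\eta _{i,n}x)$ (case (b)) already present, so the undetermined value of the indicator does not affect the limiting sum. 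As an independent check of case (b) one may use the representation $\hat{\theta}_{H,i}=\hat{\theta}_{LS,i}\mathbf 1(|\hat{\theta}_{LS,i}|>\sigma \xi _{i,n}\eta _{i,n})$ together with $\sigma _{n}^{-1}\alpha _{i,n}(\hat{\theta}_{LS,i}-\theta _{i,n})\sim N(0,\alpha _{i,n}^{2}\xi _{i,n}^{2}/n)$, whose variance is $1$ in case (a) and $(n\eta _{i,n}^{2})^{-1}\to 0$ in case (b), making the split over the selection event transparent.
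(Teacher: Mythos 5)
Your argument is correct and is essentially the approach the paper itself takes, since its proof simply delegates to the analogous Theorems 4 and 17 of P\"otscher and Leeb (2009), which likewise pass to the limit in the explicit finite-sample distribution; you have reproduced that computation directly from Proposition \ref{1} with the appropriate scaling. Your handling of the boundary abscissae (where an indicator may oscillate) via continuity points of the limit cdf and the coincidence of the limits of the competing terms is exactly the point that needs care, and it is handled correctly.
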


\begin{proposition}
\label{LSDK_S}Suppose that for given $i\geq 1$ satisfying $i\leq k=k(n)$ for
large enough $n$ we have $\xi _{i,n}\eta _{i,n}\rightarrow 0$ and $%
n^{1/2}\eta _{i,n}\rightarrow e_{i}$ where $0\leq e_{i}\leq \infty $.

(a) Assume $e_{i}<\infty $. Set the scaling factor $\alpha
_{i,n}=n^{1/2}/\xi _{i,n}$. Suppose that the true parameters $\theta
^{(n)}=(\theta _{1,n},\dots ,\theta _{k_{n},n})\in \mathbb{R}^{k_{n}}$ and $%
\sigma _{n}\in (0,\infty )$ satisfy $n^{1/2}\theta _{i,n}/(\sigma _{n}\xi
_{i,n})\rightarrow \nu _{i}\in \mathbb{\bar{R}}$. Then $H_{S,n,\theta
^{(n)},\sigma _{n}}^{i}$ converges weakly to the distribution with cdf%
\begin{equation*}
\Phi \left( x+e_{i}\right) \boldsymbol{1}\left( x+\nu _{i}\geq 0\right)
+\Phi \left( x-e_{i}\right) \boldsymbol{1}\left( x+\nu _{i}<0\right) ,
\end{equation*}%
the corresponding measure being%
\begin{equation}
\left\{ \Phi \left( -\nu _{i}+e_{i}\right) -\Phi \left( -\nu
_{i}-e_{i}\right) \right\} d\delta _{-\nu _{i}}(x)+\left\{ \phi \left(
x+e_{i}\right) \boldsymbol{1}\left( x+\nu _{i}>0\right) +\phi \left(
x-e_{i}\right) \boldsymbol{1}\left( x+\nu _{i}<0\right) \right\} dx.
\label{soft_large_sample_unknown_density_B}
\end{equation}%
[This distribution reduces to a $N(-\limfunc{sign}(\nu _{i})e_{i},1)$%
-distribution in case $\left\vert \nu _{i}\right\vert =\infty $ or $e_{i}=0$%
.]

(b) Assume $e_{i}=\infty $. Set the scaling factor $\alpha _{i,n}=\left( \xi
_{i,n}\eta _{i,n}\right) ^{-1}$. Suppose that the true parameters $\theta
^{(n)}=(\theta _{1,n},\ldots ,\theta _{k_{n},n})\in \mathbb{R}^{k_{n}}$ and $%
\sigma _{n}\in (0,\infty )$ satisfy $\theta _{i,n}/(\sigma _{n}\xi
_{i,n}\eta _{i,n})\rightarrow \zeta _{i}\in \mathbb{\bar{R}}$. Then $%
H_{S,n,\theta ^{(n)},\sigma _{n}}^{i}$ converges weakly to $\delta _{-%
\limfunc{sign}(\zeta _{i})\min (1,\left\vert \zeta _{i}\right\vert )}$.
\end{proposition}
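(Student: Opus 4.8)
\textbf{Proof strategy for Proposition \ref{LSDK_S}.} The plan is to read both limits off the explicit finite-sample cdf (\ref{soft_finite_sample}) of Proposition \ref{2}, using that weak convergence of cdfs is equivalent to pointwise convergence at every continuity point of the limiting cdf. The two parts correspond to the two scalings. In part (a), with $\alpha_{i,n}=n^{1/2}/\xi_{i,n}$, one has $n^{1/2}x/(\alpha_{i,n}\xi_{i,n})=x$, and since $\alpha_{i,n}^{-1}=\xi_{i,n}/n^{1/2}>0$ the sign of $\alpha_{i,n}^{-1}x+\theta_{i,n}/\sigma_n$ coincides with that of $x+n^{1/2}\theta_{i,n}/(\sigma_n\xi_{i,n})\to x+\nu_i$. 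In part (b), with $\alpha_{i,n}=(\xi_{i,n}\eta_{i,n})^{-1}$, one has $n^{1/2}x/(\alpha_{i,n}\xi_{i,n})=n^{1/2}\eta_{i,n}x$, so the two arguments of $\Phi$ in (\ref{soft_finite_sample}) become $n^{1/2}\eta_{i,n}(x\pm 1)$, while the sign of $\alpha_{i,n}^{-1}x+\theta_{i,n}/\sigma_n$ coincides with that of $x+\theta_{i,n}/(\sigma_n\xi_{i,n}\eta_{i,n})\to x+\zeta_i$.

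\emph{Part (a).} Here $n^{1/2}\eta_{i,n}\to e_i<\infty$. If $|\nu_i|<\infty$, then for every $x\neq -\nu_i$ the indicator $\boldsymbol{1}(\alpha_{i,n}^{-1}x+\theta_{i,n}/\sigma_n\geq 0)$ is eventually $\boldsymbol{1}(x+\nu_i\geq 0)$ and the complementary indicator is eventually $\boldsymbol{1}(x+\nu_i<0)$; combined with continuity of $\Phi$ and $n^{1/2}\eta_{i,n}\to e_i$ this gives
\[
H_{S,n,\theta^{(n)},\sigma_n}^{i}(x)\longrightarrow \Phi(x+e_i)\boldsymbol{1}(x+\nu_i\geq 0)+\Phi(x-e_i)\boldsymbol{1}(x+\nu_i<0)\qquad(x\neq -\nu_i).
\]
I would then verify that the right-hand side is a genuine cdf (nondecreasing, right-continuous, with the correct limits $0$ and $1$ at $\mp\infty$, and a jump of size $\Phi(-\nu_i+e_i)-\Phi(-\nu_i-e_i)\geq 0$ at $x=-\nu_i$), so $x=-\nu_i$ is its only discontinuity point and convergence at all other $x$ already yields weak convergence; differentiating gives the measure (\ref{soft_large_sample_unknown_density_B}). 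If $|\nu_i|=\infty$, say $\nu_i=+\infty$ (the case $\nu_i=-\infty$ is symmetric), then for each fixed $x$ the first indicator is eventually $1$ and the second eventually $0$, so the limiting cdf is $\Phi(x+e_i)$, a $N(-e_i,1)$ law; and if $e_i=0$ the limit is $\Phi(x)$ throughout. This settles the bracketed claim.

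\emph{Part (b).} Here $n^{1/2}\eta_{i,n}\to\infty$, and (\ref{soft_finite_sample}) becomes
\[
H_{S,n,\theta^{(n)},\sigma_n}^{i}(x)=\Phi\left(n^{1/2}\eta_{i,n}(x+1)\right)\boldsymbol{1}\!\left(\alpha_{i,n}^{-1}x+\theta_{i,n}/\sigma_n\geq 0\right)+\Phi\left(n^{1/2}\eta_{i,n}(x-1)\right)\boldsymbol{1}\!\left(\alpha_{i,n}^{-1}x+\theta_{i,n}/\sigma_n<0\right).
\]
Fix $x\notin\{-1,1,-\zeta_i\}$. For large $n$ exactly one indicator is active, the first if $x+\zeta_i>0$ and the second if $x+\zeta_i<0$ (when $|\zeta_i|=\infty$, the active one is fixed by the sign of $\theta_{i,n}$), and the argument of the surviving $\Phi$ is $n^{1/2}\eta_{i,n}$ times a fixed nonzero number, hence tends to $+\infty$ or $-\infty$ according to that number's sign. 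A short case split on whether $|\zeta_i|<1$, $|\zeta_i|>1$, or $|\zeta_i|=1$ (and on $\sign(\zeta_i)$, including $\zeta_i=\pm\infty$) then shows $H_{S,n,\theta^{(n)},\sigma_n}^{i}(x)\to\boldsymbol{1}(x\geq -\sign(\zeta_i)\min(1,|\zeta_i|))$; e.g. for $0<\zeta_i<1$: if $x>-\zeta_i$ then also $x>-1$, the first branch is active, and its $\Phi$-argument $\to+\infty$, giving limit $1$; if $x<-\zeta_i$ then also $x<1$, the second branch is active, and its $\Phi$-argument $\to-\infty$, giving limit $0$. Since $-\sign(\zeta_i)\min(1,|\zeta_i|)$ is the only discontinuity point of $x\mapsto\boldsymbol{1}(x\geq -\sign(\zeta_i)\min(1,|\zeta_i|))$, and the limit is continuous at $x=\pm 1$, this gives weak convergence to $\delta_{-\sign(\zeta_i)\min(1,|\zeta_i|)}$.

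\textbf{Main obstacle and an alternative.} The one delicate point is the case bookkeeping in part (b): one must check that the crossover abscissae $x=-1$, $x=+1$, and $x=-\zeta_i$ fit together so that the two branches assemble into a single pointmass located at exactly $-\sign(\zeta_i)\min(1,|\zeta_i|)$ --- no spurious atom or absolutely continuous piece surviving --- across all sign/magnitude subcases, including $|\zeta_i|=\infty$. A cleaner route avoiding this is to note that, with $Z_n:=\sigma_n^{-1}(n^{1/2}/\xi_{i,n})(\hat{\theta}_{LS,i}-\theta_{i,n})\sim N(0,1)$ for every $n$, one has in part (a) $\sigma_n^{-1}\alpha_{i,n}\tilde{\theta}_{S,i}=g_n(U_n)$ with $U_n:=n^{1/2}\theta_{i,n}/(\sigma_n\xi_{i,n})+Z_n$ and $g_n$ the soft-thresholding map with threshold $n^{1/2}\eta_{i,n}$; since $g_n$ is $1$-Lipschitz and converges uniformly to soft-thresholding with threshold $e_i$, and $U_n$ converges weakly to $N(\nu_i,1)$ (when $|\nu_i|<\infty$), the claim follows from the continuous-mapping theorem, Slutsky's lemma, and subtraction of $n^{1/2}\theta_{i,n}/(\sigma_n\xi_{i,n})\to\nu_i$. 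In part (b), $\sigma_n^{-1}\alpha_{i,n}\tilde{\theta}_{S,i}=\sign(V_n)(|V_n|-1)_+$ with $V_n:=\theta_{i,n}/(\sigma_n\xi_{i,n}\eta_{i,n})+(n^{1/2}\eta_{i,n})^{-1}Z_n\to\zeta_i$ in probability, and continuity of $v\mapsto\sign(v)(|v|-1)_+$ together with subtraction of $\theta_{i,n}/(\sigma_n\xi_{i,n}\eta_{i,n})\to\zeta_i$ yields convergence in probability (hence weakly) to $\sign(\zeta_i)(|\zeta_i|-1)_+-\zeta_i=-\sign(\zeta_i)\min(1,|\zeta_i|)$; the infinite cases $|\nu_i|=\infty$ and $|\zeta_i|=\infty$ are handled directly. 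I would present whichever route sits better with the surrounding proofs in Section \ref{LS}.
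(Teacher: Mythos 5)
Your proposal is correct and follows essentially the same route as the paper, whose proof of this proposition simply defers to Theorems 5 and 18 of P\"{o}tscher and Leeb (2009): those arguments likewise read the limits off the explicit finite-sample cdf (here (\ref{soft_finite_sample})) at continuity points of the limit, exactly as in your primary argument, while your alternative stochastic-representation route parallels the technique the paper itself uses for the unknown-variance analogue in Theorem \ref{STconsistent}. One cosmetic point: in the alternative route write $\hat{\theta}_{S,i}$ rather than $\tilde{\theta}_{S,i}$, since Proposition \ref{LSDK_S} concerns the known-variance estimator (your formulas already treat the threshold as the deterministic quantity $n^{1/2}\eta_{i,n}$, so nothing else changes).
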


\begin{proposition}
\label{LSDK_AS}Suppose that for given $i\geq 1$ satisfying $i\leq k=k(n)$
for large enough $n$ we have $\xi _{i,n}\eta _{i,n}\rightarrow 0$ and $%
n^{1/2}\eta _{i,n}\rightarrow e_{i}$ where $0\leq e_{i}\leq \infty $.

(a) Assume $e_{i}<\infty $. Set the scaling factor $\alpha
_{i,n}=n^{1/2}/\xi _{i,n}$. Suppose that the true parameters $\theta
^{(n)}=(\theta _{1,n},\dots ,\theta _{k_{n},n})\in \mathbb{R}^{k_{n}}$ and $%
\sigma _{n}\in (0,\infty )$ satisfy $n^{1/2}\theta _{i,n}/(\sigma _{n}\xi
_{i,n})\rightarrow \nu _{i}\in \mathbb{\bar{R}}$. Then $H_{AS,n,\theta
^{(n)},\sigma _{n}}^{i}$ converges weakly to the distribution with cdf%
\begin{eqnarray}
&&\Phi \left( 0.5(x-\nu _{i})+\sqrt{\left( 0.5(x+\nu _{i})\right)
^{2}+e_{i}^{2}}\right) \boldsymbol{1}\left( x+\nu _{i}\geq 0\right)  \notag
\\
&&+\Phi \left( 0.5(x-\nu _{i})-\sqrt{\left( 0.5(x+\nu _{i})\right)
^{2}+e_{i}^{2}}\right) \boldsymbol{1}\left( x+\nu _{i}<0\right)
\label{adaptive_soft_large_sample_unknown_cdf_B}
\end{eqnarray}%
in case $\left\vert \nu _{i}\right\vert <\infty $, the corresponding measure
being%
\begin{eqnarray*}
&&\left\{ \Phi \left( -\nu _{i}+e_{i}\right) -\Phi \left( -\nu
_{i}-e_{i}\right) \right\} d\delta _{-\nu _{i}}(x) \\
&&+0.5\left\{ \phi \left( 0.5(x-\nu _{i})+\sqrt{\left( 0.5(x+\nu
_{i})\right) ^{2}+e_{i}^{2}}\right) \left( 1+t(x)\right) \boldsymbol{1}%
\left( x+\nu _{i}>0\right) \right. \\
&&+\left. \phi \left( 0.5(x-\nu _{i})-\sqrt{\left( 0.5(x+\nu _{i})\right)
^{2}+e_{i}^{2}}\right) \left( 1-t(x)\right) \boldsymbol{1}\left( x+\nu
_{i}<0\right) \right\} dx,
\end{eqnarray*}%
where $t(x)=\left( x+\nu _{i}\right) /\sqrt{\left( \left( x+\nu _{i}\right)
^{2}+4e_{i}^{2}\right) }$. In case $\left\vert \nu _{i}\right\vert =\infty $%
, the cdf $H_{AS,n,\theta ^{(n)},\sigma _{n}}^{i}$ converges weakly to $\Phi 
$, i.e., to a standard normal distribution. [In case $e_{i}=0$ the limit
always reduces to a standard normal distribution.]

(b) Assume $e_{i}=\infty $. Set the scaling factor $\alpha _{i,n}=\left( \xi
_{i,n}\eta _{i,n}\right) ^{-1}$. Suppose that the true parameters $\theta
^{(n)}=(\theta _{1,n},\ldots ,\theta _{k_{n},n})\in \mathbb{R}^{k_{n}}$ and $%
\sigma _{n}\in (0,\infty )$ satisfy $\theta _{i,n}/(\sigma _{n}\xi
_{i,n}\eta _{i,n})\rightarrow \zeta _{i}\in \mathbb{\bar{R}}$.

\qquad 1. If $\left\vert \zeta _{i}\right\vert <1$, then $H_{AS,n,\theta
^{(n)},\sigma _{n}}^{i}$ converges weakly to $\delta _{-\zeta _{i}}$.

\qquad 2. If $1\leq \left\vert \zeta _{i}\right\vert <\infty $, then $%
H_{AS,n,\theta ^{(n)},\sigma _{n}}^{i}$ converges weakly to $\delta
_{-1/\zeta _{i}}$.

\qquad 3. If $\left\vert \zeta _{i}\right\vert =\infty $, then $%
H_{AS,n,\theta ^{(n)},\sigma _{n}}^{i}$ converges weakly to $\delta _{0}$.
\end{proposition}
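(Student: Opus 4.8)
The plan is to read off the claimed limits directly from the finite-sample cdf $H_{AS,n,\theta ,\sigma }^{i}$ of Proposition \ref{3}, after inserting the prescribed scaling $\alpha _{i,n}$ and the parameter sequences $\theta ^{(n)},\sigma _{n}$, and then letting $n\rightarrow \infty $. Weak convergence will be obtained by verifying pointwise convergence of the cdfs at every continuity point of the candidate limit; the (at most one) discontinuity point will in each case be exactly the location of the limiting point mass (Part~(b)) or of the atom (Part~(a)), and may therefore be ignored. Recall from Proposition \ref{3} that, after pulling the factor $n^{1/2}$ inside the square root, $z_{n,\theta ,\sigma }^{(2)}(x,y)$ and $z_{n,\theta ,\sigma }^{(1)}(x,y)$ equal $0.5n^{1/2}\xi _{i,n}^{-1}(\alpha _{i,n}^{-1}x-\theta _{i}/\sigma )$ plus, respectively minus, $\sqrt{(0.5n^{1/2}\xi _{i,n}^{-1}(\alpha _{i,n}^{-1}x+\theta _{i}/\sigma ))^{2}+(n^{1/2}y)^{2}}$, and that $H_{AS,n,\theta ,\sigma }^{i}(x)$ is $\Phi (z^{(2)}(x,\eta _{i,n}))$ or $\Phi (z^{(1)}(x,\eta _{i,n}))$ according as $\alpha _{i,n}^{-1}x+\theta _{i}/\sigma \geq 0$ or $<0$.

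For Part~(a) put $\alpha _{i,n}=n^{1/2}/\xi _{i,n}$, so that $n^{1/2}\xi _{i,n}^{-1}(\alpha _{i,n}^{-1}x\pm \theta _{i,n}/\sigma _{n})=x\pm n^{1/2}\theta _{i,n}/(\sigma _{n}\xi _{i,n})\rightarrow x\pm \nu _{i}$ and $n^{1/2}\eta _{i,n}\rightarrow e_{i}$, while the sign of $\alpha _{i,n}^{-1}x+\theta _{i,n}/\sigma _{n}$ is that of $x+n^{1/2}\theta _{i,n}/(\sigma _{n}\xi _{i,n})$, which for each fixed $x\neq -\nu _{i}$ is eventually the sign of $x+\nu _{i}$ when $|\nu _{i}|<\infty $. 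Hence, for $|\nu _{i}|<\infty $ and $x+\nu _{i}>0$, continuity of $\Phi $ and of the square root gives $H_{AS,n,\theta ^{(n)},\sigma _{n}}^{i}(x)=\Phi (z_{n,\theta ^{(n)},\sigma _{n}}^{(2)}(x,\eta _{i,n}))\rightarrow \Phi (0.5(x-\nu _{i})+\sqrt{(0.5(x+\nu _{i}))^{2}+e_{i}^{2}})$, and symmetrically with $z^{(1)}$ for $x+\nu _{i}<0$; this is exactly (\ref{adaptive_soft_large_sample_unknown_cdf_B}), and differentiating it (the jump at $x=-\nu _{i}$ having height $\Phi (-\nu _{i}+e_{i})-\Phi (-\nu _{i}-e_{i})$, the chain rule producing the factors $1\pm t(x)$) yields the asserted measure. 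If $|\nu _{i}|=\infty $, write $a_{n}=n^{1/2}\theta _{i,n}/(\sigma _{n}\xi _{i,n})\rightarrow \pm \infty $, $b_{n}=n^{1/2}\eta _{i,n}\rightarrow e_{i}<\infty $; the eventually-constant indicator selects $z^{(2)}$ (if $a_{n}\rightarrow +\infty $) or $z^{(1)}$ (if $a_{n}\rightarrow -\infty $), and a first-order expansion of $\sqrt{(0.5(x+a_{n}))^{2}+b_{n}^{2}}$ shows the selected $z^{(j)}$ equals $x+b_{n}^{2}/(x+a_{n})+o(1)\rightarrow x$, so the limit is $\Phi $. (When $e_{i}=0$ the square root equals $0.5|x+\nu _{i}|$ for all $x$, the atom has mass $0$, and the selected $z^{(j)}$ again tends to $x$, so the limit is again $\Phi $.)

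For Part~(b) put $\alpha _{i,n}=(\xi _{i,n}\eta _{i,n})^{-1}$ and write $c_{n}=n^{1/2}\eta _{i,n}\rightarrow \infty $, $d_{n}=\theta _{i,n}/(\sigma _{n}\xi _{i,n}\eta _{i,n})\rightarrow \zeta _{i}$. Then $\alpha _{i,n}^{-1}x+\theta _{i,n}/\sigma _{n}=\xi _{i,n}\eta _{i,n}(x+d_{n})$, so the indicator has the sign of $x+d_{n}$, and a direct computation gives $z_{n,\theta ^{(n)},\sigma _{n}}^{(2)}(x,\eta _{i,n})=c_{n}g(d_{n},x)$ and $z_{n,\theta ^{(n)},\sigma _{n}}^{(1)}(x,\eta _{i,n})=c_{n}h(d_{n},x)$, where $g(d,x)=0.5(x-d)+\sqrt{(0.5(x+d))^{2}+1}$ and $h(d,x)=0.5(x-d)-\sqrt{(0.5(x+d))^{2}+1}$. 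Since $c_{n}\rightarrow \infty $ and $d_{n}\rightarrow \zeta _{i}$, at any fixed $x$ with $g(\zeta _{i},x)\neq 0$ we get $\Phi (z^{(2)})\rightarrow \boldsymbol{1}(g(\zeta _{i},x)>0)$, and likewise $\Phi (z^{(1)})\rightarrow \boldsymbol{1}(h(\zeta _{i},x)>0)$ when $h(\zeta _{i},x)\neq 0$. Assume $|\zeta _{i}|<\infty $. Using the identity $(0.5(x+\zeta _{i}))^{2}-(0.5(x-\zeta _{i}))^{2}=x\zeta _{i}$ one checks that for $x>-\zeta _{i}$ (indicator selects $z^{(2)}$) one has $g(\zeta _{i},x)>0$ for all such $x$ when $|\zeta _{i}|<1$, and exactly for $x>-1/\zeta _{i}$ when $1\leq |\zeta _{i}|<\infty $; and that for $x<-\zeta _{i}$ (indicator selects $z^{(1)}$) one has $h(\zeta _{i},x)<0$ always. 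Piecing these together, the limiting cdf is $\boldsymbol{1}(x>-\zeta _{i})$ if $|\zeta _{i}|<1$ and $\boldsymbol{1}(x>-1/\zeta _{i})$ if $1\leq |\zeta _{i}|<\infty $, i.e.\ the limits are $\delta _{-\zeta _{i}}$ and $\delta _{-1/\zeta _{i}}$. If $|\zeta _{i}|=\infty $, then for each fixed $x$ the indicator is eventually constant and the same first-order expansion of the square root as in Part~(a) gives the selected $z^{(j)}=c_{n}(x+o(1))$, so $\Phi (z^{(j)})\rightarrow \boldsymbol{1}(x>0)$ and the limit is $\delta _{0}$.

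The step demanding the most care is the sign bookkeeping in Part~(b): one must track how the zero sets of $g(\zeta _{i},\cdot )$ and $h(\zeta _{i},\cdot )$ — which depend on both the magnitude and the sign of $\zeta _{i}$ — interact with the jump of the indicator at $x=-\zeta _{i}$, in order to see that exactly one threshold survives ($-\zeta _{i}$ in the subcritical regime $|\zeta _{i}|<1$, $-1/\zeta _{i}$ in the supercritical regime $|\zeta _{i}|\geq 1$), with the two descriptions agreeing at $|\zeta _{i}|=1$. The boundary abscissae ($x=-\zeta _{i}$ and $x=-1/\zeta _{i}$ in Part~(b), $x=-\nu _{i}$ in Part~(a)) never cause trouble, being precisely the discontinuity points of the limiting distributions; and the remaining verifications — the density formula in Part~(a) and the bracketed reductions to $\Phi $ when $e_{i}=0$ or $|\nu _{i}|=\infty $ — are routine.
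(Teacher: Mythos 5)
Your overall strategy — reading the limits off the finite-sample cdf of Proposition \ref{3} with the appropriate scaling, and checking pointwise convergence at continuity points of the candidate limit — is sound and is in substance the same direct computation as the proofs of Theorems 4 and 6 in P\"otscher and Schneider (2009) to which the paper defers; Part (a) and the cases $|\zeta_i|\leq 1$ and $|\zeta_i|=\infty$ of Part (b) are fine. However, the sign bookkeeping in Part (b), which you yourself single out as the delicate step, is wrong in the regime $\zeta_i<-1$. Your claim that $h(\zeta_i,x)<0$ for all $x<-\zeta_i$ fails there: one has $h(\zeta,x)>0$ exactly when $x>\zeta$ and $x\zeta<-1$, so for $\zeta_i<-1$ and $-1/\zeta_i<x<-\zeta_i$ the quantity $h(\zeta_i,x)$ is strictly positive (e.g.\ $\zeta_i=-2$, $x=1$ gives $h=1.5-\sqrt{1.25}>0$). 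If one pieces together your stated claims literally, the limit for $\zeta_i<-1$ comes out as $\boldsymbol{1}(x>-\zeta_i)$, i.e.\ $\delta_{-\zeta_i}$, which contradicts both the proposition and your own final sentence ($\delta_{-1/\zeta_i}$). The correct accounting for negative $\zeta_i$ with $|\zeta_i|>1$ is that the transition at $-1/\zeta_i$ takes place on the $z^{(1)}$ branch: within the region $x<-\zeta_i$ one has $h(\zeta_i,x)>0$ precisely for $x>-1/\zeta_i$, while $g(\zeta_i,x)>0$ throughout $x>-\zeta_i$ (the threshold $-1/\zeta_i$ you attribute to the $g$-branch is genuinely operative only for $\zeta_i\geq 1$). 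With this correction the pieced-together limit is $\boldsymbol{1}(x>-1/\zeta_i)$ in all of $1\leq|\zeta_i|<\infty$, as asserted.

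A smaller point in the same vein: when $|\zeta_i|>1$ the abscissa $x=-\zeta_i$ is a continuity point of the limiting distribution (the atom sits at $-1/\zeta_i$), so it cannot be dismissed as a discontinuity point; convergence there must be checked separately because the indicator $\boldsymbol{1}(x+d_n\geq 0)$ may oscillate. The check is easy — at $x=-\zeta_i$ both $g(\zeta_i,-\zeta_i)=1-\zeta_i$ and $h(\zeta_i,-\zeta_i)=-1-\zeta_i$ have the same sign (negative for $\zeta_i>1$, positive for $\zeta_i<-1$), so both branches give the same $0$/$1$ limit — but it needs to be said. Everything else (the density computation in Part (a), the expansion of the square root when $|\nu_i|=\infty$ or $|\zeta_i|=\infty$, and the reduction when $e_i=0$) is correct.
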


Observe that the scaling factors $\alpha _{i,n}$ used in the above
propositions are exactly of the same order as $a_{i,n}$ in the case of
conservative as well as in the case of consistent tuning and thus correspond
to the uniform rate of convergence in both cases. In the case of
conservative tuning the limiting distributions have essentially the same
form as the finite-sample distributions, demonstrating that the
moving-parameter asymptotic framework captures the finite-sample behavior of
the estimators in a satisfactory way. In contrast, a fixed-parameter
asymptotic framework, which corresponds to setting $\theta _{i,n}\equiv
\theta _{i}$ and $\sigma _{n}\equiv \sigma $ in the above propositions,
misrepresents the finite-sample properties of the thresholding estimators
whenever $\theta _{i}\neq 0$ but small, as the fixed-parameter limiting
distribution is -- in case of hard-thresholding and adaptive
soft-thresholding -- then always $N(0,1)$, regardless of the size of $\theta
_{i}$. For soft-thresholding we also observe a strong discrepancy between
the finite-sample distribution and the fixed-parameter limit for $\theta
_{i}\neq 0$ which is given by $N(-\limfunc{sign}(\theta _{i})e_{i},1)$. In
particular, the above propositions demonstrate non-uniformity in the
convergence of finite-sample distributions to their limit in a
fixed-parameter framework.

In the case of consistent tuning we observe an interesting phenomenon,
namely that the limiting distributions now correspond to pointmasses (but
not always located at zero!), or are convex combinations of two pointmasses
in some cases when considering the hard-thresholding estimator. This
essentially means that consistently tuned thresholding estimators are
plagued by a bias-problem in that the "bias-component" is the dominant
component and is of larger order than the "stochastic variability" of the
estimator.\footnote{%
For the hard-thresholding estimator some randomness survives in the limit in
the case $\left\vert \zeta _{i}\right\vert =1$, where we can achieve a
limiting probability for $\hat{\theta}_{H,i}=0$ that is strictly between $0$
and $1$. That this randomness does not survive for the other two estimators
in the limit seems to be connected to the fact that these estimators are
continuous functions of the data, whereas $\hat{\theta}_{H,i}$ is not.} In a
fixed-parameter framework we get the trivial limits $\delta _{0}$ for every
value of $\theta _{i}$ in case of hard-thresholding and adaptive
soft-thresholding. At first glance this seems to suggest that we have used a
scaling sequence that does not increase fast enough with $n$, but recall
that the scaling used here corresponds to the uniform convergence rate. We
shall take this issue further up in Section \ref{oracle}. The situation is
different for the soft-thresholding estimator where the fixed-parameter
limit is $\delta _{-\limfunc{sign}(\theta _{i})}$, which reduces to $\delta
_{0}$ only for $\theta _{i}=0$; this is a reflection of the well-known fact
that soft-thresholding is plagued by bias problems to a higher degree than
are hard-thresholding and adaptive soft-thresholding.

\subsection{Uniform Closeness of Distributions in the Known- and
Unknown-Variance Case\label{uniform_close}}

We next show that the finite-sample cdfs of $\tilde{\theta}_{H,i}$, $\tilde{%
\theta}_{S,i}$, and $\tilde{\theta}_{AS,i}$ and of their infeasible
counterparts $\hat{\theta}_{H,i}$, $\hat{\theta}_{S,i}$, and $\hat{\theta}%
_{AS,i}$, respectively, are uniformly (with respect to the parameters) close
in the total variation distance (or the supremum norm) provided the number
of degrees of freedom $n-k$ diverges to infinity fast enough. Apart from
being of interest in their own right, these results will be instrumental in
the subsequent section. We note that the results in Theorem \ref{closeness}
below hold for any choice of the scaling factors $\alpha _{i,n}$.

\begin{theorem}
\label{closeness} Suppose that for given $i\geq 1$ satisfying $i\leq k=k(n)$
for large enough $n$ we have $n^{1/2}\eta _{i,n}(n-k)^{-1/2}\rightarrow 0$
as $n\rightarrow \infty $. Then%
\begin{equation*}
\sup_{\theta \in \mathbb{R}^{k},0<\sigma <\infty }\left\Vert H_{H,n,\theta
,\sigma }^{i}-H_{H,n,\theta ,\sigma }^{i\maltese }\right\Vert
_{TV}\rightarrow 0\qquad \text{for }n\rightarrow \infty ,
\end{equation*}%
\begin{equation*}
\sup_{\theta \in \mathbb{R}^{k},0<\sigma <\infty }\left\Vert H_{S,n,\theta
,\sigma }^{i}-H_{S,n,\theta ,\sigma }^{i\maltese }\right\Vert
_{TV}\rightarrow 0\qquad \text{for }n\rightarrow \infty ,
\end{equation*}%
and 
\begin{equation*}
\sup_{\theta \in \mathbb{R}^{k},0<\sigma <\infty }\left\Vert H_{AS,n,\theta
,\sigma }^{i}-H_{AS,n,\theta ,\sigma }^{i\maltese }\right\Vert _{\infty
}\rightarrow 0\qquad \text{for }n\rightarrow \infty
\end{equation*}%
hold.\footnote{%
Uniform closeness of the respective cdfs of the adaptive soft-thresholding
estimators in the total variation distance, and not only in the supremum
norm, could probably be obtained at the expense of a more cumbersome proof.
We do not pursue this.}
\end{theorem}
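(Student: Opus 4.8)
The plan is to reduce the whole statement to a single perturbation estimate for the \emph{known}-variance distributions. Comparing Propositions~\ref{1}--\ref{3} with Propositions~\ref{4}--\ref{6} (equivalently, re-running the conditioning argument behind~(\ref{select_prob_unknown})), one sees that for $\star\in\{H,S,AS\}$ the feasible finite-sample law is exactly the $\rho_{n-k}$-mixture of the infeasible laws obtained by rescaling the tuning parameter,
\begin{equation*}
H_{\star,n,\theta,\sigma}^{i\maltese}=\int_{0}^{\infty}H_{\star,s\eta_{i,n},n,\theta,\sigma}^{i}\,\rho_{n-k}(s)\,ds,
\end{equation*}
the pointmass location $-\alpha_{i,n}\theta_{i}/\sigma$ not depending on $s$. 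Since $\|\cdot\|_{TV}$ and $\|\cdot\|_{\infty}$ are convex, this yields
\begin{equation*}
\bigl\|H_{\star,n,\theta,\sigma}^{i}-H_{\star,n,\theta,\sigma}^{i\maltese}\bigr\|\le\int_{0}^{\infty}\bigl\|H_{\star,\eta_{i,n},n,\theta,\sigma}^{i}-H_{\star,s\eta_{i,n},n,\theta,\sigma}^{i}\bigr\|\,\rho_{n-k}(s)\,ds,
\end{equation*}
with $\|\cdot\|=\|\cdot\|_{TV}$ for $\star\in\{H,S\}$ and $\|\cdot\|=\|\cdot\|_{\infty}$ for $\star=AS$; the scaling $\alpha_{i,n}$ will drop out, as promised in the remark preceding the theorem. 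So it suffices to control how far the known-variance distribution moves when the threshold is perturbed.

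The key lemma I would establish is that there is a universal constant $c$ such that
\begin{equation*}
\sup_{\theta\in\mathbb{R}^{k},\,0<\sigma<\infty}\bigl\|H_{\star,a,n,\theta,\sigma}^{i}-H_{\star,b,n,\theta,\sigma}^{i}\bigr\|\le c\,n^{1/2}|a-b|
\end{equation*}
for all thresholds $a,b>0$ and all relevant $n,i$, with $\|\cdot\|$ as above. For soft-thresholding this comes straight out of (\ref{soft_finite_sample_density}): after the substitution $u=n^{1/2}x/(\alpha_{i,n}\xi_{i,n})$ the parameters $(\theta,\sigma)$ enter only through the harmless shift $u\mapsto u+n^{1/2}\theta_{i}/(\sigma\xi_{i,n})$, the absolutely continuous part contributes at most $\sqrt{2/\pi}\,n^{1/2}|a-b|$ per half-line via $\int_{\mathbb{R}}|\phi(u+p)-\phi(u+q)|\,du\le|p-q|\int_{\mathbb{R}}|\phi'|=\sqrt{2/\pi}\,|p-q|$, and the pointmass-weight difference is controlled by $|\Phi(p)-\Phi(q)|\le\phi(0)|p-q|$ (and is in any case dominated by the $L^{1}$-difference of the densities, both measures having total mass one). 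For adaptive soft-thresholding one uses (\ref{adaptive_finite_sample}) with $|\Phi(p)-\Phi(q)|\le\phi(0)|p-q|$ and the elementary bound $\bigl|\sqrt{A^{2}+a^{2}}-\sqrt{A^{2}+b^{2}}\bigr|\le|a-b|$, which gives $|z_{n,\theta,\sigma}^{(j)}(x,a)-z_{n,\theta,\sigma}^{(j)}(x,b)|\le n^{1/2}|a-b|$ uniformly in $x$, hence $\|H_{AS,a,n,\theta,\sigma}^{i}-H_{AS,b,n,\theta,\sigma}^{i}\|_{\infty}\le\phi(0)n^{1/2}|a-b|$.

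The one genuinely different case — and the step I expect to require the most care — is hard-thresholding, where the threshold sits inside an indicator rather than a smooth function. Starting from (\ref{hard_finite_sample_density}) and the same substitution, the absolutely continuous part of $H_{H,a,n,\theta,\sigma}^{i}-H_{H,b,n,\theta,\sigma}^{i}$ has $L^{1}$-norm equal to
\begin{equation*}
\int_{\mathbb{R}}\phi(u)\,\bigl|\mathbf{1}(|u+\mu|>n^{1/2}a)-\mathbf{1}(|u+\mu|>n^{1/2}b)\bigr|\,du,\qquad \mu:=n^{1/2}\theta_{i}/(\sigma\xi_{i,n}),
\end{equation*}
and since the two indicator sets differ by a set of Lebesgue measure at most $2n^{1/2}|a-b|$ while $\phi\le\phi(0)$, this is at most $2\phi(0)n^{1/2}|a-b|$, uniformly in $\mu$ and hence in $(\theta,\sigma)$; the pointmass-weight difference is again handled by $|\Phi(p)-\Phi(q)|\le\phi(0)|p-q|$. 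This establishes the key lemma in all three cases. The delicate point here is purely that one must pass to the variable $u$ before comparing indicators so that $(\theta,\sigma)$ collapses to the single shift parameter $\mu$, keeping the bound uniform.

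Finally I would apply the key lemma with $a=\eta_{i,n}$ and $b=s\eta_{i,n}$ and integrate against $\rho_{n-k}$: in every case
\begin{equation*}
\sup_{\theta\in\mathbb{R}^{k},\,0<\sigma<\infty}\bigl\|H_{\star,n,\theta,\sigma}^{i}-H_{\star,n,\theta,\sigma}^{i\maltese}\bigr\|\le c\,n^{1/2}\eta_{i,n}\int_{0}^{\infty}|s-1|\,\rho_{n-k}(s)\,ds=c\,n^{1/2}\eta_{i,n}\,E\,|W_{n-k}-1|,
\end{equation*}
where $W_{m}$ denotes $m^{-1/2}$ times a $\chi_{m}$ variable. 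Because $E[W_{m}^{2}]=1$, one has $E[(W_{m}-1)^{2}]=2(1-E[W_{m}])$, and since $m(1-E[W_{m}])$ is bounded (it converges to $1/4$ as $m\to\infty$) there is a universal $C_{0}$ with $E|W_{m}-1|\le\sqrt{E[(W_{m}-1)^{2}]}\le C_{0}m^{-1/2}$. Hence the right-hand side is at most $c\,C_{0}\,n^{1/2}\eta_{i,n}(n-k)^{-1/2}$, which tends to $0$ by the hypothesis $n^{1/2}\eta_{i,n}(n-k)^{-1/2}\to0$, proving all three displayed limits.
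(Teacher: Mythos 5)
Your proposal is correct, and it reaches the conclusion by a route that overlaps with the paper's in its first half but genuinely differs in the second. Like the paper, you use the mixture representation $H_{\star,n,\theta,\sigma}^{i\maltese}=\int_{0}^{\infty}H_{\star,s\eta_{i,n},n,\theta,\sigma}^{i}\,\rho_{n-k}(s)\,ds$ (this is exactly how Propositions \ref{4}--\ref{6} are proved, so that step is legitimate), and your Lipschitz-in-the-threshold bounds are, after your substitution $u=n^{1/2}x/(\alpha_{i,n}\xi_{i,n})$, the same estimates the paper obtains: the annulus of measure $2n^{1/2}|a-b|$ for the hard-thresholding indicators, $\int|\phi(\cdot+p)-\phi(\cdot+q)|\le 2\phi(0)|p-q|$ for soft-thresholding, and the bound $|z^{(j)}(x,a)-z^{(j)}(x,b)|\le n^{1/2}|a-b|$ for adaptive soft-thresholding (where the paper uses the mean-value theorem and you use the cleaner exact inequality $|\sqrt{A^{2}+a^{2}}-\sqrt{A^{2}+b^{2}}|\le|a-b|$). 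Where you diverge is in how the integral over $s$ is closed: the paper splits $\{|s-1|\le(n-k)^{-1/2}c\}$ from its complement, invokes the concentration bound of Lemma 13 in P\"{o}tscher and Schneider (2010) for the tail (also bounding the integrand there by a constant), and runs an $\varepsilon$-argument; this same scheme also underlies its treatment of the atomic part via Proposition \ref{closeness_prob}. You instead bound $\int_{0}^{\infty}|s-1|\rho_{n-k}(s)\,ds=E|W_{n-k}-1|$ directly by a moment computation, which packages everything into a single uniform rate bound $c\,n^{1/2}\eta_{i,n}(n-k)^{-1/2}$ for all three estimators at once, avoids the external concentration lemma, and in fact yields a slightly more explicit quantitative statement than the paper's. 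The only assertion you leave unproved is that $m(1-E[W_{m}])$ is bounded; this is a standard Gamma-ratio asymptotic, but if you want it self-contained, note that $|W_{m}-1|\le|W_{m}^{2}-1|$ gives $E[(W_{m}-1)^{2}]\le\operatorname{Var}(\chi_{m}^{2}/m)=2/m$ directly, so $E|W_{m}-1|\le(2/m)^{1/2}$ without any expansion. With that remark added, the argument is complete.
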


\begin{remark}
\label{n-k}\normalfont In case of conservative tuning, the condition $%
n^{1/2}\eta _{i,n}(n-k)^{-1/2}\rightarrow 0$ is always satisfied if $%
n-k\rightarrow \infty $. [In fact it is then equivalent to $n-k\rightarrow
\infty $ or $e_{i}=0$.] In case of consistent tuning $n-k\rightarrow \infty $
is clearly a weaker condition than $n^{1/2}\eta
_{i,n}(n-k)^{-1/2}\rightarrow 0$. However, in general, a sufficient
condition for $n^{1/2}\eta _{i,n}(n-k)^{-1/2}\rightarrow 0$ is that $\eta
_{i,n}\rightarrow 0$ and $\limsup_{n\rightarrow \infty }k/n<1$.
\end{remark}

\begin{remark}
\label{scaleinv}\normalfont Suppose that $\xi _{i,n}\eta _{i,n}\rightarrow 0$
holds as $n\rightarrow \infty $. If $n^{1/2}\eta _{i,n}(n-k)^{-1/2}$ does
not converge to zero as $n\rightarrow \infty $, Remark \ref{weekend} shows
that none of the convergence results in Theorem \ref{closeness} holds. [To
see this note that the variable deletion probabilities constitute the weight
of the pointmass in the respective distribution functions.] This shows that
the condition $n^{1/2}\eta _{i,n}(n-k)^{-1/2}\rightarrow 0$ in the above
theorem cannot be weakened (at least in case $\xi _{i,n}\eta
_{i,n}\rightarrow 0$ holds).
\end{remark}

\subsection{The Unknown-Variance Case\label{LSDUKVC}}

\subsubsection{Conservative Tuning\label{conservative}}

We next obtain the limiting distributions of $\tilde{\theta}_{H,i}$, $\tilde{%
\theta}_{S,i}$, and $\tilde{\theta}_{AS,i}$ in a moving-parameter framework
under conservative tuning.

\begin{theorem}
\label{HTconservative}(Hard-thresholding with conservative tuning) Suppose
that for given $i\geq 1$ satisfying $i\leq k=k(n)$ for large enough $n$ we
have $\xi _{i,n}\eta _{i,n}\rightarrow 0$ and $n^{1/2}\eta _{i,n}\rightarrow
e_{i}$ where $0\leq e_{i}<\infty $. Set the scaling factor $\alpha
_{i,n}=n^{1/2}/\xi _{i,n}$. Suppose that the true parameters $\theta
^{(n)}=(\theta _{1,n},\dots ,\theta _{k_{n},n})\in \mathbb{R}^{k_{n}}$ and $%
\sigma _{n}\in (0,\infty )$ satisfy $n^{1/2}\theta _{i,n}/(\sigma _{n}\xi
_{i,n})\rightarrow \nu _{i}\in \mathbb{\bar{R}}$.

(a) If $n-k$ is eventually constant equal to $m$, say, then $H_{H,n,\theta
^{(n)},\sigma _{n}}^{i\maltese }$ converges weakly to the distribution with
cdf%
\begin{eqnarray*}
&&\int_{0}^{\infty }\left\{ \Phi \left( x\right) \boldsymbol{1}\left(
\left\vert x+\nu _{i}\right\vert >se_{i}\right) +\Phi \left( -\nu
_{i}+se_{i}\right) \boldsymbol{1}\left( 0\leq x+\nu _{i}\leq se_{i}\right)
\right. \\
&&+\left. \Phi \left( -\nu _{i}-se_{i}\right) \boldsymbol{1}\left(
-se_{i}\leq x+\nu _{i}<0\right) \right\} \rho _{m}(s)ds,
\end{eqnarray*}%
the corresponding measure being%
\begin{equation}
\int_{0}^{\infty }\left\{ \Phi \left( -\nu _{i}+se_{i}\right) -\Phi \left(
-\nu _{i}-se_{i}\right) \right\} \rho _{m}(s)dsd\delta _{-\nu _{i}}(x)+\phi
\left( x\right) \int_{0}^{\infty }\boldsymbol{1}\left( \left\vert x+\nu
_{i}\right\vert >se_{i}\right) \rho _{m}(s)dsdx.
\label{hard_large_sample_unknown_density_A}
\end{equation}%
[The distribution reduces to a standard normal distribution in case $%
\left\vert \nu _{i}\right\vert =\infty $ or $e_{i}=0$.]

(b) If $n-k\rightarrow \infty $ holds, then $H_{H,n,\theta ^{(n)},\sigma
_{n}}^{i\maltese }$ converges weakly to the distribution given in
Proposition \ref{LSDK_H}(a).
\end{theorem}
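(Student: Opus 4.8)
\textbf{Proof plan for Theorem \ref{HTconservative}.}
The plan is to start from the exact finite-sample representation of $H_{H,n,\theta^{(n)},\sigma_n}^{i\maltese}$ given in Proposition \ref{4} (formula (\ref{hard_finite_sample_unknown})), with the scaling fixed at $\alpha_{i,n}=n^{1/2}/\xi_{i,n}$, and to pass to the limit in each of its three summands separately. With this scaling the arguments simplify considerably: $n^{1/2}x/(\alpha_{i,n}\xi_{i,n})=x$, while $\alpha_{i,n}^{-1}x+\theta_{i,n}/\sigma_n=\xi_{i,n}n^{-1/2}x+\theta_{i,n}/\sigma_n$, so that $\xi_{i,n}^{-1}(\alpha_{i,n}^{-1}x+\theta_{i,n}/\sigma_n)=n^{-1/2}x+\theta_{i,n}/(\sigma_n\xi_{i,n})$, and after multiplying by $n^{1/2}$ this converges to $x+\nu_i$ by the hypothesis $n^{1/2}\theta_{i,n}/(\sigma_n\xi_{i,n})\to\nu_i$. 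Likewise $n^{1/2}(-\theta_{i,n}/(\sigma_n\xi_{i,n})+s\eta_{i,n})\to -\nu_i+se_i$ and the indicator thresholds $\xi_{i,n}s\eta_{i,n}$, after the same rescaling by $n^{1/2}/\xi_{i,n}$, become $n^{1/2}s\eta_{i,n}\to se_i$. So pointwise in $x$ and $s$ (at continuity points) the integrand of each summand converges to the integrand of the claimed limit.

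For part (a), where $n-k\equiv m$ eventually, $\rho_{n-k}=\rho_m$ is fixed, and the plan is to invoke dominated convergence in $s$ (the integrands are bounded by $1$ and $\rho_m$ is a fixed density) to obtain, for each $x$ that is a continuity point of the limiting cdf, convergence of $H_{H,n,\theta^{(n)},\sigma_n}^{i\maltese}(x)$ to the asserted integral; this gives weak convergence. One must be a little careful about the case $|\nu_i|=\infty$ (then $x+\nu_i$ is eventually of constant sign with $|x+\nu_i|>se_i$ for every fixed $s$, so the first summand tends to $\Phi(x)$ and the others vanish, yielding the standard normal limit) and about the degenerate continuity-point set when $e_i=0$ (then the pointmass weight $\int_0^\infty(\Phi(-\nu_i+se_i)-\Phi(-\nu_i-se_i))\rho_m(s)ds$ is zero and again the limit is $\Phi$); these are exactly the bracketed remarks in the statement and should be dispatched explicitly. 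The equivalence of the cdf form and the measure form (\ref{hard_large_sample_unknown_density_A}) is a routine differentiation identical in structure to the finite-sample identity (\ref{hard_finite_sample_unknown_density}), so I would just state it.

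For part (b), where $n-k\to\infty$, the cleanest route is to quote Theorem \ref{closeness}: under conservative tuning the hypothesis $\xi_{i,n}\eta_{i,n}\to 0$, $n^{1/2}\eta_{i,n}\to e_i<\infty$ together with $n-k\to\infty$ gives $n^{1/2}\eta_{i,n}(n-k)^{-1/2}\to 0$ (this is Remark \ref{n-k}), hence $\sup_{\theta,\sigma}\|H_{H,n,\theta,\sigma}^i-H_{H,n,\theta,\sigma}^{i\maltese}\|_{TV}\to 0$. Since by Proposition \ref{LSDK_H}(a) the infeasible cdf $H_{H,n,\theta^{(n)},\sigma_n}^{i}$ (with the same scaling) converges weakly to the stated limit, the triangle inequality in total variation — which dominates weak convergence — transfers that weak limit to $H_{H,n,\theta^{(n)},\sigma_n}^{i\maltese}$. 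Alternatively, and as a sanity check, one can argue directly as in part (a): when $n-k\to\infty$ the laws with densities $\rho_{n-k}$ converge weakly to $\delta_1$ and are uniformly tight, so $\int_0^\infty(\cdots)\rho_{n-k}(s)ds\to(\cdots)|_{s=1}$ for bounded integrands that are continuous at $s=1$, recovering exactly the formula in Proposition \ref{LSDK_H}(a).

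The main obstacle is the bookkeeping around the boundaries of the indicator sets: the integrand of each summand is discontinuous in $s$ at $s=|x+\nu_i|/e_i$ (when $e_i>0$) and the cdf is discontinuous in $x$ at $x=-\nu_i$, so one must check that these discontinuities form a $\rho_m$-null set in $s$ (immediate, since $\rho_m$ is absolutely continuous) and that $x=-\nu_i$ is handled as a possible atom rather than a continuity point. Apart from this, and the separate treatment of the limiting cases $|\nu_i|=\infty$ and $e_i=0$ flagged above, the argument is a routine dominated-convergence passage to the limit; no serious analytic difficulty arises.
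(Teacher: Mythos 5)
Your proposal is correct, and part (b) is exactly the paper's argument (Proposition \ref{LSDK_H}(a) combined with Theorem \ref{closeness}, the condition $n^{1/2}\eta_{i,n}(n-k)^{-1/2}\rightarrow 0$ being automatic under conservative tuning with $n-k\rightarrow \infty$, cf.~Remark \ref{n-k}). For part (a) you take a genuinely different, somewhat more elementary route: you work directly with the cdf representation (\ref{hard_finite_sample_unknown}) and pass to the limit in the three summands by dominated convergence in $s$ for each fixed $x\neq -\nu _{i}$, whereas the paper argues at the level of the measure (\ref{hard_finite_sample_unknown_density}): the atom weight converges by Theorem \ref{select_prob_moving_par_unknown}(a1), and the absolutely continuous density $\phi (x)\int_{0}^{\infty }\boldsymbol{1}\left( \left\vert x+n^{1/2}\theta _{i,n}/(\sigma _{n}\xi _{i,n})\right\vert >sn^{1/2}\eta _{i,n}\right) \rho _{m}(s)ds$ converges pointwise for a.e.~$x$, with Scheff\'{e}'s Lemma upgrading this to convergence of the continuous parts; the paper's route thus yields a slightly stronger conclusion (essentially total-variation convergence of the absolutely continuous component) and recycles the variable-selection result, while yours is self-contained and avoids Scheff\'{e}. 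The one delicate point in your version is the point $x=-\nu _{i}$ itself: when $e_{i}=0$ (and $\left\vert \nu _{i}\right\vert <\infty $) the atom weight vanishes, so $-\nu _{i}$ \emph{is} a continuity point of the limit and cannot be dismissed as "a possible atom", and there the naive indicator limits are indeterminate since both $\left\vert x+n^{1/2}\theta _{i,n}/(\sigma _{n}\xi _{i,n})\right\vert $ and $n^{1/2}s\eta _{i,n}$ tend to $0$. This is easily repaired, either by noting that pointwise convergence of cdfs on the dense set $\mathbb{R}\setminus \{-\nu _{i}\}$ to (the restriction of) the limiting cdf already implies weak convergence, or by a direct estimate using the Lipschitz property of $\Phi $ showing the second and third summands contribute $O(n^{1/2}\eta _{i,n})\rightarrow 0$ at $x=-\nu _{i}$; with that remark added, your plan goes through.
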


\begin{theorem}
\label{STconservative}(Soft-thresholding with conservative tuning) Suppose
that for given $i\geq 1$ satisfying $i\leq k=k(n)$ for large enough $n$ we
have $\xi _{i,n}\eta _{i,n}\rightarrow 0$ and $n^{1/2}\eta _{i,n}\rightarrow
e_{i}$ where $0\leq e_{i}<\infty $. Set the scaling factor $\alpha
_{i,n}=n^{1/2}/\xi _{i,n}$. Suppose that the true parameters $\theta
^{(n)}=(\theta _{1,n},\dots ,\theta _{k_{n},n})\in \mathbb{R}^{k_{n}}$ and $%
\sigma _{n}\in (0,\infty )$ satisfy $n^{1/2}\theta _{i,n}/(\sigma _{n}\xi
_{i,n})\rightarrow \nu _{i}\in \mathbb{\bar{R}}$.

(a) If $n-k$ is eventually constant equal to $m$, say, then $H_{S,n,\theta
^{(n)},\sigma _{n}}^{i\maltese }$ converges weakly to the distribution with
cdf%
\begin{equation*}
\int_{0}^{\infty }\left\{ \Phi \left( x+se_{i}\right) \boldsymbol{1}\left(
x+\nu _{i}\geq 0\right) +\Phi \left( x-se_{i}\right) \boldsymbol{1}\left(
x+\nu _{i}<0\right) \right\} \rho _{m}(s)ds,
\end{equation*}%
the corresponding measure being%
\begin{eqnarray}
&&\int_{0}^{\infty }\left\{ \Phi \left( -\nu _{i}+se_{i}\right) -\Phi \left(
-\nu _{i}-se_{i}\right) \right\} \rho _{m}(s)dsd\delta _{-\nu _{i}}(x) 
\notag \\
&&+\int_{0}^{\infty }\left\{ \phi \left( x+se_{i}\right) \boldsymbol{1}%
\left( x+\nu _{i}>0\right) +\phi \left( x-se_{i}\right) \boldsymbol{1}\left(
x+\nu _{i}<0\right) \right\} \rho _{m}(s)dsdx.
\label{soft_large_sample_unknown_density_A}
\end{eqnarray}%
[The atomic part in the above expression is absent in case $\left\vert \nu
_{i}\right\vert =\infty $. Furthermore, the distribution reduces to a
standard normal distribution if $e_{i}=0$.]

(b) If $n-k\rightarrow \infty $ holds, then $H_{S,n,\theta ^{(n)},\sigma
_{n}}^{i\maltese }$ converges weakly to the distribution given in
Proposition \ref{LSDK_S}(a).
\end{theorem}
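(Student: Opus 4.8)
The plan is to treat the two regimes for $n-k$ separately: case~(b) reduces at once to the known-variance result via the total-variation closeness theorem, whereas case~(a) requires a direct passage to the limit in the finite-sample formula of Proposition~\ref{5}. For part~(b), conservative tuning gives $n^{1/2}\eta _{i,n}\rightarrow e_{i}<\infty $, so under the additional hypothesis $n-k\rightarrow \infty $ one trivially has $n^{1/2}\eta _{i,n}(n-k)^{-1/2}\rightarrow 0$; hence Theorem~\ref{closeness} applies and yields $\sup_{\theta ,\sigma }\Vert H_{S,n,\theta ,\sigma }^{i}-H_{S,n,\theta ,\sigma }^{i\maltese }\Vert _{TV}\rightarrow 0$, in particular along the chosen sequence $(\theta ^{(n)},\sigma _{n})$. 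Since $\Vert \mu _{n}-\nu _{n}\Vert _{TV}\rightarrow 0$ together with $\nu _{n}\Rightarrow \mu $ entails $\mu _{n}\Rightarrow \mu $, and since $H_{S,n,\theta ^{(n)},\sigma _{n}}^{i}$ converges weakly to the distribution of Proposition~\ref{LSDK_S}(a) (the scaling $\alpha _{i,n}=n^{1/2}/\xi _{i,n}$ and the moving-parameter hypothesis being exactly those of Proposition~\ref{LSDK_S}(a)), it follows that $H_{S,n,\theta ^{(n)},\sigma _{n}}^{i\maltese }$ converges weakly to that same limit. This settles~(b).

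For part~(a), start from the non-central $T$ form of Proposition~\ref{5}. With $\alpha _{i,n}=n^{1/2}/\xi _{i,n}$ one has $n^{1/2}x/(\alpha _{i,n}\xi _{i,n})=x$, and since $n-k$ is eventually equal to $m$ the cdf reads, for all large $n$,
\[
H_{S,n,\theta ^{(n)},\sigma _{n}}^{i\maltese }(x)=T_{m,-x}\!\left( n^{1/2}\eta _{i,n}\right) \boldsymbol{1}\!\left( x+c_{n}\geq 0\right) +T_{m,-x}\!\left( -n^{1/2}\eta _{i,n}\right) \boldsymbol{1}\!\left( x+c_{n}<0\right) ,
\]
where $c_{n}:=n^{1/2}\theta _{i,n}/(\sigma _{n}\xi _{i,n})\rightarrow \nu _{i}$. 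Fix $x\neq -\nu _{i}$. Then $\boldsymbol{1}(x+c_{n}\geq 0)$ is eventually equal to $\boldsymbol{1}(x+\nu _{i}\geq 0)$ (with the obvious reading when $\nu _{i}=\pm \infty $, in which case one indicator is eventually $1$ and the other eventually $0$), and continuity of $y\mapsto T_{m,-x}(y)$ together with $n^{1/2}\eta _{i,n}\rightarrow e_{i}$ gives $T_{m,-x}(\pm n^{1/2}\eta _{i,n})\rightarrow T_{m,-x}(\pm e_{i})$. Writing a non-central $t$ variable with $m$ degrees of freedom and non-centrality $-x$ as $(Z-x)/S$ with $Z\sim N(0,1)$ independent of $S$, where $S$ has density $\rho _{m}$, one obtains $T_{m,-x}(\pm e_{i})=\int_{0}^{\infty }\Phi (x\pm se_{i})\rho _{m}(s)\,ds$ (the identity already implicit in Proposition~\ref{5}). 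Hence, for every $x\neq -\nu _{i}$,
\[
H_{S,n,\theta ^{(n)},\sigma _{n}}^{i\maltese }(x)\rightarrow \int_{0}^{\infty }\left\{ \Phi (x+se_{i})\boldsymbol{1}(x+\nu _{i}\geq 0)+\Phi (x-se_{i})\boldsymbol{1}(x+\nu _{i}<0)\right\} \rho _{m}(s)\,ds .
\]

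A short check shows this function is a genuine cdf: it is right-continuous, nondecreasing, has the correct limits at $\pm \infty $, and its only discontinuity is at $x=-\nu _{i}$, the jump there being $\int_{0}^{\infty }[\Phi (-\nu _{i}+se_{i})-\Phi (-\nu _{i}-se_{i})]\rho _{m}(s)\,ds$, which matches the mass of the pointmass at $-c_{n}\rightarrow -\nu _{i}$ (consistently with Theorem~\ref{select_prob_moving_par_unknown}(a1)). Thus convergence at all $x\neq -\nu _{i}$ is convergence at every continuity point of the limit, i.e.\ weak convergence, which is the first assertion of~(a); decomposing into the atom at $-\nu _{i}$ and an absolutely continuous part and differentiating the latter produces the measure \eqref{soft_large_sample_unknown_density_A}. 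The bracketed reductions are immediate: $e_{i}=0$ collapses the integral to $\Phi (x)$, while $|\nu _{i}|=\infty $ forces one indicator to be identically $1$, the other to $0$, and pushes the atom off to infinity so that it disappears. The only point needing any care is the bookkeeping for the moving indicator $\boldsymbol{1}(x+c_{n}\geq 0)$ in the extended-real-valued cases $\nu _{i}=\pm \infty $ together with the verification that the candidate limit is a bona fide cdf (so that pointwise convergence off $x=-\nu _{i}$ suffices); this is where the (rather modest) main obstacle lies. If one prefers to avoid the non-central $t$ identity, the same conclusion follows directly from the density/integral form of Proposition~\ref{5}: with the scaling in force the Gaussian arguments $n^{1/2}x/(\alpha _{i,n}\xi _{i,n})\pm n^{1/2}s\eta _{i,n}$ converge to $x\pm se_{i}$, and $\rho _{n-k}\equiv \rho _{m}$ for large $n$, so dominated convergence with respect to $\rho _{m}$ delivers the limiting cdf and its density at once.
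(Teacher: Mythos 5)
Your proposal is correct. Part (b) is exactly the paper's argument: under conservative tuning $n^{1/2}\eta _{i,n}$ is bounded, so $n-k\rightarrow \infty $ gives $n^{1/2}\eta _{i,n}(n-k)^{-1/2}\rightarrow 0$, and Theorem \ref{closeness} together with Proposition \ref{LSDK_S}(a) (plus the standard fact that total-variation closeness preserves weak limits) settles the claim. For part (a), however, you take a genuinely different route from the paper. The paper decomposes the finite-sample measure (\ref{soft_finite_sample_unknown_density}) into its atomic and absolutely continuous parts: the weight of the atom at $-\alpha _{i,n}\theta _{i,n}/\sigma _{n}$ converges by Theorem \ref{select_prob_moving_par_unknown}(a1), the density of the a.c.\ part converges pointwise by dominated convergence, and Scheff\'e's Lemma (together with conservation of total mass) upgrades this to weak convergence of the whole measure. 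You instead work directly with the cdf in its noncentral-$t$ form $T_{m,-x}(\pm n^{1/2}\eta _{i,n})$, use continuity of $y\mapsto T_{m,-x}(y)$ and the eventual stabilization of the indicator $\boldsymbol{1}(x+c_{n}\geq 0)$ to obtain pointwise convergence at every $x\neq -\nu _{i}$ (including the extended-real cases $\nu _{i}=\pm \infty $), verify that the candidate limit is a bona fide cdf with its only possible jump at $-\nu _{i}$, and conclude weak convergence from the dense-set criterion, recovering the limiting measure (\ref{soft_large_sample_unknown_density_A}) by differentiating off the atom. Your argument is more elementary and self-contained (no Scheff\'e, no appeal to the selection-probability limits), whereas the paper's decomposition identifies the two components of the limit measure directly and yields $L^{1}$-convergence of the a.c.\ densities as a byproduct. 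One small wording point: when $e_{i}=0$ the point $-\nu _{i}$ is itself a continuity point of the limit, so ``convergence at all $x\neq -\nu _{i}$'' is not literally ``convergence at every continuity point''; but since $\mathbb{R}\setminus \{-\nu _{i}\}$ is dense and the limit is a proper cdf, the standard dense-set argument still delivers weak convergence, which is precisely what your parenthetical verification covers.
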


\begin{theorem}
\label{ASTconservative}(Adaptive soft-thresholding with conservative tuning)
Suppose that for given $i\geq 1$ satisfying $i\leq k=k(n)$ for large enough $%
n$ we have $\xi _{i,n}\eta _{i,n}\rightarrow 0$ and $n^{1/2}\eta
_{i,n}\rightarrow e_{i}$ where $0\leq e_{i}<\infty $. Set the scaling factor 
$\alpha _{i,n}=n^{1/2}/\xi _{i,n}$. Suppose that the true parameters $\theta
^{(n)}=(\theta _{1,n},\dots ,\theta _{k_{n},n})\in \mathbb{R}^{k_{n}}$ and $%
\sigma _{n}\in (0,\infty )$ satisfy $n^{1/2}\theta _{i,n}/(\sigma _{n}\xi
_{i,n})\rightarrow \nu _{i}\in \mathbb{\bar{R}}$.

(a) Suppose $n-k$ is eventually constant equal to $m$, say. Then $%
H_{AS,n,\theta ^{(n)},\sigma _{n}}^{i\maltese }$ converges weakly to the
distribution with cdf%
\begin{eqnarray}
&&\int_{0}^{\infty }\Phi \left( 0.5(x-\nu _{i})+\sqrt{\left( 0.5(x+\nu
_{i})\right) ^{2}+s^{2}e_{i}^{2}}\right) \rho _{m}(s)ds\boldsymbol{1}\left(
x+\nu _{i}\geq 0\right)  \notag \\
&&+\int_{0}^{\infty }\Phi \left( 0.5(x-\nu _{i})-\sqrt{\left( 0.5(x+\nu
_{i})\right) ^{2}+s^{2}e_{i}^{2}}\right) \rho _{m}(s)ds\boldsymbol{1}\left(
x+\nu _{i}<0\right)  \label{adaptive_soft_large_sample_unknown_cdf_A}
\end{eqnarray}%
in case $\left\vert \nu _{i}\right\vert <\infty $, the corresponding measure
being given by%
\begin{eqnarray*}
&&\int_{0}^{\infty }\left\{ \Phi \left( -\nu _{i}+se_{i}\right) -\Phi \left(
-\nu _{i}-se_{i}\right) \right\} \rho _{m}(s)dsd\delta _{-\nu _{i}}(x) \\
&&+0.5\int_{0}^{\infty }\left\{ \phi \left( 0.5(x-\nu _{i})+\sqrt{\left(
0.5(x+\nu _{i})\right) ^{2}+s^{2}e_{i}^{2}}\right) \left( 1+t(x,s)\right) 
\boldsymbol{1}\left( x+\nu _{i}>0\right) \right. \\
&&+\left. \phi \left( 0.5(x-\nu _{i})-\sqrt{\left( 0.5(x+\nu _{i})\right)
^{2}+s^{2}e_{i}^{2}}\right) \left( 1-t(x,s)\right) \boldsymbol{1}\left(
x+\nu _{i}<0\right) \right\} \rho _{m}(s)dsdx,
\end{eqnarray*}%
where $t(x,s)=\left( x+\nu _{i}\right) /\sqrt{\left( \left( x+\nu
_{i}\right) ^{2}+4s^{2}e_{i}^{2}\right) }$. In case $\left\vert \nu
_{i}\right\vert =\infty $, the cdf $H_{AS,n,\theta ^{(n)},\sigma
_{n}}^{i\maltese }$ converges weakly to $\Phi $, i.e., a standard normal
distribution. [If $e_{i}=0$, the limit always reduces to a standard normal
distribution.]

(b) If $n-k\rightarrow \infty $, then $H_{AS,n,\theta ^{(n)},\sigma
_{n}}^{i\maltese }$ converges weakly to the distribution given in
Proposition \ref{LSDK_AS}(a).
\end{theorem}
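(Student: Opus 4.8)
The plan is to reduce both parts to the known-variance statement, Proposition \ref{LSDK_AS}(a), by means of a single averaging identity. Comparing the explicit cdf formulas (\ref{adaptive_finite_sample}) and (\ref{adaptive_finite_sample_unknown})---the indicator factors in the latter do not involve the integration variable $s$ and can be pulled outside the integral---one reads off, for every $x\in\mathbb{R}$,
\[
H_{AS,\eta _{i,n},n,\theta ,\sigma }^{i\maltese }(x)=\int_{0}^{\infty }H_{AS,s\eta _{i,n},n,\theta ,\sigma }^{i}(x)\,\rho _{n-k}(s)\,ds ,
\]
i.e.\ the feasible cdf is a $\rho _{n-k}$-average of infeasible cdfs with the tuning sequence rescaled by $s$. (This is just the device of conditioning on $\hat{\sigma}=s\sigma$ and using independence of $\hat{\sigma}$ and $\hat{\theta}_{LS,i}$, as in the derivation of (\ref{select_prob_unknown}).) Everything else is built on this identity.

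For part (a) I would fix $n-k\equiv m$ and $\alpha _{i,n}=n^{1/2}/\xi _{i,n}$, fix $s>0$, and apply Proposition \ref{LSDK_AS}(a) with the tuning sequence $s\eta _{i,n}$ in place of $\eta _{i,n}$: the required hypotheses hold since $\xi _{i,n}(s\eta _{i,n})=s\,\xi _{i,n}\eta _{i,n}\rightarrow 0$, $n^{1/2}(s\eta _{i,n})=s\,n^{1/2}\eta _{i,n}\rightarrow se_{i}\in[0,\infty)$, and $n^{1/2}\theta _{i,n}/(\sigma _{n}\xi _{i,n})\rightarrow \nu _{i}$ is unchanged. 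Thus $H_{AS,s\eta _{i,n},n,\theta ^{(n)},\sigma _{n}}^{i}$ converges weakly to the distribution of Proposition \ref{LSDK_AS}(a) with $e_{i}$ replaced by $se_{i}$; denote its cdf by $F_{se_{i}}$. Since for every $c\geq 0$ the cdf $F_{c}$ is continuous except possibly at the single point $-\nu _{i}$ (and equals $\Phi $ when $c=0$ or $|\nu _{i}|=\infty $), this gives $H_{AS,s\eta _{i,n},n,\theta ^{(n)},\sigma _{n}}^{i}(x)\rightarrow F_{se_{i}}(x)$ for every fixed $x\neq -\nu _{i}$ and every $s>0$. Dominated convergence in the averaging identity ($|H^{i}_{AS,\cdot}|\leq 1$, $\rho _{m}$ a density) then yields $H_{AS,\eta _{i,n},n,\theta ^{(n)},\sigma _{n}}^{i\maltese }(x)\rightarrow \int_{0}^{\infty }F_{se_{i}}(x)\,\rho _{m}(s)\,ds$ for every $x\neq -\nu _{i}$. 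To conclude I would note that $x\mapsto \int_{0}^{\infty }F_{se_{i}}(x)\,\rho _{m}(s)\,ds$ is a genuine cdf---an average of cdfs, with right-continuity and the correct limits at $\pm \infty $ by dominated convergence---whose only possible discontinuity is $-\nu _{i}$; hence pointwise convergence off $\{-\nu _{i}\}$ is exactly weak convergence to it, and one checks it coincides with (\ref{adaptive_soft_large_sample_unknown_cdf_A}) by substituting into $F_{se_{i}}$, the stated density representation following by differentiating under the integral sign as in Proposition \ref{3}. The bracketed degenerate cases $|\nu _{i}|=\infty $ and $e_{i}=0$ require no separate argument, since then $F_{se_{i}}=\Phi $ for all $s$ and the average is again $\Phi $.

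For part (b) I would use that conservative tuning together with $n-k\rightarrow \infty $ forces $n^{1/2}\eta _{i,n}(n-k)^{-1/2}\rightarrow 0$ (Remark \ref{n-k}), so Theorem \ref{closeness} applies and gives $\sup_{\theta \in \mathbb{R}^{k},0<\sigma <\infty }\Vert H_{AS,n,\theta ,\sigma }^{i}-H_{AS,n,\theta ,\sigma }^{i\maltese }\Vert _{\infty }\rightarrow 0$; in particular this holds along $(\theta ^{(n)},\sigma _{n})$. Since $H_{AS,n,\theta ^{(n)},\sigma _{n}}^{i}$ converges weakly to the distribution of Proposition \ref{LSDK_AS}(a) and a perturbation vanishing in supremum norm does not affect convergence of cdfs at continuity points of the limit, $H_{AS,n,\theta ^{(n)},\sigma _{n}}^{i\maltese }$ converges weakly to the same distribution.

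I expect the only real obstacle to be the bookkeeping in part (a): confirming that each $F_{se_{i}}$ has $-\nu _{i}$ as its unique possible discontinuity (so that one exceptional point serves for all $s$ at once), justifying the interchange of limit and $\int (\cdot )\rho _{m}(s)\,ds$, and verifying that the averaged limit is a bona fide cdf agreeing with (\ref{adaptive_soft_large_sample_unknown_cdf_A}). Once the averaging identity is in hand these are all routine; no delicate estimates are needed, in contrast to the consistent-tuning case.
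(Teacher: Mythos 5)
Your proposal is correct and takes essentially the same route as the paper: for (a) the paper likewise exploits the $\rho_m$-averaging representation of $H_{AS,n,\theta,\sigma}^{i\maltese}$ and passes to the limit by dominated convergence, handling the single exceptional point $x=-\nu_i$, the only difference being that it verifies the per-$s$ limits of $\Phi\left(z_{n,\theta^{(n)},\sigma_n}^{(1)}(x,s\eta_{i,n})\right)$ and $\Phi\left(z_{n,\theta^{(n)},\sigma_n}^{(2)}(x,s\eta_{i,n})\right)$ directly inside the integral instead of citing Proposition \ref{LSDK_AS}(a) with tuning $s\eta_{i,n}$, which is only a repackaging. Part (b) is argued exactly as in the paper, via Theorem \ref{closeness} (using Remark \ref{n-k}) combined with Proposition \ref{LSDK_AS}(a).
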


It transpires that in case of conservative tuning and $n-k\rightarrow \infty 
$ we obtain exactly the same limiting distributions as in the known-variance
case and hence the relevant discussion given at the end of Section \ref%
{LSDKVC} applies also here. [That one obtains the same limits does not come
as a surprise given the results in Section \ref{uniform_close} and the
observation made in Remark \ref{n-k}.] In the case, where $n-k$ is
eventually constant, the limits are obtained from the limits in the
known-variance case (with $\sigma $ replaced by $\sigma s$) by averaging
with respect to the distribution of $\hat{\sigma}/\sigma $. Again the
limiting distributions essentially have the same structure as the
corresponding finite-sample distributions. The fixed-parameter limiting
distributions (corresponding to setting $\theta _{i,n}\equiv \theta _{i}$
and $\sigma _{n}\equiv \sigma $ in the above theorems) again misrepresent
the finite-sample properties of the thresholding estimators whenever $\theta
_{i}\neq 0$ but small, as the fixed-parameter limiting distribution is -- in
case of hard-thresholding and adaptive soft-thresholding -- then always $%
N(0,1)$, regardless of the size of $\theta _{i}$. For soft-thresholding we
also observe a strong discrepancy between the finite-sample distribution and
the fixed-parameter limit especially for $\theta _{i}\neq 0$ but small,
which is given by the distribution with pdf $\int_{0}^{\infty }\phi \left(
x+s\limfunc{sign}(\theta _{i})e_{i}\right) \rho _{m}(s)ds$ regardless of the
size of $\theta _{i}$. As a consequence, we again observe non-uniformity in
the convergence of finite-sample distributions to their limit in a
fixed-parameter framework also in the case where the number of degrees of
freedom is (eventually) constant.

\subsubsection{Consistent Tuning\label{consistent}}

We next derive the limiting distributions of $\tilde{\theta}_{H,i}$, $\tilde{%
\theta}_{S,i}$, and $\tilde{\theta}_{AS,i}$ in a moving-parameter framework
under consistent tuning.

\begin{theorem}
\label{HTconsistent}(Hard-thresholding with consistent tuning) Suppose that
for given $i\geq 1$ satisfying $i\leq k=k(n)$ for large enough $n$ we have $%
\xi _{i,n}\eta _{i,n}\rightarrow 0$ and $n^{1/2}\eta _{i,n}\rightarrow
\infty $. Set the scaling factor $\alpha _{i,n}=\left( \xi _{i,n}\eta
_{i,n}\right) ^{-1}$. Suppose that the true parameters $\theta
^{(n)}=(\theta _{1,n},\ldots ,\theta _{k_{n},n})\in \mathbb{R}^{k_{n}}$ and $%
\sigma _{n}\in (0,\infty )$ satisfy $\theta _{i,n}/(\sigma _{n}\xi
_{i,n}\eta _{i,n})\rightarrow \zeta _{i}\in \mathbb{\bar{R}}$.

(a) If $n-k$ is eventually constant equal to $m$, say, then $H_{H,n,\theta
^{(n)},\sigma _{n}}^{i\maltese }$ converges weakly to%
\begin{eqnarray*}
&&\left( \int_{\left\vert \zeta _{i}\right\vert }^{\infty }\rho
_{m}(s)ds\right) \delta _{-\zeta _{i}}+\left( 1-\int_{\left\vert \zeta
_{i}\right\vert }^{\infty }\rho _{m}(s)ds\right) \delta _{0} \\
&=&\Pr (\chi _{m}^{2}>m\zeta _{i}^{2})\delta _{-\zeta _{i}}+\Pr (\chi
_{m}^{2}\leq m\zeta _{i}^{2})\delta _{0}.
\end{eqnarray*}%
[The above display reduces to $\delta _{0}$ for $\left\vert \zeta
_{i}\right\vert =\infty $.]

(b) If $n-k\rightarrow \infty $ holds, then

\qquad 1. $\left\vert \zeta _{i}\right\vert <1$ implies that $H_{H,n,\theta
^{(n)},\sigma _{n}}^{i\maltese }$ converges weakly to $\delta _{-\zeta _{i}}$%
.

\qquad 2. $\left\vert \zeta _{i}\right\vert >1$ implies that $H_{H,n,\theta
^{(n)},\sigma _{n}}^{i\maltese }$ converges weakly to $\delta _{0}$.

\qquad 3. $\left\vert \zeta _{i}\right\vert =1$ and $n^{1/2}\eta
_{i,n}/\left( n-k\right) ^{1/2}\rightarrow 0$ imply that $H_{H,n,\theta
^{(n)},\sigma _{n}}^{i\maltese }$ converges weakly to%
\begin{equation*}
\Phi (r_{i})\delta _{-\zeta _{i}}+\left( 1-\Phi (r_{i})\right) \delta _{0}
\end{equation*}%
provided $r_{i,n}=n^{1/2}\left( \eta _{i,n}-\zeta _{i}\theta _{i,n}/(\sigma
_{n}\xi _{i,n})\right) \rightarrow r_{i}$ for some $r_{i}\in \mathbb{\bar{R}}
$.

\qquad 4. $\left\vert \zeta _{i}\right\vert =1$ and $n^{1/2}\eta
_{i,n}/\left( n-k\right) ^{1/2}\rightarrow 2^{1/2}d_{i}$ with $%
0<d_{i}<\infty $ imply that $H_{H,n,\theta ^{(n)},\sigma _{n}}^{i\maltese }$
converges weakly to%
\begin{equation*}
\left( \int_{-\infty }^{\infty }\Phi (d_{i}t+r_{i})\phi (t)dt\right) \delta
_{-\zeta _{i}}+\left( 1-\int_{-\infty }^{\infty }\Phi (d_{i}t+r_{i})\phi
(t)dt\right) \delta _{0}
\end{equation*}%
provided $r_{i,n}\rightarrow r_{i}$ for some $r_{i}\in \mathbb{\bar{R}}$.
[Note that the above display reduces to $\delta _{-\zeta _{i}}$ if $%
r_{i}=\infty $, and to $\delta _{0}$ if $r_{i}=-\infty $.]

\qquad 5. $\left\vert \zeta _{i}\right\vert =1$ and $n^{1/2}\eta
_{i,n}/\left( n-k\right) ^{1/2}\rightarrow \infty $ imply that $%
H_{H,n,\theta ^{(n)},\sigma _{n}}^{i\maltese }$ converges weakly to%
\begin{equation*}
\Phi (r_{i}^{\prime })\delta _{-\zeta _{i}}+\left( 1-\Phi (r_{i}^{\prime
})\right) \delta _{0}
\end{equation*}%
provided $\left( n^{1/2}\eta _{i,n}/\left( n-k\right) ^{1/2}\right)
^{-1}r_{i,n}\rightarrow 2^{-1/2}r_{i}^{\prime }$ for some $r_{i}^{\prime
}\in \mathbb{\bar{R}}$.
\end{theorem}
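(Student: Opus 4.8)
The plan is to exploit that hard-thresholding takes only two values: on $\{\tilde{\theta}_{H,i}\neq 0\}$ it equals $\hat{\theta}_{LS,i}$, and on $\{\tilde{\theta}_{H,i}=0\}$ it equals $0$. With the scaling $\alpha_{i,n}=(\xi_{i,n}\eta_{i,n})^{-1}$ this yields the exact identity
\begin{equation*}
\sigma_n^{-1}\alpha_{i,n}\left(\tilde{\theta}_{H,i}-\theta_{i,n}\right)=Z_n\,\boldsymbol{1}\left(\tilde{\theta}_{H,i}\neq 0\right)-\zeta_{i,n}\,\boldsymbol{1}\left(\tilde{\theta}_{H,i}=0\right),
\end{equation*}
where $Z_n=\sigma_n^{-1}\alpha_{i,n}(\hat{\theta}_{LS,i}-\theta_{i,n})$ and $\zeta_{i,n}=\theta_{i,n}/(\sigma_n\xi_{i,n}\eta_{i,n})\to\zeta_i$ (on the second event the left-hand side equals $-\alpha_{i,n}\theta_{i,n}/\sigma_n=-\zeta_{i,n}$). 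Since $\hat{\theta}_{LS,i}$ is $N(\theta_{i,n},\sigma_n^2\xi_{i,n}^2/n)$-distributed, $Z_n$ is $N(0,(n\eta_{i,n}^2)^{-1})$-distributed, and $n^{1/2}\eta_{i,n}\to\infty$ forces $Z_n\to 0$ in probability; hence also $Z_n\boldsymbol{1}(\tilde{\theta}_{H,i}\neq 0)\to 0$ in probability, its modulus being bounded by $|Z_n|$ (no independence from the selection event is needed or available).

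By the converging-together lemma it therefore suffices to find the weak limit of $W_n:=-\zeta_{i,n}\boldsymbol{1}(\tilde{\theta}_{H,i}=0)$, whose law is the two-point distribution $p_n\delta_{-\zeta_{i,n}}+(1-p_n)\delta_0$ with $p_n=P_{n,\theta^{(n)},\sigma_n}(\tilde{\theta}_{H,i}=0)$. The key observation is that $p_n$ is exactly the variable deletion probability already treated, under identical hypotheses and case distinctions, in Theorem \ref{select_prob_moving_par_unknown}(b) (note $e_i=\infty$ here; the side conditions imposed in parts (a) and (b).1--(b).5 of Theorem \ref{HTconsistent} on $n-k$, on $n^{1/2}\eta_{i,n}(n-k)^{-1/2}$, and on $r_{i,n}$ are precisely those required there). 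Quoting that theorem: in the eventually-constant case $p_n\to\int_{|\zeta_i|}^{\infty}\rho_m(s)\,ds=\Pr(\chi_m^2>m\zeta_i^2)$, and in the case $n-k\to\infty$ one gets $p_n\to 1,\,0,\,\Phi(r_i),\,\int_{-\infty}^{\infty}\Phi(d_it+r_i)\phi(t)\,dt,\,\Phi(r_i^{\prime})$ in the five respective subcases.

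It then remains to pass from $p_n\to p$ and $\zeta_{i,n}\to\zeta_i$ to weak convergence of $W_n$. If $|\zeta_i|<\infty$, then for every bounded continuous $f$ we have $\int f\,dP_{W_n}=f(0)+\left(f(-\zeta_{i,n})-f(0)\right)p_n\to f(0)+\left(f(-\zeta_i)-f(0)\right)p$, i.e.\ $W_n$ converges weakly to $p\delta_{-\zeta_i}+(1-p)\delta_0$. If $|\zeta_i|=\infty$, the atom location diverges, but then $|\zeta_i|>1$, so $p_n\to 0$ by the cases just quoted, whence $P(|W_n|>\varepsilon)=p_n\to 0$ for every fixed $\varepsilon>0$ (eventually $|\zeta_{i,n}|>\varepsilon$), so $W_n$ converges weakly to $\delta_0$. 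Reassembling, $\sigma_n^{-1}\alpha_{i,n}(\tilde{\theta}_{H,i}-\theta_{i,n})$ has the same weak limit as $W_n$, which is precisely the distribution asserted in cases (a) and (b).1--(b).5; the bracketed reductions when $|\zeta_i|=\infty$ or $r_i=\pm\infty$ fall out of the corresponding reductions of $p$.

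I do not anticipate a serious obstacle: the genuinely delicate analysis — in particular the boundary regime $|\zeta_i|=1$, where the speed of divergence of $n-k$ decides whether the variance-estimation effect survives in the limit — is entirely absorbed into Theorem \ref{select_prob_moving_par_unknown}. The only points needing care are that $\{\tilde{\theta}_{H,i}=0\}$ need not be independent of $Z_n$ for the converging-together step (it isn't, but it need not be), and the separate, degenerate treatment of $|\zeta_i|=\infty$. A more computational alternative would start from the explicit finite-sample cdf in Proposition \ref{4}, substitute $\alpha_{i,n}=(\xi_{i,n}\eta_{i,n})^{-1}$, and pass to the limit directly in the three $\rho_{n-k}$-integrals; this works but merely re-does the work behind Theorem \ref{select_prob_moving_par_unknown}, so I would not take that route.
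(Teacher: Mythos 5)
Your proposal is correct and is essentially the paper's own argument: the paper uses the identical decomposition of $\sigma_n^{-1}\alpha_{i,n}(\tilde{\theta}_{H,i}-\theta_{i,n})$ into the term $-\theta_{i,n}/(\sigma_n\xi_{i,n}\eta_{i,n})\boldsymbol{1}(\tilde{\theta}_{H,i}=0)$ plus a term of order $n^{-1/2}\eta_{i,n}^{-1}$ times a standard normal on the complementary event, notes that these coefficients converge to $-\zeta_i$ and $0$ in probability, and then invokes Theorem \ref{select_prob_moving_par_unknown}(b) for the limits of the deletion probabilities, treating $|\zeta_i|=\infty$ separately exactly as you do. The only difference is cosmetic: where the paper says "inspection of the cdf," you spell this out via the converging-together lemma and bounded continuous test functions.
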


\begin{theorem}
\label{STconsistent}(Soft-thresholding with consistent tuning) Suppose that
for given $i\geq 1$ satisfying $i\leq k=k(n)$ for large enough $n$ we have $%
\xi _{i,n}\eta _{i,n}\rightarrow 0$ and $n^{1/2}\eta _{i,n}\rightarrow
\infty $. Set the scaling factor $\alpha _{i,n}=\left( \xi _{i,n}\eta
_{i,n}\right) ^{-1}$. Suppose that the true parameters $\theta
^{(n)}=(\theta _{1,n},\ldots ,\theta _{k_{n},n})\in \mathbb{R}^{k_{n}}$ and $%
\sigma _{n}\in (0,\infty )$ satisfy $\theta _{i,n}/(\sigma _{n}\xi
_{i,n}\eta _{i,n})\rightarrow \zeta _{i}\in \mathbb{\bar{R}}$.

(a) If $n-k$ is eventually constant equal to $m$, say, then $H_{S,n,\theta
^{(n)},\sigma _{n}}^{i\maltese }$ converges weakly to the distribution given
by%
\begin{eqnarray}
&&\int_{\left\vert \zeta _{i}\right\vert }^{\infty }\rho _{m}(s)dsd\delta
_{-\zeta _{i}}(x)+\left\{ \rho _{m}(x)\boldsymbol{1}\left( x+\zeta
_{i}<0\right) +\rho _{m}(-x)\boldsymbol{1}\left( x+\zeta _{i}>0\right)
\right\} dx  \notag \\
&=&\Pr (\chi _{m}^{2}>m\zeta _{i}^{2})d\delta _{-\zeta _{i}}(x)+\left\{ \rho
_{m}(x)\boldsymbol{1}\left( x+\zeta _{i}<0\right) +\rho _{m}(-x)\boldsymbol{1%
}\left( x+\zeta _{i}>0\right) \right\} dx,
\label{soft_large_sample_unknown_density_C}
\end{eqnarray}%
where we recall the convention that $\rho _{m}(x)=0$ for $x<0$. [In case $%
\left\vert \zeta _{i}\right\vert =\infty $, the atomic part in (\ref%
{soft_large_sample_unknown_density_C}) is absent and (\ref%
{soft_large_sample_unknown_density_C}) reduces to $\rho _{m}(-\limfunc{sign}%
(\zeta _{i})x)dx$.]

(b) If $n-k\rightarrow \infty $ holds, then $H_{S,n,\theta ^{(n)},\sigma
_{n}}^{i\maltese }$ converges weakly to $\delta _{-\limfunc{sign}(\zeta
_{i})\min (1,\left\vert \zeta _{i}\right\vert )}$.
\end{theorem}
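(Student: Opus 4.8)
The plan is to bypass the finite-sample cdf of Proposition~\ref{5} and argue directly from the representation $\tilde{\theta}_{S,i}=\sign(\hat{\theta}_{LS,i})(|\hat{\theta}_{LS,i}|-\hat{\sigma}\xi_{i,n}\eta_{i,n})_{+}$. Since $\hat{\theta}_{LS,i}\sim N(\theta_{i,n},\sigma_{n}^{2}\xi_{i,n}^{2}/n)$ is independent of $\hat{\sigma}$ and $(n-k)\hat{\sigma}^{2}/\sigma_{n}^{2}$ is $\chi_{n-k}^{2}$-distributed, the law of $\sigma_{n}^{-1}\alpha_{i,n}(\tilde{\theta}_{S,i}-\theta_{i,n})$ can be computed on any probability space carrying $\hat{\theta}_{LS,i}=\theta_{i,n}+\sigma_{n}\xi_{i,n}n^{-1/2}Z$ and $\hat{\sigma}=\sigma_{n}S_{n}$, with $Z\sim N(0,1)$ fixed and $S_{n}=(\chi_{n-k}^{2}/(n-k))^{1/2}$ independent of $Z$. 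With $\alpha_{i,n}=(\xi_{i,n}\eta_{i,n})^{-1}$ and $\mu_{n}:=\theta_{i,n}/(\sigma_{n}\xi_{i,n}\eta_{i,n})$ (so that $\mu_{n}\to\zeta_{i}$), a one-line computation gives
\begin{equation*}
\sigma_{n}^{-1}\alpha_{i,n}(\tilde{\theta}_{S,i}-\theta_{i,n})=\sign(W_{n})\left(|W_{n}|-S_{n}\right)_{+}-\mu_{n},\qquad W_{n}:=\mu_{n}+(n^{1/2}\eta_{i,n})^{-1}Z.
\end{equation*}
(The same identity can be read off from (\ref{soft_finite_sample_unknown}) after substituting the scaling, for anyone preferring a cdf-based argument.)

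Because $n^{1/2}\eta_{i,n}\to\infty$, the term $(n^{1/2}\eta_{i,n})^{-1}Z$ tends to $0$ almost surely, so $W_{n}\to\zeta_{i}$ almost surely in $\mathbb{\bar{R}}$. In Part~(a) the law of $S_{n}$ does not depend on $n$, so I take $S_{n}=S\sim\rho_{m}$; in Part~(b) the weak law of large numbers gives $S_{n}\to1$ in probability. The map $(w,s)\mapsto\sign(w)(|w|-s)_{+}$ is continuous at every point with $w\neq0$, so for $0<|\zeta_{i}|<\infty$ the continuous mapping theorem yields
\begin{equation*}
\sigma_{n}^{-1}\alpha_{i,n}(\tilde{\theta}_{S,i}-\theta_{i,n})\longrightarrow\sign(\zeta_{i})(|\zeta_{i}|-S_{\infty})_{+}-\zeta_{i},
\end{equation*}
with $S_{\infty}=S$ in case~(a) (almost surely) and $S_{\infty}=1$ in case~(b) (in probability); in either case this gives convergence in distribution. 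The remaining values of $\zeta_{i}$ are even easier: if $\zeta_{i}=0$ then $|W_{n}|\to0$ while $S_{n}$ stays bounded away from $0$ with probability tending to one, so $(|W_{n}|-S_{n})_{+}\to0$ and the expression tends to $-\mu_{n}\to0$; if $\zeta_{i}=\pm\infty$ then eventually $\pm W_{n}>S_{n}$ (with probability tending to one), so the expression collapses to $(n^{1/2}\eta_{i,n})^{-1}Z\mp S_{n}\to\mp S_{\infty}$.

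It remains to identify the limit laws. In Part~(a), with $S\sim\rho_{m}$, the variable $\sign(\zeta_{i})(|\zeta_{i}|-S)_{+}-\zeta_{i}$ equals $-\zeta_{i}$ exactly on $\{S\geq|\zeta_{i}|\}$, an event of probability $\int_{|\zeta_{i}|}^{\infty}\rho_{m}(s)\,ds=\Pr(\chi_{m}^{2}>m\zeta_{i}^{2})$ (the atom at $-\zeta_{i}$, consistent with Theorem~\ref{select_prob_moving_par_unknown}(b1)), while on the complement it equals $-\sign(\zeta_{i})S$, whose sub-probability density is $\rho_{m}(x)\boldsymbol{1}(x+\zeta_{i}<0)+\rho_{m}(-x)\boldsymbol{1}(x+\zeta_{i}>0)$ once one recalls that $\rho_{m}$ vanishes on the negative half-line; this reproduces (\ref{soft_large_sample_unknown_density_C}), and $\zeta_{i}\in\{0,\pm\infty\}$ gives the degenerate versions noted in the statement. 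In Part~(b) one checks, distinguishing $|\zeta_{i}|\leq1$ from $|\zeta_{i}|>1$, that $\sign(\zeta_{i})(|\zeta_{i}|-1)_{+}-\zeta_{i}$ is the constant $-\sign(\zeta_{i})\min(1,|\zeta_{i}|)$. I expect the only mildly delicate point to be the boundary case $|\zeta_{i}|=1$ in Part~(b): there $W_{n}-S_{n}\to0$, yet the limit is still a single pointmass because $(\cdot)_{+}$ is continuous at $0$ and no mass escapes to a second location; this is precisely the feature distinguishing soft-thresholding from the hard-thresholding behaviour in Theorem~\ref{HTconsistent}. Everything else is routine bookkeeping with the continuous mapping theorem and the law of large numbers.
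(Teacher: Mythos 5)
Your proof is correct, and it reaches the result by a route that differs from the paper's in the more substantial part. For part (b) you are essentially doing what the paper does: it too writes $\sigma_n^{-1}\alpha_{i,n}(\tilde{\theta}_{S,i}-\theta_{i,n})$ in terms of an $N(0,n^{-1}\eta_{i,n}^{-2})$ variable $W_n$ and $\hat{\sigma}/\sigma_n$ and argues convergence in probability, except that it invokes Theorem \ref{select_prob_moving_par_unknown}(b2) for the selection event where you use continuity of $(w,s)\mapsto\sign(w)(|w|-s)_{+}$ at $(\zeta_i,1)$. For part (a) the paper instead works with the finite-sample measure (\ref{soft_finite_sample_unknown_density}) from Proposition \ref{5}: the atom's mass is handled via Theorem \ref{select_prob_moving_par_unknown}(b1), the absolutely continuous density is shown to converge pointwise (the kernel $n^{1/2}\eta_{i,n}\phi(n^{1/2}\eta_{i,n}(x\pm s))$ acting as an approximate identity on $\rho_m$), and Scheff\'{e}'s Lemma finishes the argument; you bypass all of this with the exact representation $\sign(W_n)(|W_n|-S)_{+}-\mu_n$ on a coupled space with a single pair $(Z,S)$, $S\sim\rho_m$ (legitimate since $n-k=m$ eventually), apply almost-sure convergence and the continuous mapping theorem, and read off the law of $\sign(\zeta_i)(|\zeta_i|-S)_{+}-\zeta_i$ directly. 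Your approach buys a shorter, unified treatment of (a) and (b) that needs neither Proposition \ref{5} nor the variable-deletion asymptotics (the atom mass $\Pr(\chi_m^2>m\zeta_i^2)$ falls out of the computation), and it makes transparent why soft-thresholding, unlike hard-thresholding, yields a single pointmass at the boundary $|\zeta_i|=1$ in (b). What the paper's Scheff\'{e} argument buys is slightly more than weak convergence (namely $L_1$-convergence of the absolutely continuous parts) and reuse of machinery already established for the surrounding theorems. The only points you should make explicit in a full write-up are the ones you already flag or implicitly use: $S>0$ almost surely (so the continuity argument applies wherever you invoke it), and, for $|\zeta_i|=\infty$, that $|W_n|\to\infty$ together with stochastic boundedness of $S_n$ forces the non-thresholded branch eventually.
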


\begin{theorem}
\label{ASTconsistent}(Adaptive soft-thresholding with consistent tuning)
Suppose that for given $i\geq 1$ satisfying $i\leq k=k(n)$ for large enough $%
n$ we have $\xi _{i,n}\eta _{i,n}\rightarrow 0$ and $n^{1/2}\eta
_{i,n}\rightarrow \infty $. Set the scaling factor $\alpha _{i,n}=\left( \xi
_{i,n}\eta _{i,n}\right) ^{-1}$. Suppose that the true parameters $\theta
^{(n)}=(\theta _{1,n},\ldots ,\theta _{k_{n},n})\in \mathbb{R}^{k_{n}}$ and $%
\sigma _{n}\in (0,\infty )$ satisfy $\theta _{i,n}/(\sigma _{n}\xi
_{i,n}\eta _{i,n})\rightarrow \zeta _{i}\in \mathbb{\bar{R}}$.

(a) Suppose $n-k$ is eventually constant equal to $m$, say. Then $%
H_{AS,n,\theta ^{(n)},\sigma _{n}}^{i\maltese }$ converges weakly to the
distribution with cdf%
\begin{eqnarray*}
&&\int_{\sqrt{\left\vert x\zeta _{i}\right\vert }}^{\infty }\rho _{m}(s)ds%
\boldsymbol{1}\left( -\zeta _{i}\leq x<0\right) +\boldsymbol{1}\left( x\geq
0\right) \\
&=&\Pr (\chi _{m}^{2}>m\left\vert x\zeta _{i}\right\vert )\boldsymbol{1}%
\left( -\zeta _{i}\leq x<0\right) +\boldsymbol{1}\left( x\geq 0\right)
\end{eqnarray*}%
in case $0\leq \zeta _{i}<\infty $, and to the distribution with cdf%
\begin{eqnarray*}
&&\int_{0}^{\sqrt{\left\vert x\zeta _{i}\right\vert }}\rho _{m}(s)ds%
\boldsymbol{1}\left( 0\leq x<-\zeta _{i}\right) +\boldsymbol{1}\left( x\geq
-\zeta _{i}\right) \\
&=&\Pr (\chi _{m}^{2}\leq m\left\vert x\zeta _{i}\right\vert )\boldsymbol{1}%
\left( 0\leq x<-\zeta _{i}\right) +\boldsymbol{1}\left( x\geq -\zeta
_{i}\right)
\end{eqnarray*}%
in case $-\infty <\zeta _{i}<0$. Furthermore, $H_{AS,n,\theta ^{(n)},\sigma
_{n}}^{i\maltese }$ converges weakly to $\delta _{0}$ if $\left\vert \zeta
_{i}\right\vert =\infty $. [In case $\left\vert \zeta _{i}\right\vert
<\infty $, the distribution has a jump of height $\int_{\left\vert \zeta
_{i}\right\vert }^{\infty }\rho _{m}(s)=\Pr (\chi _{m}^{2}>m\zeta _{i}^{2})$
at $x=-\zeta _{i}$ and is otherwise absolutely continuous. In particular, it
reduces to $\delta _{0}$ in case $\zeta _{i}=0$.]

(b) If $n-k\rightarrow \infty $ holds, then

\qquad 1. $\left\vert \zeta _{i}\right\vert \leq 1$ implies that $%
H_{AS,n,\theta ^{(n)},\sigma _{n}}^{i\maltese }$ converges weakly to $\delta
_{-\zeta _{i}}$,

\qquad 2. $1<\left\vert \zeta _{i}\right\vert <\infty $ implies that $%
H_{AS,n,\theta ^{(n)},\sigma _{n}}^{i\maltese }$ converges weakly to $\delta
_{-1/\zeta _{i}}$,

\qquad 3. $\left\vert \zeta _{i}\right\vert =\infty $ implies that $%
H_{AS,n,\theta ^{(n)},\sigma _{n}}^{i\maltese }$ converges weakly to $\delta
_{0}$.
\end{theorem}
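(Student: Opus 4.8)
First I would reduce the scaled-and-centered estimator to a single continuous transformation of two elementary random quantities and then let the continuous mapping theorem carry the burden, treating parts (a) and (b) in parallel. Put $S_{n}=\hat{\sigma}/\sigma _{n}$, $R_{n}=\hat{\theta}_{LS,i}/(\sigma _{n}\xi _{i,n}\eta _{i,n})$, and $w_{n}=\theta _{i,n}/(\sigma _{n}\xi _{i,n}\eta _{i,n})$, so $w_{n}\rightarrow \zeta _{i}$ by hypothesis. Since $\alpha _{i,n}=(\xi _{i,n}\eta _{i,n})^{-1}$, the definition of $\tilde{\theta}_{AS,i}$ together with the identity $\hat{\sigma}^{2}\xi _{i,n}^{2}\eta _{i,n}^{2}/\hat{\theta}_{LS,i}^{2}=S_{n}^{2}/R_{n}^{2}$ gives the representation
\begin{equation*}
\sigma _{n}^{-1}\alpha _{i,n}\left( \tilde{\theta}_{AS,i}-\theta _{i,n}\right) =\psi (R_{n},S_{n})-w_{n},\qquad \psi (r,s):=r\left( 1-s^{2}/r^{2}\right) _{+},
\end{equation*}
with the convention $\psi (0,s):=0$. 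I would then record that $\hat{\theta}_{LS,i}$ and $\hat{\sigma}$ are independent, that $\hat{\theta}_{LS,i}\sim N(\theta _{i,n},\sigma _{n}^{2}\xi _{i,n}^{2}/n)$, and that $S_{n}$ has density $\rho _{n-k}$; hence $R_{n}=w_{n}+Z_{n}/(n^{1/2}\eta _{i,n})$ with $Z_{n}\sim N(0,1)$ independent of $S_{n}$, and since $n^{1/2}\eta _{i,n}\rightarrow \infty $ this gives $R_{n}\rightarrow \zeta _{i}$ in probability (in $\mathbb{\bar{R}}$).

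The analytic core is the observation that $\psi $ is continuous on $\mathbb{R}\times (0,\infty )$; the only points needing a check are the kink at $|r|=s>0$, where $\psi $ vanishes, and a neighbourhood of $r=0$, where $\psi \equiv 0$. For part (b), $n-k\rightarrow \infty $ forces $S_{n}\rightarrow 1$ in probability, because $S_{n}^{2}$ has mean $1$ and variance $2/(n-k)\rightarrow 0$. Hence, when $|\zeta _{i}|<\infty $, $(R_{n},S_{n})\rightarrow (\zeta _{i},1)$ in probability, and the continuous mapping theorem yields $\sigma _{n}^{-1}\alpha _{i,n}(\tilde{\theta}_{AS,i}-\theta _{i,n})\rightarrow \psi (\zeta _{i},1)-\zeta _{i}$ in probability; since $\psi (\zeta _{i},1)-\zeta _{i}=-\zeta _{i}$ for $|\zeta _{i}|\leq 1$ and $=-1/\zeta _{i}$ for $1<|\zeta _{i}|<\infty $, this gives the asserted point masses in (b1) and (b2). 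No delicate bookkeeping is needed at the boundary $|\zeta _{i}|=1$ precisely because $\psi $ is continuous there, in contrast to hard-thresholding; this is the continuity phenomenon alluded to in the paper's footnote.

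For part (a), $n-k$ is eventually the constant $m$, so $S_{n}$ has the fixed density $\rho _{m}$ and is independent of $R_{n}$; thus $(R_{n},S_{n})$ converges in distribution to $(\zeta _{i},S)$ with $S\sim \rho _{m}$ and $S>0$ a.s., and for $|\zeta _{i}|<\infty $ the continuous mapping theorem identifies the weak limit of $\sigma _{n}^{-1}\alpha _{i,n}(\tilde{\theta}_{AS,i}-\theta _{i,n})$ as the law of $W:=\psi (\zeta _{i},S)-\zeta _{i}$. It then remains to match this law with the cdf in the statement by a change of variables: for $\zeta _{i}>0$ one has $W=-S^{2}/\zeta _{i}$ on $\{S<\zeta _{i}\}$ and $W=-\zeta _{i}$ on $\{S\geq \zeta _{i}\}$, so $W$ has an atom of mass $\Pr (S\geq \zeta _{i})=\Pr (\chi _{m}^{2}>m\zeta _{i}^{2})$ at $-\zeta _{i}$, lives in $[-\zeta _{i},0]$, and for $-\zeta _{i}<x<0$ satisfies $\{W\leq x\}=\{S\geq \sqrt{-x\zeta _{i}}\}$, whence $\Pr (W\leq x)=\Pr (\chi _{m}^{2}>m|x\zeta _{i}|)$; the case $\zeta _{i}<0$ is the mirror image and $\zeta _{i}=0$ gives $W\equiv 0$. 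An equivalent route would be to insert $\alpha _{i,n}=(\xi _{i,n}\eta _{i,n})^{-1}$ and $n-k=m$ into the finite-sample cdf of Proposition \ref{6}, use the identity $z_{n,\theta ^{(n)},\sigma _{n}}^{(2)}(x,s\eta _{i,n})=n^{1/2}\eta _{i,n}[0.5(x-w_{n})+\sqrt{(0.5(x+w_{n}))^{2}+s^{2}}]$ (and its analogue for $z^{(1)}$), and pass to the limit under the integral by dominated convergence, the sign of the limiting bracket for the $z^{(2)}$-branch being governed by whether $s^{2}>-x\zeta _{i}$.

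The case $|\zeta _{i}|=\infty $ lies outside the reach of the continuous mapping theorem and must be disposed of directly in both parts: here $|R_{n}|\rightarrow \infty $ in probability while $S_{n}=O_{p}(1)$, so $\Pr (|R_{n}|\leq S_{n})\rightarrow 0$, on the complementary event $\psi (R_{n},S_{n})-R_{n}=-S_{n}^{2}/R_{n}\rightarrow 0$ in probability, and combining this with $R_{n}-w_{n}=Z_{n}/(n^{1/2}\eta _{i,n})\rightarrow 0$ yields $\sigma _{n}^{-1}\alpha _{i,n}(\tilde{\theta}_{AS,i}-\theta _{i,n})\rightarrow 0$ in probability, i.e.\ the limit is $\delta _{0}$. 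I expect the two genuinely fiddly points to be (i) this $|\zeta _{i}|=\infty $ argument, where one must verify that the event on which the representation $\psi (R_{n},S_{n})=R_{n}-S_{n}^{2}/R_{n}$ breaks down is asymptotically negligible, and (ii) the change-of-variables identification in part (a), where one must keep precise track of the sign of $\zeta _{i}$ (which interchanges the intervals $(-\zeta _{i},0)$ and $(0,-\zeta _{i})$ and reverses the direction of the $\chi _{m}^{2}$-tail), of the location and mass of the atom at $-\zeta _{i}$, and of the behaviour at the boundary points $x=0$ and $x=-\zeta _{i}$. Everything else is routine.
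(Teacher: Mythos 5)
Your argument is correct, but it takes a genuinely different route from the paper, most visibly in part (a). The paper proves (a) by plugging $\alpha_{i,n}=(\xi_{i,n}\eta_{i,n})^{-1}$ into the explicit finite-sample cdf of Proposition \ref{6}, analyzing the pointwise limits of $z_{n,\theta^{(n)},\sigma_n}^{(1)}(x,s\eta_{i,n})$ and $z_{n,\theta^{(n)},\sigma_n}^{(2)}(x,s\eta_{i,n})$ according to the sign of $s^{2}+x\zeta_i$, and passing to the limit under $\int\cdot\,\rho_m(s)ds$ by dominated convergence; and it proves (b) by the two-event decomposition over $\{\tilde{\theta}_{AS,i}=0\}$ and its complement combined with the variable-selection limits of Theorem \ref{select_prob_moving_par_unknown}(b2), treating $\left\vert \zeta_i\right\vert<1$, $>1$ and $=1$ separately. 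Your single representation $\sigma_n^{-1}\alpha_{i,n}(\tilde{\theta}_{AS,i}-\theta_{i,n})=\psi(R_n,S_n)-w_n$ with $\psi(r,s)=r(1-s^{2}/r^{2})_{+}$, together with $R_n\rightarrow\zeta_i$ in probability, the continuity of $\psi$ on $\mathbb{R}\times(0,\infty)$, and the continuous mapping theorem, handles (a), (b) and the $\left\vert\zeta_i\right\vert=\infty$ cases in one stroke: in (b) you never need the selection-probability theorem because the weight attached to either branch is irrelevant once $\psi$ is continuous at $(\zeta_i,1)$ (which also disposes of the boundary $\left\vert\zeta_i\right\vert=1$ without a case distinction), and in (a) you identify the limit as the law of $\psi(\zeta_i,S)-\zeta_i$ with $S\sim\rho_m$, recovering the stated cdf by the change of variables you carry out (atom of mass $\Pr(\chi_m^{2}>m\zeta_i^{2})$ at $-\zeta_i$, tail respectively lower-tail of $\chi_m^{2}$ on the interval between $-\zeta_i$ and $0$ depending on $\sign(\zeta_i)$), which matches the theorem. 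What the paper's route buys is that it reuses machinery already established (Proposition \ref{6}, Theorem \ref{select_prob_moving_par_unknown}) and produces the limiting cdf formula directly as a pointwise limit; what your route buys is brevity and uniformity, at the cost of the explicit distributional identification step and the separate direct argument for $\left\vert\zeta_i\right\vert=\infty$, both of which you execute correctly (including the check that $\Pr(\left\vert R_n\right\vert\leq S_n)\rightarrow0$ there, so the representation $\psi(R_n,S_n)=R_n-S_n^{2}/R_n$ holds with probability tending to one).
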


We know from Theorem \ref{closeness} that we obtain the same limiting
distributions for $\tilde{\theta}_{H,i}$, $\tilde{\theta}_{S,i}$, and $%
\tilde{\theta}_{AS,i}$ as for $\hat{\theta}_{H,i}$, $\hat{\theta}_{S,i}$,
and $\hat{\theta}_{AS,i}$, respectively, provided $n-k$ diverges to infinity
sufficiently fast in the sense that $n^{1/2}\eta
_{i,n}(n-k)^{-1/2}\rightarrow 0$. The theorems in this section now show that
for the soft-thresholding as well as for the adaptive soft-thresholding
estimator we actually get the same limiting distribution as in the
unknown-variance case whenever $n-k$ diverges even if $n^{1/2}\eta
_{i,n}(n-k)^{-1/2}\rightarrow 0$ is violated. However, for the
hard-thresholding estimator the picture is different, and in case $n-k$
diverges but $n^{1/2}\eta _{i,n}(n-k)^{-1/2}\rightarrow 0$ is violated,
limit distributions different from the known-variance case arise (these
limiting distributions still being convex combinations of two pointmasses,
but with weights different from the known-variance case). It seems that this
is a reflection of the fact that the hard-thresholding estimator is a
discontinuous function of the data, whereas the other two estimators
considered depend continuously on the data. The fixed-parameter limiting
distributions for all three estimators are again the same as in the
known-variance case.

In the case where the degrees of freedom $n-k$ are eventually constant, the
limiting distribution of the hard-thresholding estimator is again a convex
combination of two pointmasses, with weights that are in general different
from the known-variance case. However, for the soft-thresholding as well as
for the adaptive soft-thresholding estimator the limiting distributions can
also contain an absolutely continuous component. This component seems to
stem from an interaction of the more pronounced "bias-component" (as
compared to hard-thresholding) with the nonvanishing randomness in the
estimated variance. The fixed-parameter limiting distributions for
hard-thresholding and adaptive soft-thresholding are again given by $\delta
_{0}$ for all values of $\theta _{i}$ as in the known-variance case, whereas
for soft-thresholding the fixed-parameter limiting distribution is $\delta
_{0}$ only for $\theta _{i}=0$ and otherwise has a pdf given by $\rho _{m}(-%
\limfunc{sign}(\theta _{i})x)$ (as compared to a limit of $\delta _{-%
\limfunc{sign}(\theta _{i})}$ in the known-variance case).

\subsection{Consistent Tuning: Some Comments on Fixed-Parameter Large-Sample
Distributions and the "Oracle-Property" \label{oracle}}

\subsubsection{Hard-Thresholding and Adaptive Soft-Thresholding}

As already mentioned at the end of Sections \ref{LSDKVC} and \ref{consistent}%
, under consistent tuning the \emph{fixed-parameter} limiting distributions
of the hard-thresholding and of the adaptive soft-thresholding estimator --
in the known-variance as well as in the unknown-variance case -- always
degenerate to pointmass at zero. Recall that in these results the estimators
(after centering at $\theta _{i}$) are scaled by $\sigma ^{-1}\left( \xi
_{i,n}\eta _{i,n}\right) ^{-1}$, which corresponds to the uniform
convergence rate. We next show that if the estimators are scaled by $\sigma
^{-1}n^{1/2}\xi _{i,n}^{-1}$ instead, a limit distribution under \emph{%
fixed-parameter} asymptotics arises that is not degenerate in general (under
an additional condition on the tuning parameter in case of adaptive
soft-thresholding). In fact, we show that the hard-thresholding as well as
the adaptive soft-thresholding estimators then satisfy what has been called
the "oracle-property". However, it should be kept in mind that -- with this
faster scaling sequence $\sigma ^{-1}n^{1/2}\xi _{i,n}^{-1}$ -- the centered
estimators are no longer stochastically bounded in a moving-parameter
framework (for certain sequences of parameters), cf.~Theorem \ref%
{thresh_consistency}. This shows the fragility of the "oracle-property",
which is a fixed-parameter concept, and calls into question the statistical
significance of this notion. For a more extensive discussion of the
"oracle-property" and its consequences see Leeb and P\"{o}tscher (2008), P%
\"{o}tscher and Leeb (2009), and P\"{o}tscher and Schneider (2009).

\begin{proposition}
\label{oracle_1}Let $0<\sigma <\infty $ be given. Suppose that for given $%
i\geq 1$ satisfying $i\leq k=k(n)$ for large enough $n$ we have $\xi
_{i,n}\eta _{i,n}\rightarrow 0$ and $n^{1/2}\eta _{i,n}\rightarrow \infty $.

(a) $\sigma ^{-1}n^{1/2}\xi _{i,n}^{-1}\left( \tilde{\theta}_{H,i}-\theta
_{i}\right) $ as well as $\sigma ^{-1}n^{1/2}\xi _{i,n}^{-1}\left( \hat{%
\theta}_{H,i}-\theta _{i}\right) $ converge in distribution to $N(0,1)$ when 
$\theta _{i}\neq 0$, and to $\delta _{0}=N(0,0)$ when $\theta _{i}=0$.

(b) $\sigma ^{-1}n^{1/2}\xi _{i,n}^{-1}\left( \tilde{\theta}_{AS,i}-\theta
_{i}\right) $ as well as $\sigma ^{-1}n^{1/2}\xi _{i,n}^{-1}\left( \hat{%
\theta}_{AS,i}-\theta _{i}\right) $ converge in distribution to $N(0,1)$
when $\theta _{i}\neq 0$, and to $\delta _{0}=N(0,0)$ when $\theta _{i}=0$,
provided the tuning parameter additionally satisfies $n^{1/4}\xi
_{i,n}^{1/2}\eta _{i,n}\rightarrow 0$ for $n\rightarrow \infty $.
\end{proposition}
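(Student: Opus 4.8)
The plan is to work on an event of probability tending to one on which the thresholding estimators reduce either to zero or to the least-squares estimator (up to a negligible correction), and then to invoke the exact normality of the rescaled least-squares estimator together with Slutsky's lemma. Throughout fix $\theta_i$ and $\sigma$. First I would record two elementary facts. The hypotheses $\xi_{i,n}\eta_{i,n}\to0$ and $n^{1/2}\eta_{i,n}\to\infty$ force $\xi_{i,n}/n^{1/2}=(\xi_{i,n}\eta_{i,n})/(n^{1/2}\eta_{i,n})\to0$; hence $\hat\theta_{LS,i}$, being $N(\theta_i,\sigma^2\xi_{i,n}^2/n)$-distributed, converges in probability to $\theta_i$, while $\sigma^{-1}n^{1/2}\xi_{i,n}^{-1}(\hat\theta_{LS,i}-\theta_i)$ is standard normal for every $n$. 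Secondly, $\hat\sigma^2/\sigma^2$ has the distribution of $\chi_{n-k}^2/(n-k)$, a family with mean one and hence tight (i.e.\ $O_p(1)$), whether $n-k$ stays bounded or diverges; in particular the thresholds $\hat\sigma\xi_{i,n}\eta_{i,n}=(\hat\sigma/\sigma)\,\sigma\xi_{i,n}\eta_{i,n}$ and $\sigma\xi_{i,n}\eta_{i,n}$ tend to zero in probability.

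For $\theta_i=0$, Proposition \ref{select_prob_pointwise}(b) and Proposition \ref{select_prob_pointwise_unknown}(b) show that the common variable deletion probability tends to one, so $\hat\theta_{H,i}$, $\hat\theta_{AS,i}$ and their feasible counterparts equal zero with probability tending to one; hence all four rescaled statistics converge weakly to $\delta_0=N(0,0)$. For $\theta_i\neq0$ and hard-thresholding, since $\hat\theta_{LS,i}\to\theta_i\neq0$ in probability while the threshold tends to zero in probability, the event $E_n=\{|\hat\theta_{LS,i}|>\sigma\xi_{i,n}\eta_{i,n}\}$ (respectively $\{|\hat\theta_{LS,i}|>\hat\sigma\xi_{i,n}\eta_{i,n}\}$) has probability tending to one, and on $E_n$ one has $\hat\theta_{H,i}=\hat\theta_{LS,i}$ (respectively $\tilde\theta_{H,i}=\hat\theta_{LS,i}$); equivalently one may invoke Proposition \ref{select_prob_pointwise}(a) and Proposition \ref{select_prob_pointwise_unknown}(a). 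Thus the rescaled estimator differs from the (exactly standard normal) rescaled least-squares estimator only on a set of vanishing probability, and therefore converges weakly to $N(0,1)$.

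For adaptive soft-thresholding with $\theta_i\neq0$, on the same event $E_n$ the definition of $\tilde\theta_{AS,i}$ yields
\[
\sigma^{-1}n^{1/2}\xi_{i,n}^{-1}(\tilde\theta_{AS,i}-\theta_i)=\sigma^{-1}n^{1/2}\xi_{i,n}^{-1}(\hat\theta_{LS,i}-\theta_i)-\frac{\hat\sigma^2}{\sigma^2}\cdot\frac{\sigma\,n^{1/2}\xi_{i,n}\eta_{i,n}^2}{\hat\theta_{LS,i}},
\]
with $1$ in place of $\hat\sigma^2/\sigma^2$ in the corresponding identity for $\hat\theta_{AS,i}$ (whose relevant event is $\{|\hat\theta_{LS,i}|>\sigma\xi_{i,n}\eta_{i,n}\}$). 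The first term on the right is exactly $N(0,1)$; in the second, $\hat\theta_{LS,i}^{-1}=O_p(1)$ and $\hat\sigma^2/\sigma^2=O_p(1)$, while $n^{1/2}\xi_{i,n}\eta_{i,n}^2=(n^{1/4}\xi_{i,n}^{1/2}\eta_{i,n})^2\to0$ by the additional hypothesis, so the second term is $o_p(1)$. Since the two sides agree on $E_n$ and $P(E_n)\to1$, Slutsky's lemma gives weak convergence of $\sigma^{-1}n^{1/2}\xi_{i,n}^{-1}(\tilde\theta_{AS,i}-\theta_i)$, and likewise of its infeasible analogue, to $N(0,1)$.

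The argument is essentially bookkeeping; the only delicate point is the role of the side condition $n^{1/4}\xi_{i,n}^{1/2}\eta_{i,n}\to0$ in part (b). Without it the de-biasing correction $\hat\sigma^2\xi_{i,n}^2\eta_{i,n}^2/\hat\theta_{LS,i}$, rescaled by $n^{1/2}\xi_{i,n}^{-1}$, is of order $n^{1/2}\xi_{i,n}\eta_{i,n}^2=(n^{1/2}\eta_{i,n})(\xi_{i,n}\eta_{i,n})$ --- an indeterminate product of a divergent and a vanishing factor --- and need not be negligible; the condition is exactly what forces it to vanish. (A minor point is that the feasible statements presuppose $n>k$, so that $\hat\sigma$ is defined.)
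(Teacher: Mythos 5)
Your proof is correct and follows essentially the same route as the paper's: decompose on the variable-deletion event, use the exact standard normality of $\sigma^{-1}n^{1/2}\xi_{i,n}^{-1}(\hat\theta_{LS,i}-\theta_i)$ together with Slutsky, and observe that the de-biasing term in part (b) is $o_p(1)$ precisely because $n^{1/2}\xi_{i,n}\eta_{i,n}^{2}=(n^{1/4}\xi_{i,n}^{1/2}\eta_{i,n})^{2}\rightarrow 0$. The only (harmless) difference is that you get the needed selection-probability limits from the fixed-parameter Propositions \ref{select_prob_pointwise} and \ref{select_prob_pointwise_unknown} (or a direct threshold argument), whereas the paper invokes the moving-parameter results via a subsequence argument in $n-k$.
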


\begin{remark}
\normalfont Inspection of the proof of Part (b) given in Section \ref%
{prfs_LS} shows that the condition $n^{1/4}\xi _{i,n}^{1/2}\eta
_{i,n}\rightarrow 0$ is used for the result only in case $\theta _{i}\neq 0$%
. If now $n^{1/4}\xi _{i,n}^{1/2}\eta _{i,n}\rightarrow \omega $ with $%
0<\omega <\infty $, inspection of the proof shows that then in case $\theta
_{i}\neq 0$ we have that $\sigma ^{-1}n^{1/2}\xi _{i,n}^{-1}\left( \tilde{%
\theta}_{AS,i}-\theta _{i}\right) =Z_{n}-\sigma \omega ^{2}\theta
_{i}^{-1}\left( \hat{\sigma}/\sigma \right) ^{2}+o_{p}(1)$, where $Z_{n}$ is
standard normal and is independent of $\hat{\sigma}/\sigma $. Hence, we see
that the distribution of $\sigma ^{-1}n^{1/2}\xi _{i,n}^{-1}\left( \tilde{%
\theta}_{AS,i}-\theta _{i}\right) $ asymptotically behaves like the
convolution of an $N(0,1)$-distribution and the distribution of $-\sigma
\omega ^{2}\theta _{i}^{-1}(n-k)^{-1}$ times a chi-square distributed random
variable with $n-k$ degrees of freedom (if $n-k\rightarrow \infty $ this
reduces to an $N(-\sigma \omega ^{2}\theta _{i}^{-1},1)$-distribution). If $%
n^{1/4}\xi _{i,n}^{1/2}\eta _{i,n}\rightarrow \infty $, then $\sigma
^{-1}n^{1/2}\xi _{i,n}^{-1}\left( \tilde{\theta}_{AS,i}-\theta _{i}\right) $
is stochastically unbounded. Note that this shows that the consistently
tuned adaptive soft-thresholding estimator -- even in a fixed-parameter
setting -- has a convergence rate slower than $n^{1/2}\xi _{i,n}^{-1}$ if $%
\theta _{i}\neq 0$ and if the tuning parameter is "too large" in the sense
that $n^{1/4}\xi _{i,n}^{1/2}\eta _{i,n}\rightarrow \infty $. The same
conclusion applies to the infeasible estimator $\hat{\theta}_{AS,i}$ (with
the simplification that one always obtains an $N(-\sigma \omega ^{2}\theta
_{i}^{-1},1)$-distribution in case $n^{1/4}\xi _{i,n}^{1/2}\eta
_{i,n}\rightarrow \omega $ with $0<\omega <\infty $).
\end{remark}

We further illustrate the fragility of the fixed-parameter asymptotic
results under a $\sigma ^{-1}n^{1/2}\xi _{i,n}^{-1}$-scaling obtained above
by providing the moving-parameter limits under this scaling. Let $%
F_{H,n,\theta ,\sigma }^{i}:=F_{H,\eta _{i,n},n,\theta ,\sigma }^{i}$ denote
the cdf of $\sigma ^{-1}n^{1/2}\xi _{i,n}^{-1}(\hat{\theta}_{H,i}-\theta
_{i})$, and define $F_{S,n,\theta ,\sigma }^{i}$ and $F_{AS,n,\theta ,\sigma
}^{i}$ analogously. The proofs of the subsequent propositions are completely
analogous to the proofs of Theorem 9 in P\"{o}tscher and Leeb (2009) and
Theorem 5 in P\"{o}tscher and Schneider (2009), respectively.

\begin{proposition}
\label{oracle_H}(Hard-thresholding) Suppose that for given $i\geq 1$
satisfying $i\leq k=k(n)$ for large enough $n$ we have $\xi _{i,n}\eta
_{i,n}\rightarrow 0$ and $n^{1/2}\eta _{i,n}\rightarrow \infty $. Suppose
that the true parameters $\theta ^{(n)}=(\theta _{1,n},\ldots ,\theta
_{k_{n},n})\in \mathbb{R}^{k_{n}}$ and $\sigma _{n}\in (0,\infty )$ satisfy $%
n^{1/2}\theta _{i,n}/(\sigma _{n}\xi _{i,n})\rightarrow \nu _{i}\in \mathbb{%
\bar{R}}$ and $\theta _{i,n}/(\sigma _{n}\xi _{i,n}\eta _{i,n})\rightarrow
\zeta _{i}\in \mathbb{\bar{R}}$. [Note that in case $\zeta _{i}\neq 0$ the
convergence of $n^{1/2}\theta _{i,n}/(\sigma _{n}\xi _{i,n})$ already
follows from that of $\theta _{i,n}/(\sigma _{n}\xi _{i,n}\eta _{i,n})$, and 
$\nu _{i}$ is then given by $\limfunc{sign}(\zeta _{i})\infty $.]

1. Suppose $\left\vert \zeta _{i}\right\vert <1$. Then $F_{H,n,\theta
^{(n)},\sigma _{n}}^{i}$converges weakly to $\delta _{-\nu _{i}}$ if $%
\left\vert \nu _{i}\right\vert <\infty $; if $\left\vert \nu _{i}\right\vert
=\infty $ the total mass of $F_{H,n,\theta ^{(n)},\sigma _{n}}^{i}$ escapes
to $-\nu _{i}$, in the sense that $F_{H,n,\theta ^{(n)},\sigma
_{n}}^{i}(x)\rightarrow 0$ for every $x\in \mathbb{R}$ if $\nu _{i}=-\infty $%
, and that $F_{H,n,\theta ^{(n)},\sigma _{n}}^{i}(x)\rightarrow 1$ for every 
$x\in \mathbb{R}$ if $\nu _{i}=\infty $.

2. Suppose $\left\vert \zeta _{i}\right\vert >1$. Then $F_{H,n,\theta
^{(n)},\sigma _{n}}^{i}$converges weakly to $\Phi $.

3. Suppose $\left\vert \zeta _{i}\right\vert =1$ and $n^{1/2}\left( \eta
_{i,n}-\zeta _{i}\theta _{i,n}/(\sigma _{n}\xi _{i,n})\right) \rightarrow
r_{i}$ for some $r_{i}\in \mathbb{\bar{R}}$. Then\linebreak\ $F_{H,n,\theta
^{(n)},\sigma _{n}}^{i}(x)$ converges to%
\begin{equation*}
\Phi (r_{i})\boldsymbol{1}\left( \zeta _{i}=1\right) +\int_{-\infty
}^{x}\phi (t)\boldsymbol{1}\left( \zeta _{i}t>r_{i}\right) dt
\end{equation*}%
for every $x\in \mathbb{R}$. [In case $r_{i}=-\infty $ the limit reduces to
a standard normal distribution.]
\end{proposition}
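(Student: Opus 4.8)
The plan is to read the limit off directly from the exact finite‑sample cdf of the infeasible hard‑thresholding estimator provided by Proposition~\ref{1}. Taking the scaling factor there to be $\alpha _{i,n}=n^{1/2}/\xi _{i,n}$, the cdf $H_{H,n,\theta ,\sigma }^{i}$ is precisely $F_{H,n,\theta ,\sigma }^{i}$. Writing $\nu _{i,n}=n^{1/2}\theta _{i,n}/(\sigma _{n}\xi _{i,n})$ and $\beta _{i,n}=n^{1/2}\eta _{i,n}$, and multiplying every event inside the indicators of (\ref{hard_finite_sample}) by the positive number $n^{1/2}/\xi _{i,n}$, that formula becomes
\begin{align*}
F_{H,n,\theta ^{(n)},\sigma _{n}}^{i}(x) &=\Phi (x)\,\boldsymbol{1}\!\left( \left\vert x+\nu _{i,n}\right\vert >\beta _{i,n}\right) +\Phi \!\left( \beta _{i,n}-\nu _{i,n}\right) \boldsymbol{1}\!\left( 0\le x+\nu _{i,n}\le \beta _{i,n}\right) \\ &\quad +\Phi \!\left( -\beta _{i,n}-\nu _{i,n}\right) \boldsymbol{1}\!\left( -\beta _{i,n}\le x+\nu _{i,n}<0\right) .
\end{align*}
Since the three indicator sets partition $\mathbb{R}$, the whole proof reduces to deciding, for each fixed $x$, which indicator is eventually active and then evaluating the limit of the corresponding $\Phi $‑term, using only $\beta _{i,n}\to \infty $ and $\nu _{i,n}/\beta _{i,n}=\theta _{i,n}/(\sigma _{n}\xi _{i,n}\eta _{i,n})\to \zeta _{i}$ (the latter forcing $\left\vert \nu _{i}\right\vert =\infty $ whenever $\zeta _{i}\ne 0$).

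Then I would run the case split. If $\left\vert \zeta _{i}\right\vert <1$, then $\left\vert \nu _{i,n}\right\vert \le c\beta _{i,n}$ eventually for some $c<1$; hence at any fixed $x$ the first indicator eventually vanishes, the sign of $x+\nu _{i,n}$ is eventually determined by $\nu _{i}$ (or by $x+\nu _{i}$ when $\left\vert \nu _{i}\right\vert <\infty $), and $\pm \beta _{i,n}-\nu _{i,n}=\beta _{i,n}(\pm 1-\nu _{i,n}/\beta _{i,n})$ diverges to $\pm \infty $ with the expected sign; this gives $F_{H,n,\theta ^{(n)},\sigma _{n}}^{i}\to \delta _{-\nu _{i}}$ weakly when $\left\vert \nu _{i}\right\vert <\infty $, and $F_{H,n,\theta ^{(n)},\sigma _{n}}^{i}(x)\to 1$ (resp.\ $0$) for every $x$ when $\nu _{i}=+\infty $ (resp.\ $-\infty $). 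If $\left\vert \zeta _{i}\right\vert >1$, then $\left\vert \nu _{i,n}\right\vert \ge c\beta _{i,n}$ eventually for some $c>1$, so the first indicator is eventually active at every fixed $x$ and $F_{H,n,\theta ^{(n)},\sigma _{n}}^{i}(x)=\Phi (x)$ eventually, i.e.\ the limit is $\Phi $. Finally, when $\left\vert \zeta _{i}\right\vert =1$ one introduces $r_{i,n}:=n^{1/2}\left( \eta _{i,n}-\zeta _{i}\theta _{i,n}/(\sigma _{n}\xi _{i,n})\right) =\beta _{i,n}-\zeta _{i}\nu _{i,n}\to r_{i}$ (note $\nu _{i,n}\to \zeta _{i}\infty $): for $\zeta _{i}=1$ the boundary between the region where the first indicator is active ($F=\Phi (x)$) and where the second is active ($F=\Phi (\beta _{i,n}-\nu _{i,n})=\Phi (r_{i,n})$) sits at $x=r_{i,n}$, while for $\zeta _{i}=-1$ the boundary between the first ($F=\Phi (x)$) and the third ($F=\Phi (-\beta _{i,n}-\nu _{i,n})=\Phi (-r_{i,n})$) sits at $x=-r_{i,n}$; letting $n\to \infty $ with $r_{i,n}\to r_{i}$ then yields the stated spliced limit $\Phi (r_{i})\boldsymbol{1}(\zeta _{i}=1)+\int_{-\infty }^{x}\phi (t)\boldsymbol{1}(\zeta _{i}t>r_{i})\,dt$.

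The only point that needs a little care is the case $\left\vert \zeta _{i}\right\vert =1$: the spliced limiting function is constant (equal to $\Phi (r_{i})$ for $\zeta _{i}=1$, resp.\ $\Phi (-r_{i})$ for $\zeta _{i}=-1$) to one side of the break point and equal to $\Phi $ to the other, and one checks that the two pieces agree at the break point, so the limiting function is everywhere continuous and the pointwise convergence obtained for $x$ off the break point automatically holds at the break point as well; one also has to dispose of the degenerate sub‑cases $r_{i}=-\infty $ (the limit collapses to $\Phi $) and $r_{i}=+\infty $ (the limit is the constant $1$ for $\zeta _{i}=1$, the constant $0$ for $\zeta _{i}=-1$). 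It is worth noting explicitly that in the cases $\left\vert \nu _{i}\right\vert =\infty $ and $\left\vert \zeta _{i}\right\vert =1$ (with $r_{i}$ finite) a mass equal to the limiting variable‑deletion probability escapes to $\mp \infty $ — consistent with Proposition~\ref{select_prob_moving_par} — so the limit is only a sub‑probability cdf, which is precisely why the statement is phrased as convergence of $F_{H,n,\theta ^{(n)},\sigma _{n}}^{i}(x)$ at every $x\in \mathbb{R}$ rather than as weak convergence. None of this is deep: it amounts to bookkeeping of which indicator is active together with the asymptotics of $\Phi $ near $\pm \infty $, and the argument runs exactly parallel to the proof of Theorem~9 in P\"{o}tscher and Leeb (2009).
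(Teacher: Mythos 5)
Your proposal is correct and follows the same route as the paper, which simply refers to the analogous proof of Theorem 9 in P\"{o}tscher and Leeb (2009): specialize the exact finite-sample cdf of Proposition \ref{1} to the scaling $\alpha_{i,n}=n^{1/2}/\xi_{i,n}$ and track which of the three indicator regimes is eventually active under $\beta_{i,n}=n^{1/2}\eta_{i,n}\rightarrow\infty$ and $\nu_{i,n}/\beta_{i,n}\rightarrow\zeta_i$. Your case analysis, including the boundary case $\left\vert\zeta_i\right\vert=1$ (where monotonicity of the cdfs plus continuity of the spliced limit handles the break point) and the observation that mass equal to the limiting deletion probability escapes to infinity so that only pointwise convergence can be asserted, matches the intended argument.
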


\begin{proposition}
\label{oracle_AS}(Adaptive soft-thresholding) Suppose that for given $i\geq
1 $ satisfying $i\leq k=k(n)$ for large enough $n$ we have $\xi _{i,n}\eta
_{i,n}\rightarrow 0$ and $n^{1/2}\eta _{i,n}\rightarrow \infty $. Suppose
that the true parameters $\theta ^{(n)}=(\theta _{1,n},\ldots ,\theta
_{k_{n},n})\in \mathbb{R}^{k_{n}}$ and $\sigma _{n}\in (0,\infty )$ satisfy $%
\theta _{i,n}/(\sigma _{n}\xi _{i,n}\eta _{i,n})\rightarrow \zeta _{i}\in 
\mathbb{\bar{R}}$.

1. If $\zeta _{i}=0$ and $n^{1/2}\theta _{i,n}/(\sigma _{n}\xi
_{i,n})\rightarrow \nu _{i}\in \mathbb{R}$, then $F_{AS,n,\theta
^{(n)},\sigma _{n}}^{i}$converges weakly to $\delta _{-\nu _{i}}$.

2. The total mass of $F_{AS,n,\theta ^{(n)},\sigma _{n}}^{i}$ escapes to $%
\infty $ or $-\infty $ in the following cases: If $-\infty <\zeta _{i}<0$,
or if $\zeta _{i}=0$ and $n^{1/2}\theta _{i,n}/(\sigma _{n}\xi
_{i,n})\rightarrow -\infty $, or if $\zeta _{i}=-\infty $ and $n^{1/2}\eta
_{i,n}^{2}\xi _{i,n}\theta _{i,n}^{-1}\sigma _{n}\rightarrow -\infty $, then 
$F_{AS,n,\theta ^{(n)},\sigma _{n}}^{i}(x)\rightarrow 0$ for every $x\in 
\mathbb{R}$. If $0<\zeta _{i}<\infty $, or if $\zeta _{i}=0$ and $%
n^{1/2}\theta _{i,n}/(\sigma _{n}\xi _{i,n})\rightarrow \infty $, or if $%
\zeta _{i}=\infty $ and $n^{1/2}\eta _{i,n}^{2}\xi _{i,n}\theta
_{i,n}^{-1}\sigma _{n}\rightarrow \infty $, then $F_{AS,n,\theta
^{(n)},\sigma _{n}}^{i}(x)\rightarrow 1$ for every $x\in \mathbb{R}$.

3. If $\left\vert \zeta _{i}\right\vert =\infty $ and $n^{1/2}\eta
_{i,n}^{2}\xi _{i,n}\theta _{i,n}^{-1}\sigma _{n}\rightarrow w_{i}\in 
\mathbb{R}$, then $F_{AS,n,\theta ^{(n)},\sigma _{n}}^{i}$converges weakly
to $\Phi (\cdot +w_{i})$.
\end{proposition}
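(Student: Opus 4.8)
The plan is to reduce the statement to one stochastic representation of the scaled (infeasible) estimator and then run a case analysis in $\zeta_{i}$. Write $Z_{n}=\sigma_{n}^{-1}n^{1/2}\xi_{i,n}^{-1}(\hat{\theta}_{LS,i}-\theta_{i,n})$, which is exactly $N(0,1)$-distributed for every $n$ since $\hat{\theta}_{LS,i}\sim N(\theta_{i,n},\sigma_{n}^{2}\xi_{i,n}^{2}/n)$, and set $a_{n}=n^{1/2}\theta_{i,n}/(\sigma_{n}\xi_{i,n})$, $b_{n}=n^{1/2}\eta_{i,n}$. From the defining formula for $\hat{\theta}_{AS,i}$ (or, equivalently, by specializing Proposition~\ref{3} to $\alpha_{i,n}=n^{1/2}/\xi_{i,n}$) one obtains
\begin{equation*}
\sigma_{n}^{-1}n^{1/2}\xi_{i,n}^{-1}\bigl(\hat{\theta}_{AS,i}-\theta_{i,n}\bigr)=\Bigl(Z_{n}-\frac{b_{n}^{2}}{Z_{n}+a_{n}}\Bigr)\boldsymbol{1}\bigl(|Z_{n}+a_{n}|>b_{n}\bigr)-a_{n}\boldsymbol{1}\bigl(|Z_{n}+a_{n}|\leq b_{n}\bigr).
\end{equation*}
Note that $b_{n}\to\infty$, $a_{n}/b_{n}\to\zeta_{i}$, and $b_{n}^{2}/a_{n}=n^{1/2}\eta_{i,n}^{2}\xi_{i,n}\theta_{i,n}^{-1}\sigma_{n}$ is exactly the quantity from Part~3, which I denote $w_{n}$. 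All three parts then follow by inspecting the two events above, using that $Z_{n}$ is tight (being $N(0,1)$) and $a_{n}=(a_{n}/b_{n})b_{n}$. Part~1 is immediate: if $\zeta_{i}=0$ and $a_{n}\to\nu_{i}\in\mathbb{R}$, then $Z_{n}+a_{n}$ is tight while $b_{n}\to\infty$, so $P(|Z_{n}+a_{n}|>b_{n})\to0$ and with probability tending to $1$ the left-hand side equals $-a_{n}\to-\nu_{i}$, giving weak convergence to $\delta_{-\nu_{i}}$.

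For Part~2, in each listed case I show the left-hand side converges in probability to $+\infty$ (respectively $-\infty$), which gives $F_{AS,n,\theta^{(n)},\sigma_{n}}^{i}(x)\to0$ (respectively $\to1$) for every $x$. On $\{|Z_{n}+a_{n}|\leq b_{n}\}$ the value is $-a_{n}$, and under the stated sign conditions $-a_{n}\to+\infty$ in the three ``$\to0$'' cases ($\zeta_{i}<0$; or $\zeta_{i}=0$ with $a_{n}\to-\infty$; or $\zeta_{i}=-\infty$) and $-a_{n}\to-\infty$ in the mirror cases. On $\{|Z_{n}+a_{n}|>b_{n}\}$ the value is $Z_{n}-b_{n}^{2}/(Z_{n}+a_{n})$, and I split on the sign of $Z_{n}+a_{n}$: when $\zeta_{i}$ is finite and nonzero, $|a_{n}|\to\infty$ so $Z_{n}+a_{n}$ has the sign of $\zeta_{i}$ with probability tending to $1$ and $b_{n}^{2}/|Z_{n}+a_{n}|\geq b_{n}/(|Z_{n}|/b_{n}+|a_{n}/b_{n}|)$, whose lower bound tends to $\infty$; when $\zeta_{i}=\mp\infty$, again $|a_{n}|\to\infty$, and $b_{n}^{2}/(Z_{n}+a_{n})=w_{n}(1+Z_{n}/a_{n})^{-1}$ with $w_{n}\to\mp\infty$ and $Z_{n}/a_{n}\to0$ in probability; when $\zeta_{i}=0$ this event has probability tending to $0$ and is negligible. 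Adding $Z_{n}=O_{p}(1)$ yields the claimed escape of mass in each case.

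For Part~3, assume $|\zeta_{i}|=\infty$ and $w_{n}\to w_{i}\in\mathbb{R}$. Then $|a_{n}|=|a_{n}/b_{n}|\,b_{n}\to\infty$, so $Z_{n}/a_{n}\to0$ in probability, and $|a_{n}/b_{n}|\to\infty$ forces $P(|Z_{n}+a_{n}|>b_{n})\to1$ (e.g.\ for $a_{n}>0$, $Z_{n}+a_{n}>b_{n}\iff Z_{n}>b_{n}(1-a_{n}/b_{n})$ and $b_{n}(1-a_{n}/b_{n})\to-\infty$). On that event the left-hand side equals $Z_{n}-w_{n}(1+Z_{n}/a_{n})^{-1}=Z_{n}-w_{i}+o_{p}(1)$, so, $Z_{n}$ being $N(0,1)$, Slutsky's lemma gives weak convergence to $N(-w_{i},1)$; hence $F_{AS,n,\theta^{(n)},\sigma_{n}}^{i}$ converges weakly to $\Phi(\cdot+w_{i})$.

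The routine part is the $o_{p}$-bookkeeping and the Slutsky step; the delicate point is the treatment of $\{|Z_{n}+a_{n}|>b_{n}\}$ in Part~2, where one must confirm that $b_{n}^{2}/(Z_{n}+a_{n})$ genuinely diverges with the correct sign after conditioning on this event — this is what forces the split on the sign of $Z_{n}+a_{n}$, the identity $a_{n}=(a_{n}/b_{n})b_{n}$, the tightness of $Z_{n}$, and, in the $\zeta_{i}=\pm\infty$ subcases, the extra (non-automatic) hypothesis on $w_{n}$. I expect this to be the main obstacle; everything else is a direct limit computation. An entirely equivalent route is to work from the explicit finite-sample cdf of Proposition~\ref{3} with $\alpha_{i,n}=n^{1/2}/\xi_{i,n}$, whose arguments collapse to $0.5(x-a_{n})\pm\sqrt{0.25(x+a_{n})^{2}+b_{n}^{2}}$, so that the case analysis becomes a matter of taking limits inside $\Phi$; this is the approach used for the analogous Theorem~5 in P\"{o}tscher and Schneider (2009).
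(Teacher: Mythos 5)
Your proposal is correct. Note, though, that the paper does not spell out a proof of Proposition \ref{oracle_AS} at all: it declares the proof ``completely analogous'' to that of Theorem 5 in P\"otscher and Schneider (2009), which proceeds by taking limits in the explicit finite-sample cdf, i.e.\ precisely the alternative route you mention at the end (Proposition \ref{3} with $\alpha _{i,n}=n^{1/2}/\xi _{i,n}$, arguments $0.5(x-a_{n})\pm \sqrt{0.25(x+a_{n})^{2}+b_{n}^{2}}$, and a case analysis of where these drift to $\pm \infty $). Your primary argument instead works from the stochastic representation $\sigma _{n}^{-1}n^{1/2}\xi _{i,n}^{-1}(\hat{\theta}_{AS,i}-\theta _{i,n})=(Z_{n}-b_{n}^{2}/(Z_{n}+a_{n}))\boldsymbol{1}(|Z_{n}+a_{n}|>b_{n})-a_{n}\boldsymbol{1}(|Z_{n}+a_{n}|\leq b_{n})$ with $Z_{n}\sim N(0,1)$, $a_{n}/b_{n}\rightarrow \zeta _{i}$ and $b_{n}^{2}/a_{n}=w_{n}$, and your case analysis (including the sign split and the lower bound $b_{n}^{2}/|Z_{n}+a_{n}|\geq b_{n}/(|Z_{n}|/b_{n}+|a_{n}|/b_{n})$ in the finite nonzero $\zeta _{i}$ case, the negligibility of $\{|Z_{n}+a_{n}|>b_{n}\}$ when $\zeta _{i}=0$, and the identity $b_{n}^{2}/(Z_{n}+a_{n})=w_{n}(1+Z_{n}/a_{n})^{-1}$ with $Z_{n}/a_{n}=o_{p}(1)$ in the $|\zeta _{i}|=\infty $ cases) checks out in every branch, with the divergence in probability to $\pm \infty $ correctly translated into $F^{i}_{AS,n,\theta ^{(n)},\sigma _{n}}(x)\rightarrow 0$ or $1$, and Slutsky giving $\Phi (\cdot +w_{i})$ in Part 3. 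In effect your route is the same device the paper itself uses for Theorems \ref{HTconsistent}, \ref{STconsistent}(b), \ref{ASTconsistent}(b) and Proposition \ref{oracle_1}, rather than the cdf-limit machinery of the cited reference; it is arguably more transparent about where the bias term $b_{n}^{2}/(Z_{n}+a_{n})$ drives the escape of mass, while the cdf route has the advantage of recycling formulas already established (and of extending mechanically to the unknown-variance case by integrating against $\rho _{n-k}$). Either way, no gap.
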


It is easy to see that setting $\theta _{i,n}\equiv \theta _{i}$ and $\sigma
_{n}\equiv \sigma $ in Proposition \ref{oracle_H} immediately recovers the
"oracle-property" for $\hat{\theta}_{H,i}$. Similarly, we recover the
"oracle property" for $\hat{\theta}_{AS,i}$ from Proposition \ref{oracle_AS}
provided $n^{1/4}\xi _{i,n}^{1/2}\eta _{i,n}\rightarrow 0$. The propositions
also characterize the sequences of parameters along which the mass of the
distributions of the hard-thresholding and the adaptive soft-thresholding
estimator escapes to infinity; loosely speaking these are sequences along
which the bias of the estimators exceeds all bounds.

The theorems in Section \ref{uniform_close} also show that the last two
propositions above carry over immediately to the unknown-variance case
whenever $n-k\rightarrow \infty $ sufficiently fast such that $n^{1/2}\eta
_{i,n}(n-k)^{-1/2}\rightarrow 0$ holds. To save space, we do not extend
these two propositions to the case where the latter condition fails to hold.

\subsubsection{Soft-Thresholding}

The situation is somewhat different for the soft-thresholding estimator. It
follows from Theorem \ref{STconsistent} that the distribution of $\sigma
^{-1}(\xi _{i,n}\eta _{i,n})^{-1}\left( \tilde{\theta}_{S,i}-\theta
_{i}\right) $ does not degenerate to pointmass at zero (in fact, has no mass
at zero) if $\theta _{i}\neq 0$ and is held fixed. Consequently, $(\xi
_{i,n}\eta _{i,n})^{-1}$ is also the fixed-parameter convergence rate of $%
\tilde{\theta}_{S,i}$, in the sense that scaling with a faster rate (e.g., $%
n^{1/2}\xi _{i,n}^{-1}$) leads to the escape of the total mass of the
finite-sample distribution of the so-scaled (and centered) estimator to $-%
\limfunc{sign}(\theta _{i})\infty $. For $\theta _{i}=0$ we get with the
same argument as for hard-thresholding that $\sigma ^{-1}n^{1/2}\xi
_{i,n}^{-1}\left( \tilde{\theta}_{S,i}-\theta _{i}\right) $ converges to $%
\delta _{0}$. For the infeasible version $\hat{\theta}_{S,i}$ the situation
is identical. We conclude by a result analogous to Propositions \ref%
{oracle_H} and \ref{oracle_AS}. The proof of this result is completely
analogous to the proof of Theorem 10 in P\"{o}tscher and Leeb (2009).

\begin{proposition}
(Soft-thresholding)\ Suppose that for given $i\geq 1$ satisfying $i\leq
k=k(n)$ for large enough $n$ we have $\xi _{i,n}\eta _{i,n}\rightarrow 0$
and $n^{1/2}\eta _{i,n}\rightarrow \infty $. Suppose that the true
parameters $\theta ^{(n)}=(\theta _{1,n},\ldots ,\theta _{k_{n},n})\in 
\mathbb{R}^{k_{n}}$ and $\sigma _{n}\in (0,\infty )$ satisfy $n^{1/2}\theta
_{i,n}/(\sigma _{n}\xi _{i,n})\rightarrow \nu _{i}\in \mathbb{\bar{R}}$.
Then $F_{S,n,\theta ^{(n)},\sigma _{n}}^{i}$converges weakly to $\delta
_{-\nu _{i}}$ if $\left\vert \nu _{i}\right\vert <\infty $; and if $%
\left\vert \nu _{i}\right\vert =\infty $, the total mass of $F_{S,n,\theta
^{(n)},\sigma _{n}}^{i}$ escapes to $-\nu _{i}$, in the sense that $%
F_{S,n,\theta ^{(n)},\sigma _{n}}^{i}(x)\rightarrow 0$ for every $x\in 
\mathbb{R}$ if $\nu _{i}=-\infty $, and that $F_{S,n,\theta ^{(n)},\sigma
_{n}}^{i}(x)\rightarrow 1$ for every $x\in \mathbb{R}$ if $\nu _{i}=\infty $.
\end{proposition}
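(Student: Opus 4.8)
The plan is to read the limit straight off the closed‑form finite‑sample cdf of $\hat{\theta}_{S,i}$ furnished by Proposition~\ref{2}, specialized to the scaling factor $\alpha_{i,n}=n^{1/2}/\xi_{i,n}$, which is precisely the scaling defining $F_{S,n,\theta,\sigma}^{i}$. With this choice one has $\alpha_{i,n}\xi_{i,n}=n^{1/2}$, so $n^{1/2}x/(\alpha_{i,n}\xi_{i,n})=x$, and the two indicator inequalities $\alpha_{i,n}^{-1}x+\theta_{i}/\sigma\gtrless 0$ in (\ref{soft_finite_sample}), after multiplication by the positive factor $n^{1/2}/\xi_{i,n}$, become $x+\nu_{i,n}\gtrless 0$ with $\nu_{i,n}:=n^{1/2}\theta_{i,n}/(\sigma_{n}\xi_{i,n})\to\nu_{i}$. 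Hence, along the given parameter sequence,
\begin{equation*}
F_{S,n,\theta^{(n)},\sigma_{n}}^{i}(x)=\Phi\left(x+n^{1/2}\eta_{i,n}\right)\boldsymbol{1}\left(x+\nu_{i,n}\geq 0\right)+\Phi\left(x-n^{1/2}\eta_{i,n}\right)\boldsymbol{1}\left(x+\nu_{i,n}<0\right),
\end{equation*}
which, together with $n^{1/2}\eta_{i,n}\to\infty$, reduces the whole statement to an elementary pointwise limit computation.

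First I would treat the case $\left\vert\nu_{i}\right\vert<\infty$. Fix $x\neq-\nu_{i}$. If $x>-\nu_{i}$ then $x+\nu_{i,n}\geq 0$ for all large $n$, so only the first summand is present and $F_{S,n,\theta^{(n)},\sigma_{n}}^{i}(x)=\Phi(x+n^{1/2}\eta_{i,n})\to 1$; if $x<-\nu_{i}$ then $x+\nu_{i,n}<0$ for all large $n$, so $F_{S,n,\theta^{(n)},\sigma_{n}}^{i}(x)=\Phi(x-n^{1/2}\eta_{i,n})\to 0$. Therefore $F_{S,n,\theta^{(n)},\sigma_{n}}^{i}(x)\to\boldsymbol{1}(x\geq-\nu_{i})$ at every $x\neq-\nu_{i}$, i.e.\ at every continuity point of the cdf of $\delta_{-\nu_{i}}$, which is exactly weak convergence to $\delta_{-\nu_{i}}$.

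For the escape cases: if $\nu_{i}=-\infty$ then $\nu_{i,n}\to-\infty$, so for each fixed $x$ one eventually has $x+\nu_{i,n}<0$ and hence $F_{S,n,\theta^{(n)},\sigma_{n}}^{i}(x)=\Phi(x-n^{1/2}\eta_{i,n})\to 0$; symmetrically, if $\nu_{i}=\infty$ then $\nu_{i,n}\to\infty$, eventually $x+\nu_{i,n}\geq 0$, and $F_{S,n,\theta^{(n)},\sigma_{n}}^{i}(x)=\Phi(x+n^{1/2}\eta_{i,n})\to 1$. These are precisely the two asserted escape‑of‑mass statements ($F_{S,n,\theta^{(n)},\sigma_{n}}^{i}(x)\to 0$ when $\nu_{i}=-\infty$, and $\to 1$ when $\nu_{i}=\infty$).

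There is essentially no genuine obstacle here; the proof is of the same "completely analogous" flavour as that of Theorem~10 in P\"{o}tscher and Leeb (2009). The only point that calls for any care is the bookkeeping of the indicator at the single boundary value $x=-\nu_{i}$, which is immaterial for weak convergence, and the one structural observation worth isolating is that the faster scaling $n^{1/2}/\xi_{i,n}$ is exactly what collapses the argument of $\Phi$ to $x\pm n^{1/2}\eta_{i,n}$ while keeping the non‑vanishing location shift $\nu_{i,n}$ — which survives under a moving‑parameter sequence — inside the indicator; this is what makes the total mass of the (centered, fast‑scaled) estimator run off to $-\nu_{i}$ rather than stabilize.
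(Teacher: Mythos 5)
Your proof is correct and follows essentially the same route as the paper's (which simply defers to the "completely analogous" Theorem 10 of P\"otscher and Leeb (2009)): specialize the explicit finite-sample cdf of Proposition \ref{2} to the scaling $\alpha_{i,n}=n^{1/2}/\xi_{i,n}$ and take elementary pointwise limits using $n^{1/2}\eta_{i,n}\rightarrow\infty$ and $\nu_{i,n}\rightarrow\nu_{i}$. Nothing is missing; the handling of the boundary point $x=-\nu_{i}$ and of the two escape cases is exactly as required.
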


Again, this proposition immediately extends to the unknown-variance case
whenever $n-k\rightarrow \infty $ sufficiently fast such that $n^{1/2}\eta
_{i,n}(n-k)^{-1/2}\rightarrow 0$ holds. We abstain from extending the result
to the case where the latter condition fails to hold.

\subsection{Remarks\label{remarks}}

\begin{remark}
\label{costfree2}\normalfont(i) The convergence conditions on the various
quantities involving $\theta _{i,n}$ and $\sigma _{n}$ (and on $n-k$) in the
propositions in Sections \ref{LSDKVC} and \ref{oracle} as well as in the
theorems in Section \ref{LSDUKVC} are essentially cost-free for the same
reason as explained in Remark \ref{costfree}.

(ii) We note that all possible forms of the moving-parameter limiting
distributions in the results in this section already arise for sequences $%
\theta _{i,n}$ belonging to an arbitrarily small neighborhood of zero (and
with $\sigma >0$ fixed). Consequently, the non-uniformity in the convergence
to the fixed-parameter limits is of a local nature.
\end{remark}

\begin{remark}
\normalfont P\"{o}tscher and Leeb (2009) and P\"{o}tscher and Schneider
(2009) present impossibility results for estimating the finite-sample
distribution of the thresholding estimators considered in these papers. In
the present context, corresponding impossibility results could be derived
under appropriate assumptions. We abstain from presenting such results.
\end{remark}

\section{Numerical Study\label{numstudy}}

As has been discussed in Remarks \ref{LASSO} and \ref{ALASSO} in Section \ref%
{model}, the soft-thresholding estimator coincides with the Lasso, and the
adaptive soft-thresholding estimator coincides with the adaptive Lasso in
case of orthogonal design. A natural question now is if the distributional
results for the (adaptive) soft-thresholding estimator derived in this paper
are in any way indicative for the distribution of the (adaptive) Lasso in
case of non-orthogonal design. In order to gain some insight into this we
provide a simulation study to compare the finite-sample distributions of the
respective estimators.

We simulate the Lasso estimator as defined in Remark \ref{LASSO} (with $\eta
_{i,n}^{\prime }=\eta _{i,n}\xi _{i,n}^{-1}$ and $\eta _{i,n}=\eta _{n}$ not
depending on $i$) and the adaptive Lasso estimator as defined in Remark \ref%
{ALASSO} (with $\eta _{i,n}^{\prime }=\eta _{n}$ not depending on $i$) and
show histograms of $n^{1/2}\sigma ^{-1}\xi _{i,n}^{-1}\left( \bar{\theta}%
_{i}-\theta _{i}\right) $ where $\bar{\theta}_{i}$ stands for the $i$-th
component of Lasso or adaptive Lasso. [The scaling used here is chosen on
the basis that with this scaling the $i$-th component of the least-squares
estimator is standard normally distributed.]

We set $n=8$ and $k=4$, resulting in $n-k=4$ degrees of freedom. Two
different types of designs are considered: for Design I we use $X^{\prime
}X=n\Omega (\rho )$ with $\Omega (\rho )_{i,j}=\rho ^{|i-j|}$. More
concretely, $X$ is partitioned into $d=n/k=2$ blocks of size $k\times k$ and
each of these blocks is set equal to $k^{1/2}L$ with $LL^{\prime }=\Omega
(\rho )$, the Cholesky factorization of $\Omega (\rho )$. The value of $\rho 
$ is set equal to $0.3$, $0.5$, and $0.9$, implying condition numbers for $%
X^{\prime }X$ of $2.7$, $5.6$, and $57.0$, respectively. Design II is an
"equicorrelated" design. Here we set the matrix comprised of the first $k$
rows of $X$ equal to $I_{k}+cE_{k}$, where $E_{k}$ is the $k\times k$ matrix
with all components equal to $1$ and $c$ is a real number greater than $%
-1/k=-0.25$. The remaining entries of $X$ are all set equal to $0$. We
choose three values for $c$: first, $c=0.2$ which implies a correlation of $%
0.36$ between any two regressors and a condition number of $3.2$ for $%
X^{\prime }X$; second, $c=2$ which implies a correlation of $0.952$ and a
condition number of $81$; and $c=-0.2$ which implies a correlation of $-0.32$
and a condition number of $25$. For either type of design we proceed as
follows: For the given parameters $\theta =(3,1.5,0,0)^{\prime }$ and $%
\sigma =1$, we simulate $10,000$ data vectors $Y$ and compute the
corresponding estimator, i.e., the Lasso and adaptive Lasso as specified
above. We set $\eta _{n}=n^{-1/2}\Phi ^{-1}(0.975)$, implying that the
thresholding estimators delete a given irrelevant variable with probability $%
0.95$.

For the non-zero outcomes of the estimators, we plot the histogram of $%
n^{1/2}\sigma ^{-1}\xi _{i,n}^{-1}\left( \bar{\theta}_{i}-\theta _{i}\right) 
$ which is normalized such that its mass corresponds to the proportion of
the non-zero values. The zero values are accounted for by plotting
"pointmass" with height representing the proportion of zero values, i.e.,
the simulated variable selection probability. For the purpose of comparison
the graph of the distribution of the corresponding (centered and scaled)
thresholding estimator (using the same $\eta _{i,n}=\eta _{n}$) as derived
analytically in Section \ref{FS} is then superimposed in red color. The
results of the simulation study are presented in Figures 1-12 below.

In comparing the adaptive Lasso with the adaptive soft-thresholding
estimator, we find remarkable agreement between the respective marginal
distributions in all cases where the design matrix is not too
multicollinear, see Figures 1, 2, and 4. For the cases where the design
matrix is no longer well-conditioned a difference between the respective
marginal distributions emerges but seems to be surprisingly moderate, see
Figures 3, 5, and 6.

Turning to the Lasso and its thresholding counterpart, we find a similar
situation with a somewhat stronger disagreement between the respective
marginal distributions. Again in the cases where the design matrix is
well-conditioned (Figures 7, 8, and 10) the difference is less pronounced
than in the case of an ill-conditioned design matrix (Figures 9, 11, and 12).

We have also experimented with other values of $n$, $k$, $\theta $, $\rho $, 
$c$, and $\eta _{n}$ and have found the results to be qualitatively the same
for these choices.

\newpage

%\FRAME{dtbpFU}{5.6273in}{5.6135in}{0pt}{\Qcb{Figure 1: Adaptive Lasso,
%Design I: $\protect\rho =0.3$}}{}{adlasso-unknown-rho=0,3_level=0,95.eps}{%
%\special{language "Scientific Word";type "GRAPHIC";maintain-aspect-ratio
%TRUE;display "USEDEF";valid_file "F";width 5.6273in;height 5.6135in;depth
%0pt;original-width 6.9998in;original-height 6.9825in;cropleft "0";croptop
%"1";cropright "1";cropbottom "0";filename
%'adlasso-unknown-rho=0,3_level=0,95.eps';file-properties "XNPEU";}}

\begin{figure}[h]
\begin{center}
\includegraphics[width=0.95\textwidth]{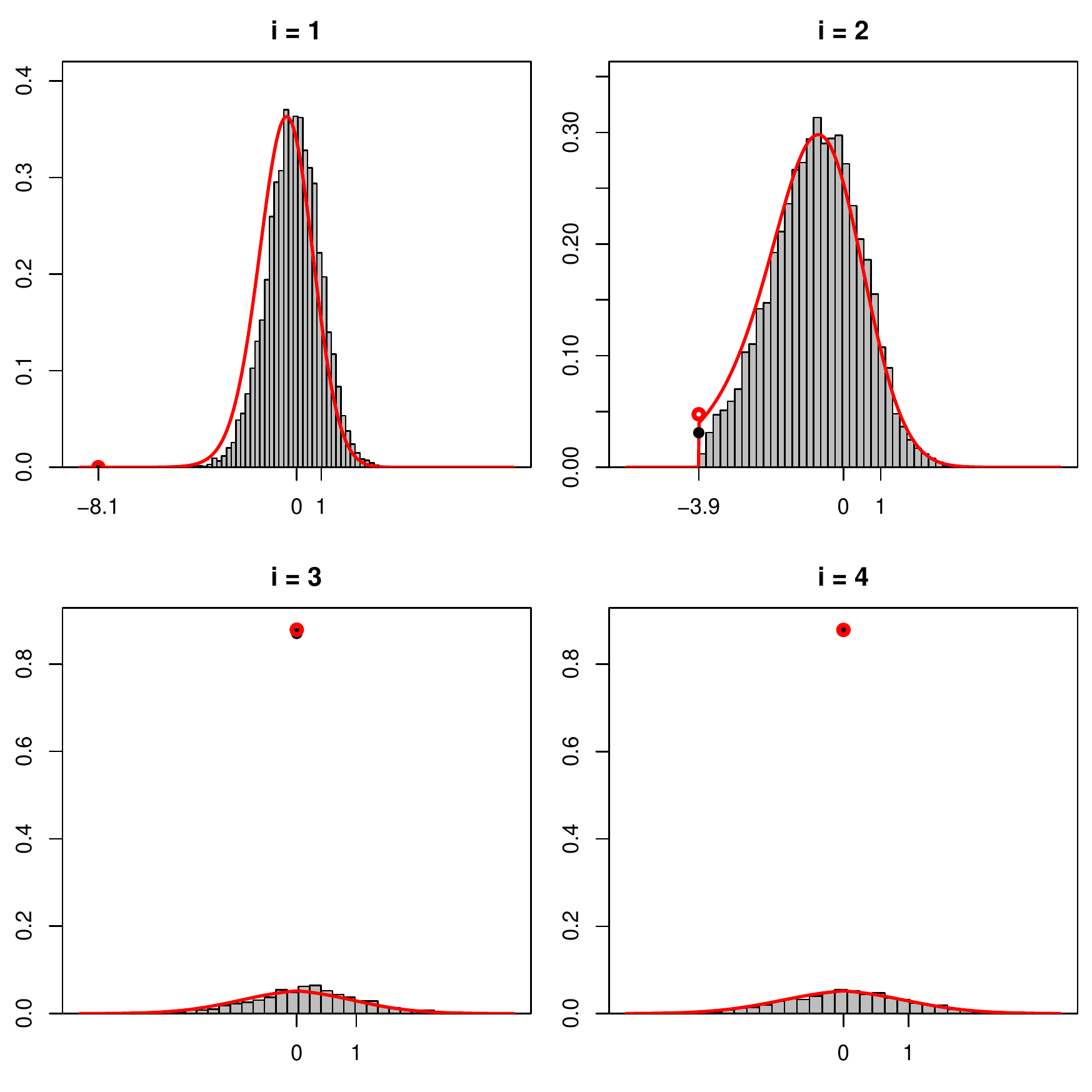}
\end{center}
\caption{Adaptive Lasso, Design I: $\rho = 0.3$}
\end{figure}

\pagebreak

%\FRAME{dtbpFU}{5.6273in}{5.6135in}{0pt}{\Qcb{Figure 2: Adaptive Lasso,
%Design I: $\protect\rho =0.5$}}{}{adlasso-unknown-rho=0,5_level=0,95.eps}{%
%\special{language "Scientific Word";type "GRAPHIC";maintain-aspect-ratio
%TRUE;display "USEDEF";valid_file "F";width 5.6273in;height 5.6135in;depth
%0pt;original-width 6.9998in;original-height 6.9825in;cropleft "0";croptop
%"1";cropright "1";cropbottom "0";filename
%'adlasso-unknown-rho=0,5_level=0,95.eps';file-properties "XNPEU";}}

\begin{figure}[h]
\begin{center}
\includegraphics[width=0.95\textwidth]{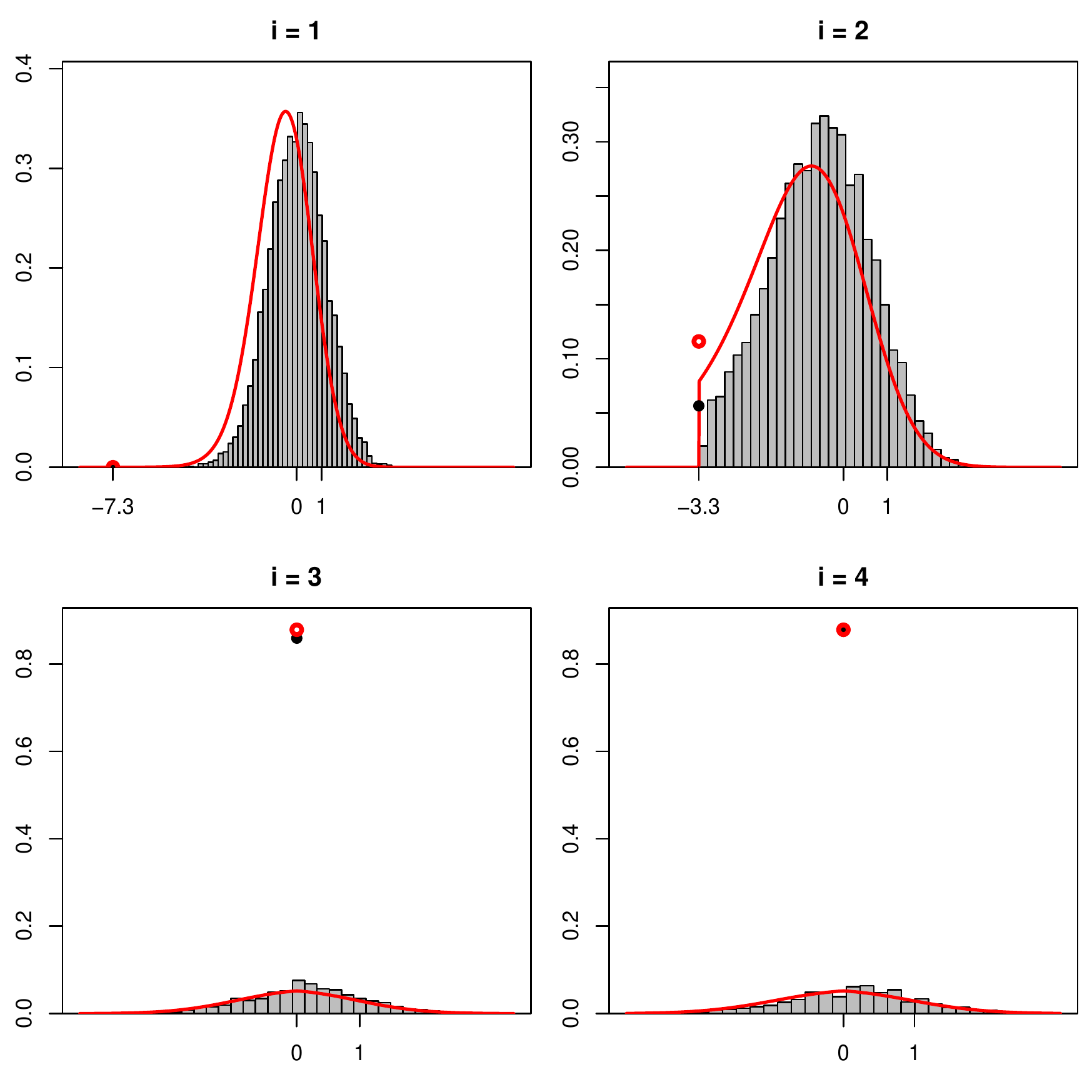}
\end{center}
\caption{Adaptive Lasso, Design I: $\protect\rho = 0.5$}
\end{figure}

\pagebreak

%\FRAME{dtbpFU}{5.6273in}{5.6135in}{0pt}{\Qcb{Figure 3: Adaptive Lasso,
%Design I: $\protect\rho =0.9$}}{}{adlasso-unknown-rho=0,9_level=0,95.eps}{%
%\special{language "Scientific Word";type "GRAPHIC";maintain-aspect-ratio
%TRUE;display "USEDEF";valid_file "F";width 5.6273in;height 5.6135in;depth
%0pt;original-width 6.9998in;original-height 6.9825in;cropleft "0";croptop
%"1";cropright "1";cropbottom "0";filename
%'adlasso-unknown-rho=0,9_level=0,95.eps';file-properties "XNPEU";}}

\begin{figure}[h]
\begin{center}
\includegraphics[width=0.95\textwidth]{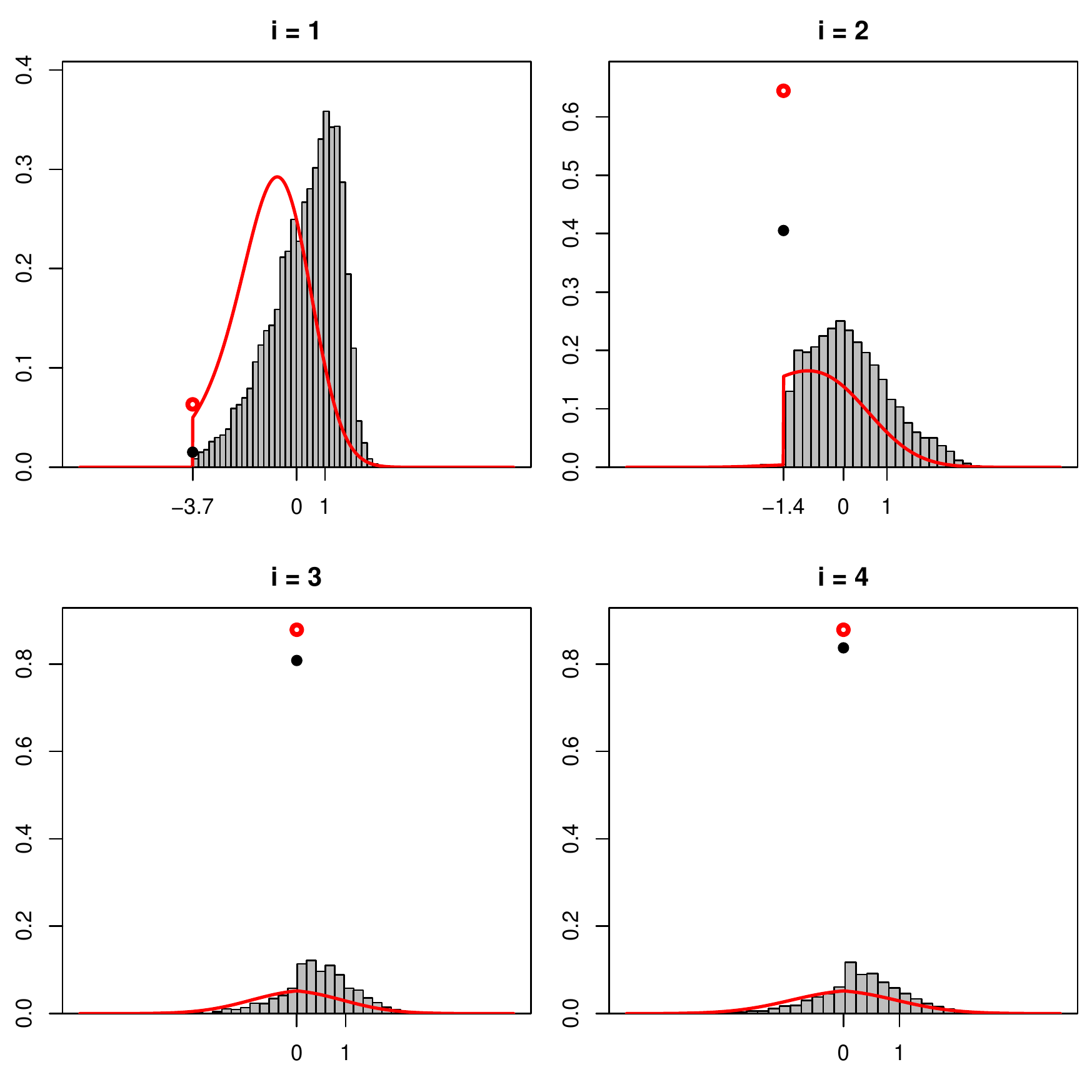}
\end{center}
\caption{Adaptive Lasso, Design I: $\protect\rho = 0.9$}
\end{figure}

\pagebreak

%\FRAME{dtbpFU}{5.6273in}{5.6135in}{0pt}{\Qcb{Figure 4: Adaptive Lasso,
%Design II: $c=0.2$}}{}{adlasso-unknown-c=0,2_level=0,95.eps}{\special%
%{language "Scientific Word";type "GRAPHIC";maintain-aspect-ratio
%TRUE;display "USEDEF";valid_file "F";width 5.6273in;height 5.6135in;depth
%0pt;original-width 6.9998in;original-height 6.9825in;cropleft "0";croptop
%"1";cropright "1";cropbottom "0";filename
%'adlasso-unknown-c=0,2_level=0,95.eps';file-properties "XNPEU";}}

\begin{figure}[h]
\begin{center}
\includegraphics[width=0.95\textwidth]{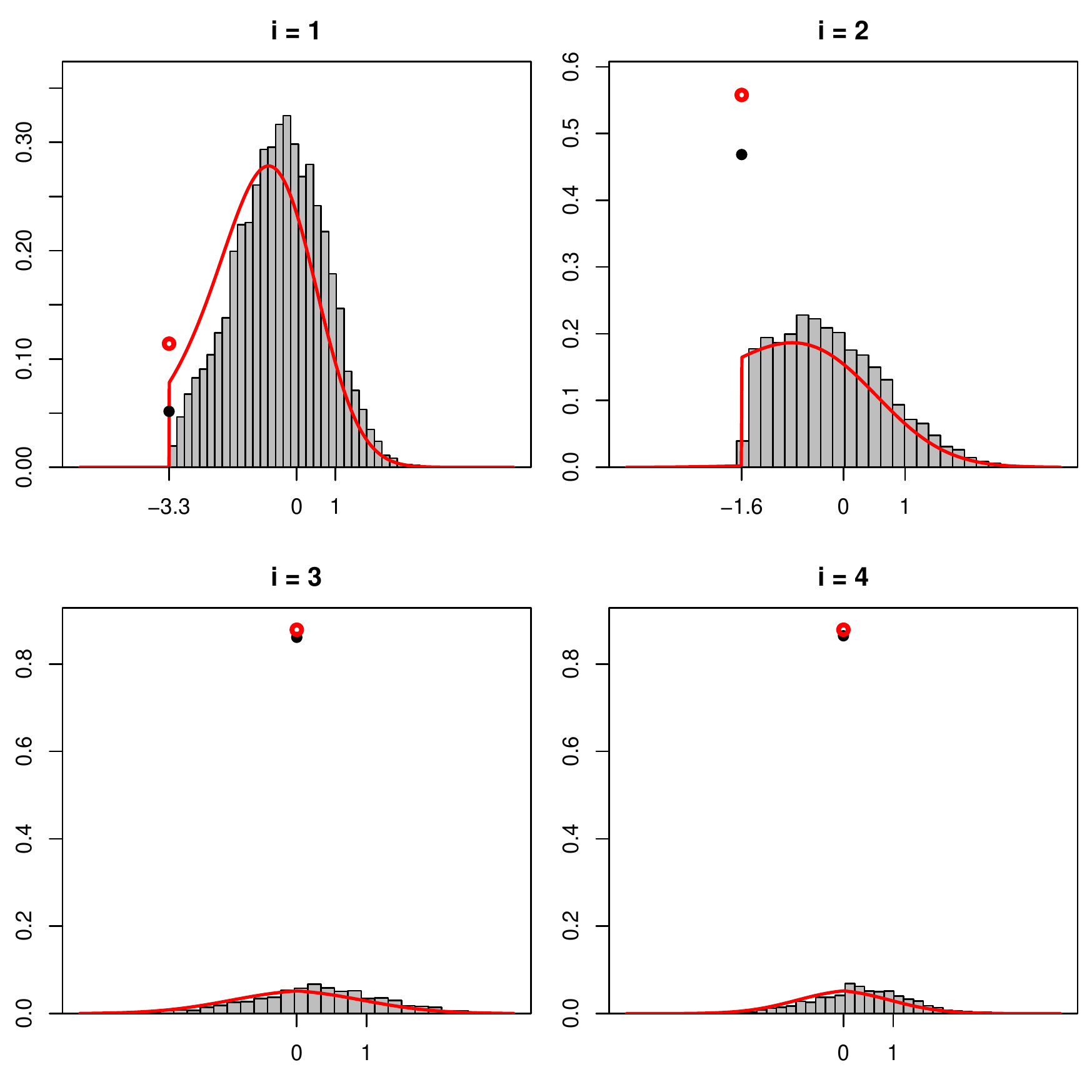}
\end{center}
\caption{Adaptive Lasso, Design II: $c = 0.2$}
\end{figure}

\pagebreak

%\FRAME{dtbpFU}{5.6273in}{5.6135in}{0pt}{\Qcb{Figure 5: Adaptive Lasso,
%Design II: $c=2$}}{}{adlasso-unknown-c=2_level=0,95.eps}{\special{language
%"Scientific Word";type "GRAPHIC";maintain-aspect-ratio TRUE;display
%"USEDEF";valid_file "F";width 5.6273in;height 5.6135in;depth
%0pt;original-width 6.9998in;original-height 6.9825in;cropleft "0";croptop
%"1";cropright "1";cropbottom "0";filename
%'adlasso-unknown-c=2_level=0,95.eps';file-properties "XNPEU";}}

\begin{figure}[h]
\begin{center}
\includegraphics[width=0.95\textwidth]{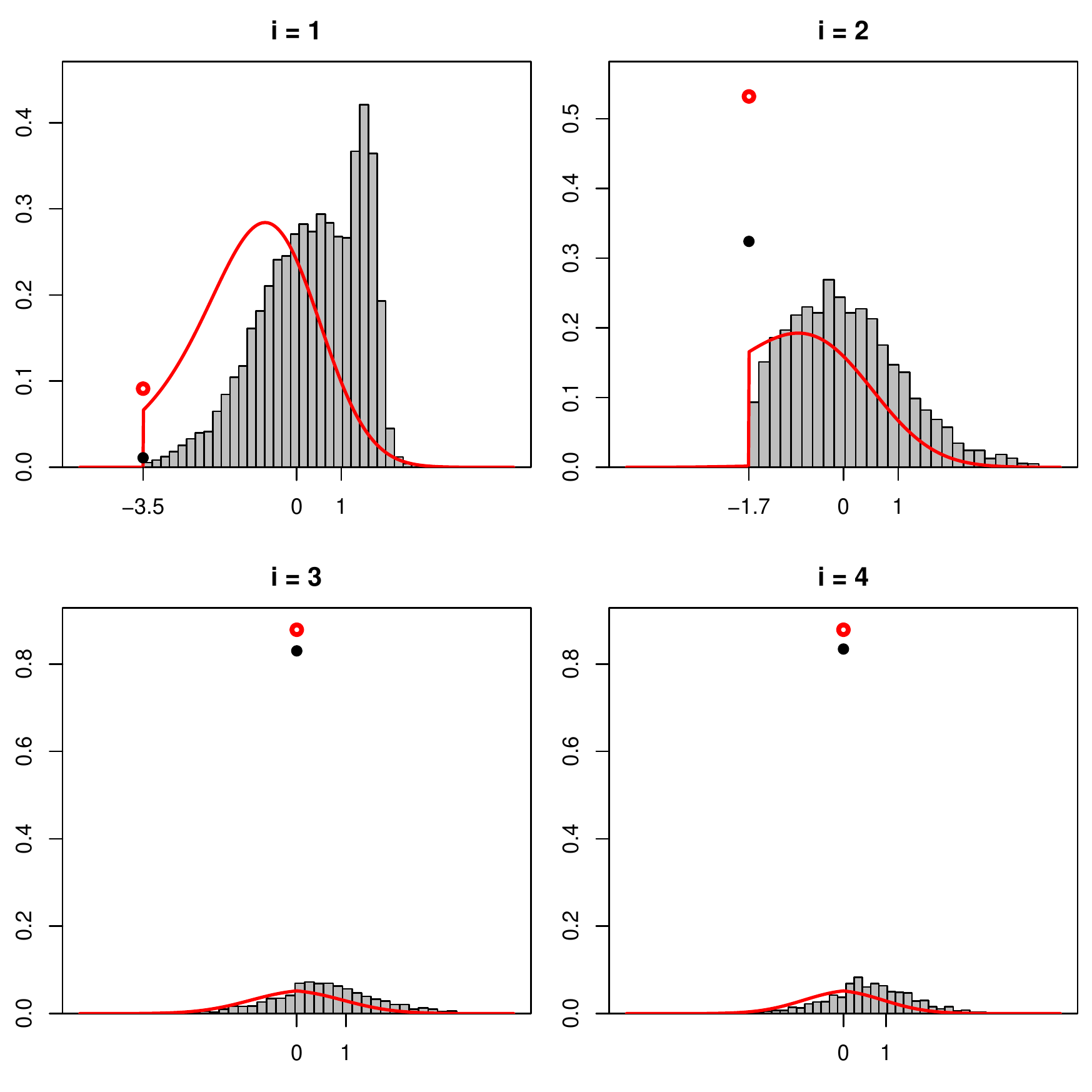}
\end{center}
\caption{Adaptive Lasso, Design II: $c = 2$}
\end{figure}

\pagebreak

%\FRAME{dtbpFU}{5.6273in}{5.6135in}{0pt}{\Qcb{Figure 6: Adaptive Lasso,
%Design II: $c=-0.2$}}{}{adlasso-unknown-c=-0,2_level=0,95.eps}{\special%
%{language "Scientific Word";type "GRAPHIC";maintain-aspect-ratio
%TRUE;display "USEDEF";valid_file "F";width 5.6273in;height 5.6135in;depth
%0pt;original-width 6.9998in;original-height 6.9825in;cropleft "0";croptop
%"1";cropright "1";cropbottom "0";filename
%'adlasso-unknown-c=-0,2_level=0,95.eps';file-properties "XNPEU";}}

\begin{figure}[h]
\begin{center}
\includegraphics[width=0.95\textwidth]{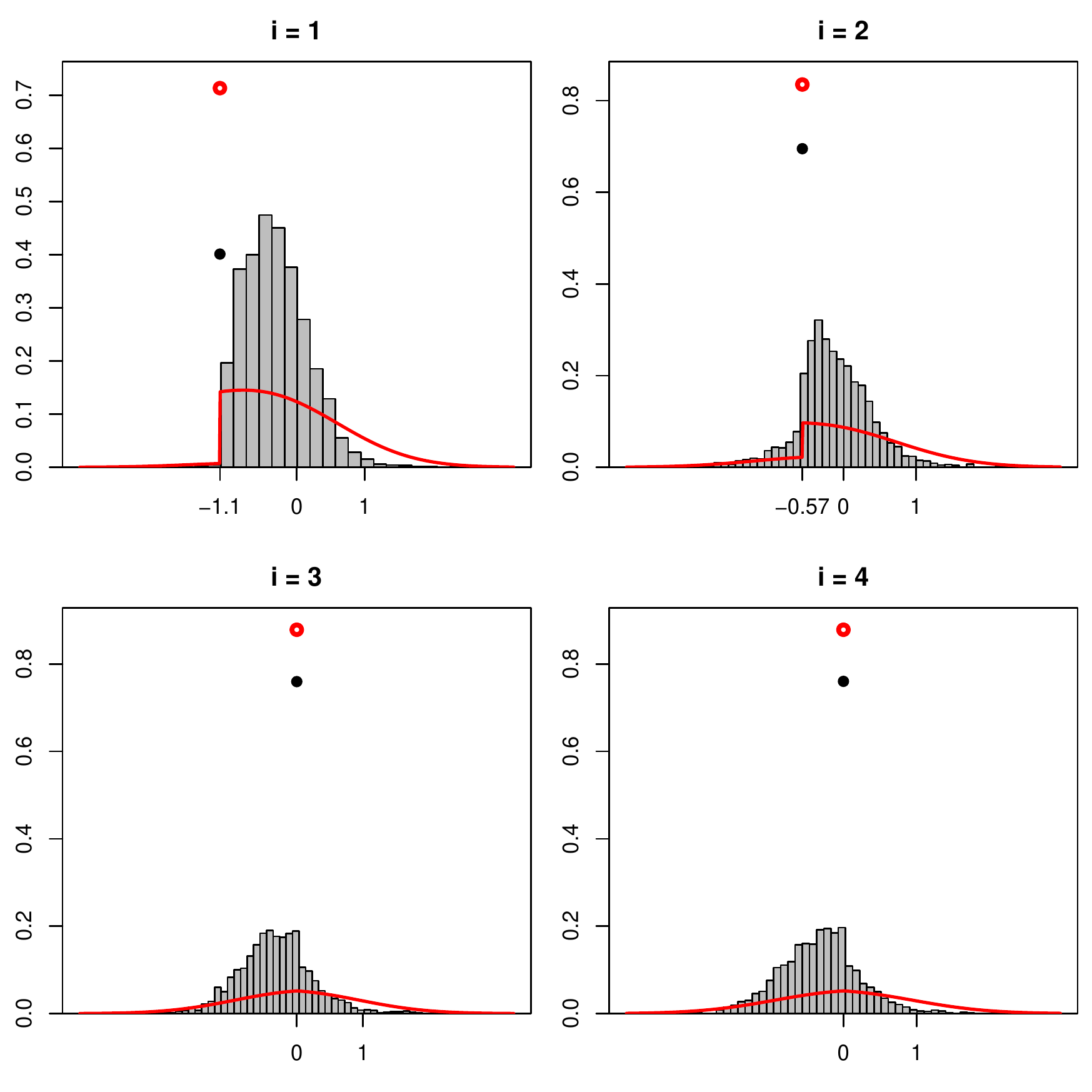}
\end{center}
\caption{Adaptive Lasso, Design II: $c = -0.2$}
\end{figure}

\pagebreak

%\FRAME{dtbpFU}{5.6273in}{5.6135in}{0pt}{\Qcb{Figure 7: Lasso, Design I: $%
%\protect\rho =0.3$}}{}{lasso-unknown-rho=0,3_level=0,95.eps}{\special%
%{language "Scientific Word";type "GRAPHIC";maintain-aspect-ratio
%TRUE;display "USEDEF";valid_file "F";width 5.6273in;height 5.6135in;depth
%0pt;original-width 6.9998in;original-height 6.9825in;cropleft "0";croptop
%"1";cropright "1";cropbottom "0";filename
%'lasso-unknown-rho=0,3_level=0,95.eps';file-properties "XNPEU";}}

\begin{figure}[h]
\begin{center}
\includegraphics[width=0.95\textwidth]{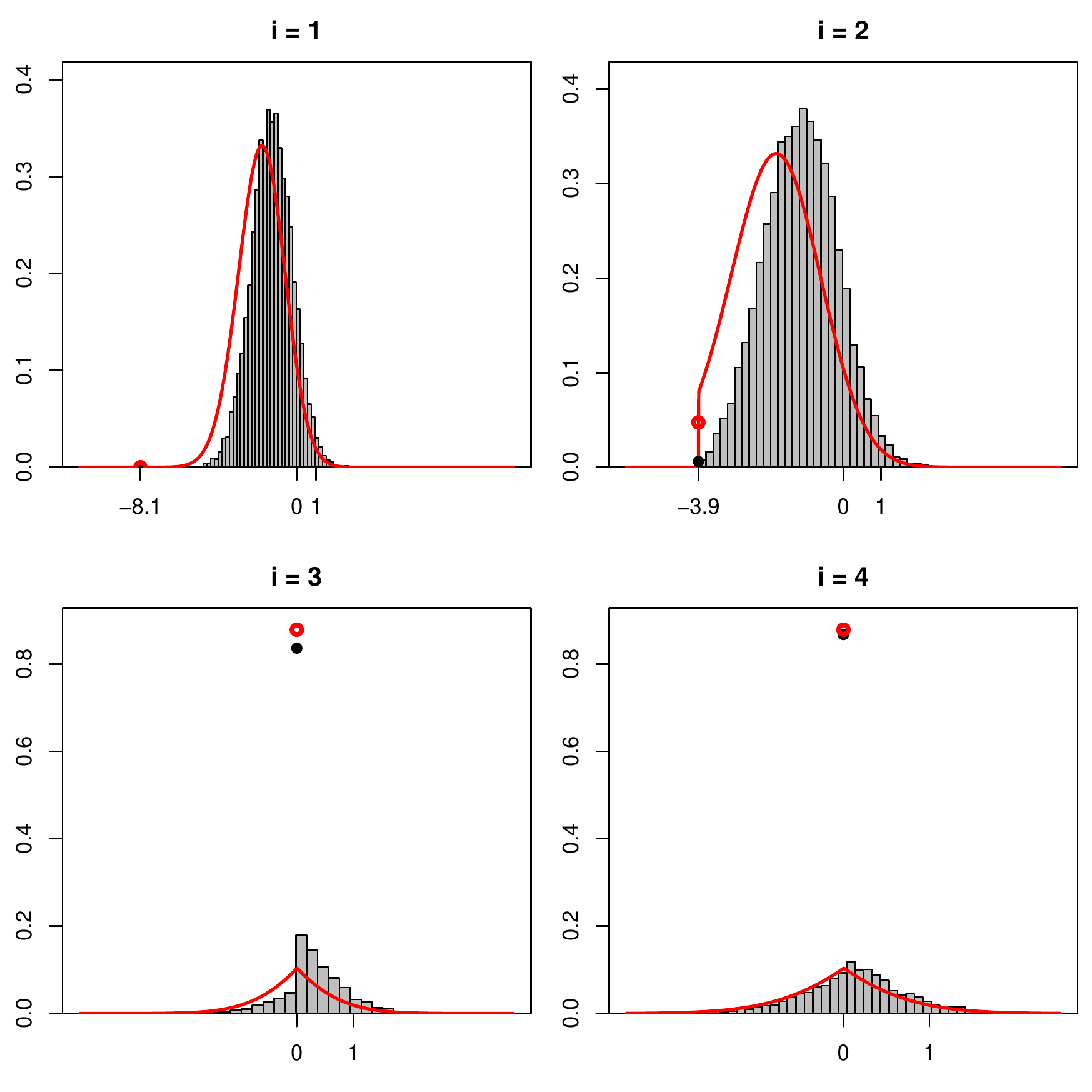}
\end{center}
\caption{Lasso, Design I: $\protect\rho = 0.3$}
\end{figure}

\pagebreak

%\FRAME{dtbpFU}{5.6273in}{5.6135in}{0pt}{\Qcb{Figure 8: Lasso, Design I: $%
%\protect\rho =0.5$}}{}{lasso-unknown-rho=0,5_level=0,95.eps}{\special%
%{language "Scientific Word";type "GRAPHIC";maintain-aspect-ratio
%TRUE;display "USEDEF";valid_file "F";width 5.6273in;height 5.6135in;depth
%0pt;original-width 6.9998in;original-height 6.9825in;cropleft "0";croptop
%"1";cropright "1";cropbottom "0";filename
%'lasso-unknown-rho=0,5_level=0,95.eps';file-properties "XNPEU";}}

\begin{figure}[h]
\begin{center}
\includegraphics[width=0.95\textwidth]{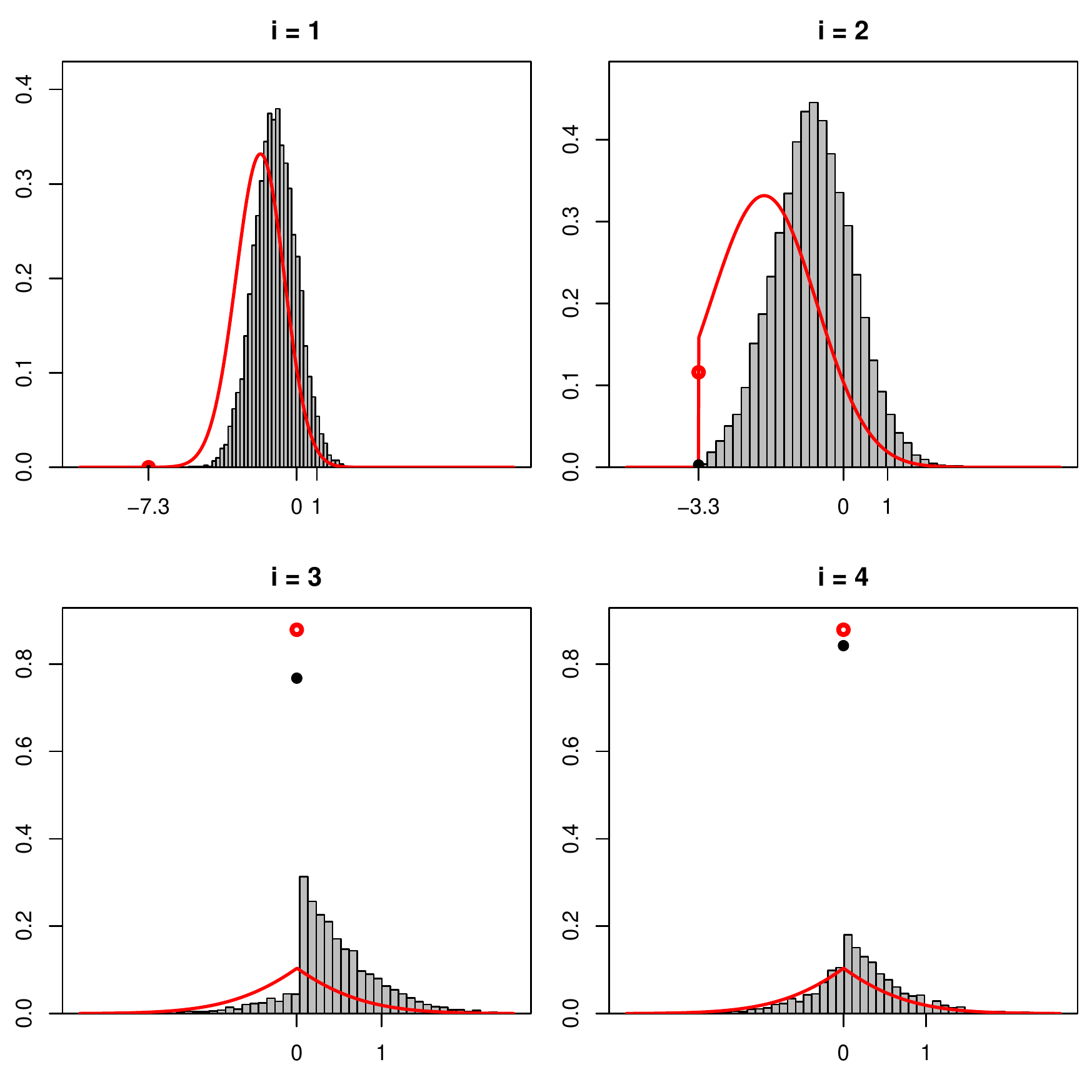}
\end{center}
\caption{Lasso, Design I: $\protect\rho = 0.5$}
\end{figure}

\pagebreak

%\FRAME{dtbpFU}{5.6273in}{5.6135in}{0pt}{\Qcb{Figure 9: Lasso, Design I: $%
%\protect\rho =0.9$}}{}{lasso-unknown-rho=0,9_level=0,95.eps}{\special%
%{language "Scientific Word";type "GRAPHIC";maintain-aspect-ratio
%TRUE;display "USEDEF";valid_file "F";width 5.6273in;height 5.6135in;depth
%0pt;original-width 6.9998in;original-height 6.9825in;cropleft "0";croptop
%"1";cropright "1";cropbottom "0";filename
%'lasso-unknown-rho=0,9_level=0,95.eps';file-properties "XNPEU";}}

\begin{figure}[h]
\begin{center}
\includegraphics[width=0.95\textwidth]{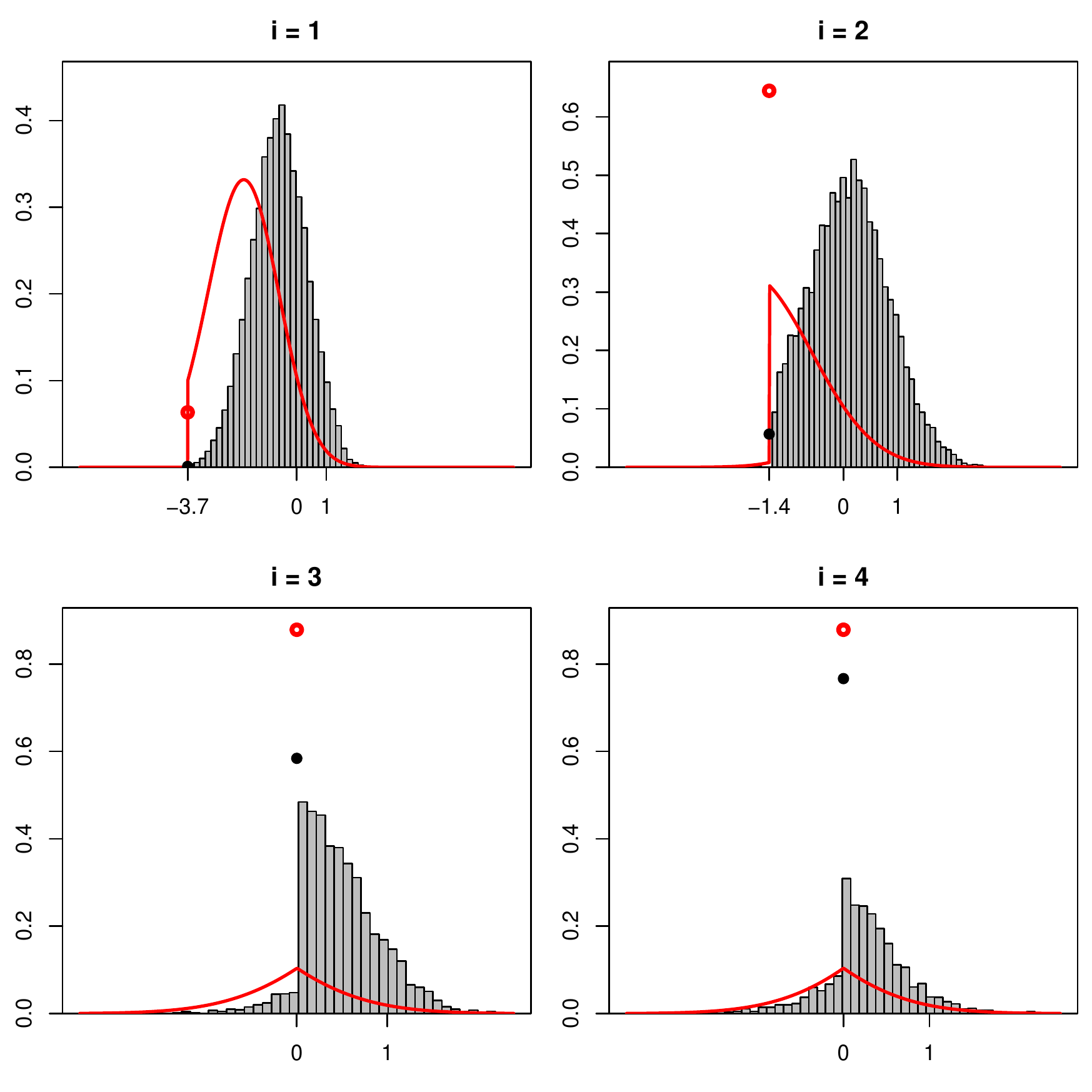}
\end{center}
\caption{Lasso, Design I: $\protect\rho = 0.9$}
\end{figure}

\pagebreak

%\FRAME{dtbpFU}{5.6273in}{5.6135in}{0pt}{\Qcb{Figure 10: Lasso, Design II: $%
%c=0.2$}}{}{lasso-unknown-c=0,2_level=0,95.eps}{\special{language "Scientific
%Word";type "GRAPHIC";maintain-aspect-ratio TRUE;display "USEDEF";valid_file
%"F";width 5.6273in;height 5.6135in;depth 0pt;original-width
%6.9998in;original-height 6.9825in;cropleft "0";croptop "1";cropright
%"1";cropbottom "0";filename
%'lasso-unknown-c=0,2_level=0,95.eps';file-properties "XNPEU";}}

\begin{figure}[h]
\begin{center}
\includegraphics[width=0.95\textwidth]{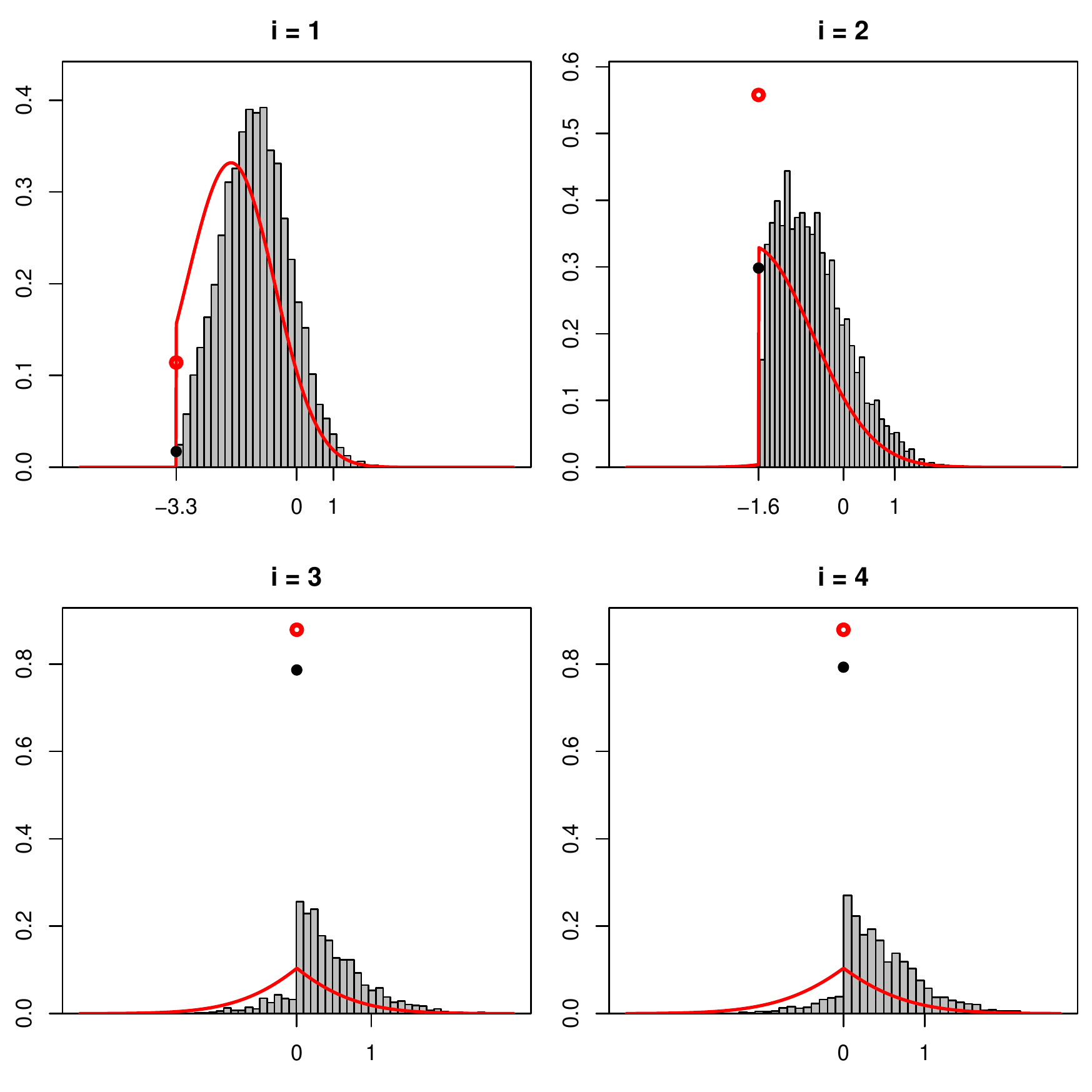}
\end{center}
\caption{Lasso, Design II: $c = 0.2$}
\end{figure}

\pagebreak

%\FRAME{dtbpFU}{5.6273in}{5.6135in}{0pt}{\Qcb{Figure 11: Lasso, Design II: $%
%c=2$}}{}{lasso-unknown-c=2_level=0,95.eps}{\special{language "Scientific
%Word";type "GRAPHIC";maintain-aspect-ratio TRUE;display "USEDEF";valid_file
%"F";width 5.6273in;height 5.6135in;depth 0pt;original-width
%6.9998in;original-height 6.9825in;cropleft "0";croptop "1";cropright
%"1";cropbottom "0";filename
%'lasso-unknown-c=2_level=0,95.eps';file-properties "XNPEU";}}

\begin{figure}[h]
\begin{center}
\includegraphics[width=0.95\textwidth]{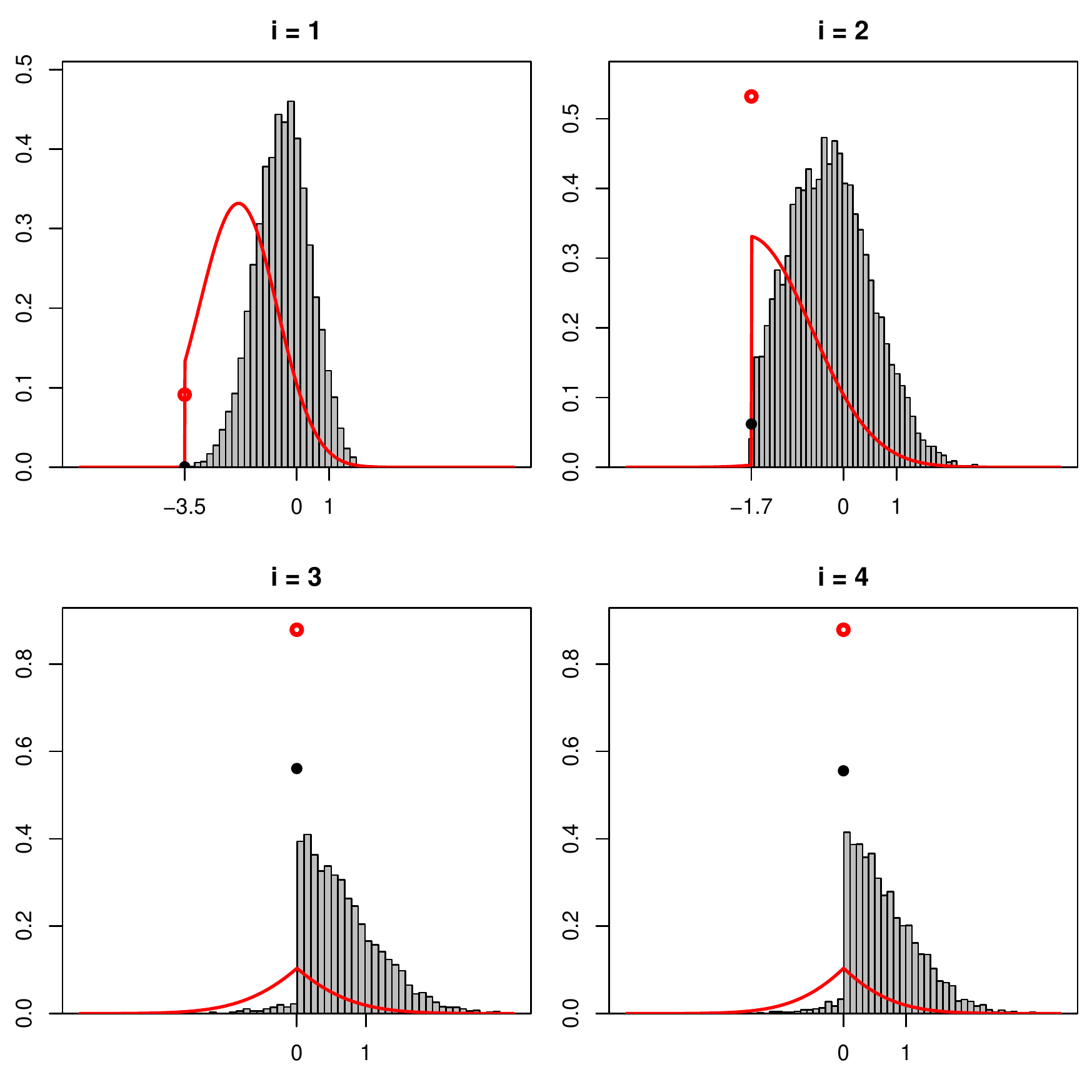}
\end{center}
\caption{Lasso, Design II: $c = 2$}
\end{figure}

\pagebreak

%\FRAME{dtbpFU}{5.6273in}{5.6135in}{0pt}{\Qcb{Figure 12: Lasso, Design II: $%
%c=-0.2$}}{}{lasso-unknown-c=-0,2_level=0,95.eps}{\special{language
%"Scientific Word";type "GRAPHIC";maintain-aspect-ratio TRUE;display
%"USEDEF";valid_file "F";width 5.6273in;height 5.6135in;depth
%0pt;original-width 6.9998in;original-height 6.9825in;cropleft "0";croptop
%"1";cropright "1";cropbottom "0";filename
%'lasso-unknown-c=-0,2_level=0,95.eps';file-properties "XNPEU";}}

\begin{figure}[h]
\begin{center}
\includegraphics[width=0.95\textwidth]{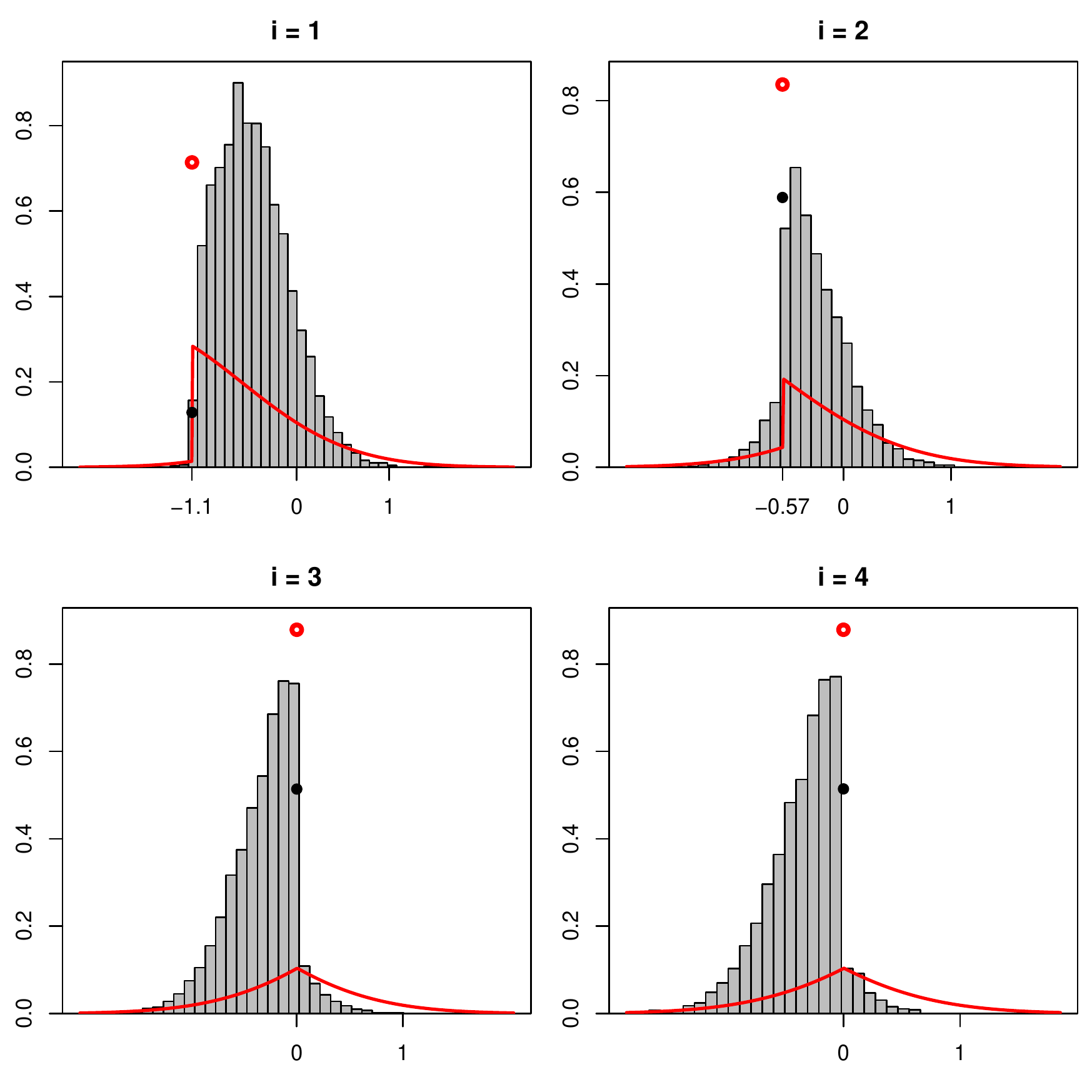}
\end{center}
\caption{Lasso, Design II: $c = -0.2$}
\end{figure}

\section{Proofs\label{prfs}}

\subsection{Proofs for Section \protect\ref{variable}}

\textbf{Proof of Proposition \ref{select_prob_pointwise}: }We first prove
Part (a). Rewrite $P_{n,\theta ,\sigma }\left( \hat{\theta}_{i}=0\right) $
as 
\begin{equation}
\Phi \left( n^{1/2}\xi _{i,n}^{-1}\left( -\theta _{i}/\sigma +\xi _{i,n}\eta
_{i,n}\right) \right) -\Phi \left( n^{1/2}\xi _{i,n}^{-1}\left( -\theta
_{i}/\sigma -\xi _{i,n}\eta _{i,n}\right) \right) .  \label{fi}
\end{equation}%
Assume first that $\xi _{i,n}\eta _{i,n}\rightarrow 0$ and fix $\theta
_{i}\neq 0$. By a standard subsequence argument we may assume without loss
of generality that $n^{1/2}\xi _{i,n}^{-1}$ converges to a constant $\kappa $
which by our maintained assumption (\ref{xi}) must satisfy $0<\kappa \leq
\infty $. Now $-\theta _{i}/\sigma \pm \xi _{i,n}\eta _{i,n}$ both converge
to $-\theta _{i}/\sigma $, which is non-zero, and consequently both
arguments in (\ref{fi}) converge to $-\kappa \theta _{i}/\sigma $. Since $%
\Phi $ is continuous on $\mathbb{\bar{R}}$, the expression (\ref{fi})
converges to zero. To prove the converse, now assume that (\ref{fi})
converges to zero for all $\theta _{i}\neq 0$. By a standard subsequence
argument, we may assume without loss of generality that $\xi _{i,n}\eta
_{i,n}$ converges to a constant $\varkappa $ satisfying $0\leq \varkappa
\leq \infty $. Suppose $\varkappa >0$ holds. Choose $\theta _{i}$ such that $%
0<-\theta _{i}/\sigma <\varkappa $ holds. It follows that $-\theta
_{i}/\sigma +\xi _{i,n}\eta _{i,n}$ and $-\theta _{i}/\sigma -\xi _{i,n}\eta
_{i,n}$ eventually have opposite signs and are bounded away from zero. By
our maintained assumption (\ref{xi}), the same is then true for the
arguments in (\ref{fi}) leading to a contradiction. Hence $\varkappa =0$
must hold, completing the proof of Part (a). Parts (b) and (c) are obvious
since $P_{n,\theta ,\sigma }\left( \hat{\theta}_{i}=0\right) =\Phi \left(
n^{1/2}\eta _{i,n}\right) $ $-$ $\Phi \left( -n^{1/2}\eta _{i,n}\right) $
whenever $\theta _{i}=0$. $\blacksquare $

\bigskip

\textbf{Proof of Proposition \ref{select_prob_moving_par}: }Part (a) follows
immediately from (\ref{select_prob}) and the assumptions. To prove Part (b)
we use (\ref{select_prob}) to write%
\begin{equation*}
P_{n,\theta ^{(n)},\sigma _{n}}\left( \hat{\theta}_{i}=0\right) =\Phi \left(
n^{1/2}\eta _{i,n}\left( 1-\theta _{i,n}/(\sigma _{n}\xi _{i,n}\eta
_{i,n})\right) \right) -\Phi \left( n^{1/2}\eta _{i,n}\left( -1-\theta
_{i,n}/(\sigma _{n}\xi _{i,n}\eta _{i,n})\right) \right) .
\end{equation*}%
The first and the second claim then follow immediately. For the third claim,
assume first that $\zeta _{i}=1$. Then%
\begin{align*}
P_{n,\theta ^{(n)},\sigma _{n}}\left( \hat{\theta}_{i}=0\right) & =\Phi
\left( n^{1/2}\left( \eta _{i,n}-\zeta _{i}\theta _{i,n}/(\sigma _{n}\xi
_{i,n})\right) \right) \\
& -\Phi \left( n^{1/2}\eta _{i,n}\left( -1-\theta _{i,n}/(\sigma _{n}\xi
_{i,n}\eta _{i,n})\right) \right) \rightarrow \Phi (r_{i}).
\end{align*}%
The case $\zeta _{i}=-1$ is handled analogously. $\blacksquare $

\bigskip

\textbf{Proof of Proposition \ref{select_prob_pointwise_unknown}: }We prove
Part (b) first. Observe that 
\begin{align*}
P_{n,\theta ,\sigma }\left( \tilde{\theta}_{i}=0\right) & =\int_{0}^{\infty }%
\left[ \Phi \left( n^{1/2}s\eta _{i,n}\right) -\Phi \left( -n^{1/2}s\eta
_{i,n}\right) \right] \rho _{n-k}(s)ds \\
& =T_{n-k}\left( n^{1/2}\eta _{i,n}\right) -T_{n-k}\left( -n^{1/2}\eta
_{i,n}\right) .
\end{align*}%
By a subsequence argument it suffices to prove the result under the
assumption that $n-k=n-k(n)$ converges in $\mathbb{N}\cup \{\infty \}$. If
the limit is finite, then $n-k(n)$ is eventually constant and the result
follows since every $t$-distribution has unbounded support. If $%
n-k\rightarrow \infty $ then 
\begin{eqnarray*}
&&\Phi \left( n^{1/2}\eta _{i,n}\right) -\Phi \left( -n^{1/2}\eta
_{i,n}\right) -2\left\Vert T_{n-k}-\Phi \right\Vert _{\infty } \\
&\leq &P_{n,\theta ,\sigma }\left( \tilde{\theta}_{i}=0\right) \leq \Phi
\left( n^{1/2}\eta _{i,n}\right) -\Phi \left( -n^{1/2}\eta _{i,n}\right)
+2\left\Vert T_{n-k}-\Phi \right\Vert _{\infty },
\end{eqnarray*}%
where $\left\Vert \cdot \right\Vert _{\infty }$ denotes the supremum norm.
Since $\left\Vert T_{n-k}-\Phi \right\Vert _{\infty }\rightarrow 0$ if $%
n-k\rightarrow \infty $ by Polya's Theorem, the result follows. Part (c) is
proved analogously.

We next prove Part (a). Observe that the collection of distributions
corresponding to $\left\{ \rho _{m}:m\in \mathbb{N}\right\} $ is tight on $%
(0,\infty )$, meaning that for every $0<\delta <1$ there exist $0<c_{\ast
}(\delta )<c^{\ast }(\delta )<\infty $ such that $\sup_{m\in \mathbb{N}%
}\int_{0}^{c_{\ast }(\delta )}\rho _{m}ds<\delta $ and $\sup_{m\in \mathbb{N}%
}\int_{c^{\ast }(\delta )}^{\infty }\rho _{m}ds<\delta $. Note that the map $%
s\mapsto P_{n,\theta ,\sigma }\left( \hat{\theta}_{i}(s\eta _{i,n})=0\right) 
$ is monotonically nondecreasing. Hence,%
\begin{eqnarray*}
\left( 1-\delta \right) P_{n,\theta ,\sigma }\left( \hat{\theta}_{i}(c_{\ast
}(\delta )\eta _{i,n})=0\right) &\leq &\int_{c_{\ast }(\delta )}^{\infty
}P_{n,\theta ,\sigma }\left( \hat{\theta}_{i}(s\eta _{i,n})=0\right) \rho
_{n-k}(s)ds \\
&\leq &P_{n,\theta ,\sigma }\left( \tilde{\theta}_{i}=0\right)
=\int_{0}^{\infty }P_{n,\theta ,\sigma }\left( \hat{\theta}_{i}(s\eta
_{i,n})=0\right) \rho _{n-k}(s)ds \\
&\leq &P_{n,\theta ,\sigma }\left( \hat{\theta}_{i}(c^{\ast }(\delta )\eta
_{i,n})=0\right) +\delta .
\end{eqnarray*}%
Since $\xi _{i,n}c_{\ast }(\delta )\eta _{i,n}$ ($\xi _{i,n}c^{\ast }(\delta
)\eta _{i,n}$, respectively) converges to zero if and only if $\xi
_{i,n}\eta _{i,n}$ does so, Part (a) follows from Proposition \ref%
{select_prob_pointwise} applied to the estimators $\hat{\theta}_{i}(c_{\ast
}(\delta )\eta _{i,n})$and $\hat{\theta}_{i}(c^{\ast }(\delta )\eta _{i,n})$%
. $\blacksquare $

\bigskip

\textbf{Proof of Theorem \ref{select_prob_moving_par_unknown}: }(a) Set $%
P_{n}(s)=P_{n,\theta ^{(n)},\sigma _{n}}\left( \hat{\theta}_{i}(s\eta
_{i,n})=0\right) $ for $s>0$. By Proposition \ref{select_prob_moving_par} we
have that $P_{n}(s)$ converges to $P(s)$ for all $s>0$, where $P(s)=\Phi
\left( -\nu _{i}+se_{i}\right) -\Phi \left( -\nu _{i}-se_{i}\right) $ for $%
s>0$. Since $P_{n}(s)$ as well as $P(s)$ are continuous functions of $s$,
are monotonically nondecreasing in $s$, and have the property that their
limits for $s\rightarrow 0$ are $0$ while the limits for $s\rightarrow
\infty $ are $1$, it follows from Polya's Theorem that the convergence is
uniform in $s$. But then using (\ref{select_prob_unknown}) gives%
\begin{eqnarray*}
&&\left\vert P_{n,\theta ^{(n)},\sigma _{n}}\left( \tilde{\theta}%
_{i}=0\right) -\int_{0}^{\infty }\left( \Phi \left( -\nu _{i}+se_{i}\right)
-\Phi \left( -\nu _{i}-se_{i}\right) \right) \rho _{n-k}(s)ds\right\vert \\
&\leq &\sup_{s>0}\left\vert P_{n}(s)-P(s)\right\vert \int_{0}^{\infty }\rho
_{n-k}(s)ds=\sup_{s>0}\left\vert P_{n}(s)-P(s)\right\vert \rightarrow 0
\end{eqnarray*}%
as $n\rightarrow \infty $. This completes the proof in case $n-k=m$
eventually; in case $n-k\rightarrow \infty $ observe that $\int_{0}^{\infty
}\left( \Phi \left( -\nu _{i}+se_{i}\right) -\Phi \left( -\nu
_{i}-se_{i}\right) \right) \rho _{n-k}(s)ds$ then converges to $\Phi \left(
-\nu _{i}+e_{i}\right) -\Phi \left( -\nu _{i}-e_{i}\right) $ as the
distribution corresponding to $\rho _{n-k}$ converges weakly to pointmass at 
$s=1$ and the integrand is bounded and continuous.

(b) Observe that $P_{n,\theta ^{(n)},\sigma _{n}}\left( \hat{\theta}%
_{i}(s\eta _{i,n})=0\right) $ converges to $1$ for $s>\left\vert \zeta
_{i}\right\vert $ and to $0$ for $s<\left\vert \zeta _{i}\right\vert $ by
Proposition \ref{select_prob_moving_par} applied to the estimator $\hat{%
\theta}_{i}(s\eta _{i,n})$. Now (\ref{select_prob_unknown}) and dominated
convergence deliver the result in (b1).

Next consider (b2): Suppose first that $\left\vert \zeta _{i}\right\vert <1$%
. Choose $\varepsilon >0$ small enough such that $\left\vert \zeta
_{i}\right\vert +\varepsilon <1$. Then, recalling that $P_{n,\theta
^{(n)},\sigma _{n}}\left( \hat{\theta}_{i}(s\eta _{i,n})=0\right) $ is
monotonically nondecreasing in $s$, eq.~(\ref{select_prob_unknown}) gives%
\begin{eqnarray*}
P_{n,\theta ^{(n)},\sigma _{n}}\left( \tilde{\theta}_{i}=0\right) &\geq
&\int_{\left\vert \zeta _{i}\right\vert +\varepsilon }^{\infty }P_{n,\theta
^{(n)},\sigma _{n}}\left( \hat{\theta}_{i}(s\eta _{i,n})=0\right) \rho
_{n-k}(s)ds \\
&\geq &P_{n,\theta ^{(n)},\sigma _{n}}\left( \hat{\theta}_{i}(\left(
\left\vert \zeta _{i}\right\vert +\varepsilon \right) \eta _{i,n})=0\right)
\int_{\left\vert \zeta _{i}\right\vert +\varepsilon }^{\infty }\rho
_{n-k}(s)ds.
\end{eqnarray*}%
Now the integral on the r.h.s.~converges to $1$ since $\left\vert \zeta
_{i}\right\vert +\varepsilon <1$, and the probability on the
r.h.s.~converges to $1$ by Proposition \ref{select_prob_moving_par} applied
to the estimator $\hat{\theta}_{i}(\left( \left\vert \zeta _{i}\right\vert
+\varepsilon \right) \eta _{i,n})$. This completes the proof for the case $%
\left\vert \zeta _{i}\right\vert <1$. Next assume that $\left\vert \zeta
_{i}\right\vert >1$. Choose $\varepsilon >0$ small enough such that $%
\left\vert \zeta _{i}\right\vert -\varepsilon >1$ holds. Then from (\ref%
{select_prob_unknown}) we have%
\begin{eqnarray*}
P_{n,\theta ^{(n)},\sigma _{n}}\left( \tilde{\theta}_{i}=0\right) &\leq
&\int_{0}^{\left\vert \zeta _{i}\right\vert -\varepsilon }P_{n,\theta
^{(n)},\sigma _{n}}\left( \hat{\theta}_{i}(s\eta _{i,n})=0\right) \rho
_{n-k}(s)ds+\int_{\left\vert \zeta _{i}\right\vert -\varepsilon }^{\infty
}\rho _{n-k}(s)ds \\
&\leq &P_{n,\theta ^{(n)},\sigma _{n}}\left( \hat{\theta}_{i}(\left(
\left\vert \zeta _{i}\right\vert -\varepsilon \right) \eta _{i,n})=0\right)
+\int_{\left\vert \zeta _{i}\right\vert -\varepsilon }^{\infty }\rho
_{n-k}(s)ds
\end{eqnarray*}%
since $P_{n}(s)$ is monotonically nondecreasing in $s$ and $%
\int_{0}^{\left\vert \zeta _{i}\right\vert -\varepsilon }\rho _{n-k}(s)ds$
is not larger than $1$. Since $\left\vert \zeta _{i}\right\vert -\varepsilon
>1$ and $n-k\rightarrow \infty $ the second term on the r.h.s.~goes to zero,
while the first term goes to zero by Proposition \ref{select_prob_moving_par}
applied to the estimator $\hat{\theta}_{i}(\left( \left\vert \zeta
_{i}\right\vert -\varepsilon \right) \eta _{i,n})$.

Next we prove 3.\&4.~and assume $\zeta _{i}=1$ first. Then using eq.~(\ref%
{select_prob_unknown}) and performing the substitution $s-1=\left( 2\left(
n-k\right) \right) ^{-1/2}t$ we obtain (recalling that $\rho _{n-k}$ is zero
for negative arguments and using the abbreviations $r_{i,n}=n^{1/2}\left(
\eta _{i,n}-\theta _{i,n}/(\sigma _{n}\xi _{i,n})\right) $ and $%
r_{i,n}^{\ast }=n^{1/2}\left( -\eta _{i,n}-\theta _{i,n}/(\sigma _{n}\xi
_{i,n})\right) $)%
\begin{eqnarray*}
&&P_{n,\theta ^{(n)},\sigma _{n}}\left( \tilde{\theta}_{i}=0\right) \\
&=&\int_{-\infty }^{\infty }\left[ \Phi \left( r_{i,n}+n^{1/2}\eta
_{i,n}\left( 2\left( n-k\right) \right) ^{-1/2}t\right) -\Phi \left(
r_{i,n}^{\ast }-n^{1/2}\eta _{i,n}\left( 2\left( n-k\right) \right)
^{-1/2}t\right) \right] \\
&&\times \left( 2\left( n-k\right) \right) ^{-1/2}\rho _{n-k}(\left( 2\left(
n-k\right) \right) ^{-1/2}t+1)dt \\
&=&\int_{-\infty }^{\infty }\left[ \Phi \left( r_{i,n}+n^{1/2}\eta
_{i,n}\left( 2\left( n-k\right) \right) ^{-1/2}t\right) -\Phi \left(
r_{i,n}^{\ast }-n^{1/2}\eta _{i,n}\left( 2\left( n-k\right) \right)
^{-1/2}t\right) \right] \\
&&\times \phi (t)dt+o(1).
\end{eqnarray*}%
The indicated term in the above display is $o(1)$ by the Lemma in the
Appendix and because the expression in brackets inside the integral is
bounded by $1$. Since $r_{i,n}\rightarrow r_{i}$ and $r_{i,n}^{\ast
}\rightarrow -\infty $, the integrand converges to $\Phi \left( r_{i}\right) 
$ under 3.$~$and to $\Phi \left( r_{i}+d_{i}t\right) $ under 4. The
dominated convergence theorem then completes the proof. The case $\zeta
_{i}=-1$ is treated similarly.

It remains to prove 5. Again assume $\zeta _{i}=1$ first. Define $%
r_{i,n}^{\prime }=2^{1/2}n^{-1/2}\eta _{i,n}^{-1}\left( n-k\right)
^{1/2}r_{i,n}$ and $r_{i,n}^{\prime \prime }=2^{1/2}n^{-1/2}\eta
_{i,n}^{-1}\left( n-k\right) ^{1/2}r_{i,n}^{\ast }$ and rewrite the above
display as%
\begin{eqnarray*}
&&P_{n,\theta ^{(n)},\sigma _{n}}\left( \tilde{\theta}_{i}=0\right) \\
&=&\int_{-\infty }^{\infty }\left[ \Phi \left( n^{1/2}\eta _{i,n}\left(
2\left( n-k\right) \right) ^{-1/2}\left( r_{i,n}^{\prime }+t\right) \right)
-\Phi \left( n^{1/2}\eta _{i,n}\left( 2\left( n-k\right) \right)
^{-1/2}\left( r_{i,n}^{\prime \prime }-t\right) \right) \right] \\
&&\times \phi (t)dt+o(1).
\end{eqnarray*}%
Observe that $r_{i,n}^{\prime }\rightarrow r_{i}^{\prime }$ and $%
r_{i,n}^{\prime \prime }\rightarrow -\infty $. The expression in brackets
inside the integral hence converges to $1$ for $t>-r_{i}^{\prime }$ and to $%
0 $ for $t<-r_{i}^{\prime }$. By dominated convergence the integral
converges to $\int_{-r_{i}^{\prime }}^{\infty }\phi (t)dt=\Phi
(r_{i}^{\prime })$. The case $\zeta _{i}=-1$ is treated similarly. $%
\blacksquare $

\bigskip

\textbf{Proof of Proposition \ref{closeness_prob}: }Observe that%
\begin{eqnarray}
&&\left\vert P_{n,\theta ,\sigma }\left( \hat{\theta}_{i}=0\right)
-P_{n,\theta ,\sigma }\left( \tilde{\theta}_{i}=0\right) \right\vert  \notag
\\
&\leq &\int_{0}^{\infty }\left\{ \left\vert \Phi \left( n^{1/2}\left(
-\theta _{i}/(\sigma \xi _{i,n})+\eta _{i,n}\right) \right) -\Phi \left(
n^{1/2}\left( -\theta _{i}/(\sigma \xi _{i,n})+s\eta _{i,n}\right) \right)
\right\vert \right.  \notag \\
&&\left. +\left\vert \Phi \left( n^{1/2}\left( -\theta _{i}/(\sigma \xi
_{i,n})-\eta _{i,n}\right) \right) -\Phi \left( n^{1/2}\left( -\theta
_{i}/(\sigma \xi _{i,n})-s\eta _{i,n}\right) \right) \right\vert \right\}
\rho _{n-k}(s)ds.  \label{prob_diff}
\end{eqnarray}%
By a trivial modification of Lemma 13 in P\"{o}tscher and Schneider (2010)
we conclude that for every $\varepsilon >0$ there exists a real number $%
c=c(\varepsilon )>0$ such that 
\begin{equation*}
\int_{\left\vert s-1\right\vert >(n-k)^{-1/2}c}\rho _{n-k}(s)ds<\varepsilon
\end{equation*}%
for every $n>k$. Using the fact, that $\Phi $ is globally Lipschitz with
constant $(2\pi )^{-1/2}$, this gives 
\begin{eqnarray*}
&&\sup_{\theta \in \mathbb{R}^{k},0<\sigma <\infty }\left\vert P_{n,\theta
,\sigma }\left( \hat{\theta}_{i}=0\right) -P_{n,\theta ,\sigma }\left( 
\tilde{\theta}_{i}=0\right) \right\vert \\
&\leq &2\int_{\left\vert s-1\right\vert >(n-k)^{-1/2}c}\rho _{n-k}(s)ds \\
&&+2(2\pi )^{-1/2}n^{1/2}\eta _{i,n}\int_{\left\vert s-1\right\vert \leq
(n-k)^{-1/2}c}\left\vert s-1\right\vert \rho _{n-k}(s)ds \\
&\leq &2\varepsilon +2(2\pi )^{-1/2}n^{1/2}\eta _{i,n}(n-k)^{-1/2}c
\end{eqnarray*}%
which proves the result since $\varepsilon $ can be made arbitrarily small. $%
\blacksquare $

\subsection{Proofs for Section \protect\ref{minimax}}

\textbf{Proof of Theorem \ref{thresh_consistency}: }(a) Observe that 
\begin{equation}
\left\vert \tilde{\theta}_{i}-\hat{\theta}_{LS,i}\right\vert \leq \hat{\sigma%
}\xi _{i,n}\eta _{i,n}  \label{closeness_H_S_AS_LS_unknown}
\end{equation}%
holds for any of the estimators. Hence, consistency of $\tilde{\theta}_{i}$
under $\xi _{i,n}\eta _{i,n}\rightarrow 0$ and $\xi
_{i,n}/n^{1/2}\rightarrow 0$ follows immediately from Proposition \ref%
{ls_consistency}(a) since the distributions of $\hat{\sigma}/\sigma $ are
tight. Conversely, suppose $\tilde{\theta}_{i}$ is consistent. Then clearly $%
P_{n,\theta ,\sigma }\left( \tilde{\theta}_{i}=0\right) \rightarrow 0$
whenever $\theta _{i}\neq 0$ must hold, which implies $\xi _{i,n}\eta
_{i,n}\rightarrow 0$ by Proposition \ref{select_prob_pointwise_unknown}(a).
This then entails consistency of $\hat{\theta}_{LS,i}$ by (\ref%
{closeness_H_S_AS_LS_unknown}) and tightness of the distributions of $\hat{%
\sigma}/\sigma $; this in turn implies $\xi _{i,n}/n^{1/2}\rightarrow 0$ by
Proposition \ref{ls_consistency}(a).

(b) Since $a_{i,n}\rightarrow \infty $, it suffices to prove the second
claim in (b). Now for every real $M>0$ we have%
\begin{eqnarray*}
&&P_{n,\theta ,\sigma }\left( a_{i,n}\left\vert \tilde{\theta}_{H,i}-\theta
_{i}\right\vert >\sigma M\right) \\
&=&P_{n,\theta ,\sigma }\left( a_{i,n}\left\vert \hat{\theta}_{LS,i}-\theta
_{i}\right\vert >\sigma M,\left\vert \hat{\theta}_{LS,i}\right\vert >\hat{%
\sigma}\xi _{i,n}\eta _{i,n}\right) \\
&&+\boldsymbol{1}\left( a_{i,n}\left\vert \theta _{i}\right\vert >\sigma
M\right) P_{n,\theta ,\sigma }\left( \left\vert \hat{\theta}%
_{LS,i}\right\vert \leq \hat{\sigma}\xi _{i,n}\eta _{i,n}\right) \\
&\leq &P_{n,\theta ,\sigma }\left( a_{i,n}\left\vert \hat{\theta}%
_{LS,i}-\theta _{i}\right\vert >\sigma M\right) +\boldsymbol{1}\left(
a_{i,n}\left\vert \theta _{i}\right\vert >\sigma M\right) P_{n,\theta
,\sigma }\left( \left\vert \hat{\theta}_{LS,i}\right\vert \leq \hat{\sigma}%
\xi _{i,n}\eta _{i,n}\right) \\
&\leq &P_{n,\theta ,\sigma }\left( \left( n^{1/2}/\xi _{i,n}\right)
\left\vert \hat{\theta}_{LS,i}-\theta _{i}\right\vert >\sigma M\right) +%
\boldsymbol{1}\left( a_{i,n}\left\vert \theta _{i}\right\vert >\sigma
M\right) P_{n,\theta ,\sigma }\left( \left\vert \hat{\theta}%
_{LS,i}\right\vert \leq \hat{\sigma}\xi _{i,n}\eta _{i,n}\right) .
\end{eqnarray*}%
This gives%
\begin{eqnarray*}
&&\sup_{n\in \mathbb{N}}\sup_{\theta \in \mathbb{R}^{k}}\sup_{0<\sigma
<\infty }P_{n,\theta ,\sigma }\left( a_{i,n}\left\vert \tilde{\theta}%
_{H,i}-\theta _{i}\right\vert >\sigma M\right) \\
&\leq &\sup_{n\in \mathbb{N}}\sup_{\theta \in \mathbb{R}^{k}}\sup_{0<\sigma
<\infty }P_{n,\theta ,\sigma }\left( \left( n^{1/2}/\xi _{i,n}\right)
\left\vert \hat{\theta}_{LS,i}-\theta _{i}\right\vert >\sigma M\right) \\
&&+\sup_{n\in \mathbb{N}}\sup_{0<\sigma <\infty }\sup_{\theta \in \mathbb{R}%
^{k}:\left\vert \theta _{i}\right\vert >\sigma M/a_{i,n}}P_{n,\theta ,\sigma
}\left( \left\vert \hat{\theta}_{LS,i}\right\vert \leq \hat{\sigma}\xi
_{i,n}\eta _{i,n}\right)
\end{eqnarray*}%
where the first term on the r.h.s.~can be made arbitrarily small in view of
Proposition \ref{ls_consistency}(b) by choosing $M$ large enough. The second
term on the r.h.s.~can be written as (cf.~(\ref{select_prob_unknown})) 
\begin{eqnarray*}
&&\sup_{n\in \mathbb{N}}\sup_{0<\sigma <\infty }\sup_{\theta \in \mathbb{R}%
^{k}:\left\vert \theta _{i}\right\vert >\sigma M/a_{i,n}}\int_{0}^{\infty
}P_{n,\theta ,\sigma }\left( \left\vert \hat{\theta}_{LS,i}\right\vert \leq
s\sigma \xi _{i,n}\eta _{i,n}\right) \rho _{n-k}(s)ds \\
&\leq &\sup_{n\in \mathbb{N}}\sup_{0<\sigma <\infty }\int_{0}^{\infty
}\sup_{\theta \in \mathbb{R}^{k}:\left\vert \theta _{i}\right\vert >\sigma
M/a_{i,n}}P_{n,\theta ,\sigma }\left( \left\vert \hat{\theta}%
_{LS,i}\right\vert \leq s\sigma \xi _{i,n}\eta _{i,n}\right) \rho
_{n-k}(s)ds.
\end{eqnarray*}%
For $\varepsilon >0$ choose $c^{\ast }(\varepsilon /2)$ as in the proof of
Proposition \ref{select_prob_pointwise_unknown}. Using continuity of $\Phi $
and the fact that the probability appearing on the r.h.s.~above is
monotonically increasing as $\left\vert \theta _{i}\right\vert $ approaches $%
\sigma M/a_{i,n}$ from above, this can be further bounded by%
\begin{eqnarray*}
&\leq &\sup_{n\in \mathbb{N}}\int_{0}^{\infty }\Phi \left( sn^{1/2}\eta
_{i,n}-Ma_{i,n}^{-1}n^{1/2}/\xi _{i,n}\right) \rho _{n-k}(s)ds \\
&\leq &\varepsilon /2+\sup_{n\in \mathbb{N}}\int_{0}^{c^{\ast }(\varepsilon
/2)}\Phi \left( sn^{1/2}\eta _{i,n}-Ma_{i,n}^{-1}n^{1/2}/\xi _{i,n}\right)
\rho _{n-k}(s)ds \\
&\leq &\varepsilon /2+\sup_{n\in \mathbb{N}}\Phi \left( n^{1/2}\xi
_{i,n}^{-1}a_{i,n}^{-1}\left( c^{\ast }(\varepsilon /2)\xi _{i,n}\eta
_{i,n}a_{i,n}-M\right) \right) \leq \varepsilon /2+\Phi \left( c^{\ast
}(\varepsilon /2)-M\right) ,
\end{eqnarray*}%
the last inequality holding for $M>c^{\ast }(\varepsilon /2)$ and since $%
n^{1/2}\xi _{i,n}^{-1}a_{i,n}^{-1}\geq 1$ and $\xi _{i,n}\eta
_{i,n}a_{i,n}\leq 1$. Choosing $M$ sufficiently large (depending on $%
\varepsilon $) completes the proof for $\tilde{\theta}_{H,i}$. Next observe
that 
\begin{equation*}
a_{i,n}\left\vert \tilde{\theta}_{H,i}-\tilde{\theta}_{S,i}\right\vert \leq 
\hat{\sigma}\min \left( n^{1/2}\eta _{i,n},1\right) \leq \hat{\sigma}
\end{equation*}%
and similarly $a_{i,n}\left\vert \tilde{\theta}_{H,i}-\tilde{\theta}%
_{AS,i}\right\vert \leq \hat{\sigma}$ hold. Since the set of distributions
of $\hat{\sigma}/\sigma $ (i.e., the set of distributions corresponding to $%
\rho _{n-k}$) is tight as already noted, this proves (b) then also for $\hat{%
\theta}_{S,i}$ and $\hat{\theta}_{AS,i}$.

(c) By a subsequence argument we can reduce the argument to the case where $%
n^{1/2}\eta _{i,n}\rightarrow e_{i}\in \mathbb{\bar{R}}$ and $n-k$ converges
in $\mathbb{N}\cup \{\infty \}$. Suppose first that $e_{i}=\infty $: Observe
that then $a_{i,n}=(\xi _{i,n}\eta _{i,n})^{-1}$ eventually. Choose $\theta
_{i,n}$ and $\sigma _{n}$ such that $\theta _{i,n}/\left( \sigma _{n}\xi
_{i,n}\eta _{i,n}\right) =\zeta _{i}$, where $\zeta _{i}$ does not depend on 
$n$ and $0<\left\vert \zeta _{i}\right\vert <1$ holds, and set the other
coordinates of $\theta ^{(n)}$ to arbitrary values (e.g., equal to zero).
Observe that there exists a constant $\delta >0$ such that 
\begin{equation}
\liminf_{n\rightarrow \infty }P_{n,\theta ^{(n)},\sigma _{n}}\left( \tilde{%
\theta}_{i}=0\right) >\delta  \label{delta}
\end{equation}%
holds: If $n-k$ converges to a finite limit, i.e., is eventually constant,
the claim follows from Theorem \ref{select_prob_moving_par_unknown}(b1); if $%
n-k\rightarrow \infty $, then use Theorem \ref%
{select_prob_moving_par_unknown}(b2). By (\ref{nec}) we have for $%
\varepsilon =\delta $ and a suitable $M$ that%
\begin{eqnarray*}
\delta &>&P_{n,\theta ^{(n)},\sigma _{n}}\left( b_{i,n}\left\vert \tilde{%
\theta}_{i}-\theta _{i,n}\right\vert >\sigma _{n}M\right) \geq P_{n,\theta
^{(n)},\sigma _{n}}\left( b_{i,n}\left\vert \tilde{\theta}_{i}-\theta
_{i,n}\right\vert >\sigma _{n}M,\tilde{\theta}_{i}=0\right) \\
&=&P_{n,\theta ^{(n)},\sigma _{n}}\left( \left\vert b_{i,n}\theta
_{i,n}\right\vert /\sigma _{n}>M,\tilde{\theta}_{i}=0\right) \\
&=&\boldsymbol{1}\left( \left\vert b_{i,n}\theta _{i,n}\right\vert /\sigma
_{n}>M\right) P_{n,\theta ^{(n)},\sigma _{n}}\left( \tilde{\theta}%
_{i}=0\right) >\delta \boldsymbol{1}\left( \left\vert b_{i,n}\theta
_{i,n}\right\vert /\sigma _{n}>M\right)
\end{eqnarray*}%
for all $n$ sufficiently large. But this is only possible if $b_{i,n}\xi
_{i,n}\eta _{i,n}\leq M/\left\vert \zeta _{i}\right\vert <\infty $ holds
eventually, implying that $b_{i,n}=O(a_{i,n})$. Next consider the case where 
$0<e_{i}<\infty $: Observe that then $a_{i,n}$ is of the same order as $%
n^{1/2}/\xi _{i,n}$. Then define $\theta _{i,n}$ and $\sigma _{n}$ such that 
$n^{1/2}\theta _{i,n}/\left( \sigma _{n}\xi _{i,n}\right) =\nu _{i}$, where $%
\nu _{i}$ does not depend on $n$ and $0<\left\vert \nu _{i}\right\vert
<\infty $ holds, and set the other coordinates of $\theta ^{(n)}$ to
arbitrary values (e.g., equal to zero). Observe that then (\ref{delta}) also
holds, in view of Theorem \ref{select_prob_moving_par_unknown}(a1) in case $%
n-k$ is eventually constant, and in view of Theorem \ref%
{select_prob_moving_par_unknown}(a2) in case $n-k\rightarrow \infty $. The
rest of the proof is then similar as before. It remains to consider the case 
$e_{i}=0$: It follows from (\ref{closeness_H_S_AS_LS_unknown}), the
assumptions on $\xi _{i,n}$ and $\eta _{i,n}$, from $e_{i}=0$, and from the
observation that $\hat{\theta}_{LS,i}$ is $N(\theta _{i},\sigma ^{2}\xi
_{i,n}^{2}/n)$-distributed, that $n^{1/2}\xi _{i,n}^{-1}\sigma ^{-1}\left( 
\tilde{\theta}_{i}-\theta _{i}\right) $ converges in distribution to a
standard normal distribution for each fixed $\theta _{i}$ and $\sigma $.
Hence, stochastic boundedness of $\sigma ^{-1}b_{i,n}\left\vert \tilde{\theta%
}_{i}-\theta _{i}\right\vert $ for each $\theta _{i}$ (and a fortiori (\ref%
{nec})) necessarily implies that $b_{i,n}=O(n^{1/2}\xi
_{i,n}^{-1})=O(a_{i,n})$.

(d) The proof for $\hat{\theta}_{i}$ is similar and in fact simpler: note
that now $\left\vert \hat{\theta}_{i}-\hat{\theta}_{LS,i}\right\vert \leq
\sigma \xi _{i,n}\eta _{i,n}$ holds and that in the proof of (b) the
integration over $s$ can simply be replaced by evaluation at $s=1$. For (c)
one uses Proposition \ref{select_prob_moving_par} instead of Theorem \ref%
{select_prob_moving_par_unknown}. $\blacksquare $

\subsection{Proofs for Section \protect\ref{FS}}

\textbf{Proofs of Propositions \ref{1}, \ref{2}, and \ref{3}: }Observe that 
\begin{equation*}
\hat{\theta}_{H,i}/(\sigma \xi _{i,n})=\left( \hat{\theta}_{LS,i}/(\sigma
\xi _{i,n})\right) \boldsymbol{1}\left( \left\vert \hat{\theta}%
_{LS,i}/(\sigma \xi _{i,n})\right\vert >\eta _{i,n}\right)
\end{equation*}%
and that $\hat{\theta}_{LS,i}/(\sigma \xi _{i,n})$ is $N\left( \theta
_{i}/(\sigma \xi _{i,n}),1/n\right) $. Furthermore, we have 
\begin{eqnarray*}
H_{H,n,\theta ,\sigma }^{i}(x) &=&P_{n,\theta ,\sigma }\left( \sigma
^{-1}\alpha _{i,n}(\hat{\theta}_{H,i}-\theta _{i})\leq x\right) \\
&=&P_{n,\theta ,\sigma }\left( n^{1/2}(\hat{\theta}_{H,i}-\theta
_{i})/(\sigma \xi _{i,n})\leq n^{1/2}\alpha _{i,n}^{-1}\xi
_{i,n}^{-1}x\right) .
\end{eqnarray*}%
Identifying $\hat{\theta}_{LS,i}/(\sigma \xi _{i,n})$ and $\theta
_{i}/(\sigma \xi _{i,n})$ with $\bar{y}$ and $\theta $ in P\"{o}tscher and
Leeb (2009) and making use of eq.~(4) in that reference immediately gives
the result for $dH_{H,n,\theta ,\sigma }^{i}$. The result for $H_{H,n,\theta
,\sigma }^{i}$ then follows from elementary calculations.

The result for $dH_{S,n,\theta ,\sigma }^{i}$ follows similarly by making
use of eq.~(5) instead of eq.~(4) in P\"{o}tscher and Leeb (2009). The
result for $H_{S,n,\theta ,\sigma }^{i}$ then follows from elementary
calculations.

The results for $dH_{AS,n,\theta ,\sigma }^{i}$ and $H_{AS,n,\theta ,\sigma
}^{i}$ follow similarly by making use of eqs.~(9)-(11) in P\"{o}tscher and
Schneider (2009). $\blacksquare $

\bigskip

\textbf{Proofs of Propositions \ref{4}, \ref{5}, and \ref{6}: }We have 
\begin{eqnarray*}
H_{H,n,\theta ,\sigma }^{i\maltese }(x) &=&\int_{0}^{\infty }P_{n,\theta
,\sigma }\left( \sigma ^{-1}\alpha _{i,n}(\tilde{\theta}_{H,i}-\theta
_{i})\leq x\mid \hat{\sigma}=s\sigma \right) \rho _{n-k}(s)ds \\
&=&\int_{0}^{\infty }H_{H,s\eta _{i,n},n,\theta ,\sigma }^{i}(x)\rho
_{n-k}(s)ds,
\end{eqnarray*}%
where we have used independence of $\hat{\sigma}$ and $\hat{\theta}_{LS,i}$
allowing us to replace $\hat{\sigma}$ by $s\sigma $ in the relevant
formulae, cf.~Leeb and P\"{o}tscher (2003, p.~110). Substituting (\ref%
{hard_finite_sample}), with $\eta _{i,n}$ replaced by $s\eta _{i,n}$, into
the above equation gives (\ref{hard_finite_sample_unknown}). Representing $%
H_{H,s\eta _{i,n},n,\theta ,\sigma }^{i}(x)$ as an integral of $dH_{H,s\eta
_{i,n},n,\theta ,\sigma }^{i}$ given in (\ref{hard_finite_sample_density})
and applying Fubini's theorem then gives (\ref%
{hard_finite_sample_unknown_density}).

Similarly, we have 
\begin{equation*}
H_{S,n,\theta ,\sigma }^{i\maltese }(x)=\int_{0}^{\infty }H_{S,s\eta
_{i,n},n,\theta ,\sigma }^{i}(x)\rho _{n-k}(s)ds.
\end{equation*}%
Substituting (\ref{soft_finite_sample}), with $\eta _{i,n}$ replaced by $%
s\eta _{i,n}$, into the above equation and noting that $\int_{0}^{\infty
}\Phi (a+bs)\rho _{\nu }(s)\,ds=T_{\nu ,-a}(b)$ gives (\ref%
{soft_finite_sample_unknown}). Elementary calculations then yield (\ref%
{soft_finite_sample_unknown_density}).

Finally, we have 
\begin{equation*}
H_{AS,n,\theta ,\sigma }^{i\maltese }(x)=\int_{0}^{\infty }H_{AS,s\eta
_{i,n},n,\theta ,\sigma }^{i}(x)\rho _{n-k}(s)ds.
\end{equation*}%
Substituting (\ref{adaptive_finite_sample}), with $\eta _{i,n}$ replaced by $%
s\eta _{i,n}$, into the above equation gives (\ref%
{adaptive_finite_sample_unknown}). Elementary calculations then yield (\ref%
{adaptive_finite_sample_unknown_density}). $\blacksquare $

\subsection{Proofs for Section \protect\ref{LS} \label{prfs_LS}}

\textbf{Proof of Proposition \ref{LSDK_H} : }The proof of (a) is completely
analogous to the proof of Theorem 4 in P\"{o}tscher and Leeb (2009), whereas
the proof of (b) is analogous to the proof of Theorem 17 in the same
reference. $\blacksquare $

\bigskip

\textbf{Proof of Proposition \ref{LSDK_S} : }The proof of (a) is completely
analogous to the proof of Theorem 5 in P\"{o}tscher and Leeb (2009), whereas
the proof of (b) is analogous to the proof of Theorem 18 in the same
reference. $\blacksquare $

\bigskip

\textbf{Proof of Proposition \ref{LSDK_AS} : }The proof of (a) is completely
analogous to the proof of Theorem 4 in P\"{o}tscher and Schneider (2009),
whereas the proof of (b) is analogous to the proof of Theorem 6 in the same
reference. $\blacksquare $

\bigskip

\textbf{Proof of Theorem \ref{closeness}: }Observe that the total variation
distance between two cdfs is bounded by the sum of the total variation
distances between the corresponding discrete and continuous parts.
Furthermore, recall that the total variation distance between the absolutely
continuous parts is bounded from above by the $L_{1}$-distance of the
corresponding densities. Hence, from (\ref{hard_finite_sample_density}) and (%
\ref{hard_finite_sample_unknown_density}) we obtain

\begin{equation*}
\left\Vert H_{H,n,\theta ,\sigma }^{i}-H_{H,n,\theta ,\sigma }^{i\maltese
}\right\Vert _{TV}\leq A+B
\end{equation*}%
where%
\begin{equation*}
A=\left\vert P_{n,\theta ,\sigma }\left( \hat{\theta}_{H,i}=0\right)
-P_{n,\theta ,\sigma }\left( \tilde{\theta}_{H,i}=0\right) \right\vert
\end{equation*}%
and%
\begin{eqnarray*}
B &=&\int_{-\infty }^{\infty }\int_{0}^{\infty }\left\vert \boldsymbol{1}%
\left( \left\vert \alpha _{i,n}^{-1}x+\theta _{i}/\sigma \right\vert >\xi
_{i,n}\eta _{i,n}\right) \right. \\
&&\left. -\boldsymbol{1}\left( \left\vert \alpha _{i,n}^{-1}x+\theta
_{i}/\sigma \right\vert >\xi _{i,n}s\eta _{i,n}\right) \right\vert \rho
_{n-k}(s)dsn^{1/2}\alpha _{i,n}^{-1}\xi _{i,n}^{-1}\phi \left(
n^{1/2}x/(\alpha _{i,n}\xi _{i,n})\right) dx \\
&=&\int_{0}^{\infty }\int_{-\infty }^{\infty }\left\vert \boldsymbol{1}%
\left( \left\vert u+n^{1/2}\theta _{i}/\left( \sigma \xi _{i,n}\right)
\right\vert >n^{1/2}\eta _{i,n}\right) \right. \\
&&\left. -\boldsymbol{1}\left( \left\vert u+n^{1/2}\theta _{i}/\left( \sigma
\xi _{i,n}\right) \right\vert >sn^{1/2}\eta _{i,n}\right) \right\vert \phi
(u)du\rho _{n-k}(s)ds \\
&=&\int_{0}^{\infty }\int_{-\infty }^{\infty }\boldsymbol{1}\left(
n^{1/2}\eta _{i,n}(s\wedge 1)<\left\vert u+n^{1/2}\theta _{i}/(\sigma \xi
_{i,n})\right\vert \leq n^{1/2}\eta _{i,n}(s\vee 1)\right) \phi \left(
u\right) du\rho _{n-k}(s)ds \\
&=&\int_{0}^{\infty }\left\{ \left[ \Phi \left( n^{1/2}\left( -\theta
_{i}/(\sigma \xi _{i,n})+\eta _{i,n}(s\vee 1)\right) \right) -\Phi \left(
n^{1/2}\left( -\theta _{i}/(\sigma \xi _{i,n})+\eta _{i,n}(s\wedge 1)\right)
\right) \right] \right. \\
&&+\left. \left[ \Phi \left( n^{1/2}\left( -\theta _{i}/(\sigma \xi
_{i,n})-\eta _{i,n}(s\wedge 1)\right) \right) -\Phi \left( n^{1/2}\left(
-\theta _{i}/(\sigma \xi _{i,n})-\eta _{i,n}(s\vee 1)\right) \right) \right]
\right\} \rho _{n-k}(s)ds,
\end{eqnarray*}%
where we have made use of Fubini's theorem and performed an obvious
substitution. By a trivial modification of Lemma 13 in P\"{o}tscher and
Schneider (2010) we conclude that for every $\varepsilon >0$ there exists a
real number $c=c(\varepsilon )>0$ such that 
\begin{equation}
\int_{\left\vert s-1\right\vert >(n-k)^{-1/2}c}\rho _{n-k}(s)ds<\varepsilon
\label{c}
\end{equation}%
for every $n-k>0$. Using the fact, that $\Phi $ is globally Lipschitz with
constant $(2\pi )^{-1/2}$, this gives%
\begin{eqnarray*}
\sup_{\theta \in \mathbb{R}^{k},0<\sigma <\infty }B &\leq &2\int_{\left\vert
s-1\right\vert >(n-k)^{-1/2}c}\rho _{n-k}(s)ds \\
&&+2(2\pi )^{-1/2}n^{1/2}\eta _{i,n}\int_{\left\vert s-1\right\vert \leq
(n-k)^{-1/2}c}\left\vert (s\vee 1)-(s\wedge 1)\right\vert \rho _{n-k}(s)ds \\
&\leq &2\varepsilon +2(2\pi )^{-1/2}n^{1/2}\eta _{i,n}(n-k)^{-1/2}c.
\end{eqnarray*}%
The r.h.s.~now converges to $2\varepsilon $ because $n^{1/2}\eta
_{i,n}(n-k)^{-1/2}\rightarrow 0$. Since $\varepsilon >0$ was arbitrary, this
shows that $\sup_{\theta \in \mathbb{R}^{k},0<\sigma <\infty }B$ converges
to zero. Note also that $\sup_{\theta \in \mathbb{R}^{k},0<\sigma <\infty }A$
has already been shown to converge to zero in Proposition \ref%
{closeness_prob}. This completes the proof for the hard-thresholding
estimator.

With the same argument as above we obtain%
\begin{equation*}
\left\Vert H_{S,n,\theta ,\sigma }^{i}-H_{S,n,\theta ,\sigma }^{i\maltese
}\right\Vert _{TV}\leq A+B,
\end{equation*}%
where 
\begin{equation*}
A=\left\vert P_{n,\theta ,\sigma }\left( \hat{\theta}_{S,i}=0\right)
-P_{n,\theta ,\sigma }\left( \tilde{\theta}_{S,i}=0\right) \right\vert
\end{equation*}%
and%
\begin{eqnarray*}
B &=&n^{1/2}\alpha _{i,n}^{-1}\xi _{i,n}^{-1}\int_{-\infty }^{\infty
}\int_{0}^{\infty }\left\vert \phi \left( n^{1/2}x/(\alpha _{i,n}\xi
_{i,n})+n^{1/2}\eta _{i,n}\right) \right. \\
&&\left. -\phi \left( n^{1/2}x/(\alpha _{i,n}\xi _{i,n})+n^{1/2}s\eta
_{i,n}\right) \right\vert \rho _{n-k}(s)ds\boldsymbol{1}\left( \alpha
_{i,n}^{-1}x+\theta _{i}/\sigma >0\right) dx \\
&&+n^{1/2}\alpha _{i,n}^{-1}\xi _{i,n}^{-1}\int_{-\infty }^{\infty
}\int_{0}^{\infty }\left\vert \phi \left( n^{1/2}x/(\alpha _{i,n}\xi
_{i,n})-n^{1/2}\eta _{i,n}\right) \right. \\
&&\left. -\phi \left( n^{1/2}x/(\alpha _{i,n}\xi _{i,n})-n^{1/2}s\eta
_{i,n}\right) \right\vert \rho _{n-k}(s)ds\boldsymbol{1}\left( \alpha
_{i,n}^{-1}x+\theta _{i}/\sigma <0\right) dx
\end{eqnarray*}%
where we have used (\ref{soft_finite_sample_density}) and (\ref%
{soft_finite_sample_unknown_density}). Now,%
\begin{equation*}
B\leq \int_{0}^{\infty }\left( B_{1}(s)+B_{2}(s)\right) \rho _{n-k}(s)ds
\end{equation*}%
where%
\begin{eqnarray*}
B_{1}(s) &=&\int_{-\infty }^{\infty }\left\vert \phi \left( u+n^{1/2}\eta
_{i,n}\right) -\phi \left( u+n^{1/2}s\eta _{i,n}\right) \right\vert du, \\
B_{2}(s) &=&\int_{-\infty }^{\infty }\left\vert \phi \left( u-n^{1/2}\eta
_{i,n}\right) -\phi \left( u-n^{1/2}s\eta _{i,n}\right) \right\vert du,
\end{eqnarray*}%
and where we have used Fubini's theorem and an obvious substitution. It is
elementary to verify that%
\begin{equation*}
B_{1}(s)=B_{2}(s)=2\left\vert \Phi (n^{1/2}\eta _{i,n}(s-1)/2)-\Phi
(-n^{1/2}\eta _{i,n}(s-1)/2)\right\vert ,
\end{equation*}%
and that $B_{1}(s)\leq 2$ holds. Consequently, using (\ref{c}) we obtain%
\begin{eqnarray*}
B &\leq &4\int_{\left\vert s-1\right\vert >(n-k)^{-1/2}c}\rho
_{n-k}(s)ds+\int_{\left\vert s-1\right\vert \leq (n-k)^{-1/2}c}\left(
B_{1}(s)+B_{2}(s)\right) \rho _{n-k}(s)ds \\
&\leq &4\varepsilon +4(2\pi )^{-1/2}n^{1/2}\eta _{i,n}\int_{\left\vert
s-1\right\vert \leq (n-k)^{-1/2}c}\left\vert s-1\right\vert \rho _{n-k}(s)ds
\\
&\leq &4\varepsilon +4(2\pi )^{-1/2}n^{1/2}\eta _{i,n}(n-k)^{-1/2}c,
\end{eqnarray*}%
where we have again used the fact that $\Phi $ is globally Lipschitz with
constant $(2\pi )^{-1/2}$. Since $n^{1/2}\eta _{i,n}(n-k)^{-1/2}\rightarrow
0 $ and $\varepsilon >0$ was arbitrary, the proof for soft-thresholding is
complete, because $\sup_{\theta \in \mathbb{R}^{k},0<\sigma <\infty }A$ goes
to zero by Proposition \ref{closeness_prob}.

Finally, from (\ref{adaptive_finite_sample}) and (\ref%
{adaptive_finite_sample_unknown}) we obtain%
\begin{eqnarray*}
\left\Vert H_{AS,n,\theta ,\sigma }^{i}-H_{AS,n,\theta ,\sigma }^{i\maltese
}\right\Vert _{\infty } &\leq &\int_{0}^{\infty }\sup_{x\in \mathbb{R}%
}\left\vert \Phi \left( z_{n,\theta ,\sigma }^{(2)}(x,\eta _{i,n})\right)
-\Phi \left( z_{n,\theta ,\sigma }^{(2)}(x,s\eta _{i,n})\right) \right\vert
\rho _{n-k}(s)ds \\
&&+\int_{0}^{\infty }\sup_{x\in \mathbb{R}}\left\vert \Phi \left(
z_{n,\theta ,\sigma }^{(1)}(x,\eta _{i,n})\right) -\Phi \left( z_{n,\theta
,\sigma }^{(1)}(x,s\eta _{i,n})\right) \right\vert \rho _{n-k}(s)ds \\
&=&:\int_{0}^{\infty }C_{1}(s)\rho _{n-k}(s)ds+\int_{0}^{\infty
}C_{2}(s)\rho _{n-k}(s)ds.
\end{eqnarray*}%
Observe that on the one hand $C_{1}(s)$ and $C_{2}(s)$ are bounded by $1$,
and that on the other hand, using the Lipschitz-property of $\Phi $ and the
mean-value theorem,%
\begin{eqnarray*}
\left\vert C_{1}(s)\right\vert &\leq &(2\pi )^{-1/2}\sup_{x\in \mathbb{R}%
}\left\vert z_{n,\theta ,\sigma }^{(2)}(x,\eta _{i,n})-z_{n,\theta ,\sigma
}^{(2)}(x,s\eta _{i,n})\right\vert \\
&=&(2\pi )^{-1/2}\sup_{x\in \mathbb{R}}\left\vert n^{1/2}\sqrt{\left( 0.5\xi
_{i,n}^{-1}(\alpha _{i,n}^{-1}x+\theta _{i}/\sigma )\right) ^{2}+\eta
_{i,n}^{2}}\right. \\
&&\left. -n^{1/2}\sqrt{\left( 0.5\xi _{i,n}^{-1}(\alpha _{i,n}^{-1}x+\theta
_{i}/\sigma )\right) ^{2}+s^{2}\eta _{i,n}^{2}}\right\vert \\
&\leq &(2\pi )^{-1/2}n^{1/2}\eta _{i,n}^{2}\left\vert s-1\right\vert
\sup_{x\in \mathbb{R}}\left\vert \left( \left( 0.5\xi _{i,n}^{-1}(\alpha
_{i,n}^{-1}x+\theta _{i}/\sigma )\right) ^{2}\bar{s}^{-2}+\eta
_{i,n}^{2}\right) ^{-1/2}\right\vert ,
\end{eqnarray*}%
where $\bar{s}$ is a mean-value between $s$ and $1$ which may depend on $x$.
The supremum over $x$ on the r.h.s.~is now clearly assumed for $x=-\alpha
_{i,n}\theta _{i}/\sigma $, resulting in the bound%
\begin{equation*}
\left\vert C_{1}(s)\right\vert \leq (2\pi )^{-1/2}n^{1/2}\eta
_{i,n}\left\vert s-1\right\vert .
\end{equation*}%
The same bound is obtained for $C_{2}$ in exactly the same way.
Consequently, using (\ref{c}) we obtain%
\begin{eqnarray*}
\sup_{\theta \in \mathbb{R}^{k},0<\sigma <\infty }\left\Vert H_{AS,n,\theta
,\sigma }^{i}-H_{AS,n,\theta ,\sigma }^{i\maltese }\right\Vert _{\infty }
&\leq &2\int_{\left\vert s-1\right\vert >(n-k)^{-1/2}c}\rho _{n-k}(s)ds \\
&&+2(2\pi )^{-1/2}n^{1/2}\eta _{i,n}\int_{\left\vert s-1\right\vert \leq
(n-k)^{-1/2}c}\left\vert s-1\right\vert \rho _{n-k}(s)ds \\
&\leq &2\left[ \varepsilon +(2\pi )^{-1/2}n^{1/2}\eta _{i,n}(n-k)^{-1/2}c%
\right] .
\end{eqnarray*}%
Since $n^{1/2}\eta _{i,n}(n-k)^{-1/2}\rightarrow 0$ and $\varepsilon >0$ was
arbitrary, the proof is complete. $\blacksquare $

\bigskip

\textbf{Proof of Theorem \ref{HTconservative}: }(a) The atomic part of $%
dH_{H,n,\theta ^{(n)},\sigma _{n}}^{i\maltese }$ as given in (\ref%
{hard_finite_sample_unknown_density}) clearly converges weakly to the atomic
part of (\ref{hard_large_sample_unknown_density_A}) in view of Theorem \ref%
{select_prob_moving_par_unknown}(a1) and the fact that $\alpha _{i,n}\theta
_{i,n}/\sigma _{n}=n^{1/2}\theta _{i,n}/(\sigma _{n}\xi _{i,n})\rightarrow
\nu _{i}$ by assumption; also note that the atomic part converges to the
zero measure in case $\left\vert \nu _{i}\right\vert =\infty $ or $e_{i}=0$
as then the total mass of the atomic part converges to zero. We turn to the
absolutely continuous part next. For later use we note that what has been
established so far also implies that the total mass of the absolutely
continuous part converges to the total mass of the absolutely continuous
part of the limit, since it is easy to see that the limiting distribution
given in the theorem has total mass $1$. The density of the absolutely
continuous part of (\ref{hard_finite_sample_unknown_density}) takes the form%
\begin{equation*}
\phi \left( x\right) \int_{0}^{\infty }\boldsymbol{1}\left( \left\vert
x+n^{1/2}\theta _{i,n}/(\sigma _{n}\xi _{i,n})\right\vert >sn^{1/2}\eta
_{i,n}\right) \rho _{n-k}(s)ds.
\end{equation*}%
Observe that for given $x\in \mathbb{R}$, the indicator function in the
above display converges to \linebreak $\boldsymbol{1}\left( \left\vert x+\nu
_{i}\right\vert >se_{i}\right) $ for Lebesgue almost all $s$. [If $e_{i}=0$,
this is necessarily true only for $x\in \mathbb{R}$ with $x\neq -\nu _{i}$.]
Since $n-k=m$ eventually, we get from the dominated convergence theorem that
the above display converges to $\phi \left( x\right) \int_{0}^{\infty }%
\boldsymbol{1}\left( \left\vert x+\nu _{i}\right\vert >se_{i}\right) \rho
_{m}(s)ds$ for every $x\in \mathbb{R}$ (for every $x\in \mathbb{R}$ with $%
x\neq -\nu _{i}$ in case $e_{i}=0$), which is the density of the absolutely
continuous part in (\ref{hard_large_sample_unknown_density_A}). Since the
total mass of the absolutely continuous part is preserved in the limit as
shown above, the proof is completed by Scheff\'{e}'s Lemma.

(b) Follows immediately from Proposition \ref{LSDK_H} and Theorem \ref%
{closeness}. $\blacksquare $

\bigskip

\textbf{Proof of Theorem \ref{STconservative}: }(a) The atomic part of $%
dH_{S,n,\theta ^{(n)},\sigma _{n}}^{i\maltese }$ as given in (\ref%
{soft_finite_sample_unknown_density}) converges weakly to the atomic part of
(\ref{soft_large_sample_unknown_density_A}) in view of Theorem \ref%
{select_prob_moving_par_unknown}(a1) and the fact that $\alpha _{i,n}\theta
_{i,n}/\sigma _{n}=n^{1/2}\theta _{i,n}/(\sigma _{n}\xi _{i,n})\rightarrow
\nu _{i}$ by assumption; also note that the atomic part converges to the
zero measure in case $\left\vert \nu _{i}\right\vert =\infty $ or $e_{i}=0$
as then the total mass of the atomic part converges to zero. We turn to the
absolutely continuous part next. For later use we note that what has been
established so far also implies that the total mass of the absolutely
continuous part converges to the total mass of the absolutely continuous
part of the limit, since it is easy to see that the limiting distribution
given in the theorem has total mass $1$. The density of the absolutely
continuous part of (\ref{soft_finite_sample_unknown_density}) takes the form%
\begin{eqnarray*}
&&\int_{0}^{\infty }\phi \left( x+sn^{1/2}\eta _{i,n}\right) \rho _{n-k}(s)ds%
\boldsymbol{1}\left( x+n^{1/2}\theta _{i,n}/(\sigma _{n}\xi _{i,n})>0\right)
\\
&&+\int_{0}^{\infty }\phi \left( x-sn^{1/2}\eta _{i,n}\right) \rho
_{n-k}(s)ds\boldsymbol{1}\left( x+n^{1/2}\theta _{i,n}/(\sigma _{n}\xi
_{i,n})<0\right) .
\end{eqnarray*}%
Observe that for given $x\in \mathbb{R}$, the functions $\phi \left( x\pm
sn^{1/2}\eta _{i,n}\right) $ converge to $\phi \left( x\pm se_{i}\right) $,
respectively, for all $s$. Since $n-k=m$ eventually, we then get from the
dominated convergence theorem that the above display converges to 
\begin{equation*}
\int_{0}^{\infty }\phi \left( x+se_{i}\right) \rho _{m}(s)ds\boldsymbol{1}%
\left( x+\nu _{i}>0\right) +\int_{0}^{\infty }\phi \left( x-se_{i}\right)
\rho _{m}(s)ds\boldsymbol{1}\left( x+\nu _{i}<0\right)
\end{equation*}%
for every $x\mathbb{\neq -}\nu _{i}$; the last display is precisely the
density of the absolutely continuous part in (\ref%
{soft_large_sample_unknown_density_A}). Since the total mass of the
absolutely continuous part is preserved in the limit as shown above, the
proof is completed by Scheff\'{e}'s Lemma.

(b) Follows immediately from Proposition \ref{LSDK_S} and Theorem \ref%
{closeness}. $\blacksquare $

\bigskip

\textbf{Proof of Theorem \ref{ASTconservative}: }(a) Observe that 
\begin{eqnarray}
H_{AS,n,\theta ^{(n)},\sigma _{n}}^{i\maltese }(x) &=&\int_{0}^{\infty }\Phi
\left( z_{n,\theta ^{(n)},\sigma _{n}}^{(2)}(x,s\eta _{i,n})\right) \rho
_{n-k}(s)ds\boldsymbol{1}\left( x+n^{1/2}\theta _{i,n}/(\sigma _{n}\xi
_{i,n})\geq 0\right)  \label{above} \\
&&+\int_{0}^{\infty }\Phi \left( z_{n,\theta ^{(n)},\sigma
_{n}}^{(1)}(x,s\eta _{i,n})\right) \rho _{n-k}(s)ds\boldsymbol{1}\left(
x+n^{1/2}\theta _{i,n}/(\sigma _{n}\xi _{i,n})<0\right) \,  \notag
\end{eqnarray}%
where $z_{n,\theta ^{(n)},\sigma _{n}}^{(1)}(x,s\eta _{i,n})$ and $%
z_{n,\theta ^{(n)},\sigma _{n}}^{(2)}(x,s\eta _{i,n})$ reduce to 
\begin{equation*}
0.5(x-n^{1/2}\theta _{i,n}/(\sigma _{n}\xi _{i,n}))\pm \sqrt{\left(
0.5(x+n^{1/2}\theta _{i,n}/(\sigma _{n}\xi _{i,n}))\right) ^{2}+s^{2}n\eta
_{i,n}^{2}}.
\end{equation*}%
Clearly, $\Phi \left( z_{n,\theta ^{(n)},\sigma _{n}}^{(1)}(x,s\eta
_{i,n})\right) $ as well as $\Phi \left( z_{n,\theta ^{(n)},\sigma
_{n}}^{(2)}(x,s\eta _{i,n})\right) $ converge for every $s\geq 0$ to 
\begin{equation*}
\Phi \left( 0.5(x-\nu _{i})-\sqrt{\left( 0.5(x+\nu _{i})\right)
^{2}+s^{2}e_{i}^{2}}\right)
\end{equation*}%
and 
\begin{equation*}
\Phi \left( 0.5(x-\nu _{i})+\sqrt{\left( 0.5(x+\nu _{i})\right)
^{2}+s^{2}e_{i}^{2}}\right) ,
\end{equation*}%
respectively, if $\left\vert \nu _{i}\right\vert <\infty $, and the
dominated convergence theorem shows that the weights of the indicator
functions in (\ref{above}) converge to the corresponding weights in (\ref%
{adaptive_soft_large_sample_unknown_cdf_A}). Since $n^{1/2}\theta
_{i,n}/(\sigma _{n}\xi _{i,n})$ converges to $\nu _{i}$ by assumption, it
follows that for every $x\neq -\nu _{i}$ we have convergence of $%
H_{AS,n,\theta ^{(n)},\sigma _{n}}^{i\maltese }$ to the cdf given in (\ref%
{adaptive_soft_large_sample_unknown_cdf_A}). This proves part (a) in case $%
\left\vert \nu _{i}\right\vert <\infty $. In case $\nu _{i}=\infty $, we
have that $z_{n,\theta ^{(n)},\sigma _{n}}^{(2)}(x,s\eta _{i,n})$ converges
to $x$ by an application of Proposition 15 in P\"{o}tscher and Schneider
(2009). Consequently, the limit of $\Phi \left( z_{n,\theta ^{(n)},\sigma
_{n}}^{(2)}(x,s\eta _{i,n})\right) $ is now $\Phi \left( x\right) $. Again
applying the dominated convergence theorem and observing that for each $x\in 
\mathbb{R}$ we have that $\boldsymbol{1}\left( x+n^{1/2}\theta
_{i,n}/(\sigma _{n}\xi _{i,n})<0\right) $ is eventually zero, shows that $%
H_{AS,n,\theta ^{(n)},\sigma _{n}}^{i\maltese }(x)$ converges to $\Phi
\left( x\right) $. The case $\nu _{i}=-\infty $ is proved analogously.

(b) Follows immediately from Proposition \ref{LSDK_AS} and Theorem \ref%
{closeness}. $\blacksquare $

\bigskip

\textbf{Proof of Theorem \ref{HTconsistent}: }Observe that%
\begin{eqnarray*}
\sigma _{n}^{-1}\alpha _{i,n}(\tilde{\theta}_{H,i}-\theta _{i,n}) &=&-\theta
_{i,n}/(\sigma _{n}\xi _{i,n}\eta _{i,n})\boldsymbol{1}\left( \tilde{\theta}%
_{H,i}=0\right) \\
&&+(\sigma _{n}\xi _{i,n}\eta _{i,n})^{-1}\left( \hat{\theta}_{LS,i}-\theta
_{i,n}\right) \boldsymbol{1}\left( \tilde{\theta}_{H,i}\neq 0\right) \\
&=&-\theta _{i,n}/(\sigma _{n}\xi _{i,n}\eta _{i,n})\boldsymbol{1}\left( 
\tilde{\theta}_{H,i}=0\right) +n^{-1/2}\eta _{i,n}^{-1}Z_{n}\boldsymbol{1}%
\left( \tilde{\theta}_{H,i}\neq 0\right)
\end{eqnarray*}%
where $Z_{n}$ is standard normally distributed. The expressions in front of
the indicator functions now converge to $-\zeta _{i}$ and $0$, respectively,
in probability as $n\rightarrow \infty $. Inspection of the cdf of $\sigma
_{n}^{-1}\alpha _{i,n}(\tilde{\theta}_{H,i}-\theta _{i,n})$ then shows that
this cdf converges weakly to%
\begin{equation*}
\left( \lim_{n\rightarrow \infty }P_{n,\theta ^{(n)},\sigma _{n}}\left( 
\tilde{\theta}_{H,i}=0\right) \right) \delta _{-\zeta _{i}}+\left(
1-\lim_{n\rightarrow \infty }P_{n,\theta ^{(n)},\sigma _{n}}\left( \tilde{%
\theta}_{H,i}=0\right) \right) \delta _{0}
\end{equation*}%
if $\left\vert \zeta _{i}\right\vert <\infty $. Part (b) of Theorem \ref%
{select_prob_moving_par_unknown}\ completes the proof of both parts of the
theorem in case $\left\vert \zeta _{i}\right\vert <\infty $. If $\left\vert
\zeta _{i}\right\vert =\infty $ the same theorem shows that the weak limit
is now $\delta _{0}$. $\blacksquare $

\bigskip

\textbf{Proof of Theorem \ref{STconsistent}: }(a) The atomic part of $%
dH_{S,n,\theta ^{(n)},\sigma _{n}}^{i\maltese }$ as given in (\ref%
{soft_finite_sample_unknown_density}) converges weakly to the atomic part
given in (\ref{soft_large_sample_unknown_density_C}) by Theorem \ref%
{select_prob_moving_par_unknown}(b1). The density of the absolutely
continuous part of $dH_{S,n,\theta ^{(n)},\sigma _{n}}^{i\maltese }$ can be
written as%
\begin{eqnarray*}
&&n^{1/2}\eta _{i,n}\int_{-\infty }^{\infty }\phi \left( n^{1/2}\eta
_{i,n}\left( x+s\right) \right) \rho _{m}(s)ds\boldsymbol{1}\left( x+\theta
_{i,n}/(\sigma _{n}\xi _{i,n}\eta _{i,n})>0\right) \\
&&+n^{1/2}\eta _{i,n}\int_{-\infty }^{\infty }\phi \left( n^{1/2}\eta
_{i,n}\left( x-s\right) \right) \rho _{m}(s)ds\boldsymbol{1}\left( x+\theta
_{i,n}/(\sigma _{n}\xi _{i,n}\eta _{i,n})<0\right)
\end{eqnarray*}%
recalling the convention that $\rho _{m}(s)=0$ for $s<0$. Note that with
this convention $\rho _{m}$ is then a bounded continuous function on the
real line. Since $n^{1/2}\eta _{i,n}\phi \left( n^{1/2}\eta _{i,n}\left(
x+\cdot \right) \right) $ and $n^{1/2}\eta _{i,n}\phi \left( n^{1/2}\eta
_{i,n}\left( x-\cdot \right) \right) $ clearly converge weakly to $\delta
_{-x}$ and $\delta _{x}$, respectively, the density of the absolutely
continuous part of $dH_{S,n,\theta ^{(n)},\sigma _{n}}^{i\maltese }$ is seen
to converge to $\rho _{m}(-x)\boldsymbol{1}\left( x+\zeta _{i}>0\right)
+\rho _{m}(x)\boldsymbol{1}\left( x+\zeta _{i}<0\right) $ for every $x\neq
-\zeta _{i}$. An application of Scheff\'{e}'s Lemma then completes the
proof, noting that the total mass of the absolutely continuous part of $%
dH_{S,n,\theta ^{(n)},\sigma _{n}}^{i\maltese }$ converges to the total mass
of the absolutely continuous part of (\ref%
{soft_large_sample_unknown_density_C}) as the same is true for the atomic
part in view of Theorem \ref{select_prob_moving_par_unknown}(b1) (and since
the distributions involved all have total mass $1$).

(b) Rewrite $\sigma _{n}^{-1}\alpha _{i,n}(\tilde{\theta}_{S,i}-\theta
_{i,n})$ as%
\begin{equation*}
-\theta _{i,n}/(\sigma _{n}\xi _{i,n}\eta _{i,n})\boldsymbol{1}\left( \tilde{%
\theta}_{S,i}=0\right) +\left( W_{n}-\left( \hat{\sigma}/\sigma _{n}\right) 
\limfunc{sign}(W_{n}+\theta _{i,n}/(\sigma _{n}\xi _{i,n}\eta
_{i,n}))\right) \boldsymbol{1}\left( \tilde{\theta}_{S,i}\neq 0\right) ,
\end{equation*}%
where $W_{n}$ is a sequence of $N(0,n^{-1}\eta _{i,n}^{-2})$-distributed
random variables. Observe that $\theta _{i,n}/(\sigma _{n}\xi _{i,n}\eta
_{i,n})$ converges to $\zeta _{i}$ and that $W_{n}$ converges to zero in $%
P_{n,\theta ^{(n)},\sigma _{n}}$-probability. Now, if $\left\vert \zeta
_{i}\right\vert <1$, then $P_{n,\theta ^{(n)},\sigma _{n}}\left( \tilde{%
\theta}_{S,i}=0\right) \rightarrow 1$ by Theorem \ref%
{select_prob_moving_par_unknown}(b2), and hence $\sigma _{n}^{-1}\alpha
_{i,n}(\tilde{\theta}_{S,i}-\theta _{i,n})$ converges to $-\zeta _{i}$ in $%
P_{n,\theta ^{(n)},\sigma _{n}}$-probability. This proves the result in case 
$\left\vert \zeta _{i}\right\vert <1$. In case $\left\vert \zeta
_{i}\right\vert >1$ we have that 
\begin{equation*}
P_{n,\theta ^{(n)},\sigma _{n}}\left( \tilde{\theta}_{S,i}\neq 0\right)
\rightarrow 1
\end{equation*}%
and%
\begin{equation}
P_{n,\theta ^{(n)},\sigma _{n}}\left( \limfunc{sign}(W_{n}+\theta
_{i,n}/(\sigma _{n}\xi _{i,n}\eta _{i,n}))=\limfunc{sign}(\zeta _{i})\right)
\rightarrow 1.  \label{sign}
\end{equation}%
Clearly, also $\hat{\sigma}/\sigma _{n}$ converges to $1$ in $P_{n,\theta
^{(n)},\sigma _{n}}$-probability since $n-k\rightarrow \infty $.
Consequently, $\sigma _{n}^{-1}\alpha _{i,n}(\tilde{\theta}_{S,i}-\theta
_{i,n})$ converges to $-\limfunc{sign}(\zeta _{i})$ in $P_{n,\theta
^{(n)},\sigma _{n}}$-probability, which proves the case $\left\vert \zeta
_{i}\right\vert >1$. Finally, if $\left\vert \zeta _{i}\right\vert =1$, then
(\ref{sign}) continues to hold and we can write%
\begin{eqnarray*}
\sigma _{n}^{-1}\alpha _{i,n}(\tilde{\theta}_{S,i}-\theta _{i,n}) &=&\left(
-\zeta _{i}+o(1)\right) \boldsymbol{1}\left( \tilde{\theta}_{S,i}=0\right)
-\left( o_{p}(1)+\left( 1+o_{p}(1)\right) \limfunc{sign}(\zeta _{i})\right) 
\boldsymbol{1}\left( \tilde{\theta}_{S,i}\neq 0\right) \\
&=&-\limfunc{sign}(\zeta _{i})+o_{p}(1),
\end{eqnarray*}%
where $o_{p}(1)$ refers to a term that converges to zero in $P_{n,\theta
^{(n)},\sigma _{n}}$-probability. This then completes the proof of part (b). 
$\blacksquare $

\bigskip

\textbf{Proof of Theorem \ref{ASTconsistent}: }(a) Assume first that $0\leq
\zeta _{i}<\infty $ holds. Note that $z_{n,\theta ^{(n)},\sigma
_{n}}^{(1)}(x,s\eta _{i,n})$ and $z_{n,\theta ^{(n)},\sigma
_{n}}^{(2)}(x,s\eta _{i,n})$ now reduce to 
\begin{equation*}
n^{1/2}\eta _{i,n}\left[ 0.5(x-\theta _{i,n}/(\sigma _{n}\xi _{i,n}\eta
_{i,n}))\pm \sqrt{\left( 0.5(x+\theta _{i,n}/(\sigma _{n}\xi _{i,n}\eta
_{i,n}))\right) ^{2}+s^{2}}\right] .
\end{equation*}%
First, for $x>-\zeta _{i}$ we see that $H_{AS,n,\theta ^{(n)},\sigma
_{n}}^{i\maltese }(x)$ eventually reduces to 
\begin{equation*}
\int_{0}^{\infty }\Phi \left( z_{n,\theta ^{(n)},\sigma _{n}}^{(2)}(x,s\eta
_{i,n})\right) \rho _{m}(s)ds.
\end{equation*}%
Furthermore, for $x\geq 0$ we see that $z_{n,\theta ^{(n)},\sigma
_{n}}^{(2)}(x,s\eta _{i,n})\rightarrow \infty $ for all $s>0$ whereas for $%
-\zeta _{i}<x<0$ we have that $z_{n,\theta ^{(n)},\sigma _{n}}^{(2)}(x,s\eta
_{i,n})\rightarrow \infty $ for $s>\sqrt{-x\zeta _{i}}$ and $z_{n,\theta
^{(n)},\sigma _{n}}^{(2)}(x,s\eta _{i,n})\rightarrow -\infty $ for $s<\sqrt{%
-x\zeta _{i}}$. As a consequence, we obtain from the dominated convergence
theorem that $H_{AS,n,\theta ^{(n)},\sigma _{n}}^{i\maltese }(x)$ converges
to $1$ for $x\geq 0$ and to $\int_{\sqrt{-x\zeta _{i}}}^{\infty }\rho
_{m}(s)ds$ for $-\zeta _{i}<x<0$. Second, for $x<-\zeta _{i}$ note that $%
H_{AS,n,\theta ^{(n)},\sigma _{n}}^{i\maltese }(x)$ eventually reduces to 
\begin{equation*}
\int_{0}^{\infty }\Phi \left( z_{n,\theta ^{(n)},\sigma _{n}}^{(1)}(x,s\eta
_{i,n})\right) \rho _{m}(s)ds
\end{equation*}%
and that $z_{n,\theta ^{(n)},\sigma _{n}}^{(1)}(x,s\eta _{i,n})\rightarrow
-\infty $ for all $s>0$ in this case. This shows that for $x<-\zeta _{i}$ we
have that $H_{AS,n,\theta ^{(n)},\sigma _{n}}^{i\maltese }(x)$ converges to $%
0$. But this proves the result for the case $0\leq \zeta _{i}<\infty $. In
case $\zeta _{i}=\infty $ the same reasoning shows that now $H_{AS,n,\theta
^{(n)},\sigma _{n}}^{i\maltese }(x)$ eventually reduces to 
\begin{equation*}
\int_{0}^{\infty }\Phi \left( z_{n,\theta ^{(n)},\sigma _{n}}^{(2)}(x,s\eta
_{i,n})\right) \rho _{m}(s)ds
\end{equation*}%
for all $x$, and that now for $x>0$ we have $z_{n,\theta ^{(n)},\sigma
_{n}}^{(2)}(x,s\eta _{i,n})\rightarrow \infty $ for all $s>0$ whereas for $%
x<0$ we have that $z_{n,\theta ^{(n)},\sigma _{n}}^{(2)}(x,s\eta
_{i,n})\rightarrow -\infty $ for all $s>0$. This shows that $H_{AS,n,\theta
^{(n)},\sigma _{n}}^{i\maltese }$ converges weakly to $\delta _{0}$ in case $%
\zeta _{i}=\infty $. The proof for the case $\zeta _{i}<0$ is completely
analogous.

(b) Rewrite $\sigma _{n}^{-1}\alpha _{i,n}(\tilde{\theta}_{AS,i}-\theta
_{i,n})$ as%
\begin{eqnarray*}
&&-\theta _{i,n}/(\sigma _{n}\xi _{i,n}\eta _{i,n})\boldsymbol{1}\left( 
\tilde{\theta}_{AS,i}=0\right) \\
&&+\left( \sigma _{n}\xi _{i,n}\eta _{i,n}\right) ^{-1}\left( \hat{\theta}%
_{LS,i}-\theta _{i,n}-\hat{\sigma}^{2}\xi _{i,n}^{2}\eta _{i,n}^{2}/\hat{%
\theta}_{LS,i}\right) \boldsymbol{1}\left( \tilde{\theta}_{AS,i}\neq 0\right)
\\
&=&-\theta _{i,n}/(\sigma _{n}\xi _{i,n}\eta _{i,n})\boldsymbol{1}\left( 
\tilde{\theta}_{AS,i}=0\right) +\left( W_{n}-\left( \hat{\sigma}^{2}/\sigma
_{n}\right) \xi _{i,n}\eta _{i,n}/\hat{\theta}_{LS,i}\right) \boldsymbol{1}%
\left( \tilde{\theta}_{AS,i}\neq 0\right) \\
&=&-\theta _{i,n}/(\sigma _{n}\xi _{i,n}\eta _{i,n})\boldsymbol{1}\left( 
\tilde{\theta}_{AS,i}=0\right) \\
&&+\left( W_{n}-\left( \hat{\sigma}^{2}/\sigma _{n}^{2}\right) \left(
W_{n}+\theta _{i,n}/(\sigma _{n}\xi _{i,n}\eta _{i,n})\right) ^{-1}\right) 
\boldsymbol{1}\left( \tilde{\theta}_{AS,i}\neq 0\right)
\end{eqnarray*}%
where $W_{n}$ is a sequence of $N(0,n^{-1}\eta _{i,n}^{-2})$-distributed
random variables. Note that $\theta _{i,n}/(\sigma _{n}\xi _{i,n}\eta
_{i,n}) $ converges to $\zeta _{i}$ by assumption. Now, if $\left\vert \zeta
_{i}\right\vert <1$, then $P_{n,\theta ^{(n)},\sigma _{n}}\left( \tilde{%
\theta}_{AS,i}=0\right) \rightarrow 1$ by Theorem \ref%
{select_prob_moving_par_unknown}(b2), hence $\sigma _{n}^{-1}\alpha _{i,n}(%
\tilde{\theta}_{AS,i}-\theta _{i,n})$ converges to $-\zeta _{i}$ in $%
P_{n,\theta ^{(n)},\sigma _{n}}$-probability, establishing the result in
this case. Furthermore, for $1\leq \left\vert \zeta _{i}\right\vert \leq
\infty $ rewrite the above display as%
\begin{eqnarray*}
&&\left( -\zeta _{i}+o(1)\right) \boldsymbol{1}\left( \tilde{\theta}%
_{AS,i}=0\right) +\left( o_{p}(1)-\left( 1+o_{p}(1)\right) \left( \zeta
_{i}+o_{p}(1)\right) ^{-1}\right) \boldsymbol{1}\left( \tilde{\theta}%
_{AS,i}\neq 0\right) \\
&=&\left( -\zeta _{i}+o(1)\right) \boldsymbol{1}\left( \tilde{\theta}%
_{AS,i}=0\right) +\left( -\zeta _{i}^{-1}+o_{p}(1)\right) \boldsymbol{1}%
\left( \tilde{\theta}_{AS,i}\neq 0\right) ,
\end{eqnarray*}%
with the convention that $\zeta _{i}^{-1}=0$ in case $\left\vert \zeta
_{i}\right\vert =\infty $. If $\left\vert \zeta _{i}\right\vert >1$
(including the case $\left\vert \zeta _{i}\right\vert =\infty $) then $%
P_{n,\theta ^{(n)},\sigma _{n}}\left( \tilde{\theta}_{AS,i}\neq 0\right)
\rightarrow 1$ by Theorem \ref{select_prob_moving_par_unknown}(b2), and
hence the last display shows that $\sigma _{n}^{-1}\alpha _{i,n}(\tilde{%
\theta}_{AS,i}-\theta _{i,n})$ converges to $-\zeta _{i}^{-1}$ in $%
P_{n,\theta ^{(n)},\sigma _{n}}$-probability, establishing the result in
this case. Finally, if $\left\vert \zeta _{i}\right\vert =1$ holds, then the
last line in the above display reduces to $-\zeta _{i}+o_{p}(1)$, completing
the proof of part (b). $\blacksquare $

\bigskip

\textbf{Proof of Proposition \ref{oracle_1}: }(a) By a subsequence argument
we may assume that $n-k$ converges in $\mathbb{N\cup \{\infty \}}$. Applying
Theorem \ref{select_prob_moving_par_unknown}(b) we obtain that $P_{n,\theta
,\sigma }\left( \tilde{\theta}_{H,i}=0\right) $ converges to $1$ in case $%
\theta _{i}=0$, and to $0$ in case $\theta _{i}\neq 0$. Observe that 
\begin{equation*}
\sigma ^{-1}n^{1/2}\xi _{i,n}^{-1}\left( \tilde{\theta}_{H,i}-\theta
_{i}\right) =-\sigma ^{-1}n^{1/2}\xi _{i,n}^{-1}\theta _{i}
\end{equation*}%
holds on the event $\tilde{\theta}_{H,i}=0$, while 
\begin{equation*}
\sigma ^{-1}n^{1/2}\xi _{i,n}^{-1}\left( \tilde{\theta}_{H,i}-\theta
_{i}\right) =\sigma ^{-1}n^{1/2}\xi _{i,n}^{-1}\left( \hat{\theta}%
_{LS,i}-\theta _{i}\right) =:Z_{n}
\end{equation*}%
holds on the event $\tilde{\theta}_{H,i}\neq 0$. The result then follows in
view of the fact that $Z_{n}$ is standard normally distributed. The proof
for $\hat{\theta}_{H,i}$ is similar using Proposition \ref%
{select_prob_moving_par}(b) instead of Theorem \ref%
{select_prob_moving_par_unknown}(b) (it is in fact simpler as the
subsequence argument is not needed).

(b) Again we may assume that $n-k$ converges in $\mathbb{N\cup \{\infty \}}$%
. By the same reference as in the proof of (a) we obtain that $P_{n,\theta
,\sigma }\left( \tilde{\theta}_{AS,i}=0\right) $ converges to $1$ in case $%
\theta _{i}=0$, and to $0$ in case $\theta _{i}\neq 0$. Now 
\begin{equation*}
\sigma ^{-1}n^{1/2}\xi _{i,n}^{-1}\left( \tilde{\theta}_{AS,i}-\theta
_{i}\right) =-\sigma ^{-1}n^{1/2}\xi _{i,n}^{-1}\theta _{i}
\end{equation*}%
holds on the event $\tilde{\theta}_{AS,i}=0$ and the claim for $\theta
_{i}=0 $ follows immediately. On the event $\tilde{\theta}_{AS,i}\neq 0$ we
have from the definition of the estimator 
\begin{eqnarray*}
\sigma ^{-1}n^{1/2}\xi _{i,n}^{-1}\left( \tilde{\theta}_{AS,i}-\theta
_{i}\right) &=&\sigma ^{-1}n^{1/2}\xi _{i,n}^{-1}\left( \hat{\theta}%
_{LS,i}-\theta _{i}-\hat{\sigma}^{2}\xi _{i,n}^{2}\eta _{i,n}^{2}/\hat{\theta%
}_{LS,i}\right) \\
&=&Z_{n}-\left( \hat{\sigma}/\sigma \right) ^{2}\left( \left( n\eta
_{i,n}^{2}\right) ^{-1}Z_{n}+\sigma ^{-1}\xi _{i,n}^{-1}n^{-1/2}\eta
_{i,n}^{-2}\theta _{i}\right) ^{-1}.
\end{eqnarray*}%
Now, if $\theta _{i}\neq 0$, then the event $\tilde{\theta}_{AS,i}\neq 0$
has probability approaching $1$ as shown above. Hence, we have on events
that have probability tending to $1$%
\begin{eqnarray*}
\sigma ^{-1}n^{1/2}\xi _{i,n}^{-1}\left( \tilde{\theta}_{AS,i}-\theta
_{i}\right) &=&Z_{n}-\left( \hat{\sigma}/\sigma \right) ^{2}\left(
o_{p}(1)+\sigma ^{-1}\xi _{i,n}^{-1}n^{-1/2}\eta _{i,n}^{-2}\theta
_{i}\right) ^{-1} \\
&=&Z_{n}-o_{p}(1),
\end{eqnarray*}%
since $n\eta _{i,n}^{2}\rightarrow \infty $ and $\xi _{i,n}^{-1}n^{-1/2}\eta
_{i,n}^{-2}\rightarrow \infty $ by the assumption and since $\theta _{i}\neq
0$; also note that $\hat{\sigma}/\sigma $ is stochastically bounded since
the collection of distributions corresponding to $\rho _{m}$ with $m\in 
\mathbb{N}$ is tight on $(0,\infty )$ as was noted earlier. The proof for $%
\hat{\theta}_{AS,i}$ is again similar (and simpler) by using Proposition \ref%
{select_prob_moving_par}(b) instead of Theorem \ref%
{select_prob_moving_par_unknown}(b). $\blacksquare $

\section{References}

\quad\ Alliney, S. \& S.~A. Ruzinsky (1994): An algorithm for the
minimization of mixed $l_{1}$ and $l_{2}$ norms with applications to
Bayesian estimation. \emph{IEEE Transactions on Signal Processing \ }42,
618-627.

Bauer, P., P\"{o}tscher, B.~M. \& P.~Hackl (1988): Model selection by
multiple test procedures. \emph{Statistics \ }19, 39--44.

Donoho, D.~L., Johnstone, I.~M., Kerkyacharian, G., D. Picard (1995):
Wavelet shrinkage: asymptopia? With discussion and a reply by the authors. 
\emph{Journal of the Royal Statistical Society Series B} \ 57, 301--369.

Fan, J. \& R. Li (2001): Variable selection via nonconcave penalized
likelihood and its oracle properties. \emph{Journal of the American
Statistical Association} \ 96, 1348-1360.

Fan, J. \& H. Peng (2004): Nonconcave penalized likelihood with a diverging
number of parameters. \emph{Annals of Statistics} \ 32, 928--961.

Feller, W. (1957): \emph{An Introduction to Probability Theory and Its
Applications, Volume 1. }2nd ed., Wiley, New York.

Frank, I.~E. \& J.~H. Friedman (1993): A statistical view of some
chemometrics regression tools (with discussion). \emph{Technometrics \ }35,
109-148.

Ibragimov, I.~A. (1956): On the composition of unimodal distributions. \emph{%
Theory of Probability and its Applications \ }1, 255-260.

Knight, K. \& W. Fu (2000): Asymptotics for lasso-type estimators. \emph{%
Annals of Statistics \ }28, 1356-1378.

Leeb, H. \& B.~M. P\"{o}tscher (2003): The finite-sample distribution of
post-model-selection estimators and uniform versus nonuniform
approximations. \emph{Econometric Theory} {\ 19}, 100--142.

Leeb, H. \& B.~M. P\"{o}tscher (2005): Model selection and inference: facts
and fiction. \emph{Econometric Theory} {\ 21}, 21--59.

Leeb, H. \& B.~M. P\"{o}tscher (2008): Sparse estimators and the oracle
property, or the return of Hodges' estimator. \emph{Journal of Econometrics
\ }142, 201-211.

P\"{o}tscher, B.~M. (1991): Effects of model selection on inference. \emph{%
Econometric Theory} {\ 7}, 163--185.

P\"{o}tscher, B.~M. (2006): The distribution of model averaging estimators
and an impossibility result regarding its estimation. \emph{IMS Lecture
Notes-Monograph Series} \ 52, 113--129.

P\"{o}tscher, B.~M. \& H. Leeb (2009): On the distribution of penalized
maximum likelihood estimators: the LASSO, SCAD, and thresholding. \emph{%
Journal of Multivariate Analysis \ }100, 2065-2082.

P\"{o}tscher, B.~M. \& U. Schneider (2009): On the distribution of the
adaptive LASSO estimator. \emph{Journal of Statistical Planning and
Inference \ }139, 2775-2790.

P\"{o}tscher, B.~M. \& U. Schneider (2010): Confidence sets based on
penalized maximum likelihood estimators in Gaussian regression. \emph{%
Electronic Journal of Statistics} \ 10, 334-360.

Sen, P.~K. (1979): Asymptotic properties of maximum likelihood estimators
based on conditional specification. \emph{Annals of Statistics \ }7,
1019-1033.

Tibshirani, R. (1996): Regression shrinkage and selection via the lasso.\ 
\emph{Journal of the Royal Statistical Society Series B} \ 58, 267-288.

Zhang, C.-H. (2010): Nearly unbiased variable selection under minimax
concave penalty. \emph{Annals of Statistics \ }38, 894-942.

Zou, H. (2006): The adaptive lasso and its oracle properties. \emph{Journal
of the American Statistical Association \ }101, 1418-1429.

\appendix{}

\section{Appendix}

Recall that $\rho _{m}(x)=0$ for $x<0$.

\begin{lemma}
$(2m)^{-1/2}\rho _{m}((2m)^{-1/2}t+1)$ converges to $\phi (t)$ in the $L_{1}$%
-sense as $m\rightarrow \infty $.
\end{lemma}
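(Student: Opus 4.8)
The plan is to recognize that $(2m)^{-1/2}\rho_{m}((2m)^{-1/2}t+1)$ is exactly the probability density, evaluated at $t$, of the random variable $T_{m}:=(2m)^{1/2}(S_{m}-1)$, where $S_{m}:=(\chi_{m}^{2}/m)^{1/2}$ and $\chi_{m}^{2}$ is chi-square distributed with $m$ degrees of freedom; by definition $S_{m}$ has density $\rho_{m}$, so the change-of-variables formula applied to $T_{m}=(2m)^{1/2}(S_{m}-1)$ produces precisely the claimed density. Since $\phi$ is also a probability density and $\int (2m)^{-1/2}\rho_{m}((2m)^{-1/2}t+1)\,dt=1=\int\phi(t)\,dt$ for every $m$, Scheff\'{e}'s Lemma reduces the assertion to pointwise (Lebesgue-a.e.) convergence of these densities to $\phi$.

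For the pointwise convergence I would work with the explicit form of $\rho_{m}$. Transforming the chi-square density one gets $\rho_{m}(s)=c_{m}\,s^{m-1}\exp(-ms^{2}/2)$ for $s>0$ (and $\rho_{m}(s)=0$ otherwise), with $c_{m}=m^{m/2}/(2^{m/2-1}\Gamma(m/2))$. Fix $t\in\mathbb{R}$; for all large $m$ the point $s_{m}:=1+t/(2m)^{1/2}$ is positive, and
\[
\log\!\big[(2m)^{-1/2}\rho_{m}(s_{m})\big]=-\tfrac12\log(2m)+\log c_{m}+(m-1)\log s_{m}-\tfrac{m}{2}s_{m}^{2}.
\]
Inserting Stirling's expansion $\log\Gamma(m/2)=\tfrac{m-1}{2}\log(m/2)-\tfrac{m}{2}+\tfrac12\log(2\pi)+O(1/m)$ gives $\log c_{m}=\tfrac12\log(2m)+\tfrac{m}{2}-\tfrac12\log(2\pi)+O(1/m)$, so the two $\log(2m)$ terms cancel. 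Expanding $\log s_{m}=t/(2m)^{1/2}-t^{2}/(4m)+O(m^{-3/2})$ yields $(m-1)\log s_{m}=t(m/2)^{1/2}-t^{2}/4+O(m^{-1/2})$, while $-\tfrac{m}{2}s_{m}^{2}=-\tfrac{m}{2}-t(m/2)^{1/2}-t^{2}/4$. Adding the $\tfrac{m}{2}$ coming from $\log c_{m}$, the $O(m)$ and $O(m^{1/2})$ contributions cancel exactly and the constants combine to $-\tfrac12\log(2\pi)-t^{2}/2=\log\phi(t)$, with remainder $O(m^{-1/2})$. Exponentiating gives $(2m)^{-1/2}\rho_{m}(s_{m})\to\phi(t)$, and Scheff\'{e}'s Lemma then finishes the proof.

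The only substantive step is the Stirling/Taylor computation, and the point to be careful about is verifying that the terms of order $m$ and of order $m^{1/2}$ produced by $\log c_{m}$, by $(m-1)\log s_{m}$, and by $-(m/2)s_{m}^{2}$ cancel precisely, leaving the correct Gaussian constant. (Alternatively one could deduce the pointwise convergence of the density of $T_{m}$ from a local central limit theorem for chi-square random variables combined with the smooth substitution $u\mapsto (2m)^{1/2}\big((1+u/(2m)^{1/2})^{1/2}-1\big)$, whose derivative tends to $1$ locally uniformly; but the direct computation above appears shorter.)
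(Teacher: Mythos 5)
Your proof is correct, but it follows a genuinely different route from the paper's. You identify $(2m)^{-1/2}\rho_{m}((2m)^{-1/2}t+1)$ as the density of $T_{m}=(2m)^{1/2}\bigl(\sqrt{\chi_{m}^{2}/m}-1\bigr)$ and then establish \emph{pointwise} convergence of this density to $\phi(t)$ by an explicit Stirling/Taylor expansion (your bookkeeping of the $O(m)$ and $O(m^{1/2})$ terms, and of the constant $\tfrac12\log(2m)$ coming from the normalizing constant $c_{m}$, checks out), after which Scheff\'{e}'s Lemma upgrades this to $L_{1}$-convergence. The paper instead stops at \emph{weak} convergence of $T_{m}$ (central limit theorem plus delta-method), and then upgrades weak convergence to $L_{1}$-convergence of the densities by a structural argument: it rewrites $\rho_{m}(x)$ as a constant times $g_{m+1}(mx^{2})$, concludes that the distribution of $\sqrt{\chi_{m}^{2}/m}$ is unimodal, and invokes a result of Ibragimov (1956) on unimodal distributions together with Scheff\'{e}'s Lemma and a subsequence argument. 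Your computation buys a self-contained, elementary proof that needs no external unimodality result and even delivers a rate ($O(m^{-1/2})$ on the log scale) for the pointwise convergence; the paper's argument buys brevity by avoiding any asymptotic expansion of the Gamma function, at the cost of relying on the Ibragimov reference. Either route is a complete proof of the Lemma.
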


\begin{proof}
Observe that $(2m)^{-1/2}\rho _{m}((2m)^{-1/2}t+1)$ is the density of $%
U_{m}=(2m)^{1/2}\left( \sqrt{\chi _{m}^{2}/m}-1\right) $ where $\chi
_{m}^{2} $ denotes a chi-square distributed random variable with $m$ degrees
of freedom. By the central limit theorem and the delta-method $U_{m}$
converges in distribution to a standard normal random variable. With 
\begin{equation*}
g_{m}(x)=2^{-m/2}\left( \Gamma (m/2)\right) ^{-1}x^{(m/2)-1}\exp
(-x/2)\qquad \text{for }x>0
\end{equation*}%
being the density of $\chi _{m}^{2}$ we have for $x>0$ 
\begin{eqnarray*}
\rho _{m}(x) &=&2mxg_{m}(mx^{2})=2^{1-m/2}\left( \Gamma (m/2)\right)
^{-1}m^{1/2}\left( mx^{2}\right) ^{(m/2)-1/2}\exp \left( -mx^{2}/2\right) \\
&=&(8m)^{1/2}\Gamma (\left( m+1\right) /2)\left( \Gamma (m/2)\right)
^{-1}g_{m+1}\left( mx^{2}\right) .
\end{eqnarray*}%
and we have $\rho _{m}(x)=0$ for $x\leq 0$. Since the cdf associated with $%
g_{m+1}$ is unimodal, this shows that the same is true for the cdf
associated with $\rho _{m}$. But then convergence in distribution of $U_{m}$
implies convergence of $m^{-1/2}\rho _{m}(m^{-1/2}t+1)$ to $\phi (t)$ in the 
$L_{1}$-sense by a result of Ibragimov (1956), Scheff\'{e}'s Lemma, and a
standard subsequence argument.
\end{proof}

\end{document}